\title[double cone isomorphic to triple]{The Griffiths double cone group is isomorphic to the triple}
\author{Samuel M. Corson}
\theoremstyle{definition}\newtheorem{theorem}{Theorem}
\theoremstyle{definition}
\theoremstyle{definition}
\theoremstyle{definition}
\theoremstyle{definition}
\theoremstyle{definition}\newtheorem{bigtheorem}{Theorem}
\numberwithin{theorem}{section}
\theoremstyle{definition}\newtheorem{corollary}[theorem]{Corollary}
\theoremstyle{definition}\newtheorem{proposition}[theorem]{Proposition}
\theoremstyle{definition}\newtheorem{definition}[theorem]{Definition}
\theoremstyle{definition}
\theoremstyle{definition}
\theoremstyle{definition}
\theoremstyle{definition}
\theoremstyle{definition}\newtheorem{lemma}[theorem]{Lemma}
\theoremstyle{definition}
\theoremstyle{definition}
\theoremstyle{definition}
\theoremstyle{definition}
\newcommand{\Red}{\operatorname{Red}}
\newcommand{\pro}{\operatorname{proj}}
\newcommand{\GS}{\mathbb{G}\mathbb{S}}
\newcommand{\E}{\operatorname{E}}
\newcommand{\proj}{\operatorname{proj}}
\newcommand{\Co}{\mathcal{C}}
\newcommand{\Ao}{\mathcal{A}}
\newcommand{\diam}{\operatorname{diam}}
\newcommand{\im}{\operatorname{im}}
\newcommand{\R}{\operatorname{R}}
\newcommand{\Aut}{\operatorname{Aut}}
\newcommand{\HA}{\mathbb{H}\mathbb{A}}
\newcommand{\Ho}{\operatorname{H}}
\newcommand{\W}{\mathcal{W}}
\newcommand{\pchunk}{\operatorname{p-chunk}}
\newcommand{\Pfine}{\operatorname{Pfine}}
\newcommand{\pindex}{\operatorname{p-index}}
\newcommand{\Close}{\operatorname{Close}}
\newcommand{\coi}{\operatorname{coi}}
\newcommand{\dom}{\operatorname{dom}}
\newcommand{\first}{\operatorname{h}}
\newcommand{\Ea}{\mathbb{E}}
\newcommand{\Pure}{\operatorname{Pure}}
\begin{document}

\address{School of Mathematics, University of Bristol, Fry Building, Woodland Road, Bristol, BS8 1UG, United Kingdom.}
\email{sammyc973@gmail.com}

\keywords{fundamental group, Griffiths space, harmonic archipelago, Hawaiian earring, topological cone}
\subjclass[2010]{Primary 03E75, 20A15, 55Q52; Secondary 20F10, 20F34}
\thanks{This work was supported by European Research Council grant PCG-336983, the Severo Ochoa Programme for Centres of Excellence in R\&D SEV-20150554, and by the Heilbronn Institute for Mathematical Research, Bristol, UK.}

\begin{abstract}  It is shown that the fundamental group of the Griffiths double cone space is isomorphic to that of the triple cone.  More generally if $\kappa$ is a cardinal such that $2 \leq \kappa \leq 2^{\aleph_0}$ then the $\kappa$-fold cone has the same fundamental group as the double cone.  The isomorphisms produced are non-constructive, and no isomorphism between the fundamental group of the $2$- and of the $\kappa$-fold cones, with $2 < \kappa$, can be realized via continuous mappings.  We also prove a conjecture of James W. Cannon and Gregory R. Conner which states that the fundamental group of the Griffiths double cone space is isomorphic to that of the harmonic archipelago.
\end{abstract}

\maketitle

\begin{section}{Introduction}

The Griffiths double cone over the Hawaiian earring, which we denote $\GS_2$, was introduced by H. B. Griffiths in \cite{G} and has long stood as an interesting example in topology (Figure \ref{doubleconefig}).  Although $\GS_2$ is a path connected, locally path connected compact metric space (a \emph{Peano continuum}) which embeds as a subspace of $\mathbb{R}^3$, it has some subtle properties.  Despite being a wedge of two contractible spaces, $\GS_2$ is not itself contractible, and more surprisingly the fundamental group of $\GS_2$ is uncountable.  The fundamental group is freely indecomposable and includes a copy of the additive group of the rationals and of the fundamental group of the Hawaiian earring.  This group has found use in defining cotorsion-free groups in the non-abelian setting \cite{EF} and continues to serve as a counterexample \cite{Z} and as a test model for notions of infinitary abelianization \cite{BG}.

It is easy to see that analogous behavior is exhibited when one uses more cones in the wedge, as in the triple wedge $\GS_3$ of cones over the Hawaiian earring.  A natural question is whether the isomorphism type of the fundamental group changes with this change in subscript.  In light of the intuitive fact that no spacial isomorphism can be defined (see the forthcoming Theorem \ref{continuous}), the following answer is surprising.

\begin{bigtheorem}\label{bigisomorphism}  If $\kappa$ is a cardinal such that $2 \leq \kappa \leq 2^{\aleph_0}$ then $\pi_1(\GS_2) \simeq \pi_1(\GS_{\kappa})$.
\end{bigtheorem}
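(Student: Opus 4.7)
The approach must be non-constructive, since the forthcoming Theorem \ref{continuous} forbids any isomorphism induced by continuous maps when $\kappa > 2$. I would proceed via a transfinite back-and-forth construction that builds on the model-theoretic features of the groups involved, rather than on any geometric relationship between the spaces.

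First I would set up a combinatorial model for $\pi_1(\GS_\kappa)$ in which each element is represented by a transfinite word: a countable linearly ordered sequence of letters, each letter being a loop in the Hawaiian earring of one of the $\kappa$ cones and tagged by that cone. Two words are identified if they differ by (i) the intrinsic relations within each Hawaiian earring subgroup, (ii) absorption of any sub-word supported in a single cone (justified by contractibility of that cone), and (iii) free cancellation. A routine count then gives $|\pi_1(\GS_\kappa)| = 2^{\aleph_0}$ whenever $2 \leq \kappa \leq 2^{\aleph_0}$, so the two groups in question have the same cardinality.

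Second, invoking Theorem \ref{elementaryequiv} to the effect that $\pi_1(\GS_2)$ and $\pi_1(\GS_\kappa)$ satisfy the same first-order theory, I would upgrade elementary equivalence to isomorphism through a back-and-forth extension lemma: any partial isomorphism between countable subgroups of $\pi_1(\GS_2)$ and $\pi_1(\GS_\kappa)$ can be extended to include any prescribed element on either side. The heart of the extension step is a saturation-like claim, namely that every consistent countable system of group-theoretic equations and inequations over countable parameters is realized in each group; the absorption relation within a single cone furnishes the enormous flexibility needed to produce such realizations. A transfinite induction of length $2^{\aleph_0}$, alternating along enumerations of the two groups, then assembles the desired isomorphism.

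The principal obstacle is the extension lemma. Even with elementary equivalence in hand, producing realizations of arbitrary consistent types in the transfinite-word model is delicate: one must decide how much of a candidate element is forced by prior commitments, and argue that the residual freedom supplied by absorption is always sufficient to complete it without conflict. Making this quantitative likely demands a careful classification of the types realized by transfinite words, together with a uniformity statement showing that the available moves do not depend on $\kappa$ in the admissible range. Once this is secured, the back-and-forth proceeds by routine bookkeeping.
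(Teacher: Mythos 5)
Your high-level instincts are right: the paper does work in a combinatorial word model, does prove $|\pi_1(\GS_\kappa)| = 2^{\aleph_0}$ for $2 \le \kappa \le 2^{\aleph_0}$, and does run a transfinite back-and-forth of length $2^{\aleph_0}$. But the route through elementary equivalence is circular and, independently, does not work. Theorem~\ref{elementaryequiv} is not available to you at this point: in the paper it is \emph{derived from} Theorem~\ref{bigisomorphism} (the elementary embedding $\pi_1(\GS_\gamma) \hookrightarrow \pi_1(\GS_\kappa)$ is obtained by applying Theorem~\ref{bigisomorphism} to reduce to the case $2^{\aleph_0} \le \gamma < \kappa$, and then using automorphisms). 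Moreover, even if one granted elementary equivalence, ZFC gives no license to upgrade it to isomorphism between structures of cardinality $2^{\aleph_0}$; that requires both sides to be saturated, and saturated models of size continuum exist only under CH or after passing to special models of a larger cardinality. So ``elementary equivalence $+$ same cardinality $+$ saturation-like extension'' is not a theorem you can appeal to --- it is precisely the thing you would have to prove, with no model-theoretic shortcut.

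Your paragraph on the ``principal obstacle'' correctly names where the work is, but the proposal contains no mechanism for the extension step, and the naive version of it is false: a partial isomorphism between countable \emph{subgroups} cannot in general be extended element-by-element, because group-theoretic data alone do not control how a new infinitary word must interact, at the level of its p-decomposition into maximal pure subwords, with the words already in the domain. The paper's actual mechanism replaces ``partial isomorphism between subgroups'' with a \emph{coherent collection of coi triples} $\{\coi(W_x,\iota_x,U_x)\}$: for each matched pair $(W_x,U_x)$ one records a close order isomorphism $\iota_x$ between subsets of the p-indices, and coherence is a compatibility condition across all pairs that makes the induced maps $\phi_0,\phi_1$ well defined modulo pure words (Propositions~\ref{welldefined}, \ref{obtainediso}). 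The genuinely hard content is then Proposition~\ref{arbitraryextensions}: given any $W \in \Red_{\kappa_0}$, one can add a compatible coi triple, proved by decomposing $\pindex(W)$ into $\omega$-type and $\mathbb{Q}$-type pieces and handling each with bespoke constructions (Propositions~\ref{omegawords}, \ref{Qtype}) that insert fresh pure p-chunks outside $\Pfine$ of the current collection --- this is where $|X| < 2^{\aleph_0}$ and Lemma~\ref{avoidpchunk} earn their keep. None of this is ``routine bookkeeping'' once the extension lemma is stated abstractly; the extension lemma \emph{is} the theorem, and it is proved word-combinatorially, not model-theoretically.
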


The bounds on $\kappa$ in the statement of Theorem \ref{bigisomorphism} are the best possible.  The spaces $\GS_0$ and $\GS_1$ both strongly deformation retract to a point and therefore have trivial fundamental group, and when $\kappa>2^{\aleph_0}$ one has $|\pi_1(\GS_{\kappa})| > 2^{\aleph_0} = |\pi_1(\GS_2)|$ (Theorem \ref{howmany}).  Using techniques of \cite{EF} or \cite{HH} one can compute the abelianizations of $\pi_1(\GS_2)$ and $\pi_1(\GS_3)$ and see that these abelianizations are isomorphic.

A notable point of comparison is that the wedge of $2$, $3$, etc. Hawaiian earrings (without cones) is again homeomorphic to the Hawaiian earring, and so these spaces have isomorphic fundamental groups.  However the fundamental group of a wedge of $\aleph_0$ Hawaiian earrings, under the topology that we are considering, will not have isomorphic fundamental group.  This follows since the $\aleph_0$-wedge of Hawaiian earrings retracts to a subspace which is the $\aleph_0$-wedge of circles each having diameter $1$, and this shows that the fundamental group of the $\aleph_0$-wedge homomorphically surjects onto an infinite rank free group, which the fundamental group of the Hawaiian earring cannot do \cite{Hig}.

The isomorphism given in Theorem \ref{bigisomorphism} is produced combinatorially by a back-and-forth argument, using the axiom of choice.  One can ask whether an isomorphism can be given more explicitly using constructive methods, perhaps via continuous maps between spaces.  This is impossible because of the following theorem.

\begin{bigtheorem}\label{continuous}  If $1 \leq n< \kappa$ with $n$ finite the following hold:

\begin{enumerate}
\item  If $f: \GS_n \rightarrow \GS_{\kappa}$ is continuous then $f_*(\pi_1(\GS_n))$ is of uncountable index in $\pi_1(\GS_{\kappa})$.

\item  If $f: \GS_{\kappa} \rightarrow \GS_n$ is continuous then $\ker(f_*)$ is uncountable.
\end{enumerate}
\end{bigtheorem}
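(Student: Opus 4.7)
My plan uses the metric `narrowness' of continuous maps into $\GS_\kappa$: the image of any compact set touches only countably many cones of $\GS_\kappa$ away from the wedge point $w$. This is because, for each $\epsilon > 0$, only finitely many cones can contain a point of a compact $K \subseteq \GS_\kappa$ at distance at least $\epsilon$ from $w$---otherwise one would extract from $K$ an infinite sequence with no convergent subsequence, contradicting compactness since points in different cones are separated through $w$.

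For part (1), since $\GS_n$ is compact, $K = f(\GS_n)$ is contained in $\GS_{C_0}$ for some countable $C_0 \subseteq \kappa$, so $f_*(\pi_1(\GS_n))$ lies in the subgroup $G_0 \leq \pi_1(\GS_\kappa)$ that is the image of $\pi_1(\GS_{C_0}) \to \pi_1(\GS_\kappa)$. When $\kappa > \aleph_0$, I pick a base cone $c_0 \in C_0$, and for each $c \in \kappa \setminus C_0$ I form the Griffiths-alternating loop $\lambda_c$ built from shrinking circles in the $c$- and $c_0$-earrings. Using the retractions $r_B \colon \GS_\kappa \to \GS_B$ for $B = C_0$ and $B = C_0 \cup \{c\}$, together with the nontriviality of $\lambda_c$ in $\pi_1(\GS_{\{c, c_0\}}) \simeq \pi_1(\GS_2)$, a short diagram chase shows that $\lambda_c \lambda_{c'}^{-1} \notin G_0$ for distinct $c, c' \in \kappa \setminus C_0$: applying $r_{C_0 \cup \{c\}}$ kills $\lambda_{c'}$ and identifies the image of $G_0$ with the inclusion of $\pi_1(\GS_{C_0})$, then applying $r_{\{c, c_0\}}$ would force $\lambda_c$ to vanish in $\pi_1(\GS_2)$. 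This yields uncountably many cosets.

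For part (2) the perspective is reversed. Each cone $CH_c \subseteq \GS_\kappa$ is contractible, so $f|_{CH_c}$ is nullhomotopic in $\GS_n$ and $f_*$ collapses large amounts of structure. A combinatorial argument, exploiting a countable-type invariant of $f|_{H_c}$ (where $H_c$ is the Hawaiian earring in cone $c$) together with the fact that $\GS_n$ has only $n$ cone directions, should produce an uncountable family of cones $c$ on which these invariants agree; taking pairs and forming differences of Griffiths-type loops $\lambda_{c,c'}$ then yields uncountably many elements of $\ker f_*$.

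The main obstacle is the case where $\kappa$ is finite or countable, since then the cone-avoidance argument in part (1) does not supply cones outside $C_0$ and the pigeonhole in part (2) loses its counting advantage. In these cases one must instead exploit the local combinatorial structure of $f$ near $w$: continuity sends small loops to small loops, and a careful tracking of the letter-level translation between the $\kappa$-indexed alphabet of loops in $\GS_\kappa$ and the $n$-indexed alphabet of $\GS_n$ is required. This finer word-combinatorial control, rather than the cheap compactness observation, is where I expect the technical heart of the proof to lie.
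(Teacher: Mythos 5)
Your compactness observation is correct and pleasant, but it only bites when $\kappa > \aleph_0$, and that is not where the theorem is difficult. The statement requires the conclusion for all $\kappa > n$ with $n$ finite, so in particular for $n = 2$, $\kappa = 3$. In that case $f(\GS_n)$ is contained in all of $\GS_\kappa$ and your cone-avoidance argument for (1) produces nothing. You acknowledge this yourself in the last paragraph, flagging the finite/countable $\kappa$ case as ``where I expect the technical heart of the proof to lie,'' which is exactly right --- but deferring the technical heart means the proposal as written does not prove the theorem.

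The ingredient you are missing is what the paper proves in Lemmas \ref{eventualpurity} and \ref{eventualspecialpurity}: from a continuous $f:\GS_{\kappa_0}\to\GS_{\kappa_1}$ fixing the wedge points one extracts an induced homomorphism $h:\Red_{\kappa_0}\to\Red_{\kappa_1}$ compatible with infinite concatenation, and then shows that for each cone $\alpha$ in the source there is a threshold $M_\alpha$ and a single cone $\beta_\alpha < \kappa_1$ such that any $\alpha$-pure word with all indices $\geq M_\alpha$ is sent by $h$ to a $\beta_\alpha$-pure word. This ``eventual purity'' is a genuinely non-trivial fact requiring the diagonal/cancellation argument in those lemmas, not merely the small-loops-to-small-loops continuity observation. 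Once it is in hand, (1) follows by choosing a target cone $\beta$ missed by the finitely many $\beta_\alpha$ and composing with the retraction onto two cones of $\GS_{\kappa}$, and (2) follows by pigeonholing the $n+1$ source cones (after restricting to $\GS_{n+1} \subseteq \GS_\kappa$) into the $n$ target cones; both yield uncountably many elements in a single kernel. Your sketch for (2) gestures at a ``countable-type invariant'' and pigeonhole but does not identify what the invariant is or why it is well-defined; the invariant $\alpha \mapsto \beta_\alpha$ is precisely what Lemma \ref{eventualspecialpurity} supplies and is the crux. As written your argument has a concrete gap at the core of both parts, and you would need to supply the eventual-purity lemmas (or an equivalent) before the diagram chases you outline could be carried out in the finite-$\kappa$ regime.
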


A comparable situation in the setting of topological groups is that $\mathbb{R}$ and $\mathbb{R}^2$ are isomorphic as abstract groups, since by picking a Hamel basis over $\mathbb{Q}$ one sees that both are isomorphic to $\bigoplus_{2^{\aleph_0}} \mathbb{Q}$.  There is no continuous, or even Baire measurable, isomorphism between these topological groups.  By contrast Theorem \ref{bigisomorphism} does not seem to follow by producing isomorphisms to an easily understood third group like $\bigoplus_{2^{\aleph_0}} \mathbb{Q}$.

Another curiosity worth mentioning is that despite the necessary constraints on the cardinality of $\kappa$ in Theorem \ref{bigisomorphism}, the first-order logical theory of $\pi_1(\GS_2)$ and $\pi_1(\GS_{\kappa})$ are the same whenever $\kappa \geq 2$.

\begin{bigtheorem}\label{elementaryequiv}   If $2 \leq \gamma\leq \kappa$ then $\pi_1(\GS_{\gamma})$ elementarily embeds in $\pi_1(\GS_{\kappa})$.  Thus for $\kappa \geq 2$ the groups $\pi_1(\GS_2)$ and $\pi_1(\GS_{\kappa})$ are elementarily equivalent.
\end{bigtheorem}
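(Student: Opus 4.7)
The plan is to show that the natural map $\iota: \pi_1(\GS_\gamma) \to \pi_1(\GS_\kappa)$ induced by the subspace inclusion $\GS_\gamma \hookrightarrow \GS_\kappa$ (treating $\gamma$ as an initial segment of $\kappa$) is an elementary embedding; the second assertion of the theorem then follows by setting $\gamma = 2$. Injectivity of $\iota$ is immediate from the existence of a retraction $\GS_\kappa \to \GS_\gamma$ obtained by collapsing each of the remaining $\kappa \setminus \gamma$ cones, which are contractible, to the base point.

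To prove elementarity I would invoke Karp's back-and-forth criterion: it suffices to exhibit a non-empty family $\mathcal{F}$ of finite partial isomorphisms from $\pi_1(\GS_\gamma)$ into $\pi_1(\GS_\kappa)$ which contains, for every finite tuple $\bar a$ in $\pi_1(\GS_\gamma)$, the pinned partial map $a_i \mapsto \iota(a_i)$, and which has the back-and-forth property that every $p \in \mathcal{F}$ can be extended within $\mathcal{F}$ to cover any prescribed new element of either group. The combinatorial back-and-forth construction driving Theorem \ref{bigisomorphism} is essentially a solution to the one-step extension problem, so the task reduces to verifying that its extension steps depend only on a finite amount of data about the current partial isomorphism, and hence can be executed above an arbitrary prescribed finite set of pre-pinned identifications.

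When $\kappa \leq 2^{\aleph_0}$ the isomorphism $\pi_1(\GS_\gamma) \simeq \pi_1(\GS_\kappa)$ is already supplied by Theorem \ref{bigisomorphism}, and a minor refinement of its proof, seeding the back-and-forth with any prescribed finite partial matching, should give elementarity of $\iota$ in this range. When $\kappa > 2^{\aleph_0}$ the two groups have different cardinalities, so $\iota$ is necessarily a proper embedding; nevertheless Karp's criterion still delivers $L_{\infty\omega}$-equivalence relative to the pinned identifications, which in particular yields the desired first-order elementarity.

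I expect the main difficulty to lie in the ``back'' direction of the construction when $\gamma < \kappa$: given a partial isomorphism already sending $\bar a$ to $\iota(\bar a)$ and a new target $b \in \pi_1(\GS_\kappa)$ whose representing loop genuinely visits cones of $\GS_\kappa$ not present in $\GS_\gamma$, one must produce an element of $\pi_1(\GS_\gamma)$ having the same combinatorial profile relative to $\bar a$ as $b$ does relative to $\iota(\bar a)$. The assumption $\gamma \geq 2$ — already supplying two cones, and so enough room to realise any finite combinatorial behaviour of reduced loops — should be precisely what makes this transfer possible, but confirming it will require re-examining the combinatorial reduction lemmas on which Theorem \ref{bigisomorphism} is built.
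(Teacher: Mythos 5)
Your proposal takes a genuinely different route from the paper. The paper first reduces to the case $\gamma \geq 2^{\aleph_0}$ by replacing $\pi_1(\GS_\gamma)$ with the isomorphic $\pi_1(\GS_{2^{\aleph_0}})$, courtesy of Theorem \ref{bigisomorphism}, and then applies a classical automorphism test for elementary substructure (Lemma \ref{test}): $\Co_\gamma \preceq \Co_\kappa$ provided that for every finite tuple from $\Co_\gamma$ and every single $b \in \Co_\kappa$ there is an automorphism of $\Co_\kappa$ fixing the tuple pointwise and moving $b$ into $\Co_\gamma$. The required automorphisms $\overline{\theta_f}$ are induced by permutations $f$ of the cone-index set $\kappa$ acting on the first subscripts of letters; since any finite set of group elements involves only countably many cone indices and $\gamma$ is by now uncountable, one can always permute as needed (Lemma \ref{lotsofauts}). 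This sidesteps the combinatorial machinery of Section \ref{isom} entirely once Theorem \ref{bigisomorphism} is available. Your plan to run a Karp-style back-and-forth using coherent coi collections is plausible and would even deliver the stronger $L_{\infty\omega}$-elementarity directly, but you misdiagnose where the work lies: the premise that ``extension steps depend only on a finite amount of data about the current partial isomorphism'' is false, and fortunately irrelevant — Karp's criterion imposes no finiteness requirement on the partial isomorphisms, so the maps $\Phi_0 : \beth_0(\Pfine(\{W_x\})) \to \beth_1(\Pfine(\{U_x\}))$ from Proposition \ref{obtainediso}, indexed by coherent coi collections of size $< 2^{\aleph_0}$, are a perfectly good back-and-forth family. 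Likewise, the ``back'' direction you flag as the main difficulty is already handled: Proposition \ref{arbitraryextensions} is symmetric in $\kappa_0, \kappa_1$ and makes no assumption beyond $\kappa_0, \kappa_1 \geq 2$, so one can match any $U \in \Red_\kappa$ with some $W \in \Red_\gamma$ at each step. What you do not establish — and what is the real content in your version — is that the pinned finite partial map $a_i \mapsto \iota(a_i)$ lies in the back-and-forth family; this works because the identity coi triples $\coi(W_i, \mathrm{id}, W_i)$ already form a coherent collection inducing $\iota$ on the relevant $\Pfine$ subgroup, but you leave that unverified. In sum, your route is viable but considerably heavier, and you miss the paper's central observation that the $S_\kappa$-action on $\Co_\kappa$ gives a short, self-contained proof.
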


Of course when $\kappa$ is $0$ or $1$ the fundamental group $\pi_1(\GS_{\kappa})$ is trivial and therefore not elementarily equivalent to $\pi_1(\GS_2)$.  The proof of Theorem \ref{elementaryequiv} utilizes Theorem \ref{bigisomorphism} and the action of the automorphism group, and no previous knowledge of first-order logic is required to understand the proof.

\begin{figure}
\includegraphics[height = 5cm]{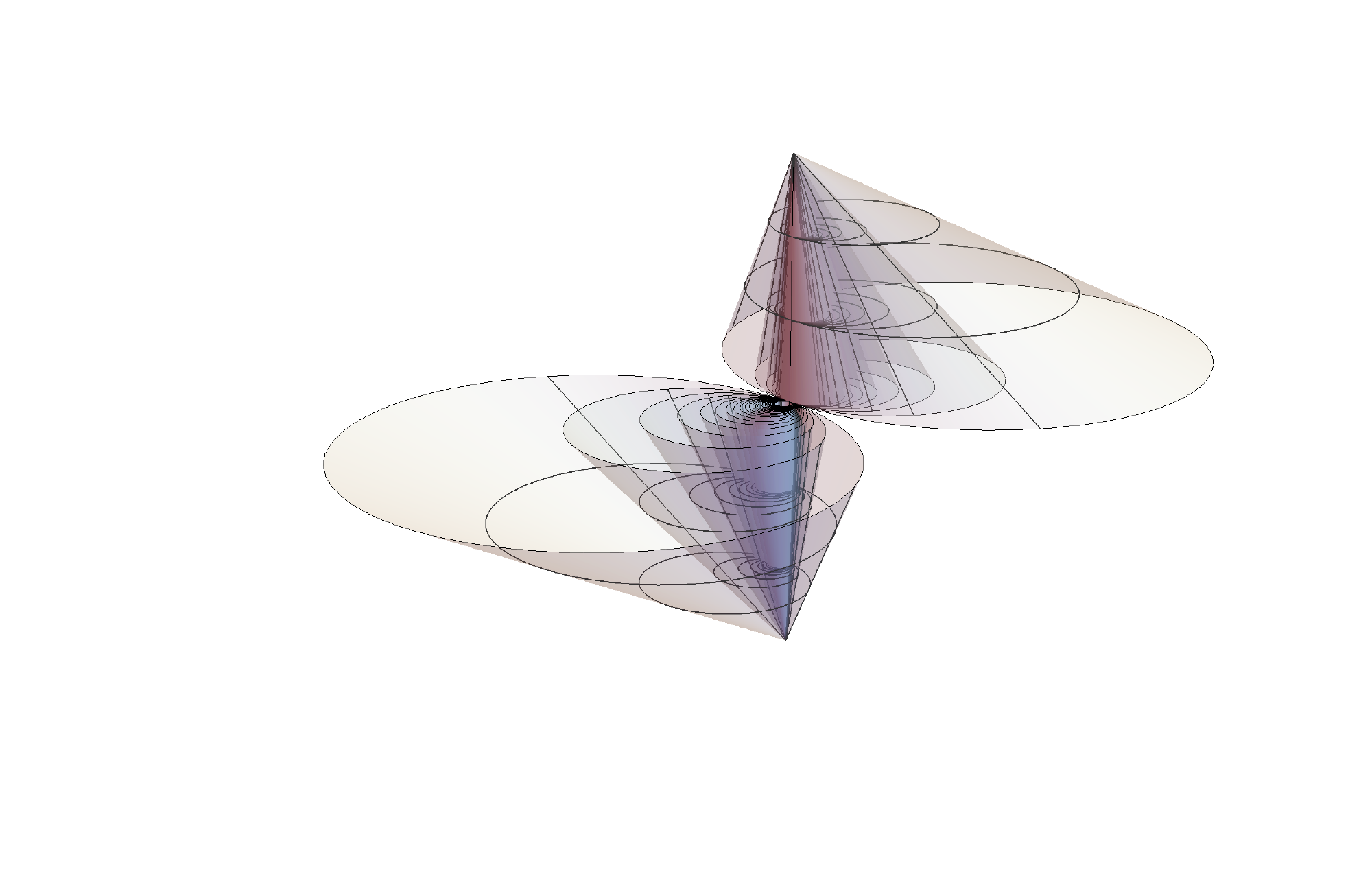}
\caption{The Griffiths double cone $\GS_2$}
\label{doubleconefig}
\end{figure}

The ideas used in proving Theorem \ref{bigisomorphism} seem to have very broad applications, and we state one now.  Another space that is often mentioned along with the Griffiths space is the harmonic archipelago $\HA$ of Bogley and Sieradski \cite{BS} (see Figure \ref{harmonicarchipelagofig}).  The spaces $\GS_2$ and $\HA$ share many common properties.  Each embeds as a subspace of $\mathbb{R}^3$, both contain a distinguished point at which every loop can be homotoped to have arbitrarily small image, and both have uncountable fundamental group.  Cannon and Conner have conjectured that the two spaces share a further property, namely that they have isomorphic fundamental group \cite{C2}.  We show that this is the case.

\begin{bigtheorem} \label{bigisomorphism2}  The groups $\pi_1(\GS_2)$ and $\pi_1(\HA)$ are isomorphic.
\end{bigtheorem}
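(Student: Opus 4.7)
The plan is to follow the back-and-forth strategy that proves Theorem \ref{bigisomorphism}, substituting $\pi_1(\HA)$ for $\pi_1(\GS_\kappa)$ on one side. First I would recall the combinatorial description of $\pi_1(\GS_2)$ that underlies Theorem \ref{bigisomorphism}: loops based at the natural basepoint are encoded by transfinite words in a countable alphabet coming from the Hawaiian earring generators, with two words identified precisely when the relators of $\GS_2$ permit the passage (these relators encode the contractibility of each of the two cones).

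Second, I would give $\pi_1(\HA)$ an analogous transfinite-word presentation over the same alphabet, with relators now coming from the disks of the harmonic archipelago --- each of which kills a single circle $\ell_n$ of the Hawaiian earring --- together with the infinitary products forced by the topology. The technical heart is then to show that this structure on $\pi_1(\HA)$ satisfies the same ``extension'' property used for Theorem \ref{bigisomorphism}: any partial isomorphism with countable domain from a substructure of $\pi_1(\GS_2)$ into $\pi_1(\HA)$ can be extended to absorb a prescribed new element on either side. This should follow from a normal form theorem for infinitary words in $\pi_1(\HA)$ together with an analysis of the degree of freedom one has in matching tails across the two presentations.

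With both structural descriptions in hand, one runs a back-and-forth of length $2^{\aleph_0}$, enumerating both groups and extending the partial isomorphism at each stage by one new element from each side. The union of the chain is the desired isomorphism $\pi_1(\GS_2) \simeq \pi_1(\HA)$.

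The main obstacle will be the second step. The defining relations of $\HA$ are quite different from those of $\GS_2$: in $\HA$ each small circle is null in isolation, whereas in $\GS_2$ loops become null only through partitions into cone-contained pieces. Consequently the isomorphism cannot arise from the obvious quotient maps out of the Hawaiian earring group, and must be produced non-constructively through the back-and-forth. All the real work will be in showing that, despite this difference, the infinitary combinatorial homogeneity underlying the back-and-forth for Theorem \ref{bigisomorphism} persists for the archipelago.
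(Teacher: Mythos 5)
Your proposal follows the same overall strategy as the paper: model $\pi_1(\HA)$ combinatorially as $\Red_c / \langle\langle\Pure(\Red_c)\rangle\rangle$ (reduced transfinite words on the Hawaiian earring alphabet, modulo pure words $c_n^j$), adapt the coherent coi-triple machinery of Section \ref{isom}, and run a back-and-forth of length $2^{\aleph_0}$ using Proposition \ref{obtainediso}'s analogue. So at the level of architecture you have the right plan.

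However, your sketch glosses over the one place where the argument for $\GS_\kappa$ genuinely breaks and a new idea is needed, and it is worth naming it concretely. The extension lemmas in Section \ref{isom} (in particular Lemma \ref{avoidpchunk}, and through it the $\omega$- and $\mathbb{Q}$-concatenation propositions) repeatedly need to produce a pure word \emph{not} lying in $\Pfine$ of the current coherent family. In $\Red_{\kappa_1}$ with $\kappa_1 \geq 2$ this is a cardinality argument: there are $2^{\aleph_0}$ $\alpha$-pure words but (by Lemma \ref{notmanypures}) fewer than $2^{\aleph_0}$ pure p-chunks in the relevant $\Pfine$. In $\Red_c$ this argument is simply unavailable, because $\Pure(\Red_c)$ is countable --- every pure word is $c_m^j$, and they all already lie in the p-fine subgroup once a single nontrivial word is present. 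The paper explicitly flags this as ``the principal difficulty'' and resolves it not by choosing pure \emph{words} outside $\Pfine$, but by choosing \emph{exponents} on finite blocking segments so that no tail of the constructed infinite concatenation lies in $\Pfine$ of the countably-generated family (Lemmas \ref{intervalsareboring}, \ref{injectionsarefew}, \ref{blocksubword}, \ref{avoidinc}, and then their use in Propositions \ref{omegawordsnew} and \ref{Qtypenew}). Your phrase ``an analysis of the degree of freedom one has in matching tails'' gestures in this direction, but a complete write-up would need this exponent-selection device, since without it the $\omega$- and $\mathbb{Q}$-extension steps for $\Red_c$ cannot be carried out.
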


One can quickly convince oneself that there cannot be a continuous function from one space to the other which induces an isomorphism on fundamental groups.  The abelianizations of these groups are known to be isomorphic \cite{KR}, \cite{EF}, \cite{HH}.  The proof of Theorem \ref{bigisomorphism2} uses modifications of that of Theorem \ref{bigisomorphism}.  It seems clear that by further reworking these ideas one can produce a correct proof of the main theorem of \cite{CHM} (some errors have been pointed out by K. Eda) as well as answer many of the questions of that paper in the affirmative.

\begin{figure}
\includegraphics[height = 5cm]{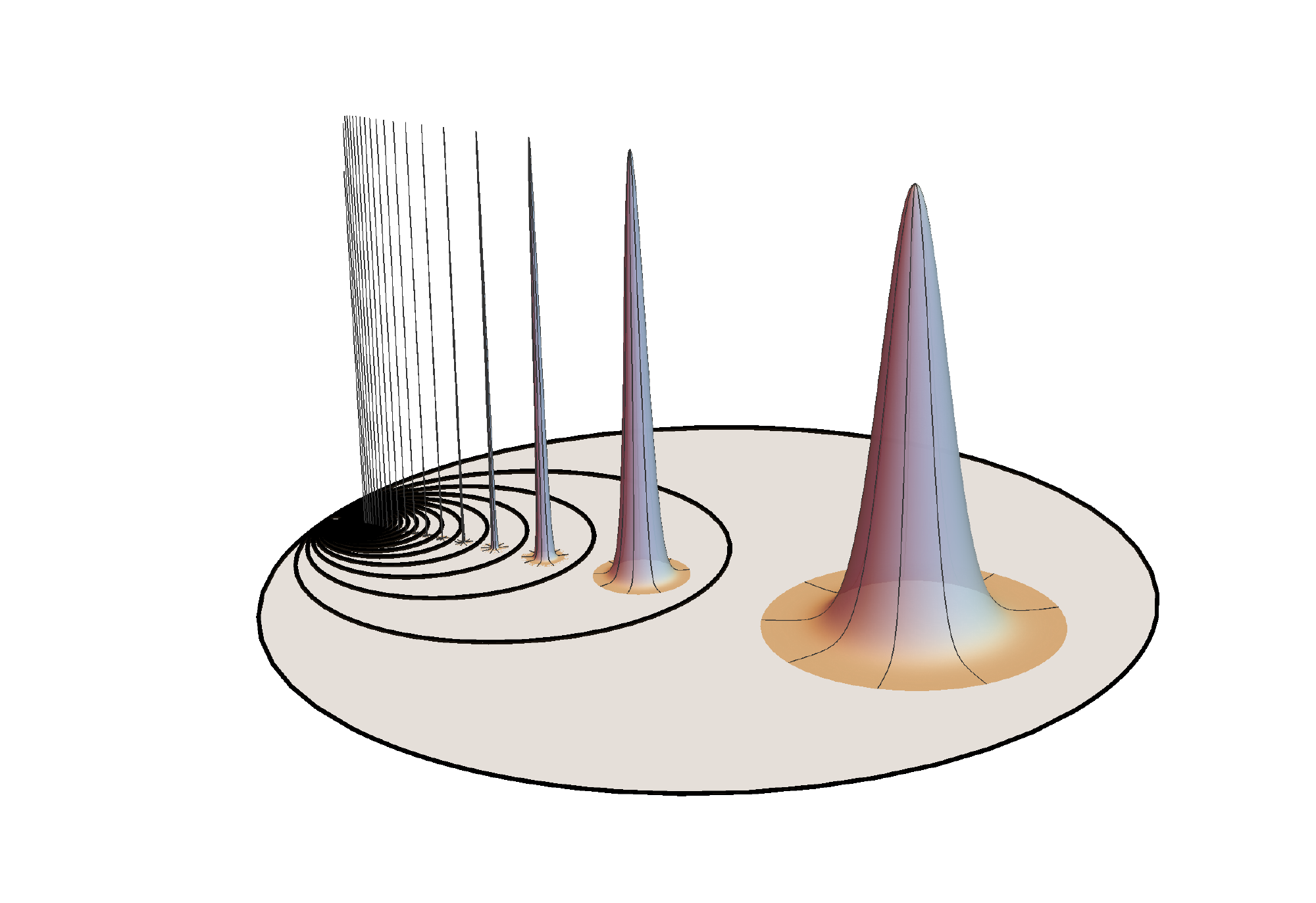}
\caption{The harmonic archipelago $\HA$}
\label{harmonicarchipelagofig}
\end{figure}

We describe the layout of this paper.  In Section \ref{conegroups} we give the formal definition of the Griffiths space and its $\kappa$-fold analogues.  We also present some combinatorially defined groups $\Co_{\kappa}$ and show them to be isomorphic to the fundamental groups $\pi_1(\GS_{\kappa})$.  We also prove Theorem \ref{continuous}.  In Section \ref{isom} we prove Theorems \ref{bigisomorphism} and \ref{elementaryequiv}.  In Section \ref{last} we prove Theorem \ref{bigisomorphism2}.
\end{section}

\begin{section}{The cone groups}\label{conegroups}

We give a construction of $\GS_2$ and more generally of the $\kappa$-fold Griffiths space $\GS_{\kappa}$ for any cardinal $\kappa$.  We consider each cardinal number $\kappa$ as being the set of all ordinals below it in the standard way.  Thus $0 = \emptyset$, $n = \{0, \ldots, n-1\}$ for each $n\in \omega$, $\omega + 2 = \{0, 1, \ldots, \omega, \omega + 1\}$, etc.  Let $2^{\aleph_0}$ denote the cardinal of the continuum.  Given a point $p\in \mathbb{R}^2$ and $r\in [0, \infty)$ we let $C(p, r)$ denote the circle centered at $p$ of radius $r$ (in case $r=0$ we obtain the degenerate circle consisting only of the point $p$).  The \emph{Hawaiian Earring} is the subspace $\Ea=\bigcup_{n\in \omega} C((0, \frac{1}{n+3}),\frac{1}{n+3})$ of $\mathbb{R}^2$.  Let $\GS_1 \subseteq \mathbb{R}^3$ be the subspace $\bigcup_{r\in [0, 1]}(\bigcup_{n\in \omega} C((0, \frac{1 - r}{n+3}), \frac{r}{n+3})\times \{r\})$.  The space $\GS_1$ may also be viewed as the space obtained by first taking the Hawaiian earring sitting in the xy-plane $\Ea \times \{0\}$ and joining each point of $\Ea \times \{0\}$ to the point $(0, 0, 1)$ by a geodesic line segment.  A third, topological way of viewing $\GS_1$ is by simply taking the topological cone over the Hawaiian earring.  In other words, $\GS_1$ is homeomorphic to the quotient space obtained by beginning with $\Ea\times [0, 1]$ and identifying all points which have $1$ in the last coordinate.

We define $\GS_0$ to be the metric space consisting of the single point $\circ_0$.  Let $\kappa\geq 1$ be a cardinal.  We take $\GS_{\kappa}$ to be the set obtained by taking $\kappa$-many disjoint isometric copies $\bigsqcup_{\alpha<\kappa} X_{\alpha}$ of $\GS_1$ and identifying all copies of $(0, 0, 0)$ to a single point $\circ_{\kappa}$.  Thus we consider $\circ_{\kappa} \in X_{\alpha}$ for all $\alpha <\kappa$.  Metrize $\GS_{\kappa}$ by letting

\[
d(x,y) = \left\{
\begin{array}{ll}
d_{\alpha}(x, y)
                                            & \text{if } x, y\in X_{\alpha}, \\
d_{\alpha}(x, \circ_{\kappa}) + d_{\alpha'}(\circ_{\kappa}, y)                                           & \text{if } x\in X_{\alpha}\setminus\{\circ_{\kappa}\} \text{ and }y\in X_{\alpha'}\setminus \{\circ_{\kappa}\}, \alpha \neq \alpha'.
\end{array}
\right.
\]

\noindent  We note that this definition yields an isometric copy of $\GS_1$ when $\kappa = 1$ and so the definition is consistent.  When $\kappa$ is finite, the space $\GS_{\kappa}$ is a Peano continuum and $\GS_{\kappa}$ is homeomorphic to the topological wedge of $\kappa$-many copies of $\GS_1$ with the copies of the point $(0, 0, 0)$ identified.  When $\kappa\geq \aleph_0$ the space $\GS_{\kappa}$ is neither compact nor homeomorphic to the quotient space obtained by identifying all copies of $(0, 0, 0)$ in the topological disjoint union of $\kappa$-many copies of $\GS_1$.

Next we give a description of what we call the \emph{cone group} $\Co_{\kappa}$ for each cardinal $\kappa$.  The description involves infinitary word combinatorics.  Fix a cardinal $\kappa$.  We start with a set $\Ao_{\kappa}=  \{a_{\alpha, n}^{\pm 1 }\}_{\alpha<\kappa, n< \omega}$ equipped with formal inverses.  We call the elements of $\Ao_{\kappa}$ \emph{letters} and a letter is \emph{positive} if it has superscript $1$.  For convenience we shall usually leave off the superscript $1$ on positive letters.  A letter which is not positive is \emph{negative}.  Let $\pro_0$, respectively $\pro_1$, be the functions defined on $\Ao_{\kappa}$ which project the first, resp. second, subscript of a letter.  Thus $\pro_0(a_{\alpha, n}^{-1}) = \alpha$ and $\pro_1(a_{\alpha, n}^{-1}) = n$.

A \emph{word} in $\Ao_{\kappa}$ is a function $W: \overline{W} \rightarrow \Ao_{\kappa}$ such that $\overline{W}$ is a totally ordered set and for each $N\in \omega$ the set $\{i\in \overline{W}\mid \pro_1(W(i)) \leq N\}$ is finite.  The domain of a word is necessarily countable.  We write $W_0 \equiv W_1$ if there exists an order isomorphism $\iota: \overline{W_0} \rightarrow \overline{W_1}$ such that $W_1(\iota(i)) = W_0(i)$ for all $i\in \overline{W_0}$, and write $\iota: W_0 \equiv W_1$ in this case.  Let $E$ denote the word with empty domain.

Let $\W_{\kappa}$ denote the set of all $\equiv$ classes of words in $\Ao_{\kappa}$.  For $W \in \W_{\kappa}$ we let $d(W) = \min\{\pro_1(W(i))\mid i\in \overline{W}\}$ and $d(E) = \infty$.  There is a natural associative binary operation on $\W_{\kappa}$ given by word concatenation, defined by letting $W_0W_1$ be the word $W$ such that $\overline{W} = \overline{W_0} \sqcup \overline{W_1}$ has the ordering that extends the orders of $\overline{W_0}$ and $\overline{W_1}$, placing elements in $\overline{W_0}$ below those of $\overline{W_1}$, and

\[
W(i) = \left\{
\begin{array}{ll}
W_0(i)
                                            & \text{if } i\in \overline{W_0}, \\
W_1(i)                                           & \text{if } i\in \overline{W_1}.
\end{array}
\right.
\]

\noindent There is similarly a notion of infinite concatenation.  If $\Lambda$ is a totally ordered set and $\{W_{\lambda}\}_{\lambda\in \Lambda}$ is a collection of words such that for every $N\in \omega$ the set $\{\lambda\in \Lambda: d(W_{\lambda}) \leq N\}$ is finite then we can take a concatenation $\prod_{\lambda\in \Lambda}W_{\lambda}$ whose domain is the disjoint union $\bigsqcup_{\lambda\in \Lambda} \overline{W_{\lambda}}$ ordered in the natural way and whose outputs are given by $(\prod_{\lambda\in \Lambda}W_{\lambda})(i) = W_{\lambda}(i)$ where $i\in \overline{W_{\lambda}}$.  We also use this notation for the concatenation of ordered sets.  If $\{\Lambda_{\lambda}\}_{\lambda\in \Lambda}$ is a collection of ordered sets and $\Lambda$ is itself ordered we let $\prod_{\lambda\in \Lambda}\Lambda_{\lambda}$ be the ordered set obtained by taking the disjoint union of the $\Lambda_{\lambda}$ and ordering the elements in the obvious way.  To further abuse notation we write $\Lambda \equiv \Theta$ if $\Lambda$ is order isomorphic to $\Theta$.

We also have an inversion operation on words given by letting $W^{-1}$ have domain $\overline{W}$ under the reverse order and letting $W^{-1}(i) = (W(i))^{-1}$.  For each $N\in \omega$ and word $W$ we let $p_N(W)$ be the restriction $W\upharpoonright\{i\in \overline{W}\mid \pro_1(W(i)) \leq N\}$.  Thus $p_N(W)$ is a finite word in the alphabet $\Ao_{\kappa}$.  We write $W_0 \sim W_1$ if for every $N\in \omega$ the words $p_N(W_0)$ and $p_N(W_1)$ are equal when considered as elements in the free group on positive elements of $\Ao_{\kappa}$.  As an example, the word $W \equiv a_{0, 0}a_{0, 0}^{-1}a_{0, 1}a_{0, 1}^{-1}\cdots$ satisfies $W \sim E$ since $p_N(W) \equiv a_{0, 0}a_{0, 0}^{-1}a_{0, 1}a_{0, 1}^{-1}\cdots a_{0, N}a_{0, N}^{-1}$ is freely equal to $E$ for each $N\in \omega$.  Let $[W]$ denote the $\sim$ equivalence class of $W$.  We obtain a group structure on $\W_{\kappa}/\sim$ by letting $[W_0][W_1] = [W_0W_1]$, from which one gets inverses defined by $[W]^{-1} = [W^{-1}]$ and $[E]$ as the identity element.  Let $\Ho_{\kappa}$ denote this group.  Define a word $W$ to be \emph{$\alpha$-pure} if $p_0\circ W(i) = \alpha$ for all $i\in \overline{W}$.  More generally a word is \emph{pure} if it is $\alpha$-pure for some $\alpha$.  The empty word $E$ is $\alpha$-pure for every $\alpha$.  Define the group $\Co_{\kappa}$ to be the quotient of $\Ho_{\kappa}$ by the smallest normal subgroup including the set of $\sim$ equivalence classes of pure words.

We work towards the proof that $\Co_{\kappa} \simeq \pi_1(\GS_{\kappa}, \circ_{\kappa})$.  Recall that the Hawaiian earring $\Ea \times \{0\}$ is a subspace of $\GS_1$.  Each copy $X_{\alpha}$ of $\GS_1$ which appears in the wedge $\GS_{\kappa}$ therefore has such a copy of the Hawaiian earring, which we denote $E_{\alpha}$, at its ``base.''  Let $\Ea_{\kappa}$ denote the union of all of these copies $E_{\alpha}$ of the Hawaiian earring.

In \cite{CC} is a description of an isomorphism of $\Ho_1$ with the fundamental group of the Hawaiian earring $\pi_1(\Ea_1, \circ_1)$, which we give and generalize here.  Let $\mathcal{I}$ denote the set of maximal open intervals in the closed interval $[0, 1]$ minus the Cantor ternary set.  The natural ordering on $\mathcal{I}$ is order isomorphic to that of the rationals, and so every countable order type embeds in $\mathcal{I}$.  For each $n\in \omega$ let $L_n$ be a loop based at $\circ_1$ which passes exactly once around the circle $C((0, \frac{1}{n+3}), \frac{1}{n+3})$ and is injective except at $0$ and $1$.  Given a word $W\in \W_1$ we let $\iota: \overline{W} \rightarrow \mathcal{I}$ be an order embedding.  Let $\R_{\iota}(W): [0, 1] \rightarrow \Ea_1$ be the loop given by

\[
\R_{\iota}(W)(t) = \left\{
\begin{array}{ll}
L_n(\frac{t - \inf I}{\sup I - \inf I})
                                            & \text{if } W(i) = a_{0, n}\text{ and }t\in I = \iota(i), \\
L_n^{-1}(\frac{t - \inf I}{\sup I - \inf I})                                           & \text{if } W(i) = a_{0, n}^{-1} \text{ and }t\in I = \iota(i), \\

\circ_{1} & \text{otherwise}.
\end{array}
\right.
\]

If $\iota_0: \overline{W} \rightarrow \mathcal{I}$ is a distinct order embedding, then $\R_{\iota}(W)$ and $\R_{\iota_0}(W)$ are homotopic via a straightforward homotopy whose image lies inside the common image $\R_{\iota}(W)([0, 1]) = \R_{\iota_0}(W)([0, 1])$.  Thus we have a well defined map $\R: \W\rightarrow \pi_1(\Ea_1, \circ_1)$.  Less obvious is the fact that $W \sim U$ implies $\R(W) = \R(U)$, so that $\R$ descends to a map, which we also name $\R$, from $\Ho_1$ to $\pi_1(\Ea_1, \circ_1)$ which is in fact an isomorphism.  Each loop at $\circ_1$, moreover, can be homotoped in its image to a loop which is precisely $\R_{\iota}(W)$ for some $\iota$ and $W$.

We'll use these facts to produce such a map $\R$ for larger values of $\kappa$.  To simplify the work we introduce the notion of reduced words.  As is the case with finitary words, there is a notion of reducedness for words in $\W_{\kappa}$.  We say $W \in \W_{\kappa}$ is \emph{reduced} if $W \equiv W_0W_1W_2$ and $W_1 \sim E$ implies $W_1\equiv E$.  We state the following, whose proof would follow in precisely the same way as that of \cite[Theorem 1.4, Corollary 1.7]{E}.

\begin{lemma}\label{reduced}  Given $W\in \W_{\kappa}$ there exists a reduced word $W_0 \in \W_{\kappa}$ such that $[W] = [W_0]$ and this $W_0$ is unique up to $\equiv$.  Moreover letting $W$ and $U$ be reduced there exist unique words $W_0, W_1, U_0, U_1$ such that 

\begin{enumerate}  \item $W \equiv W_0W_1$;

\item $U \equiv U_0U_1$;

\item $W_1\equiv U_0^{-1}$;

\item $W_0U_1$ is reduced.
\end{enumerate}

\end{lemma}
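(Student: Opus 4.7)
The plan is to follow the strategy of Eda's proof \cite{E} for the Hawaiian earring, adapted to the larger alphabet $\Ao_{\kappa}$. The crucial observation is that the first subscript $\alpha$ of a letter $a_{\alpha,n}^{\pm 1}$ plays no role in the definitions of $\sim$, $\equiv$, the projections $p_N$, or reducedness: only the second subscript $n$ governs the finiteness condition on words and the notion of level. Hence every combinatorial step in \cite{E} carries over verbatim once the alphabet is formally enlarged to $\Ao_{\kappa}$, and the statements of Theorem 1.4 and Corollary 1.7 there translate directly.

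For the existence of a reduced form I would construct a canonical reduced representative of $W$ by a \emph{survivor} analysis. Call $i\in\overline{W}$ a survivor if its letter $W(i)$ is not cancelled in the free reduction of $p_N(W)$ for all sufficiently large $N$. One shows the non-survivors come in matched pairs via a canonical fixed-point-free involution $\phi$ that is nested: if $i<j$ are matched then every index strictly between them is matched with another index strictly between them. Letting $S$ be the set of survivors and $W_0:=W\upharpoonright S$ with the induced order, the required finiteness condition is inherited from $W$, and $p_N(W_0)$ is by construction the free reduction of $p_N(W)$, so $[W_0]=[W]$. Any nontrivial subword of $W_0$ that was $\sim E$ would produce further cancellations among survivors, contradicting the definition of $S$, so $W_0$ is reduced.

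For uniqueness, the key observation is that $W$ is reduced in $\W_{\kappa}$ if and only if $p_N(W)$ is freely reduced for every $N$: a free cancellation in some $p_N(W)$ can be promoted to an infinite subword $\sim E$ by including, between the cancelling pair, all of the (finitely many at each level) letters of second-subscript $>N$ that sit between them. Given two reduced representatives of $[W]$, their $p_N$'s are freely reduced and freely equal, hence identical as finite words, and the position-correspondences at different levels cohere into a single order isomorphism between the two reduced words.

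For the ``moreover" clause, apply the existence part to the concatenation $WU$. Since $W$ and $U$ are each reduced, the canonical involution on non-survivors of $WU$ cannot pair two indices both inside $\overline{W}$ or both inside $\overline{U}$; hence every cancelling pair crosses the boundary. Nestedness then forces the non-surviving indices of $\overline{W}$ to form a final segment of $\overline{W}$, giving $W\equiv W_0W_1$, and those of $\overline{U}$ to form an initial segment of $\overline{U}$, giving $U\equiv U_0U_1$; the involution itself furnishes the identification $W_1\equiv U_0^{-1}$, and the surviving portion $W_0U_1$ is the reduced form of $WU$. Uniqueness of the decomposition follows from uniqueness of the reduced form of $WU$ together with uniqueness of the canonical involution. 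The main obstacle throughout is verifying coherence of the survivor data across levels — that the pairings obtained from each finite $p_N(W)$ stabilize to a single well-defined involution on $\overline{W}$ — which is the technical content of \cite[Theorem 1.4]{E} and is unaffected by the cardinality of the first-subscript index set.
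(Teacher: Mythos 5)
The central claim in your uniqueness argument — that $W$ is reduced if and only if $p_N(W)$ is freely reduced for every $N$ — is false in the forward direction, and the "promotion" step does not repair it. Consider $W \equiv a_{0,0}\,a_{0,1}\,a_{0,0}^{-1}$. Every nonempty restriction of $W$ to an interval of $\overline{W}$ fails to be $\sim E$ (for instance $p_1$ of the whole word is the freely reduced, nontrivial word $a_{0,0}a_{0,1}a_{0,0}^{-1}$), so $W$ is reduced. Yet $p_0(W) = a_{0,0}a_{0,0}^{-1}$ is not freely reduced. If one tries to "promote" the cancelling pair at positions $0$ and $2$ by filling in the intervening letter $a_{0,1}$, one obtains the word $a_{0,0}a_{0,1}a_{0,0}^{-1}$, which is \emph{not} $\sim E$; the promotion simply does not produce a null subword. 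The converse direction ($p_N(W)$ reduced for all $N$ implies $W$ reduced) is fine, but that is not the direction your uniqueness argument needs. In consequence the step "their $p_N$'s are freely reduced and freely equal, hence identical as finite words, and the position-correspondences cohere" collapses, since reduced infinitary words can have projections that are far from freely reduced. Uniqueness in fact requires a genuinely different mechanism — typically one works with maximal cancellations of $WU^{-1}$ or an analogue of the paper's Lemma \ref{cancellationreduces}, not level-by-level comparison of $p_N$.

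Your existence argument has a softer issue that you flag yourself: the set of positions killed under "the free reduction of $p_N(W)$" is not canonical (in $aa^{-1}a$ the reduction $a$ survives either at position $1$ or position $3$), so defining "survivor" demands a choice of cancellation pattern at each level, and the coherence of those choices across $N$ is precisely the nontrivial content of Eda's Theorem 1.4. You acknowledge this, so I would not call it a gap so much as a deferral. By contrast, the uniqueness argument is not a deferral; it asserts a false equivalence as the key observation. Note also that the paper itself provides no proof of this lemma — it simply cites Eda — so your task was to fill in Eda's argument, and the specific route you chose for uniqueness would not work even within that framework.
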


Let $\Red_{\kappa}$ denote the set of reduced words in $\W_{\kappa}$ and for each $W\in \W_{\kappa}$ let $\Red(W)$ be the reduced word such that $W \sim \Red(W)$.

\begin{lemma}\label{reducedconsequence}  Given $W \in \W_{\kappa}$ and $U\in \W_{\kappa}$ we have $\Red(WU)\equiv \Red(\Red(W)\Red(U))$.  Similarly, given $W_0, W_1, W_2 \in \W_{\kappa}$ we have $\Red(W_0W_1W_2) \equiv \Red(W_0\Red(W_1W_2)) \equiv \Red(\Red(W_0W_1)W_2)$.
\end{lemma}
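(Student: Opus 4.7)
The plan is to derive both claims from the uniqueness asserted in Lemma \ref{reduced}, together with the fact that $\sim$ is a congruence with respect to concatenation (which is implicit in the definition of the group operation on $\Ho_{\kappa}$, since $[W_0][W_1] = [W_0 W_1]$ is stated to be well defined).

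First I would establish the identity $\Red(WU) \equiv \Red(\Red(W)\Red(U))$. By definition of $\Red$ we have $W \sim \Red(W)$ and $U \sim \Red(U)$. Since $\sim$ is preserved under concatenation, one obtains $WU \sim \Red(W)\Red(U)$. By Lemma \ref{reduced}, each $\sim$-class contains a unique reduced word up to $\equiv$. Both $\Red(WU)$ and $\Red(\Red(W)\Red(U))$ are reduced representatives of the $\sim$-class of $WU$, so they must be $\equiv$-equivalent.

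For the triple-concatenation claim, I would first note the trivial fact that $\Red(\Red(X)) \equiv \Red(X)$: since $\Red(X)$ is already reduced, its unique reduced representative (up to $\equiv$) is itself. Then, using associativity of concatenation in $\W_{\kappa}$ and applying the first identity twice, I would compute
\[
\Red(W_0 W_1 W_2) \;\equiv\; \Red(\Red(W_0)\Red(W_1 W_2))
\]
from the first identity with $W = W_0$, $U = W_1 W_2$, and compare with
\[
\Red(W_0 \Red(W_1 W_2)) \;\equiv\; \Red(\Red(W_0)\Red(\Red(W_1 W_2))) \;\equiv\; \Red(\Red(W_0)\Red(W_1 W_2)),
\]
obtained by applying the first identity with $W = W_0$, $U = \Red(W_1 W_2)$ and then using $\Red(\Red(W_1 W_2)) \equiv \Red(W_1 W_2)$. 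Chaining these yields $\Red(W_0 W_1 W_2) \equiv \Red(W_0 \Red(W_1 W_2))$, and the symmetric argument with $W_0 W_1$ grouped to the left yields $\Red(W_0 W_1 W_2) \equiv \Red(\Red(W_0 W_1) W_2)$.

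There is no real obstacle here: everything reduces to uniqueness of reduced forms (Lemma \ref{reduced}) together with the congruence property of $\sim$ under concatenation. The only care needed is in keeping track of which occurrences of $\Red$ are applied to which subwords and verifying that each invocation of the first identity has the correct operands.
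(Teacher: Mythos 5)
Your proof is correct and follows essentially the same route as the paper: both argue that $WU \sim \Red(W)\Red(U)$ via the congruence property of $\sim$ under concatenation, then invoke the uniqueness of the reduced representative from Lemma \ref{reduced}. You simply spell out the triple-concatenation case in more detail (via $\Red(\Red(X))\equiv\Red(X)$ and two applications of the first identity), which the paper dispatches with the phrase "follows along the same lines."
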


\begin{proof}  Since $W\sim \Red(W)$ and $U\sim \Red(U)$ we have $WU \sim \Red(W)\Red(U)$ and by the uniqueness of the reduced word in its $\sim$ class we see that $\Red(WU) \equiv \Red(\Red(W)\Red(U))$.  The claim in the second sentence follows along the same lines.
\end{proof}

Lemma \ref{reducedconsequence} implies the group $\Ho_{\kappa}$ is isomorphic to the set $\Red_{\kappa}$ under the group operation $W*U =\Red(WU)$.  We give the following definition (see \cite[Definition 3.4]{CC}):

\begin{definition}\label{cancellation}  Given a word $W\in \W_{\kappa}$ we say $\mathcal{S} \subseteq \overline{W} \times\overline{W}$ is a \emph{cancellation} provided

\begin{enumerate}\item for $\langle i_0, i_1 \rangle \in \mathcal{S}$ we have $i_0 < i_1$;

\item  if $\langle i_0, i_1\rangle \in \mathcal{S}$ and $\langle i_0, i_2\rangle \in \mathcal{S}$ then $i_2 = i_1$;

\item  if $\langle i_0, i_1\rangle \in \mathcal{S}$ and $\langle i_2, i_1\rangle\in \mathcal{S}$ then $i_2 = i_0$;

\item  if $\langle i_0, i_1 \rangle \in \mathcal{S}$ and $i_2\in (i_0, i_1) \subseteq \overline{W}$ there exists $i_3\in (i_0, i_1)$ such that either $\langle i_2, i_3\rangle \in \mathcal{S}$ or $\langle i_3, i_2\rangle \in \mathcal{S}$;

\item  if $\langle i_0, i_1\rangle \in \mathcal{S}$ then $W(i_0) = (W(i_1))^{-1}$.

\end{enumerate}

\end{definition}

The $\langle \cdot , \cdot \rangle$ notation for ordered pairs is used here in order to avoid confusion with parenthetical notation $( \cdot, \cdot)$ which can be interpreted as an open interval.  We shall use $\langle \cdot \rangle$ to denote a generated subgroup, and the lack of a comma makes this use unambiguous.

A cancellation may be understood as a transfinite strategy for freely reducing a word.  Conditions (2) and (3) imply that a cancellation is a pairing of elements in a subset of elements of $\overline{W}$.  Condition (5) says that the pairing requires the associated letters in $W$ to be inverses of each other.  Condition (4) requires the pairing to be complete in the sense that each element between paired elements must also be paired by $\mathcal{S}$.  Condition (4) also requires that the pairing is noncrossing in the sense that if an element $i$ lies between two paired elements $i_0$ and $i_1$, then the element with which $i$ is paired must also be between $i_0$ and $i_1$.

Zorn's Lemma implies that each cancellation $\mathcal{S}$ in a word $W$ is included in a maximal cancellation $\mathcal{S}'$; that is, $\mathcal{S} \subseteq \mathcal{S}'$ and $\mathcal{S}'$ is not a proper subset of a cancellation in $W$.  It turns out that a maximal cancellation reveals the reduced word representative, as happens with freely reducing a finitary word until free reductions are no longer possible.  We omit the proof of the following, but it follows in precisely the same manner as \cite[Theorem 3.9]{CC}:

\begin{lemma}\label{cancellationreduces}  If $\mathcal{S}$ is a maximal cancellation for $W\in \W_{\kappa}$ then 
\begin{center}
$W\upharpoonright \{i\in \overline{W}\mid (\neg \exists i')(\langle i, i'\rangle \in \mathcal{S} \text{ or }\langle i, i'\rangle \in \mathcal{S})\} \equiv\Red(W)$.
\end{center}

Thus a word has only trivial cancellation if and only if that word is reduced.  As a consequence, if $W \in \W_{\kappa}$ with $W \equiv \prod_{\lambda \in \Lambda} W_{\lambda}$ then $\Red(W) \equiv \Red(\prod_{\lambda \in \Lambda}\Red(W_{\lambda}))$.
\end{lemma}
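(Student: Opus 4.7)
The plan mirrors \cite[Theorem 3.9]{CC}, proving the main claim in two steps and then extracting the consequences.

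\emph{Step 1: $W' := W\upharpoonright \{i \mid i \text{ is not involved in any pair of } \mathcal{S}\}$ satisfies $W' \sim W$.} Condition (5) forces $\pro_1(W(i_0)) = \pro_1(W(i_1))$ for each pair $\langle i_0, i_1 \rangle \in \mathcal{S}$, so both endpoints of a pair lie at the same level; in particular, no pair has one endpoint at level $\leq N$ and the other at level $>N$. Fix $N \in \omega$ and let $\mathcal{S}_N$ consist of those pairs of $\mathcal{S}$ whose endpoints are at level $\leq N$; conditions (1)--(4) restrict $\mathcal{S}_N$ to a non-crossing nested matching on the finite word $p_N(W)$, and condition (5) makes each pair a pair of inverse letters. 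Free reduction using $\mathcal{S}_N$ deletes exactly the $\mathcal{S}_N$-paired positions and produces $p_N(W')$, so $p_N(W) = p_N(W')$ in the free group on positive letters for every $N$, giving $W \sim W'$.

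\emph{Step 2: $W'$ is reduced.} Suppose otherwise, so $W' \equiv V_0 V_1 V_2$ with $V_1 \sim E$ and $V_1 \not\equiv E$. The technical heart of the argument, and the main obstacle, is the sub-claim that every non-empty $U$ with $U \sim E$ admits a (necessarily non-trivial) cancellation pairing all of its positions. I would establish this by a compactness argument: for each $N$ the set $\mathcal{M}_N$ of complete non-crossing inverse matchings of the finite word $p_N(U)$ is finite and non-empty (the latter because $p_N(U) \sim E$), and the ``forget letters of level exactly $N$'' map $\mathcal{M}_N \to \mathcal{M}_{N-1}$ is well-defined since pairs share a level. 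The inverse limit of this system of finite non-empty sets is non-empty, producing a coherent $\mathcal{T} = \bigcup_N \mathcal{T}^N$ whose non-crossing and completeness (condition (4)) on $\overline{U}$ follow because any intermediate position $i_2 \in (i_0, i_1)$ at level $M$ is matched in $\mathcal{T}^{\max(M,N)}$, and non-crossing there forces its partner into $(i_0, i_1)$. Granting the sub-claim, pick such a $\mathcal{T}$ for $V_1$ and transport it through the natural order-embedding $\overline{V_1} \hookrightarrow \overline{W}$ into the unpaired positions of $W$. Then $\mathcal{S} \cup \mathcal{T}$ strictly contains $\mathcal{S}$ and still satisfies (1)--(5): conditions (1)--(3) and (5) are immediate, and condition (4) for a new $\mathcal{T}$-pair holds because an intermediate $\mathcal{S}$-paired position has its partner inside the $\mathcal{T}$-interval by non-crossing of $\mathcal{S}$, while intermediate unpaired positions are handled by $\mathcal{T}$'s own (4). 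This contradicts maximality, so $W'$ is reduced, and uniqueness in Lemma \ref{reduced} gives $W' \equiv \Red(W)$.

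\emph{Consequences.} The biconditional ``trivial cancellation iff reduced'' is immediate: if $W$ is reduced, any cancellation $\mathcal{S}$ extends to a maximal $\mathcal{S}'$ whose unpaired-restriction equals $\Red(W) \equiv W$, forcing $\mathcal{S}' = \emptyset$ and hence $\mathcal{S} = \emptyset$; conversely, if only the empty cancellation exists, it is maximal and $W \equiv W' \equiv \Red(W)$. For the factorization statement, $W_\lambda \sim \Red(W_\lambda)$ for each $\lambda$ implies $\prod_\lambda W_\lambda \sim \prod_\lambda \Red(W_\lambda)$ (the level-finiteness of the indexing set survives because $d(\Red(W_\lambda)) \geq d(W_\lambda)$), and uniqueness of reduced form yields $\Red(\prod_\lambda W_\lambda) \equiv \Red(\prod_\lambda \Red(W_\lambda))$.
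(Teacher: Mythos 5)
Your proof is correct and is a faithful spelling-out of the argument the paper attributes to \cite[Theorem 3.9]{CC}: conditions (5) and (4) make the level-$\leq N$ truncation of $\mathcal{S}$ a cancellation of the finite word $p_N(W)$ whose unpaired part is $p_N(W')$ (whence $W' \sim W$), and the compactness/inverse-limit construction of a \emph{total} cancellation of any hypothetical $V_1 \sim E$ is exactly what is needed to graft onto $\mathcal{S}$ and contradict maximality. One detail worth making explicit when verifying condition (4) for $\mathcal{S}\cup\mathcal{T}$ at a $\mathcal{T}$-pair $\langle i_0,i_1\rangle$: an intermediate position that is $\mathcal{S}$-unpaired lies in $\overline{W'}$ between $i_0,i_1\in\overline{V_1}$, and since $\overline{V_1}$ is an interval of $\overline{W'}$ it must in fact lie in $\overline{V_1}$, where $\mathcal{T}$ pairs it within $(i_0,i_1)$ --- this is precisely why the sub-claim must produce a cancellation of $V_1$ with \emph{no} unpaired positions rather than merely a non-trivial one.
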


Now we define our homomorphism from $\Red_{\kappa}$ to $\pi_1(\Ea_{\kappa}, \circ_{\kappa})$.  For each $\alpha < \kappa$ and $n<\omega$ we let $L_{\alpha, n}$ be a loop based at $\circ_{\kappa}$ which goes exactly once around the $n$-th circle of $E_{\alpha}$ and is injective except at $0, 1$.  One can use an isometry between $\Ea_1$ and $E_{\alpha}$ to define $L_{\alpha, n}$ from $L_n$ if wished.  Given a reduced word $W\in \Red_{\kappa}$ and an order embedding $\iota: \overline{W} \rightarrow \mathcal{I}$ we get a loop $\R_{\iota}(W)$ defined by

\[
\R_{\iota}(W)(t) = \left\{
\begin{array}{ll}
L_{\alpha, n}(\frac{t - \inf I}{\sup I - \inf I})
                                            & \text{if } W(i) = a_{0, n}\text{ and }t\in I = \iota(i), \\
L_{\alpha, n}^{-1}(\frac{t - \inf I}{\sup I - \inf I})                                           & \text{if } W(i) = a_{0, n}^{-1} \text{ and }t\in I = \iota(i), \\

\circ_{\kappa} & \text{otherwise}.
\end{array}
\right.
\]

\noindent The check that this function on $[0, 1]$ is continuous is straightforward.  Given some other order embedding $\iota_0: \overline{W} \rightarrow \mathcal{I}$ we obtain a different loop $\R_{\iota_0}$ which is homotopic to $\R_{\iota}$ via a homotopy which is a reparametrization.  Explicitly, letting 

\begin{center}
$j_{\min}(s)(i) = s\inf \iota(i) + (1-s)\iota_0(s)$
\end{center}

\noindent and 
\begin{center}
$j_{\max}(s)(i) = s\sup\iota(i) + (1-s)\sup\iota_0(i)$
\end{center}

\noindent a homotopy $H: [0, 1]\times [0,1] \rightarrow \GS_{\kappa}$ is given by $H(t, s) =$

\[
\left\{
\begin{array}{ll}
L_{\alpha, n}(\frac{t - j_{\min}(s)(i)}{j_{\max}(s)(i) - j_{\min}(s)(i)})
                                            & \text{if } W(i) = a_{\alpha, n}\text{ and }t\in (j_{\max}(s)(i), j_{\min}(s)(i)), \\
L_{\alpha, n}^{-1}(\frac{t - j_{\min}(s)(i)}{j_{\max}(s)(i) - j_{\min}(s)(i)})                                           & \text{if } W(i) = a_{\alpha, n}^{-1} \text{ and }t\in (j_{\max}(s)(i), j_{\min}(s)(i)), \\

\circ_{\kappa} & \text{otherwise}.
\end{array}
\right.
\]

In particular we have a well-defined map $\R: \Red_{\kappa} \rightarrow \pi_1(\Ea_{\kappa}, \circ_{\kappa})$.  To see that this is a homomorphism, we let $W, U \in \Red_{\kappa}$ and let $W_0, W_1, U_0, U_1$ be as in Lemma \ref{reduced}.  The loop $\R(W_1)$ is readily seen to be the inverse of $\R(U_2)$.  The word $W_0U_1$ is reduced and therefore we have

$$
\begin{array}{ll}
\R(W*U) & = \R(\Red(WU))\\
& = \R(W_0U_1)\\
& = \R(W_0)\R(U_0)^{-1}\R(U_0)\R(U_1)\\
& = \R(W_0)\R(W_1)\R(U_0)\R(U_1)\\
& = \R(W_0W_1)\R(U_0U_1)\\
& = \R(W)\R(U).
\end{array}
$$

Suppose now that $W \in \Red_{\kappa}$ is in the kernel of $\R$. Suppose for contradiction that $W \not\equiv E$.  We'll construct a cancellation $\mathcal{S}$ of $W$ to obtain a contradiction.  Fix an order embedding $\iota: \overline{W} \rightarrow \mathcal{I}$.  Let $H: [0, 1] \times [0, 1] \rightarrow \Ea_{\kappa}$ be a nullhomotopy of $\R_{\iota}(W)$.  That is, $H(t, 0) = \R_{\iota}(W)(t)$ and $H(0, s) = H(1, s) = H(t, 1)$ for all $t, s\in [0, 1]$.  For each $I\in \mathcal{I}$ we let $m(I)$ signify the midpoint $m(I) = \frac{\sup I + \inf I}{2}$.  Consider the set of points $M = \{(m(\iota(i)), 0)\}_{i\in\overline{W}} \subseteq [0, 1]\times[0, 1]$.  For each point $p\in M$ we consider its path component $P_p$ in $[0, 1]\times [0, 1] \setminus H^{-1}(\circ_{\kappa})$.  Each $p\in M$ is associated with a unique interval $\iota(i_p)$ and therefore with a unique element $i_p\in \overline{W}$, and each $i\in \overline{W}$ is in turn associated with a unique point $p\in M$.  Moreover, the natural order on points in $M$ is isomorphic with the elements of $\overline{W}$ in this association.

Fixing $p\in M$ the set $P_p \cap M$ is necessarily finite, because each element of $P_p \cap M$ corresponds to exactly one occurrence of a loop $L_{\alpha, n}$ or of its inverse, for a fixed $\alpha$ and $n$, and there are only finitely many such occurrences since there are finitely many occurences of $a_{\alpha, n}^{\pm 1}$ in $W$.  Write $P_p \cap M = \{p_0, p_1, \ldots, p_j\}$ listing elements in the natural order.  By modifying $H$ to have output $\circ_{\kappa}$ outside of $P_p$, we see that $H$ witnesses a nulhomotopy of the loop $\R_{i}(W\upharpoonright\{i_{p_0}, \ldots, i_{p_j}\})$, which lies entirely in the $n$-th circle of $E_{\alpha}$.  Then there are exactly as many $i_{p_k}$ for which $W(i_{p_k}) = a_{\alpha, n}$ as there are for which $W(i_{p_k}) = a_{\alpha, n}^{-1}$.  Select neighboring points $p_k, p_{k+1}$ which are of opposite parity and let $\langle i_{p_k}, i_{p_{k+1}}\rangle \in \mathcal{S}$.  Among the remaining points $P_p\cap M \setminus \{p_k, p_{k+1}\}$ select two which are neighboring under the new order and add this ordered pair to $\mathcal{S}$.  Continue in this way until all elements of $P_p\cap M$ are used.  Perform this procedure on all path components $P_p$ for $p\in M$.  It is straightforward to check that $\mathcal{S}$ satisfies the rules of a cancellation.  We have obtained our contradiction.  Thus $\R$ is an injection.

We check that $\R$ is a surjection.  Let $L: [0, 1] \rightarrow \Ea_{\kappa}$ be a loop at $\circ_{\kappa}$.  Let $\mathcal{J}$ be the set of maximal open intervals in $[0, 1] \setminus L^{-1}(\circ_{\kappa})$.  This set is countable and has a natural ordering.  For each restriction $L\upharpoonright \overline{J}$, where $J \in \mathcal{J}$, there is a homotopy $H_J: \overline{J} \times [0, 1] \rightarrow L(\overline{J})$ to a loop $L_J: \overline{J} \rightarrow L(\overline{J})$ which is either constant, or $L_{\alpha, n}(\frac{t-\inf J}{\sup J - \inf J})$ or $L_{\alpha, n}(\frac{t-\inf J}{\sup J - \inf J})$.  By gluing these homotopies together we get a homotopy of $L$ to a loop whose restriction to each nonconstant interval $\overline{J}$ is of the form $L_{\alpha, n}(\frac{t-\inf J}{\sup J - \inf J})$ or $L_{\alpha, n}(\frac{t-\inf J}{\sup J - \inf J})$.

Thus assuming $L$ is of this form, we define a word $W: \mathcal{J} \rightarrow \Ao_{\kappa}$ by letting $W(J) = a_{\alpha, n}^{\pm 1}$ where the $\alpha$, $n$ and superscript are determined in the straightforward way.  That the mapping $W$ is indeed a word (no $n$ in the subscript occurs infinitely often) follows from the fact that $L$ is continuous.  Let $\mathcal{S}$ be a maximal cancellation on $W$.  This $\mathcal{S}$ can be used to homotope $L$ so that the new associated word is $\Red(W)$.  More explicitly, we define $H: [0, 1] \times [0, 1] \rightarrow \Ea_{\kappa}$ by having $H(t, s) = L(t)$ if $t$ does not lie inside an interval $(\inf J_0, \sup J_1)$ where $\langle J_0, J_1\rangle \in \mathcal{S}$.  If a point $(t, s) \in [0, 1]\times [0, 1]$ lies on the semicircle determined by points $(t_0, 0)$ and $(t_1, 0)$ which is perpendicular to $[0, 1]\times \{0\}$ where $t_0\in J_0$, $t_1\in J_1$ and $\langle J_0, J_1\rangle \in \mathcal{S}$ with $L(t_0) = L(t_1)$ we let $H(t, s) = L(t_0) = L(t_1)$.  Give $H$ output $\circ_{\kappa}$ everywhere else.  That $H$ is continuous and produces a loop $H(t, 1)$ as described is intuitive but tedious to check.  Thus we may now assume that the associated word $W$ is reduced.  By reparametrizing $L$ we may make it so that all the intervals in $\mathcal{J}$ are elements in $\mathcal{I}$, which immediately gives an order embedding $\iota$ of $\overline{W}$ to $\mathcal{I}$ for which $L = \R_{\iota}(W)$.  We have shown surjectivity and finished the proof of the following:

\begin{lemma}\label{Hawaiianpart}  The function $\R: \Red_{\kappa}\rightarrow \pi_1(\Ea_{\kappa}, \circ_{\kappa})$ is an isomorphism.
\end{lemma}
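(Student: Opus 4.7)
The plan is to establish the four standard ingredients separately: well-definedness of $\R$ on reduced words, the homomorphism property, injectivity, and surjectivity. For well-definedness, given two order embeddings $\iota, \iota_0: \overline{W} \to \mathcal{I}$, I would exhibit the explicit linear reparametrization homotopy between $\R_\iota(W)$ and $\R_{\iota_0}(W)$ (e.g.\ straight-line in each assigned subinterval, extended by $\circ_\kappa$ elsewhere); continuity of the homotopy reduces to the observation that at each depth $N$ only finitely many intervals carry nonconstant data. For the homomorphism property, apply Lemma \ref{reduced} to decompose $W \equiv W_0W_1$ and $U \equiv U_0U_1$ with $W_1 \equiv U_0^{-1}$; then $\R(W_1)$ and $\R(U_0)$ are mutually inverse loops, and the concatenation telescopes to $\R(W_0U_1) = \R(W*U)$.

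For injectivity, suppose $W \in \Red_\kappa$ is reduced with $W \not\equiv E$ but $H: [0,1]^2 \to \Ea_\kappa$ is a nullhomotopy of $\R_\iota(W)$. The idea is to extract a nontrivial cancellation $\mathcal{S}$ on $W$, contradicting Lemma \ref{cancellationreduces}. Fix the set of midpoints $M = \{(m(\iota(i)), 0)\}_{i \in \overline{W}}$ on the bottom edge of the square and look at the path components of $[0,1]^2 \setminus H^{-1}(\circ_\kappa)$. For each $p \in M$ the intersection $P_p \cap M$ is finite, since all midpoints in one component come from occurrences of a single letter $a_{\alpha,n}^{\pm 1}$ of which $W$ contains only finitely many. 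Within $P_p$ one modifies $H$ to get a nullhomotopy inside a single circle of $E_\alpha$, forcing an equal count of positive and negative occurrences; pair the innermost opposite neighbors, delete, and iterate to exhaust $P_p \cap M$. Doing this across all path components yields the required $\mathcal{S}$.

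For surjectivity, given a loop $L: [0,1] \to \Ea_\kappa$ I would take the countable family $\mathcal{J}$ of maximal open intervals in $[0,1] \setminus L^{-1}(\circ_\kappa)$, homotope each $L\upharpoonright \overline{J}$ inside its image to either a constant or a standard $L_{\alpha,n}^{\pm 1}$, and glue these homotopies. This defines a candidate function $W: \mathcal{J} \to \Ao_\kappa$; continuity of $L$ ensures that for every $N$ only finitely many $J$ land in circles of depth $\le N$, so $W$ is a word. Applying Lemma \ref{cancellationreduces} via a maximal cancellation homotopes $L$ to a loop whose associated word is reduced, and a reparametrization embedding $\mathcal{J}$ inside $\mathcal{I}$ finishes the argument.

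The main obstacle is the injectivity step, specifically verifying that the constructed $\mathcal{S}$ satisfies the noncrossing/completeness condition (4) of Definition \ref{cancellation}. The nesting structure of $\mathcal{S}$ must be dictated purely by the planar topology of the nullhomotopy $H$: if an index $i_2$ lies between two paired indices, then $i_2$ must be paired with a partner also lying between them. This relies on the fact that the semicircular arcs in $[0,1]^2 \setminus H^{-1}(\circ_\kappa)$ cannot escape the component in which they live, so the pairing inherited from path components is automatically noncrossing. Formalizing this separation property rigorously, together with checking that the pairing of neighbors within each $P_p \cap M$ respects condition (4), is the one step I would expect to consume the bulk of the proof.
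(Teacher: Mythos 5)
Your proposal mirrors the paper's proof essentially step for step: the same reparametrization homotopy for well-definedness, the same telescoping via Lemma \ref{reduced} for the homomorphism property, the same extraction of a cancellation from the path components of $[0,1]^2 \setminus H^{-1}(\circ_{\kappa})$ for injectivity, and the same maximal-interval decomposition and maximal-cancellation argument for surjectivity. You also correctly flag that the noncrossing property (condition~(4) of Definition \ref{cancellation}) is the point the paper dismisses as ``straightforward to check,'' and the planar-separation intuition you give --- that a path component containing two midpoints separates the region below it, trapping any intermediate midpoint's component --- is the right way to close that gap.
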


We now approach the isomorphism $\Co_{\kappa} \simeq \pi_1(\GS_{\kappa}, \circ_{\kappa})$.  For finite values of $\kappa$ this can be done by a straighforward argument in which van Kampen's Theorem is iterated finitely many times, as is done in \cite[Section 4]{EF}.  We present an argument which works for every cardinal $\kappa$.

\begin{lemma}\label{smallloops}  Given $\epsilon>0$ and a loop $L: [0, 1] \rightarrow \GS_{\kappa}$ based at $\circ_{\kappa}$ there is a loop homotopic to $L$ whose image is of diameter at most $\epsilon$.
\end{lemma}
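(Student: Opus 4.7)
The strategy is to exploit that each cone $X_\alpha$, being a topological cone over the Hawaiian earring, is contractible, and to use this to homotope away only the finitely many excursions of $L$ that contribute substantial diameter.

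First I would consider the open set $U = L^{-1}(\GS_\kappa \setminus \{\circ_\kappa\}) \subseteq [0,1]$ and write it as a countable disjoint union of its maximal open subintervals $\{J_k\}_{k}$. Continuity of $L$ together with the connectedness of each $J_k$ forces $L(\overline{J_k}) \subseteq X_{\alpha_k}$ for a single index $\alpha_k$, and since $\partial J_k \subseteq L^{-1}(\circ_\kappa)$ the restriction $L\upharpoonright\overline{J_k}$ is a loop based at $\circ_\kappa$ lying in the cone $X_{\alpha_k}$. Uniform continuity of $L$ on $[0,1]$ applied with tolerance $\epsilon/2$ supplies an $\eta>0$ such that any interval on which $L$ oscillates by at least $\epsilon/2$ must have length at least $\eta$; only finitely many disjoint $J_k$ can then satisfy $\diam(L(\overline{J_k})) \geq \epsilon/2$. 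Call these $J_{k_1}, \dots, J_{k_N}$.

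For each $j \leq N$, contractibility (and in particular simple connectedness) of $X_{\alpha_{k_j}}$ produces a based nullhomotopy $H_j\colon \overline{J_{k_j}}\times[0,1] \to X_{\alpha_{k_j}}$ of $L\upharpoonright\overline{J_{k_j}}$ to the constant loop at $\circ_\kappa$, fixing $\partial J_{k_j}$ at $\circ_\kappa$ throughout. I would then define a global homotopy $H\colon[0,1]\times[0,1]\to\GS_\kappa$ by setting $H=H_j$ on each closed strip $\overline{J_{k_j}}\times[0,1]$ and $H(t,s)=L(t)$ on the closed complement. The two specifications agree on $\partial J_{k_j}\times[0,1]$ (where both output $\circ_\kappa$); since only finitely many strips appear, the pasting lemma yields continuity of $H$. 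The endloop $L'(t)=H(t,1)$ is constantly $\circ_\kappa$ on each $\overline{J_{k_j}}$ and coincides with $L$ on the remaining intervals, whose images have diameter strictly less than $\epsilon/2$ and contain $\circ_\kappa$; hence $\im(L')$ lies in the open $(\epsilon/2)$-ball about $\circ_\kappa$ and has diameter at most $\epsilon$.

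The point requiring most care is the uniform-continuity argument that only finitely many excursions are ``large'' (without finiteness, gluing countably many nullhomotopies could produce discontinuities at accumulation points of the $J_k$), together with the routine but finicky pasting-lemma verification across the strip boundaries.
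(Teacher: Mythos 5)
Your proof is correct and takes essentially the same approach as the paper: decompose $L^{-1}(\GS_\kappa \setminus \{\circ_\kappa\})$ into maximal open intervals, observe only finitely many have image of diameter $\geq \epsilon/2$, and collapse those using contractibility of the cones. You simply spell out the uniform-continuity argument for finiteness and the pasting-lemma check that the paper leaves implicit.
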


\begin{proof}  Let $\mathcal{J}$ be the set of maximal open intervals in $[0, 1] \setminus L^{-1}(\circ_{\kappa})$.  There are only finitely many intervals $J \in \mathcal{J}$ for which the diameter of the image $\diam(L\upharpoonright J)$ is at least $\epsilon/2$.  But for every $J\in \mathcal{J}$ the loop $L\upharpoonright \overline{J}$ lies entirely in a contractible space, a homeomorph of $\GS_1$.  In particular each restriction $L\upharpoonright \overline{J}$ is nulhomotopic.  Thus letting $\mathcal{J}' \subseteq \mathcal{J}$ the set of those intervals whose images are of diameter $\geq \epsilon/2$ we have $L$ homotopic to the loop $L': [0, 1]\rightarrow \GS_{\kappa}$ given by

\[
L'(t) = \left\{
\begin{array}{ll}
L(t)
                                            & \text{if } t\notin \bigcup \mathcal{J}', \\
\circ_{\kappa}                                           & \text{if } t\in\bigcup \mathcal{J}'.
\end{array}
\right.
\]

\noindent which has diameter at most $\epsilon$.
\end{proof}

\begin{lemma}\label{smalldeformation}  The space $\GS_1 \setminus \{(0, 0, 1)\}$ strongly deformation retracts to $\Ea_1$.
\end{lemma}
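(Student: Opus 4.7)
The approach is to use the cone structure of $\GS_1$ and perform a straight-line deformation sliding each point down to its corresponding point in the base $\Ea_1$.

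First I would recall that $\GS_1$ can be described as the topological cone $(\Ea \times [0,1])/(\Ea \times \{1\})$, under which identification $(0,0,1)$ corresponds to the collapsed class and $\Ea_1$ corresponds to $\Ea \times \{0\}$. Next I would establish the homeomorphism $\GS_1 \setminus \{(0,0,1)\} \cong \Ea \times [0, 1)$ (with the product topology): since $\{(0,0,1)\}$ is closed in $\GS_1$, its complement is open, and the restriction of a quotient map to the preimage of an open subset is again a quotient map. Applied to the cone quotient $\Ea \times [0,1] \to \GS_1$ and the open set $\GS_1 \setminus \{(0,0,1)\}$, this yields a bijective quotient map $\Ea \times [0,1) \to \GS_1 \setminus \{(0,0,1)\}$, hence a homeomorphism.

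Once this identification is in hand, I would define
\[ H \colon (\Ea \times [0, 1)) \times [0, 1] \to \Ea \times [0, 1), \qquad H((e, t), s) = (e, (1-s) t). \]
This is manifestly continuous as a product of continuous coordinate maps. The three requirements for a strong deformation retraction then follow instantly: $H(\cdot, 0)$ is the identity; $H(\cdot, 1)$ takes values in $\Ea \times \{0\} \cong \Ea_1$; and $H((e, 0), s) = (e, 0)$ for every $s \in [0,1]$, so the retract $\Ea_1$ is pointwise fixed throughout the homotopy.

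The one step that genuinely requires care is the identification $\GS_1 \setminus \{(0,0,1)\} \cong \Ea \times [0, 1)$; everything else is a formality. If one prefers to avoid appealing to the cone presentation, an alternative is to use the geodesic-segment description of $\GS_1$ in $\mathbb{R}^3$ directly: each point $p \in \GS_1 \setminus \{(0,0,1)\}$ lies on a unique Euclidean segment joining some $e(p) \in \Ea \times \{0\}$ to $(0,0,1)$, and one can define $H$ by linearly sliding $p$ along this segment toward $e(p)$. In this variant the main point to check is continuity at the wedge point $\circ_1 = (0,0,0)$, which follows because the homotopy only decreases Euclidean distance to $\Ea_1$.
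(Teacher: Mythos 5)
Your proof is correct and takes essentially the same route as the paper: pass to the cone presentation $(\Ea \times [0,1])/(\Ea \times \{1\})$, identify $\GS_1 \setminus \{(0,0,1)\}$ with $\Ea \times [0,1)$, and slide the $[0,1)$-coordinate linearly to $0$. You spell out the homeomorphism step (restriction of a quotient map to a saturated open set) that the paper treats as implicit, but the underlying argument is the same.
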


\begin{proof}  We recall that $\GS_1$ is homeomorphic to the quotient space of $\Ea \times [0, 1]$ which identifies points whose third coordinate is $1$.  Under this homeomorphism the point $(0, 0, 1)$ is mapped to the identified point whose third coordinate is $1$.  Letting $h: (\GS_1\setminus \{(0, 0, 1)\}) \times [0, 1] \rightarrow \GS_1$ be given by $((x, y, z), s)\mapsto (x, y, (1-s)z)$ it is easy to see that $h$ is a strong deformation retraction of $\GS_1$ to $\Ea\times \{0\}$.
\end{proof}

Let each copy of $(0, 0, 1)$ in the copies of $\GS_1$ whose wedge forms $\GS_{\kappa}$ be called a ``cone tip.''  Let $\GS_{\kappa}'$ denote the space $\GS_{\kappa}$ minus the set of cone tips.

\begin{lemma}\label{bigdeformation}  The space $\GS_{\kappa}'$ strongly deformation retracts to $\Ea_{\kappa}$. 
\end{lemma}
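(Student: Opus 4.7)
The plan is to apply the strong deformation retraction from Lemma \ref{smalldeformation} on each wedge summand $X_\alpha$ of $\GS_\kappa$ simultaneously, and then verify continuity of the assembled map — with the only delicate point being continuity at the wedge point $\circ_\kappa$.

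For each $\alpha<\kappa$, let $h_\alpha\colon (X_\alpha \setminus \{t_\alpha\})\times[0,1] \to X_\alpha$ be the isometric copy of the map $h$ from Lemma \ref{smalldeformation}, where $t_\alpha$ denotes the cone tip of $X_\alpha$. Each $h_\alpha$ fixes $\circ_\kappa$ and every point of $E_\alpha$, satisfies $h_\alpha(x,0)=x$, and satisfies $h_\alpha(x,1)\in E_\alpha$. Since the summands $X_\alpha\setminus\{t_\alpha\}$ cover $\GS_\kappa'$ and overlap only at $\circ_\kappa$ (where all the $h_\alpha$'s agree), I would define $H\colon\GS_\kappa'\times[0,1]\to\GS_\kappa'$ by $H(x,s)=h_\alpha(x,s)$ for $x\in X_\alpha\setminus\{t_\alpha\}$. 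The identities $H(x,0)=x$, $H(x,1)\in\Ea_\kappa$, and $H(y,s)=y$ for all $y\in\Ea_\kappa$ then follow immediately from the corresponding identities for each $h_\alpha$.

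The hard part — and really the only non-routine step — is continuity of $H$. Away from the wedge point, continuity is automatic: for $x\in X_\alpha\setminus\{\circ_\kappa,t_\alpha\}$, the wedge metric makes the open ball of radius $d_\alpha(x,\circ_\kappa)/2$ about $x$ lie entirely inside $X_\alpha\setminus\{\circ_\kappa,t_\alpha\}$, so in a neighborhood of $(x,s_0)$ the map $H$ coincides with the continuous map $h_\alpha$. At a point $(\circ_\kappa,s_0)$, given $\epsilon>0$, continuity of $h$ at $(\circ_1,s_0)$ on $\GS_1\setminus\{(0,0,1)\}$ supplies some $\delta>0$ such that $d_{\GS_1}(x,\circ_1)<\delta$ and $|s-s_0|<\delta$ together force $d_{\GS_1}(h(x,s),\circ_1)<\epsilon$; since each $h_\alpha$ is conjugate to $h$ by an isometry $X_\alpha\to\GS_1$ sending $\circ_\kappa$ to $\circ_1$, the same $\delta$ works simultaneously for every $\alpha$. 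For any $(x,s)\in\GS_\kappa'\times[0,1]$ with $d(x,\circ_\kappa)<\delta$ and $|s-s_0|<\delta$, the point $x$ lies in some $X_\alpha$ (if $x=\circ_\kappa$, choose any $\alpha$), and the wedge metric gives $d(H(x,s),\circ_\kappa)=d_\alpha(h_\alpha(x,s),\circ_\kappa)<\epsilon$. This yields continuity at $(\circ_\kappa,s_0)$ and completes the construction of the strong deformation retraction.
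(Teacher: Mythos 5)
Your proof is correct and follows the same construction as the paper: it assembles the strong deformation retraction $H$ by applying the map $h$ from Lemma \ref{smalldeformation} on each wedge summand $X_\alpha$. You supply the continuity verification at the wedge point that the paper's proof leaves unstated, and your key observation --- that the same $\delta$ works uniformly across all $\alpha$ because each $h_\alpha$ is an isometric copy of the single map $h$ --- is exactly the reason this works.
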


\begin{proof}  Let $h_{\alpha}: X_{\alpha} \times [0, 1] \rightarrow X_{\alpha}$ be the homotopy given by Lemma \ref{smalldeformation} on each isometric copy $X_{\alpha}$ of $\GS_{1}$ whose union gives $\GS_{\kappa}$.  Let $H: \GS_{\kappa}'\times [0, 1] \rightarrow \GS_{\kappa}'$  be given by

\[
H(p, s) = \left\{
\begin{array}{ll}
h_{\alpha}(p, s)
                                            & \text{if } p\in X_{\alpha}\setminus\{\circ_{\kappa}\}, \\
\circ_{\kappa}                                        & \text{if } p = \circ_{\kappa}.
\end{array}
\right.
\]

\noindent This map $H$ is a strong deformation retraction to $\Ea_{\kappa}$.
\end{proof}

\begin{lemma}\label{thingsareinE}  Each loop in $\GS_{\kappa}$ based at $\circ_{\kappa}$ is homotopic to a loop in $\Ea_{\kappa}$.  In particular the inclusion map $\Ea_{\kappa} \rightarrow \GS_{\kappa}$ induces an onto homomorphism of fundamental groups.
\end{lemma}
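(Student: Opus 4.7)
The plan is to chain together Lemmas \ref{smallloops} and \ref{bigdeformation}. The key observation is that in each isometric copy $X_\alpha$ of $\GS_1$, the cone tip sits at Euclidean distance $1$ from the basepoint $\circ_{\kappa}$, so by the definition of the metric on $\GS_{\kappa}$ every cone tip is at distance exactly $1$ from $\circ_{\kappa}$. Therefore a loop based at $\circ_{\kappa}$ whose image has diameter strictly less than $1$ cannot pass through any cone tip.

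First I would apply Lemma \ref{smallloops} with $\epsilon = 1/2$ (any value in $(0,1)$ would do) to replace $L$ by a homotopic loop $L'$ whose image has diameter at most $1/2$. Since $L'$ is based at $\circ_{\kappa}$, its image is contained in the closed ball of radius $1/2$ about $\circ_{\kappa}$, and in particular misses every cone tip. Thus $L'$ is a loop in $\GS_{\kappa}'$.

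Next I would apply Lemma \ref{bigdeformation}: the strong deformation retraction $H: \GS_{\kappa}' \times [0,1] \to \GS_{\kappa}'$ onto $\Ea_{\kappa}$ restricts to a homotopy (rel endpoints, since $H(\circ_{\kappa}, s) = \circ_{\kappa}$) of $L'$ to the loop $L'' = H(L'(\cdot), 1)$ whose image lies in $\Ea_{\kappa}$. This homotopy takes place inside $\GS_{\kappa}' \subseteq \GS_{\kappa}$, so composing with the homotopy from $L$ to $L'$ gives a homotopy in $\GS_{\kappa}$ from $L$ to $L''$. Since $L'' \subseteq \Ea_{\kappa}$, this establishes the first assertion. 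The second assertion (surjectivity of $\iota_*: \pi_1(\Ea_{\kappa}, \circ_{\kappa}) \to \pi_1(\GS_{\kappa}, \circ_{\kappa})$) is immediate, as every element of $\pi_1(\GS_{\kappa}, \circ_{\kappa})$ is represented by a loop in $\Ea_{\kappa}$.

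There is essentially no serious obstacle; the only point that requires care is verifying that the chosen $\epsilon$ is genuinely smaller than the distance from $\circ_{\kappa}$ to every cone tip, but this follows at once from the fact that each $X_\alpha$ is an isometric copy of $\GS_1$ and the metric on $\GS_{\kappa}$ is built by glueing at $\circ_{\kappa}$, so a uniform lower bound of $1$ holds across all $\alpha < \kappa$.
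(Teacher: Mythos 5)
Your proof is correct and takes essentially the same approach as the paper: apply Lemma \ref{smallloops} to shrink the loop to diameter at most $1/2$, observe that the result misses all cone tips and hence lies in $\GS_{\kappa}'$, and then apply the strong deformation retraction of Lemma \ref{bigdeformation}. You supply a bit more detail than the paper in verifying that diameter $1/2$ suffices to avoid the cone tips, but the argument is the same.
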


\begin{proof}  Letting $L$ be a loop in $\GS_{\kappa}$ based at $\circ_{\kappa}$ we homotope $L$ to a loop $L'$ which is of diameter $1/2$ by Lemma \ref{smallloops}.  This $L'$ lies in $\GS_{\kappa}'$  and so by Lemma \ref{bigdeformation} we can homotope $L'$ to have image in $\Ea_{\kappa}$.
\end{proof}

\begin{theorem}\label{conegroupdoesthetrick}  The isomorphism $\R: \Red_{\kappa} \rightarrow \pi_1(\Ea_{\kappa}, \circ_{\kappa})$ descends to an isomorphism $\R_{\Co_{\kappa}}: \Co_{\kappa} \rightarrow \pi_1(\GS_{\kappa}, \circ_{\kappa})$.
\end{theorem}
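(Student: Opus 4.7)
The plan is to show $\R_{\Co_{\kappa}}$ is both well-defined and bijective, by analyzing the composition of $\R$ with the inclusion-induced map $i_{*}\colon \pi_1(\Ea_{\kappa},\circ_{\kappa})\to \pi_1(\GS_{\kappa},\circ_{\kappa})$. By Lemma \ref{thingsareinE} the map $i_{*}$ is surjective, so $i_{*}\circ \R$ is surjective. If $W$ is $\alpha$-pure, then $\R(W)$ is a loop in $E_{\alpha}\subseteq X_{\alpha}$, and since $X_{\alpha}\cong \GS_{1}$ is contractible, $\R(W)$ is null-homotopic in $\GS_{\kappa}$. Thus $i_{*}\circ \R$ annihilates every $\sim$-class of a pure word and, since its kernel is normal, also the smallest normal subgroup containing them. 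Consequently $i_{*}\circ\R$ factors through the quotient defining $\Co_{\kappa}$, producing the surjective homomorphism $\R_{\Co_{\kappa}}$.

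For injectivity, take $W\in\Red_{\kappa}$ with $\R_{\iota}(W)$ null-homotopic in $\GS_{\kappa}$ via some $H\colon [0,1]^{2}\to \GS_{\kappa}$, where $\iota\colon \overline{W}\to \mathcal{I}$ is an order embedding. I would aim to exhibit $[W]$ as a product of conjugates of pure-word classes in $\Ho_{\kappa}$. Consider the open set $D=[0,1]^{2}\setminus H^{-1}(\circ_{\kappa})$, which has countably many path components (pairwise disjoint opens in the separable space $[0,1]^{2}$). By connectivity, each path component $P$ is mapped entirely into some $X_{\alpha(P)}\setminus\{\circ_{\kappa}\}$, and for each $i\in \overline{W}$ the midpoint $m(\iota(i))$ lies in a unique component $P_{i}$ with $\alpha(P_{i})=\pro_{0}(W(i))$. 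Following the planar argument of the injectivity portion of Lemma \ref{Hawaiianpart}, each component $P$ yields, via the order on the boundary arcs $\partial P\cap \partial([0,1]^{2})$ and the restriction of $W$, an $\alpha(P)$-pure subword $W_{P}$. Using the contractibility of each $X_{\alpha(P)}$ together with Lemma \ref{reducedconsequence}, one then iteratively peels off these sub-loops to express $[W]$ as a (possibly transfinite) concatenation of conjugates of the $[W_{P}]$'s, each of which is pure, so $[W]$ lies in the normal closure of pure words and $[W]_{\Co_{\kappa}}=1$.

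The main obstacle is the transfinite bookkeeping in this last step: converting the planar decomposition into a legitimate infinite concatenation satisfying the summability condition built into $\W_{\kappa}$, namely that $\{\lambda : d(W_{\lambda})\leq N\}$ is finite for each $N\in \omega$. This is ultimately forced by the summability of $W$ itself, since for each $N$ only finitely many $i\in \overline{W}$ satisfy $\pro_{1}(W(i))\leq N$, so only finitely many components $P$ can contribute letters of depth $\leq N$. Making this precise requires a maximal-cancellation-style argument in the spirit of Lemma \ref{cancellationreduces}, organizing the hierarchy of components by the planar nesting of their boundary arcs and carrying out the iterated associative rewriting permitted by Lemma \ref{reducedconsequence}.
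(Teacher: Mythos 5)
Your surjectivity and well-definedness arguments are fine and match the paper. But your injectivity argument has a genuine gap at the crucial step. You aim to express $[W]$ as a ``(possibly transfinite) concatenation of conjugates'' of pure-word classes and then conclude that $[W]$ lies in the normal closure of pure words. That conclusion does not follow: the normal closure (the smallest normal subgroup including the pure-word classes, as used in the definition of $\Co_{\kappa}$) consists precisely of \emph{finite} products of conjugates of pure words, not transfinite concatenations. An infinite product of conjugates of pure words has no reason to lie in this subgroup -- indeed, an infinitary concatenation of pure words over a countable order type is generically a nontrivial element of $\Co_{\kappa}$, not a trivial one, so the ``transfinite bookkeeping'' you flag as the main obstacle is not merely an organizational nuisance but the actual mathematical content you have not addressed.

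The paper circumvents this by proving a finiteness result that your outline omits entirely: only \emph{finitely many} path components of $[0,1]^2 \setminus H^{-1}(\circ_{\kappa})$ contain a point mapping to a cone tip. This is established by a compactness argument (a sequence of cone-tip-hitting points in infinitely many distinct components would yield a discontinuity of $H$). Components that avoid cone tips contribute sub-loops lying in $\GS_{\kappa}'$, which by Lemma \ref{bigdeformation} and Lemma \ref{Hawaiianpart} force the corresponding subword to reduce to $E$; hence they contribute nothing to deal with. Only the finitely many cone-tip-touching components give rise to pure intervals $\Lambda_{m,n}$, and there are finitely many of these. The resulting decomposition has finitely many ``bad'' pure pieces, and deleting exactly those finitely many intervals and reducing yields $E$ -- this is precisely the characterization recorded as Corollary \ref{deletesomewords}, and it is what places $[W]$ in the normal closure. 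Without this finiteness dichotomy, your iterative peeling never terminates in a form recognizable as a finite product of conjugates, and the argument does not close.
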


\begin{proof}  We have by Lemma \ref{thingsareinE} that the inclusion $\Ea_{\alpha} \rightarrow \GS_{\kappa}$ induces a surjection $\pi_1(\Ea_{\kappa}, \circ_{\kappa}) \rightarrow \pi_1(\GS_{\kappa}, \circ_{\kappa})$.  Thus by composing with $\R$ we obtain an epimorphism $\R': \Red_{\kappa} \rightarrow \pi_1(\GS_{\kappa}, \circ_{\kappa})$.  Moreover each pure word $W$ maps to a loop which is contained entirely in a copy of $\GS_1$ and is therefore in the kernel.  Then $\R'$ descends to an epimorphism $\R_{\Co_{\kappa}}:\Co_{\kappa} \rightarrow \pi_1(\GS_{\kappa}, \circ_{\kappa})$.  We shall be done when we show that $\R_{\Co_{\kappa}}$ has trivial kernel.

Suppose that $W$ is in the kernel of $\R'$.  Fix an order injection $\iota: \overline{W} \rightarrow \mathcal{I}$ and let $\R_{\iota}(W): [0, 1]\rightarrow \Ea_{\kappa}$ be the corresponding loop.  Let $H: [0,1]\times[0, 1] \rightarrow \GS_{\kappa}$ be a nulhomotopy.  That is, $H(t, 0) = \R_{\iota}(W)(t)$, $H(0, s) = H(1, s) = H(t, 1) = \circ_{\kappa}$ for all $t, s\in [0, 1]$.  For each $I\in \mathcal{I}$ we again let $m(I)$ be the midpoint $m(I) = \frac{\sup I + \inf I}{2}$ and $M = \{(m(\iota(i)), 0)\}_{i \in \overline{W}} \subseteq [0, 1]\times [0, 1]$.  For $p\in M$ let $P_p$ signify the path component of $p$ in $[0, 1]\times [0, 1]\setminus H^{-1}(\circ_{\kappa})$.

We claim that there are only finitely many path components $P_{p_0}, \ldots, P_{p_j}$ for which there exists a point $z\in P_{p_m}$ such that $H(z)$ is a cone tip.  Supposing this is false, we obtain by compactness of $[0, 1]\times[0, 1]$ a sequence of points $\{z_m\}_{m\in \omega}$ for which each $H(z_m)$ is a cone tip, each $z_m$ is in a distinct path component $P_{p_m}$ and the $z_m$ converge to a point $z\in [0, 1]\times [0, 1]$.  Let $\rho: [0, 1] \rightarrow [0, 1] \times [0, 1]$ be a function such that $\rho\upharpoonright [1-\frac{1}{m+1}, 1 - \frac{1}{m+2}]$ follows the geodesic from $z_m$ to $z_{m+1}$ and $\rho(1) = z$.   Such a function is obviously continuous.  However $H \circ \rho$ is not continuous at the point $1$, for there are points $t$ arbitrarily close to $1$ for which $H(\rho(t))$ is a cone tip and there are $t$ arbitrarily close to $1$ for which $H(\rho(t)) = \circ_{\kappa}$, a contradiction.

We next notice that for each of these finitely many path components $P_{p_m}$ including a point which maps under $H$ to a cone tip that all elements of $P_{p_m}\cap M$ map into the same cone $X_{\alpha_m}$.  This is clear since any two points in $P_{p_m}\cap M$ are joined by a path which avoids $H^{-1}(\circ_{\kappa})$, and so their images under $H$ are joined by a path which avoids $\circ_{\kappa}$.  In particular their images lie in the same cone.

Next, for each path component $P_{p_m}$ which includes a point which maps under $H$ to a cone tip there exist finitely many intervals $\Lambda_{m, 0}, \Lambda_{m, 1}, \ldots, \Lambda_{m, {j_m}}$ in $\overline{W}$ such that $m(\iota(i))\in P_{p_m}$ if and only if $i\in \Lambda_{m, n}$ for some $0\leq n\leq j_m$.  Were this not the case, there would exist a nonempty interval $\Lambda' \subseteq \overline{W}$ for which all $i\in \Lambda'$ are such that $P_{m(\iota(i))}$ does not contain a point mapping under $H$ to a cone tip and such that any interval properly including $\Lambda'$ contains an $i$ for which $m(\iota(i)) \in P_{p_m}$.  This follows from the fact that there are only finitely many path components $P_{p_0}, \ldots, P_{p_j}$ which contain a point mapping under $H$ to a cone tip.  The map $H$ witnesses that $\R(W\upharpoonright \Lambda')$ is nulhomotopic in $\GS_{\kappa}'$.  Thus by Lemma \ref{bigdeformation} we know $\R(W\upharpoonright \Lambda')$ is nulhomotopic in $\Ea_{\kappa}$.  By Lemma \ref{Hawaiianpart} we therefore have $W \upharpoonright \Lambda' \equiv E$, contrary to $\Lambda'$ being a nonempty interval.

Finally, we write $W \equiv W_0W_1\cdots W_l$ as the decomposition of $W$ such that each $\overline{W_q}$ is one of the intervals $\Lambda_{m, n}$ or is a maximal interval not intersecting any of the $\Lambda_{m, n}$.  Let $q_0<q_1< \cdots< q_r$ be the subscripts for which $\overline{W_{q_d}}$ is not a $\Lambda_{m, n}$.  The function

\[
H'(t, s) = \left\{
\begin{array}{ll}
H(t, s)
                                            & \text{if } (t, s) \notin \bigcup_{m=0}^jP_{p_j}, \\
\circ_{\kappa}                                        & \text{otherwise}.
\end{array}
\right.
\]

\noindent witnesses a nulhomotopy of the concatenation of loops $\R(W_{q_0})\R(W_{q_1})\cdots \R(W_{q_r})$ taking place entirely inside of $\GS_{\kappa}'$.  Thus $\R(W_{q_0})\R(W_{q_1})\cdots \R(W_{q_r})$ is nulhomotopic in $\Ea_{\kappa}$ by Lemma \ref{bigdeformation}, and by Lemma \ref{Hawaiianpart} we know that in fact $\Red(W_{q_0}\cdots W_{q_r}) = E$.  Thus by deleting finitely many intervals of $\overline{W}$ over each of which the letters have the same first coordinate we get a word which reduces to $E$.  Then $W$ is in the kernel of $\Red_{\kappa} \rightarrow \Co_{\kappa}$ and we are done.
\end{proof}

The above proof immediately gives us the following (cf. \cite[Theorem 8.1]{BZ}):

\begin{corollary}\label{deletesomewords}  A reduced word $W$ is in the kernel of the map $\Red_{\kappa}\rightarrow \Co_{\kappa}$ if and only if there exist finitely many intervals $I_0, \ldots, I_p$ such that $W\upharpoonright I_j$ is pure for each $j$ and $\Red(W\upharpoonright(\overline{W} \setminus \bigcup_{j=0}^pI_j)) = E$.
\end{corollary}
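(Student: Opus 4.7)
Proof plan: The corollary is a bi-implication; the forward direction is essentially an extraction from the proof of Theorem \ref{conegroupdoesthetrick}, while the reverse is a short formal manipulation in the group $\Co_{\kappa}$.

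For the forward direction I would take a reduced word $W$ in the kernel of $\Red_{\kappa}\to\Co_{\kappa}$ and argue as in Theorem \ref{conegroupdoesthetrick}. Fix an order embedding $\iota:\overline{W}\to\mathcal{I}$ and a nulhomotopy $H:[0,1]\times[0,1]\to\GS_{\kappa}$ of $\R_\iota(W)$; such an $H$ exists because $\R_{\Co_{\kappa}}$ is an isomorphism, so the triviality of $[W]$ in $\Co_{\kappa}$ forces $\R'(W)$ to be nulhomotopic in $\GS_{\kappa}$. The theorem's argument then produces finitely many path components $P_{p_0},\ldots,P_{p_j}$ of $[0,1]\times[0,1]\setminus H^{-1}(\circ_{\kappa})$ whose images under $H$ contain cone tips, and for each such component a finite list of intervals $\Lambda_{m,0},\ldots,\Lambda_{m,j_m}\subseteq\overline{W}$ collecting the indices whose midpoints lie in $P_{p_m}$. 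Enumerate these intervals as $I_0,\ldots,I_p$. Two facts established in the theorem's proof are exactly what the corollary asks: (i) each $W\upharpoonright\Lambda_{m,n}$ maps into a single cone $X_{\alpha_m}$, hence all letters have first coordinate $\alpha_m$ and so $W\upharpoonright I_j$ is pure; (ii) the modified homotopy $H'$ witnesses that the concatenation $\R(W_{q_0})\cdots\R(W_{q_r})$ of the complementary subwords is nulhomotopic in $\GS_{\kappa}'$, hence in $\Ea_{\kappa}$ by Lemma \ref{bigdeformation}, whereupon Lemma \ref{Hawaiianpart} gives $\Red(W\upharpoonright(\overline{W}\setminus\bigcup_j I_j))=E$.

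For the reverse direction, assume the intervals $I_0,\ldots,I_p$ are as in the statement. Using the finiteness of the list and the total order on $\overline{W}$, decompose $W$ as a finite alternating concatenation $W\equiv V_0 P_1 V_1 P_2\cdots P_p V_p$ where each $P_j$ is the pure subword $W\upharpoonright I_j$ and the $V_q$ are the complementary pieces supported in $\overline{W}\setminus\bigcup_j I_j$. In $\Ho_{\kappa}$ the class of $W$ is the product of the classes of these pieces in order; on passing to $\Co_{\kappa}$ each $[P_j]$ vanishes by definition of $\Co_{\kappa}$, so $[W]_{\Co_{\kappa}}$ equals the image in $\Co_{\kappa}$ of $[V_0][V_1]\cdots[V_p]=[V_0 V_1\cdots V_p]$ in $\Ho_{\kappa}$. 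But $V_0 V_1\cdots V_p\equiv W\upharpoonright(\overline{W}\setminus\bigcup_j I_j)$, whose reduction is $E$ by hypothesis, so $[W]$ is trivial in $\Co_{\kappa}$.

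The only substantive step is the forward direction, and the main obstacle is entirely organizational: one must verify that the interval decomposition produced implicitly in the proof of Theorem \ref{conegroupdoesthetrick} literally records both purity of the pieces and triviality of the complementary concatenation. No fresh geometric or combinatorial input is needed beyond what is already handled there.
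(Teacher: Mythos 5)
Your proof is correct and takes essentially the same approach as the paper, which regards the corollary as immediate from the proof of Theorem \ref{conegroupdoesthetrick}: the forward direction is exactly the argument of that proof (translated through the isomorphism $\R_{\Co_{\kappa}}$ to obtain the nulhomotopy of $\R_{\iota}(W)$ in $\GS_{\kappa}$), and the reverse direction is the group-theoretic step that the theorem's proof invokes implicitly in its final sentence. Your write-up spells out both halves cleanly; the only cosmetic point is the indexing in the alternating decomposition, which should account for all $p+1$ pure pieces $I_0,\ldots,I_p$ (and one may assume the intervals are pairwise disjoint without loss of generality, merging overlapping ones).
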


\begin{lemma}\label{foranontrivialelement}  Suppose that we have a word $V \equiv \prod_{n \in \omega} V_n$ with $V \in \Red_{\kappa}$ and

\begin{enumerate}

\item any interval $I \subseteq \overline{V}$ such that $V \upharpoonright I$ is pure is a subinterval of $\overline{\prod_{n = 0}^m V_n}$ for some $m \in \omega$; and

\item for each $n \in \omega$ there exists $j_n \in \omega$ such that $|\{i \in \overline{V_n} \mid \pro_1(V_n(i)) = j_n\}| > \sum_{m \neq n} |\{i \in \overline{V_n} \mid \pro_1(V_n(i)) = j_n\}|$.

\end{enumerate}
\noindent Then $[[V]] \neq [[E]]$ in $\Co_{\kappa}$.

\end{lemma}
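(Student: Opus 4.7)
The plan is to argue by contradiction. Suppose $[[V]] = [[E]]$ in $\Co_\kappa$. Since $V$ is reduced, Corollary \ref{deletesomewords} yields finitely many intervals $I_0, \ldots, I_p \subseteq \overline{V}$ with each $V \upharpoonright I_j$ pure, and such that the complementary restriction $V'' := V \upharpoonright (\overline{V} \setminus \bigcup_j I_j)$ satisfies $V'' \sim E$. First I would invoke hypothesis (1) on each $I_j$: it forces $I_j$ to lie inside $\overline{\prod_{n \leq m_j} V_n}$ for some finite $m_j$, so setting $M := \max_j m_j$ the entire erased region $\bigcup_j I_j$ sits inside the initial segment $\overline{\prod_{n \leq M} V_n}$. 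In particular, for every $n > M$ the whole block $\overline{V_n}$ survives intact inside $\overline{V''}$, and the raw letter counts in $V_n$ appearing in hypothesis (2) transfer verbatim to $V''$.

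Next I would appeal to Lemma \ref{cancellationreduces}: since $V'' \sim E$, there is a cancellation $\mathcal{S}$ on $V''$ pairing up every element of $\overline{V''}$ with an inverse letter in a non-crossing fashion. Fix some $n > M$ and set $A = \{i \in \overline{V_n} : \pro_1(V_n(i)) = j_n\}$ and $B = \{i \in \overline{V''} \setminus \overline{V_n} : \pro_1(V''(i)) = j_n\}$. Because matched letters are inverses they share the second coordinate $j_n$, so every element of $A$ is paired by $\mathcal{S}$ with an element of $A \cup B$. Hypothesis (2) gives $|A| > \sum_{m \neq n} |\{i \in \overline{V_m} : \pro_1(V_m(i)) = j_n\}| \geq |B|$, so strictly fewer than $|A|$ of the $A$-letters can be matched outside $\overline{V_n}$ (each external match consuming a distinct element of $B$); consequently at least one pair $\langle i_0, i_1 \rangle \in \mathcal{S}$ has both endpoints inside $A \subseteq \overline{V_n}$.

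The final step is to run non-crossing against the reducedness of $V$. Since $(i_0, i_1) \subseteq \overline{V_n} \subseteq \overline{V''}$, axiom (4) of a cancellation forces every element of $(i_0, i_1)$ to be paired by $\mathcal{S}$ with another element of $(i_0, i_1)$, so the restriction of $\mathcal{S}$ to pairs lying in $(i_0, i_1)$ is itself a full cancellation on the subword $V \upharpoonright (i_0, i_1)$. By Lemma \ref{cancellationreduces} this gives $V \upharpoonright (i_0, i_1) \sim E$, and since $V(i_0)$ and $V(i_1)$ are mutual inverses at level $j_n$ we deduce $V \upharpoonright [i_0, i_1] \sim V(i_0) V(i_1) \sim E$. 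But $V \in \Red_\kappa$ then forces $V \upharpoonright [i_0, i_1] \equiv E$, contradicting $i_0 \in [i_0, i_1]$.

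I expect the most delicate point to be the pigeonhole bookkeeping in the middle step: one must verify that the removal of pure pieces $I_j$ only affects finitely many initial blocks, so that condition (2) applies unchanged at some $V_n$ with $n > M$, and then carefully check that each $A$-letter matched externally consumes a distinct level-$j_n$ letter in $B$. Once these counts are set up correctly, the rest of the argument --- extracting an internal pair via pigeonhole, producing a $\sim E$ subword via the non-crossing axiom, and invoking reducedness --- follows mechanically from the framework already in place.
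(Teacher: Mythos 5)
Your proof is correct and takes essentially the same approach as the paper: invoke Corollary \ref{deletesomewords} to pass to a $\sim E$ restriction, use hypothesis (1) to confine the deleted pure intervals to an initial segment, and then use hypothesis (2) together with the pairing structure of a maximal cancellation to reach a contradiction with reducedness. The only cosmetic difference is the order of the last two steps --- the paper uses reducedness first to rule out internal pairings and then observes there are too few external partners, while you run the counting argument first to extract an internal pair and then spell out explicitly (via the non-crossing axiom and Lemma \ref{cancellationreduces}) why that pair contradicts reducedness.
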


\begin{proof}  Suppose for contradiction that $[[V]] = [[E]]$, so by Corollary \ref{deletesomewords} we obtain a finite collection of intervals $I_0, \ldots, I_p$ in $\overline{V}$ such that $V \upharpoonright I_k$ is pure for each $0 \leq k \leq p$ and $\Red(V \upharpoonright (\overline{V} \setminus \bigcup_{k = 0}^p I_k)) = E$.  Let $\mathcal{S}$ be a maximal cancellation of $V \upharpoonright (\overline{V} \setminus \bigcup_{k = 0}^p I_k)$.  We know by (1) that $\bigcup_{k = 0}^p I_k \subseteq \overline{\prod_{n = 0}^m V_n}$ for some $m \in \omega$.  All elements of $Z = \{i \in \overline{V_{m + 1}} \mid \pro_1(V_{m + 1}(i)) = j_{m + 1}\}$ must participate in $\mathcal{S}$ since $\Red(V \upharpoonright (\overline{V} \setminus \bigcup_{k = 0}^p I_k)) = E$, but since $V_{m + 1}$ is reduced we know that the elements of $Z$ are paired with elements of $\overline{V} \setminus (\overline{V_{m + 1}} \cup \bigcup_{k = 0}^p I_k)$, but this is impossible by condition (2).
\end{proof}

For a reduced word $W$ we let $[[W]]$ denote the equivalence class of $W$ in $\Co_{\kappa}$ and if $[[W]]=[[U]]$ we write $W\approx U$.

\begin{theorem}\label{howmany}  For each cardinal $\kappa$ we have

\[
|\Co_{\kappa}| = \left\{
\begin{array}{ll}
1
                                            & \text{if } \kappa = 0, \\
\kappa^{\aleph_0}                                        & \text{if }\kappa \geq 1.
\end{array}
\right.
\]

\end{theorem}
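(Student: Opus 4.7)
The cases $\kappa=0$ and $\kappa=1$ are immediate: the alphabet $\Ao_0$ is empty so $\Co_0=\{[[E]]\}$, and for $\kappa=1$ every letter has first subscript $0$ which forces every element of $\Ho_1$ to be the class of a $0$-pure word, making $\Co_1$ trivial. Assume $\kappa\geq 2$ from now on. For the upper bound I would observe that by Lemma \ref{reduced} each element of $\Co_\kappa$ has a reduced representative, and a word is a function from a countable linear order into $\Ao_\kappa$. Since there are at most $2^{\aleph_0}$ countable order types and $|\Ao_\kappa|=\max(\kappa,\aleph_0)$, the total count of $\equiv$-classes of reduced words is bounded by $2^{\aleph_0}\cdot\max(\kappa,\aleph_0)^{\aleph_0}=\kappa^{\aleph_0}$.

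For the lower bound my plan is to parametrize $\kappa^{\aleph_0}$ distinct elements of $\Co_\kappa$ by sequences $\phi\in\kappa^\omega$. For each such $\phi$ set
$$V^\phi\;\equiv\;\prod_{n\in\omega} C_n^\phi,\qquad C_n^\phi=\bigl[a_{0,n},\,a_{\phi(n),n}\bigr]\text{ if }\phi(n)\neq 0,\quad C_n^\phi\equiv E\text{ if }\phi(n)=0.$$
This $V^\phi$ is reduced because the second subscript $n$ is used only in the $n$-th block and inside each commutator the $\pro_0$-values alternate between $0$ and $\phi(n)$. Its pure sub-intervals are singletons, so condition $(1)$ of Lemma \ref{foranontrivialelement} is automatic; taking $j_n=n$ yields condition $(2)$ whenever $\phi(n)\neq 0$. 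Hence $[[V^\phi]]\neq[[E]]$ whenever $\phi$ is nonzero on an infinite set.

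The main obstacle will be distinctness. For each $\beta\in\kappa\setminus\{0\}$ the retraction $r_\beta\colon\GS_\kappa\to\GS_\kappa$ that collapses every $X_\alpha$ with $\alpha\notin\{0,\beta\}$ to $\circ_\kappa$ induces an endomorphism $r_{\beta*}$ of $\Co_\kappa$ whose image is the subgroup supported on cones $0$ and $\beta$. A block $C_n^\phi$ maps to itself when $\phi(n)=\beta$ and to $E$ otherwise, so
$$r_{\beta*}([[V^\phi]])\;=\;\Bigl[\Bigl[\prod_{n\in\phi^{-1}(\beta)}\bigl[a_{0,n},a_{\beta,n}\bigr]\Bigr]\Bigr].$$
If $\phi,\psi$ disagree at infinitely many indices, then since $\{n:\phi(n)\neq\psi(n)\}\subseteq\bigcup_{\beta}(\phi^{-1}(\beta)\triangle\psi^{-1}(\beta))$ there must be some $\beta\in\kappa\setminus\{0\}$ with $\phi^{-1}(\beta)\triangle\psi^{-1}(\beta)$ infinite. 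The maximal cancellation of $r_{\beta*}([[V^\phi]])\cdot r_{\beta*}([[V^\psi]])^{-1}$ pairs matching commutator blocks and leaves an infinite product of commutators $[a_{0,n},a_{\beta,n}]^{\pm 1}$ indexed by the symmetric difference; this leftover has order type $\omega+\omega^*$ in general. Its pure sub-intervals are still singletons, and its finitary projections $p_N$ are non-identity in the relevant finite free groups because distinct blocks use disjoint generators. Removing any finite collection of singletons disturbs only finitely many blocks, so cofinitely many complete commutators survive and $p_N$ of the surviving word remains non-identity for all sufficiently large $N$. By Corollary \ref{deletesomewords} the leftover is nontrivial in $\Co_\kappa$, whence $r_{\beta*}([[V^\phi]])\neq r_{\beta*}([[V^\psi]])$ and a fortiori $[[V^\phi]]\neq[[V^\psi]]$.

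Consequently $\phi\mapsto[[V^\phi]]$ descends to an injection from $\kappa^\omega$ modulo the relation ``differ on only finitely many positions'' into $\Co_\kappa$. A standard cardinal-arithmetic computation (equivalence classes are of size at most $\max(\aleph_0,\kappa)$, and $|\kappa^\omega|=\kappa^{\aleph_0}$, while the constant sequences already supply $\kappa$ inequivalent representatives) shows this quotient has cardinality $\kappa^{\aleph_0}$; for $\kappa=2$ this is $|2^\omega/\mathrm{fin}|=2^{\aleph_0}$. Together with the upper bound this gives $|\Co_\kappa|=\kappa^{\aleph_0}$. The hard part throughout is the distinctness step, which requires carefully tracking maximal cancellation in the concatenation of two infinite commutator products of order type $\omega+\omega^*$ and then invoking Corollary \ref{deletesomewords} to rule out finite trivialization of the leftover.
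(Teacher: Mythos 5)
Your cases $\kappa=0,1$ and your upper bound are fine; the upper bound is a clean combinatorial count of reduced words, whereas the paper counts loops in the space, and both are valid and essentially equivalent.  The lower bound, however, has a genuine gap at the pigeonhole step.  You assert that if $\phi,\psi\in\kappa^\omega$ disagree at infinitely many indices, then there must be some $\beta\in\kappa\setminus\{0\}$ with $\phi^{-1}(\beta)\,\triangle\,\psi^{-1}(\beta)$ infinite.  This is true when $\kappa$ is finite (a finite union of finite sets is finite), but it fails for every infinite $\kappa$: take $\phi(n)=n+1$ and $\psi(n)=n+2$.  These disagree everywhere, yet $\phi^{-1}(\beta)$ and $\psi^{-1}(\beta)$ are each at most singletons, so every symmetric difference has size at most $2$.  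In that situation $r_{\beta*}([[V^{\phi}]])=r_{\beta*}([[V^{\psi}]])$ for every $\beta$, because each retraction sees only a finite discrepancy and kills it via Corollary \ref{deletesomewords}; the retraction method thus cannot separate $[[V^\phi]]$ from $[[V^\psi]]$.  Since the cases $\kappa>2^{\aleph_0}$ are exactly the ones where you need more than $2^{\aleph_0}$ distinct elements, this is fatal to the argument as written.

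It may still be true that $[[V^\phi]]\neq[[V^\psi]]$ whenever $\phi,\psi$ differ infinitely often (for the example above a direct analysis of cancellations in $\Red(V^\phi(V^\psi)^{-1})$ shows the unique occurrences of $a_{n+1,n}^{\pm 1}$ block any trivialization after finite pure deletions), but proving this requires working with maximal cancellations and Corollary \ref{deletesomewords} directly rather than via retractions, and you have not supplied that argument.  The paper sidesteps the difficulty entirely by splitting the lower bound into three regimes: for $2\le\kappa\le 2^{\aleph_0}$ it uses an almost-disjoint family with only two cone labels, for $2^{\aleph_0}<\kappa=\kappa^{\aleph_0}$ it uses an injection $f\colon\kappa\times\omega\to\kappa$ so that distinct $\alpha$ yield words on genuinely disjoint first subscripts (making distinctness obvious), and for $2^{\aleph_0}<\kappa<\kappa^{\aleph_0}$ it runs a similar injection over representatives modulo eventual equality.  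If you want a unified construction you will need to replace the pigeonhole by a cancellation argument that works even when the disagreement set is smeared across infinitely many cone labels; otherwise you should follow the paper's case split.
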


\begin{proof}  We have already seen that the formula holds in case $\kappa =0, 1$.  Suppose $\kappa \geq 2$.  Notice that the space $\GS_{\kappa}$ has $2^{\aleph_0}\cdot \kappa = \max\{2^{\aleph_0}, \kappa\}$ points in it.  Every continuous function from $[0, 1]$ to the metric space $\GS_{\kappa}$ is totally determined by the restriction to $[0, 1]\cap \mathbb{Q}$.  Thus there are at most $(\max\{2^{\aleph_0}, \kappa\})^{\aleph_0} = \kappa^{\aleph_0}$ loops in the space, so in particular $|\Co_{\kappa}| \leq \kappa^{\aleph_0}$.  We must show $|\Co_{\kappa}| \geq \kappa^{\aleph_0}$.

If $2 \leq \kappa \leq 2^{\aleph_0}$ then let $\Sigma$ be a collection of infinite subsets of $\omega$ such that for distinct $X, Y \in \Sigma$ we have $X\cap Y$ finite and such that $|\Sigma| = 2^{\aleph_0}$.  Such a construction is straightforward, see for example
 \cite[II.1.3]{K}.  For each $X\in \Sigma$ let $X = \{n_{0, X}, n_{1, X}, \ldots\}$ be the enumeration of $X$ in the natural order.  Let $$W_X \equiv a_{0, n_{0, X}}a_{1, n_{1, X}}a_{0, n_{2, X}}a_{1, n_{3, X}}\cdots .$$  Since $W_X$ uses only positive letters it is clear that $W_X$ and also any deletion of finitely many letters of $W_X$ is a reduced word.  By the conditions on $\Sigma$ is is clear that $[[W_X]] \neq [[W_Y]]$ if $X\neq Y$.  Then $\kappa^{\aleph_0} \leq |\Co_{\kappa}|$.

Suppose that $2^{\aleph_0}<\kappa$ and that $\kappa^{\aleph_0} = \kappa$.  Let $f: \kappa \times \omega \rightarrow \kappa$ be an injection and for each $\alpha<\kappa$ we define $W_{\alpha} \equiv a_{f(\alpha, 0), 0}a_{f(\alpha, 1), 1}\cdots$.  It is clear that $[[W_{\alpha}]] \neq [[W_{\beta}]]$ for distinct $\alpha, \beta<\kappa$.

Suppose finally that $2^{\aleph_0}<\kappa$ and that $\kappa^{\aleph_0} > \kappa$.  Let $X$ be the set of all sequences from $\omega$ to $\kappa$ and consider two sequences $\sigma_0, \sigma_1\in X$ to be equivalent if they are eventually identical: for some $m \in \omega$ we have $\sigma_0(m + n) = \sigma_1(m + n)$ for all $n\in \omega$.  Each equivalence class is of cardinality $\kappa$, so there are exactly $\kappa^{\aleph_0}$ distinct equivalence classes.  Letting $Y \subset X$ be a selection from each equivalence class we define a map $Y \rightarrow \Co_{\kappa}$ by letting $\sigma \mapsto W_{\sigma}$ where $W_{\sigma} \equiv a_{f(\sigma(0), 0), 0}a_{f(\sigma(1), 1), 1}\cdots$ and again $f: \kappa \times \omega \rightarrow \kappa$ is an injection.  It is easy to see that for distinct elements of $Y$ the assigned words are not equivalent in $\Co_{\kappa}$.
\end{proof}

An interval $I$ in a totally ordered set $\Lambda$ is \emph{initial} if it is a union of intervals of the form $(-\infty, i]$ and is \emph{terminal} if a union of intervals of form $[i, \infty)$ (an initial or terminal interval may be empty).  Given a nonempty word $W\in \Red_{\kappa}$ there exists a unique maximal initial interval $I_0$ of $\overline{W}$ for which there exists a terminal interval $I_1 \subseteq \overline{W}$ such that $W\upharpoonright I_0\equiv (W\upharpoonright I_1)^{-1}$.  By the proof of \cite[Corollary 1.6]{E} the maximal such initial interval $I_0$ and the accompanying $I_1$ are disjoint and $\overline{W} \setminus (I_0 \cup I_1)$ is nonempty, and this set is clearly an interval, say $I_2$.  Thus $W \equiv (W\upharpoonright I_0) (W\upharpoonright I_2)  (W\upharpoonright I_0)^{-1}$ and we call the word $W\upharpoonright I_2$ the \emph{cyclic reduction} of $W$.  Clearly if $U$ is the cyclic reduction of $W$ then the cyclic reduction of $U$ is again $U$, so cyclic reduction is an idempotent operation.  A word whose cyclic reduction is itself is called \emph{cyclically reduced}.  It is clear from Lemma \ref{cancellationreduces} that word $U$ is cyclically reduced if and only if the word $U^n$ is reduced for all $n \geq 1$ if and only if $U^2$ is reduced.

\begin{proof}[Proof of Theorem \ref{continuous}] By Theorem \ref{howmany} we know that when $n = 1$ any homomorphism from $\Co_n$ to $\Co_{\kappa}$ has trivial image and is therefore of uncountable index.  Any homomorphism from $\Co_{\kappa}$ to $\Co_n$ is trivial and therefore has uncountable kernel by Theorem \ref{howmany}.  We may therefore assume $2 \leq n, \kappa$.  We will pause for some general discussion and a couple of lemmas, finally returning to finish our proof.

Suppose that $2\leq \kappa_0, \kappa_1$ and that $f: \GS_{\kappa_0} \rightarrow \GS_{\kappa_1}$ is continuous (no assumption on how $\kappa_0$ compares with $\kappa_1$ or whether either of $\kappa_0$, $\kappa_1$ is finite).  We notice that if $f(\circ_{\kappa_0}) \neq \circ_{\kappa_2}$ then the induced map is trivial.  This can be seen by letting $\delta = d(f(\circ_{\kappa_0}), \circ_{\kappa_1})$ and selecting $\epsilon>0$ such that $d(x, \circ_{\kappa_0})<\epsilon$ implies $d(f(x), f(\circ_{\kappa_1}))<\delta$.  Given any loop $L$ at $\circ_{\kappa_0}$ in $\GS_{\kappa_0}$ we can homotope $L$ to have diameter less than $\epsilon$ by Lemma \ref{smallloops}, and the image $f\circ L$ will lie entirely in a copy of the contractible space $\GS_1$, and therefore be trivial in $\pi_1(\GS_{\kappa_1})$.  Thus when proving either (1) or (2) we may without loss of generality assume that the wedge point of the domain is mapped by $f$ to the wedge point of the codomain.

Suppose again that $2\leq \kappa_0, \kappa_1$ without any assumptions on how $\kappa_0$ and $\kappa_1$ compare or whether either is finite.  Also suppose we have a continuous function $f: \GS_{\kappa_0} \rightarrow \GS_{\kappa_1}$ with $f(\circ_{\kappa_0}) = \circ_{\kappa_1}$.  Select $\epsilon>0$ such that $d(x, \circ_{\kappa_0})<\epsilon$ implies $d(f(x), \circ_{\kappa_1})<1$.  Select $N \in\omega$ large enough that the circle $C((0, \frac{1}{N+3}),\frac{1}{N+3})$ is of diameter less than $\epsilon$.  For each $\alpha<\kappa_0$ we let $E_{N , \alpha}\leq E_{\alpha}$ be the union of all circles $C((0, \frac{1}{n+3}),\frac{1}{n+3})$ for $n\geq N$ in the copy of the Hawaiian earring $E_{\alpha}\subseteq \E_{\kappa_0}$.  Let $\Ea_{\kappa_0, N} = \bigcup_{\alpha<\gamma} E_{N, \alpha}$.  The image $f(\Ea_{\kappa_0, N})$ has trivial intersection with the cone tips of $\GS_{\kappa_1}$, so by Lemma \ref{bigdeformation} the restriction map $f\upharpoonright \Ea_{\kappa_0, N}$ can be homotoped to a map $g_1:\Ea_{\kappa_0, N} \rightarrow \Ea_{\kappa_1}$.  Extend $g_1:\Ea_{\kappa_0, N} \rightarrow \Ea_{\kappa_1}$ to a map $g: \Ea_{\kappa_0} \rightarrow \Ea_{\kappa_1}$ by letting all circles $C((0, \frac{1}{n+3}), \frac{1}{n+3})$ with $n<N$ in each $E_{\alpha} \subseteq \Ea_{\kappa_0}$ map to $\circ_{\kappa_1}$.

Now it is clear that the map $g: \Ea_{\kappa_0} \rightarrow \Ea_{\kappa_1}$ satisfies $(\iota_{\kappa_1} \circ g)_* = (f \circ \iota_{\kappa_0})_*$ where $\iota_0: \Ea_{\kappa_0} \rightarrow \GS_{\kappa_0}$ is the inclusion map and similarly for $\iota_1$.  From the isomorphisms $\R_{\kappa_0}: \Red_{\kappa_0} \rightarrow \pi_1(\Ea_{\kappa_0}, \circ_{\kappa_0})$ and $\R_{\kappa_1}: \Red_{\kappa_1} \rightarrow \pi_1(\Ea_{\kappa_1}, \circ_{\kappa_1})$ we obtain a homomorphism $h: \Red_{\kappa_0} \rightarrow \Red_{\kappa_1}$ defined by $g_* \circ \R_{\kappa_0} = \R_{\kappa_1} \circ h$.  Because $g$ is continuous we have that if $W \in \W_{\kappa_0}$ with $W \equiv \prod_{\lambda \in \Lambda} W_{\lambda}$ then $\prod_{\lambda \in \Lambda}h(\Red(W_{\lambda})) \in \W_{\kappa_1}$ and $h(\Red(W)) \equiv \Red(\prod_{\lambda \in \Lambda}h(\Red(W_{\lambda})))$.

\begin{lemma}\label{eventualpurity}  For each $\alpha < \kappa_0$ there exists $N_{\alpha} \in \omega$ such that if $d(W) \geq N_{\alpha}$ and $W$ is $\alpha$-pure then $h(W)$ is pure.
\end{lemma}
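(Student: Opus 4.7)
The approach is a proof by contradiction, aiming to apply Lemma~\ref{foranontrivialelement}. Suppose no such $N_{\alpha}$ exists; then we can extract a sequence $\{W_k\}_{k \in \omega}$ of $\alpha$-pure reduced words with $d(W_k) \geq k$ and $h(W_k)$ non-pure. By continuity of $g$ at $\circ_{\kappa_0}$, the loops $g \circ \R(W_k)$ have images shrinking to $\circ_{\kappa_1}$ in $\Ea_{\kappa_1}$; since a loop confined to a small ball about $\circ_{\kappa_1}$ can only fully traverse circles of correspondingly small diameter (because a fully traversed $n$-th circle must lie in the image and has diameter $2/(n+3)$), we obtain $d(h(W_k)) \to \infty$.

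Next I would pass to a subsequence and take suitable powers to prepare the hypothesis of Lemma~\ref{foranontrivialelement}. For each $k$, let $j_k$ be a second subscript appearing in the cyclic reduction of $h(W_k)$ (which is nontrivial, since $h(W_k)$ is non-pure, hence nonempty). By subsequencing, ensure $d(h(W_{k+1})) > j_k$; and setting $V_k := h(W_k^{m_k})$ for a sufficiently large exponent $m_k$ chosen inductively, ensure that the multiplicity of $j_k$ in $V_k$ strictly exceeds $\sum_{m<k}$(multiplicity of $j_k$ in $V_m$).

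Now set $V \equiv \prod_k W_k^{m_k}$, an $\alpha$-pure word. The loop $\R(V)$ lies in $E_\alpha \subseteq X_\alpha$, which is contractible, so $[[h(V)]] = [[E]]$ in $\Co_{\kappa_1}$. On the other hand, applying Lemma~\ref{foranontrivialelement} to $h(V) \equiv \prod_n V_n$: Condition~(2) holds by the multiplicity construction (later $V_m$ contribute zero because $d(V_m) > j_n$; earlier $V_m$ are controlled by the inductive choice of $m_n$). Condition~(1) holds because each $V_n$ is non-pure---a pure sub-interval of $\overline{h(V)}$ spanning three consecutive $V_m$'s would contain an entire non-pure middle $\overline{V_m}$, contradicting its purity, so any pure sub-interval is contained in $\overline{V_0 V_1 \cdots V_{n+1}}$ for some $n$. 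Thus $[[h(V)]] \neq [[E]]$, contradicting the previous conclusion.

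The principal obstacle is controlling boundary cancellations in the reduced concatenation $h(V) = \Red(\prod h(W_k^{m_k}))$. Since $h(W_k)$ may be an infinite reduced word whose second subscripts are unbounded above (and the conjugating prefix arising in the cyclic reduction may likewise be infinite), cancellation between adjacent blocks $V_n, V_{n+1}$ is not a priori precluded, and the decomposition $h(V) \equiv \prod_n V_n$ may fail verbatim. A careful inductive refinement of the subsequence is required to guarantee that the non-pure witnesses of each $V_n$ and the multiplicity of $j_n$ both persist after reduction, so that Lemma~\ref{foranontrivialelement} applies and the contradiction goes through.
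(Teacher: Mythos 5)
Your overall plan — extract a sequence of $\alpha$-pure witnesses, form $\prod_k W_{n_k}^{m_k}$ with carefully chosen exponents, and apply Lemma~\ref{foranontrivialelement} to $h(\Red(\prod_k W_{n_k}^{m_k}))$ to contradict the nullity of pure words under $f_*$ — is exactly the paper's strategy, down to the use of continuity of $g$ to force $d(h(W_n)) \to \infty$ and the use of the second subscript $j_k$ from the cyclic reduction to get condition~(2).

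However, you have explicitly left the central technical work undone. The paragraph beginning ``The principal obstacle is controlling boundary cancellations\ldots'' correctly diagnoses the problem but then declares that ``a careful inductive refinement of the subsequence is required'' without carrying it out. That refinement \emph{is} the proof. Specifically: in the concatenation $U \equiv \prod_k U_k$, where $U_k \equiv \Red(h(W_{n_k})^{m_k}) \equiv U_{0,k}U_{1,k}^{m_k}U_{0,k}^{-1}$, one must argue via a maximal cancellation $\mathcal{S}$ that (a) since each $U_k$ is reduced, all pairs in $\mathcal{S}$ go between distinct blocks; (b) the inductive bound $2 + 2\sum_{r=0}^k m_r\,|\{i\in\overline{h(W_{n_r})} : \pro_1(h(W_{n_r})(i)) = j_{k+1}\}| < m_{k+1}$ forces a split $\overline{U_k} \equiv I_k I_k' I_k''$ where only the outer intervals participate in $\mathcal{S}$ and $I_k'$ is nonempty and retains a full copy of $U_{1,k}$; and (c) the choice of $n_{k+1}$ with $d(h(W_{n_{k+1}})) > l_0, l_1, j_k$ (where $a_{\alpha_0,l_0}^{\pm1}, a_{\alpha_1,l_1}^{\pm1}$ are the non-purity witnesses in $h(W_{n_k})$) guarantees that later blocks cannot cancel those witnesses, so they survive into $V_k := U_k\upharpoonright I_k'$ and make $V_k$ non-pure with $d(\prod_{q>k} U_q)$ exceeding their second subscripts. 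Only then does $\Red(U) \equiv \prod_k V_k$ satisfy conditions~(1) and~(2) of Lemma~\ref{foranontrivialelement}. Without steps (a)--(c), your claims that ``each $V_n$ is non-pure'' and that the multiplicity of $j_n$ persists after reduction are unsubstantiated, so the contradiction does not yet go through.

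Two smaller points: your sketch of condition~(1) assumes the decomposition $h(V) \equiv \prod_n V_n$ already holds (which is what you flagged as unproved); and you should check that the surviving $V_k$, not merely the original $h(W_{n_k})$, carries the non-pure witnesses --- a priori the two letters of distinct first subscript in $h(W_{n_k})$ could sit in the conjugating prefix $U_{0,k}$ and be entirely cancelled. The paper's control over $d$ of later blocks is precisely what rules this out.
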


\begin{proof}  Suppose that the claim is false.  Select $\alpha < \kappa$ and sequence of $\alpha$-pure words $\{W_n \}_{n \in \omega} \subseteq \Red_{\kappa_0}$ such that $d(W_n) \rightarrow \infty$ and for each $n \in \omega$ we have $h(W_n)$ not pure.  By continuity of $g$ we know that $d(h(W_n)) \rightarrow \infty$ as well.  We inductively define sequences $\{n_k\}_{k \in \omega}$, $\{m_k\}_{k \in \omega}$, and $\{j_k\}_{k \in \omega}$ of natural numbers.

Let $n_0 = m_0 = 1$ and select $j_0$ such that $h(W_{n_0})$ has a letter with second subscript $j_0$.  Suppose that we have already defined $n_0, \ldots, n_k$ and $m_0, \ldots, m_k$ and $j_0, \ldots, j_k$.  We know that $h(W_{n_k})$ is not pure, so it has letters $a_{\alpha_0, l_0}^{\pm 1}$ and $a_{\alpha_1, l_1}^{\pm 1}$ where $\alpha_0, \alpha_1$ are distinct ordinals below $\kappa_1$.  Pick $n_{k + 1}$ large enough that $d(h(W_{n_{k + 1}})) > l_0, l_1, j_k$.  Since $h(W_{n_{k + 1}})$ is nontrivial it has a nontrivial cyclic reduction $U_{1, k}$.  Select $j_{k + 1}$ such that $U_{1, k}$ has a letter whose second subscript is $j_{k + 1}$.  Select $m_{k + 1}$ large enough that $2 + 2 \sum_{r = 0}^k m_r |\{i \in \overline{h(W_{n_r})} \mid \pro_1(h(W_{n_r})(i)) = j_{k + 1}\}| < m_{k + 1}$.

Let $U_k \equiv \Red((h(W_{n_k}))^{m_k}) \equiv U_{0, k}U_{1, k}^{m_k}U_{0, k}^{-1}$, where $U_{1, k}$ is the cyclic reduction of $h(W_{n_k})$ and $h(W_{n_k}) \equiv U_{0, k}U_{1, k}U_{0, k}^{-1}$.  Notice that the concatenation $U \equiv \prod_{k \in \omega} U_k$ is a word in $\W_{\kappa_1}$.  Moreover $\Red(U) = h(\Red(\prod_{k \in \omega}W_{n_k}^{m_k}))$ by continuity of $g$ and how $h$ is defined.  Let $\mathcal{S}$ be a maximal cancellation of $U$.  Since each $U_k$ is reduced, $\mathcal{S}$ cannot pair elements of $\overline{U_k}\subseteq \overline{U}$ with elements in $\overline{U_k}$.  Moreover

$$
\begin{array}{ll}
|\{i \in \overline{U_{1, k}^{m_k}} \mid \pro_1(U_{1, k}^{m_k}(i)) = j_k\}| & \geq m_k|\{i \in \overline{U_{1, k}} \mid \pro_1(U_{1, k}(i)) = j_k\}|\\
& \geq m_k;
\end{array}
$$

$$
\begin{array}{ll}
|\{ i \in \overline{\prod_{q = k + 1}^{\infty} U_q} \mid \pro_1((\prod_{q = k + 1}^{\infty} U_q)(i)) = j_k\}|& = 0;\text{ and}
\end{array}
$$

\begin{center}

$|\{ i \in \overline{\prod_{q = 0}^{k - 1} U_q} \mid \pro_1((\prod_{q = 0}^{k - 1} U_q)(i)) = j_k\}|$

$\leq \sum_{r = 0}^{k - 1} m_r |\{i \in \overline{h(W_{n_r})} \mid \pro_1(h(W_{n_r})(i)) = j_k\}|$

\end{center}

\noindent hold for each $k \in \omega$.  Thus for each $k \in \omega$ there is a (possibly empty) initial interval $I_k \subseteq \overline{U_k}$, nonempty interval $I_k' \subseteq \overline{U_k}$, and (possibly empty) terminal interval $I_k'' \subseteq \overline{U_k}$ such that $\overline{U_k} \equiv I_kI_k'I_k''$ and the elements of $I_k$ are second coordinates of elements in $\mathcal{S}$ and the elements of $I_k''$ are first coordinates of elements in $\mathcal{S}$ and elements of $I_k'$ do not appear in $\mathcal{S}$.  We can say furthermore from the above inequalities that $U_k \upharpoonright I_k'$ includes a subword which is $\equiv$ to $U_{1, k}$.  By construction there exist $i_k, i_k' \in I_k' \cup \overline{U_{1, k}^{m_k} U_{0, k}^{-1}}$ such that  $d(\prod_{q = k + 1}^{\infty} U_q) > \pro_1(U_k(i_k)), \pro_1(U_k(i_k'))$ and with $\pro_0(U_k(i_k)) \neq \pro_0(U_k(i_k'))$.

Now let $V_k \equiv U_k \upharpoonright I_k'$ and $V \equiv \prod_{k \in \omega} V_k$, so $V \equiv \Red(U) \equiv h(\Red(\prod_{k \in \omega}W_{n_k}^{m_k}))$.  Clearly the hypotheses of Lemma \ref{foranontrivialelement} apply, and so $[[V]] \neq [[E]]$ in $\Red_{\kappa_1}$.  However $\R_{\kappa_0}(\Red(\prod_{k \in \omega}W_{n_k}^{m_k})) \in \ker(\iota_{0*})$, which implies that $V \in \ker(\iota_{1*} \circ \R_{\kappa_1})$ since $\iota_{1*} \circ \R_{\kappa_1} \circ h = f_* \circ \iota_{0*} \circ \R_{\kappa_0}$, so $[[V]] = [[E]]$, a contradiction.

\end{proof}

\begin{lemma}\label{eventualspecialpurity}  For each $\alpha < \kappa_0$ there exist $\beta_{\alpha} < \kappa_1$ and $M_{\alpha} \in \omega$ such that if $d(W) \geq M_{\alpha}$ and $W$ is $\alpha$-pure then $h(W)$ is $\beta_{\alpha}$-pure.
\end{lemma}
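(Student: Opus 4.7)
The plan is to leverage Lemma \ref{eventualpurity} and the fact that a cancellation can only pair letters sharing the same first coordinate, reducing the statement to a purely combinatorial impossibility.

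First I would argue by contradiction. Suppose some $\alpha < \kappa_0$ has no associated $\beta_{\alpha}$ and $M_{\alpha}$. Let $N_{\alpha}$ be the constant from Lemma \ref{eventualpurity}. The negation of the conclusion says that for every $\beta < \kappa_1$ and every $M \in \omega$ there is an $\alpha$-pure $W$ with $d(W) \geq M$ such that $h(W)$ is not $\beta$-pure. Fix any $\beta_0 < \kappa_1$ and apply this with $M = N_{\alpha}$ to obtain an $\alpha$-pure $W_1 \in \Red_{\kappa_0}$, $d(W_1) \geq N_{\alpha}$, with $h(W_1)$ not $\beta_0$-pure. Lemma \ref{eventualpurity} forces $h(W_1)$ to be pure, hence nontrivial and $\beta^1$-pure for some $\beta^1 \neq \beta_0$. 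Applying the negation again with $\beta = \beta^1$ and $M = N_{\alpha}$ yields an $\alpha$-pure $W_2$, $d(W_2) \geq N_{\alpha}$, such that $h(W_2)$ is pure but not $\beta^1$-pure; since the empty word is $\beta^1$-pure, $h(W_2)$ must in fact be nontrivial and $\beta^2$-pure for some $\beta^2 \neq \beta^1$.

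Next I would form $W := W_1 * W_2 = \Red(W_1 W_2)$ inside $\Red_{\kappa_0}$. Because concatenation of $\alpha$-pure words is $\alpha$-pure and free reduction can only delete letters, $W$ is again $\alpha$-pure, and $d(W) \geq \min(d(W_1), d(W_2)) \geq N_{\alpha}$. Lemma \ref{eventualpurity} therefore forces $h(W)$ to be pure.

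The contradiction comes from computing $h(W)$ directly. Since $h$ is a homomorphism with respect to $*$, one has $h(W) \equiv \Red(h(W_1) h(W_2))$. All letters of $h(W_1)$ have first coordinate $\beta^1$ and all letters of $h(W_2)$ have first coordinate $\beta^2$, and $\beta^1 \neq \beta^2$. By condition (5) of Definition \ref{cancellation}, any pair in a cancellation must consist of mutually inverse letters, and inverse letters necessarily share their first coordinate; hence no element of $\overline{h(W_1)}$ can be paired with any element of $\overline{h(W_2)}$ in any cancellation of $h(W_1) h(W_2)$. Since $h(W_1)$ and $h(W_2)$ are already reduced (being images under $h$), the maximal cancellation on $h(W_1) h(W_2)$ is trivial, and by Lemma \ref{cancellationreduces} we conclude $\Red(h(W_1) h(W_2)) \equiv h(W_1) h(W_2)$. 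This word is nonempty and contains letters with both first subscripts $\beta^1$ and $\beta^2$, so it is not pure, contradicting what Lemma \ref{eventualpurity} told us about $h(W)$.

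I do not anticipate a serious obstacle: the argument is essentially a clean two-step extraction from the negated statement followed by a concatenation trick. The only item requiring any care is the verification that $\alpha$-pureness and the lower bound on $d$ are preserved under passage from $W_1 W_2$ to $\Red(W_1 W_2)$, which is immediate from the fact that reduction only deletes letters.
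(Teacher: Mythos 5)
Your proof is correct, and it takes a genuinely shorter route than the paper's. The paper extracts from the negation an entire infinite sequence $\{W_n\}_{n\in\omega}$ with $d(W_n)\to\infty$ and alternating purity types $\beta_n\neq\beta_{n+1}$, forms the infinite concatenation $V\equiv\prod_n h(W_n)$, argues via continuity of $g$ that $V\equiv h(\Red(\prod_n W_n))$, and then invokes Corollary \ref{deletesomewords} to contradict $[[V]]=[[E]]$. You instead extract only two words $W_1, W_2$ from the negation, observe that $h$ is a homomorphism of the groups $(\Red_{\kappa_0},*)$ and $(\Red_{\kappa_1},*)$, and compute $h(W_1*W_2)=\Red(h(W_1)h(W_2))$ directly. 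The cancellation-theoretic step — that no cancellation can pair letters with distinct first subscripts, by condition (5) of Definition \ref{cancellation}, so $h(W_1)h(W_2)$ is already reduced and visibly non-pure — is the same elementary observation the paper uses elsewhere, and it suffices to contradict Lemma \ref{eventualpurity}. Your argument also correctly handles the small checks: $\Red(W_1W_2)$ remains $\alpha$-pure with $d\geq N_\alpha$ since reduction only deletes letters (and the degenerate case $\Red(W_1W_2)\equiv E$ does no harm, since then $h(\Red(W_1W_2))\equiv E$ is pure while your computation gives a nonempty non-pure word, still a contradiction). The trade-off is that the paper's heavier machinery (infinite concatenation plus Corollary \ref{deletesomewords}) establishes a template reused in later arguments, whereas your approach is self-contained and shorter for this particular lemma.
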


\begin{proof}  By Lemma \ref{eventualpurity} we can select $N_{\alpha}$ such that if $d(W) \geq N_{\alpha}$ and $W$ is $\alpha$-pure then $h(W)$ is pure.  If our current lemma is false then there exists a sequence $\{W_n\}_{n \in \omega}$ with $d(W_n) > N_{\alpha}$ and $h(W_n)$ being nontrivial and $\beta_n$-pure, $d(W_n) \rightarrow \infty$, and $\beta_n \neq \beta_{n + 1}$ for each $n \in \omega$.  Letting $V_n \equiv h(W_n)$ and $V \equiv \prod_{n \in \omega} V_n$ it is clear that $V \in \Red_{\kappa_1}$ since $\beta_n \neq \beta_{n + 1}$ (any reduction would require that some parts of a word $V_n$ will cancel with parts of $V_{n + 1}$, and this is impossible).  Also, by the continuity of $g$ we know $V \equiv h(\Red(\prod_n W_n))$.

Since $\R_{\kappa_0}(\Red(\prod_{n \in \omega} W_n)) \in \ker(\iota_{0*})$ we have $[[V]] = [[E]]$.  Then by Corollary \ref{deletesomewords} we select intervals $I_0, \ldots, I_p$ in $\overline{V}$ such that $V \upharpoonright I_k$ is pure for each $0 \leq k \leq p$ and $\Red(V \upharpoonright (\overline{V} \setminus \bigcup_{k = 0}^p I_k)) = E$.  Each $I_k$ must be a subinterval of some $\overline{V_{n_k}} \subseteq \overline{V}$ since $V \upharpoonright I_k$ is pure.  Let $M > n_0, \ldots, n_p$.  Let $V' = \Red(V\upharpoonright (\overline{\prod_{n = 0}^{M-1} V_n} \setminus \bigcup_{k = 0}^p I_k))$.  The subword $\prod_{n = M}^{\infty} V_n$ is reduced and $V'$ is also reduced and $E \equiv \Red(V \upharpoonright (\overline{V} \setminus \bigcup_{k = 0}^p I_k)) \equiv \Red(V' \prod_{n = M}^{\infty} V_n)$.  Thus by Lemma \ref{reduced} we have $(V')^{-1} \equiv \prod_{n = M}^{\infty} V_n$.  However, $V'$ is clearly the concatenation of at most $M$ pure words, whereas $\prod_{n = M}^{\infty} V_n$ is not, and this is a contradiction.

\end{proof}

Now we are ready to finish the proof of Theorem \ref{continuous}.  For (1), if $\kappa_0 = n < \kappa = \kappa_1$ in the notation above, with $n$ finite, then we can select by Lemma \ref{eventualspecialpurity} an $M \in \omega$ large enough and $\beta_{\alpha}$ for each $\alpha < n = \kappa_0$ so that if $d(W) > M$ and $W$ is $\alpha$-pure then $h(W)$ is $\beta_{\alpha}$-pure.  Then we may select $\beta < \kappa_1$ such that $\beta \notin \{\beta_{\alpha}\}_{\alpha < n}$.  The continuous function $f_1: \GS_{\kappa_1} \rightarrow \GS_2$ given by mapping the $\beta$-cone homeomorphically to the $1$-cone of $\GS_2$ and mapping each other cone homeomorphically to the $0$-cone of $\GS_2$ is clearly such that $f_{1*}: \pi_1(\GS_{\kappa_1}, \circ_{\kappa_1}) \rightarrow \pi_1(\GS_2, \circ_2)$ is surjective, but also the image $f_*(\pi_1(\GS_{\kappa_0}))$ is included in the kernel $\ker(f_{1*})$, and so claim (1) follows since $\ker(f_{1*})$ has index at least $2^{\aleph_0}$ in $\pi_1(\GS_{\kappa_1}, \circ_{\kappa_1})$.

For (2) we let $\kappa_1 = n < \kappa = \kappa_0$ in the notation used above.  To prove that $\ker(f_*)$ is uncountable it is sufficient to show that $\ker(f_{2*})$ is uncountable, where $f_2$ is the restriction $f \upharpoonright \GS_{n + 1}$ since the subspace $\GS_{n + 1}$ is a retract subspace of $\GS_{\kappa_0}$ (so, in particular, $\pi_1(\GS_{n + 1}, \circ_{n + 1})$ includes into $\pi_1(\GS_{\kappa_0}, \circ_{\kappa_0})$ as a retract subgroup).  Thus we will assume that $\kappa = \kappa_0 = n + 1$ and that $f_2 = f$.  By Lemma \ref{eventualspecialpurity}, since $n + 1$ is finite we select an $M \in \omega$ large enough and $\beta_{\alpha}$ for each $\alpha < n + 1 = \kappa_0$ so that if $d(W) > M$ and $W$ is $\alpha$-pure then $h(W)$ is $\beta_{\alpha}$-pure.  By the pigeonhole principle, since $n < n + 1$, there are $\alpha_0, \alpha_1 < n + 1$ such that $\beta_{\alpha_0} = \beta_{\alpha_1}$.  But now any words in $\Red_{n + 1} = \Red_{\kappa_0}$ which utilize only letters whose first coordinate is in $\{\alpha_0, \alpha_1\}$ will represent elements in $\ker(f_*)$, and this implies that $\ker(f_*)$ is of cardinality at least $2^{\aleph_0}$.

\end{proof}

\end{section}

\begin{section}{Theorem \ref{bigisomorphism}}\label{isom}

We begin with a description of the overall strategy and then describe the structure of this section.  An isomorphism between two cone groups $\Co_{\kappa_0}$ and $\Co_{\kappa_1}$ will be constructed by induction on specially defined subgroups.  We cannot expect that such an isomorphism will be imposed by a homomorphism $\Red_{\kappa_0} \rightarrow \Red_{\kappa_1}$, because of the arguments of Section \ref{conegroups}.  However, the idea is that establishing careful correspondences between certain words in $\Red_{\kappa_0}$ and certain words in $\Red_{\kappa_1}$ will allow us to ultimately produce homomorphisms $\phi_0: \Red_{\kappa_0} \rightarrow \Co_{\kappa_1}$ and $\phi_1: \Red_{\kappa_1} \rightarrow \Co_{\kappa_0}$ which will descend to isomorphisms $\Phi_0: \Co_{\kappa_0} \rightarrow \Co_{\kappa_1}$ and $\Phi_1: \Co_{\kappa_1} \rightarrow \Co_{\kappa_0}$ with $\Phi_1 = \Phi_0^{-1}$.

What sort of correspondences between words should be produced?  They should not be so rigid as to produce a homomorphism $\Red_{\kappa_0} \rightarrow \Red_{\kappa_1}$.  Rather, they should be forgiving enough to produce the homomorphisms $\phi_0$ and $\phi_1$ described above.  The correspondences should also agree with each other so that the $\phi_0$ and $\phi_1$ are well-defined.

Each word in $\Red_{\kappa_0}$ and $\Red_{\kappa_1}$ may be decomposed in a natural way as a concatenation of maximal pure subwords (the index over which concatenation is written is unique up to order isomorphism and is called the \emph{p-index}).  Taking concatenations over subintervals of the p-index gives us words which are recognizable pieces of the original word (which we will call \emph{p-chunks}).  There is a natural way of comparing certain words $W \in \Red_{\kappa_0}$ with other in $U \in \Red_{\kappa_1}$ via an order isomorphism between a subset of the p-index of $W$ and that of $U$.  These subsets will be large enough to ``capture'' any interval of the p-index, up to deletion of finitely many elements, and there will be a correspondence between the p-chunks of $W$ and those of $U$.  The bijections between the subsets of the p-indices will honor word concatenation (up to finite deletion of pure subwords) and will allow us to define isomorphisms between the subgroups of $\Co_{\kappa_0}$ and $\Co_{\kappa_1}$ which are generated by the p-chunks of the words on which we have defined such bijections.

In order to have the isomorphisms be well-defined, it is essential that the imposed correspondences between p-chunks are in agreement with each other.  That is- suppose that $W_0, W_1 \in \Red_{\kappa_0}$ and $U_0, U_1 \in \Red_{\kappa_1}$ and $W_i$ is made to correspond to $U_i$ for $i = 0, 1$.  If $W \in \Red_{\kappa_0}$ is a p-chunk of each of $W_0$ and $W_1$ then we should be able to make $W$ correspond to a word $U \in \Red_{\kappa_0}$ in a way that honors the correspondences $W_i \leftrightarrow U_i$, so any choice of such a $U$ should  be independent of whether we are considering $W$ as a p-chunk of $W_0$ or of $W_1$, up to the equivalence $\approx$.

It will be necessary to be able to define many such correspondences between words, so as to make the isomorphism between subgroups of $\Co_{\kappa_0}$ and $\Co_{\kappa_1}$ have larger and larger domain and range.  Keeping such new correspondences in agreement with the previously defined ones requires us to consider concatenations of words on which such bijections have already been defined, concatenations of order type $\omega$ and of order type $\mathbb{Q}$ are of particular concern.  If we can continue to do this for sufficiently many steps ($2^{\aleph_0}$ steps will suffice) then we can succeed in the construction.

This section is organized into subsections for the sake of clarity.  We introduce and prove some basic properties of p-chunks in subsection \ref{pchunks}.  In subsection \ref{closesubsets} we will make precise the concept of a ``sufficiently large'' subset of an ordered set.  In subsection \ref{coherentcoitriplessubsect} we define what it means for bijections between sufficiently large subsets of p-indices to honor word concatenation (up to deletion of finitely many pure subwords).  In subsection \ref{extendingcoherence} we give some baby steps towards defining such bijections on more words, and in subsections \ref{omegaconcat} and \ref{Qconcat} we show how to extend such notions for $\omega$- and $\mathbb{Q}$-type concatenations, respectively.  Finally in subsection \ref{arbitraryext} we combine all the previous ideas to prove Theorems \ref{bigisomorphism} and \ref{elementaryequiv}.

\begin{subsection}{P-chunks}\label{pchunks}   Let $\kappa$ be a cardinal.  For each word $W\in \Red_{\kappa}$ we have a decomposition of the domain $\overline{W} \equiv  \prod_{\lambda \in \Lambda} \Lambda_{\lambda}$ such that each $\Lambda_{\lambda}$ is a nonempty maximal interval such that $W\upharpoonright \Lambda_{\lambda}$ is pure.  We'll call this decomposition the \emph{pure decomposition of the domain of $W$}.   Write $W \equiv_p \prod_{\lambda \in \Lambda} W_{\lambda}$ to express that $\overline{W}\equiv \prod_{\lambda \in \Lambda}\overline{W_{\lambda}}$ is the p-decomposition of the domain of $W$, and call this writing $W \equiv_p \prod_{\lambda \in \Lambda} W_{\lambda}$ the \emph{p-decomposition of $W$} and $\Lambda$ the \emph{p-index}, denoted $\pindex(W)$.  By definition we therefore have $E \equiv_p \prod_{\lambda\in \Lambda}W_{\lambda}$ with $\Lambda = \emptyset$.  If $W \equiv_p \prod_{\lambda \in \pindex(W)}W_{\lambda}$ and $I$ is an interval in $\pindex(W)$ then let $W\upharpoonright_p I$ denote the word $\prod_{\lambda \in I} W_{\lambda}$.  Call a word $W'$ a \emph{p-chunk} of $W$ if for some interval $I \subseteq \pindex(W)$ we have $W' \equiv W \upharpoonright_p I$.  For a given $W\in \Red_{\kappa}$ we let $\pchunk(W)$ denote the set of p-chunks of $W$.  A pure p-chunk of a word $W \equiv_p \prod_{\lambda \in \Lambda} W_{\lambda}$ will, of course, either be empty or one of the $W_{\lambda}$.  Notice as well that an equivalence $W \equiv U$ immediately gives an order isomorphism from $\pindex(W)$ to $\pindex(U)$.

\begin{lemma}\label{pchunkmultiplication}  Suppose that $W \equiv_p \prod_{\lambda \in \Lambda} W_{\lambda}$ and $U \equiv_p \prod_{\lambda' \in \Lambda'} U_{\lambda'}$.  Then there exists a (possibly empty) initial interval $I \subseteq \Lambda$, a (possibly empty) terminal interval $I' \subseteq \Lambda'$ such that either:

\begin{enumerate}[(i)] \item $\Red(WU) \equiv_p \prod_{\lambda \in I}W_{\lambda}\prod_{\lambda'\in I'}U_{\lambda'}$; or

\item there exist $\lambda_0\in \Lambda$ which is the least element strictly above all elements in $I$, $\lambda_1\in \Lambda'$ which is the greatest element strictly below all elements of $I'$ and 

\begin{center}
$\Red(WU) \equiv_p (\prod_{\lambda \in I}W_{\lambda})V(\prod_{\lambda'\in I'}U_{\lambda'})$ 
\end{center}

\noindent where $V \equiv \Red(W_{\lambda_0}U_{\lambda_1})\not\equiv E$ is pure.

\end{enumerate}

\end{lemma}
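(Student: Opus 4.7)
The plan is to reduce to the decomposition provided by Lemma \ref{reduced} and then translate the cancellation structure into the language of p-decompositions. First I would apply Lemma \ref{reduced} to obtain $W \equiv W_0 W_1$, $U \equiv U_0 U_1$ with $W_1 \equiv U_0^{-1}$ and $W_0 U_1$ reduced; since $W_0 U_1 \sim WU$, uniqueness of reduced representatives gives $\Red(WU) \equiv W_0 U_1$. The remaining task is to read off the p-decomposition of $W_0 U_1$ from those of $W$ and $U$.

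Next I introduce the relevant indices. If $W_1 \not\equiv E$, let $\lambda_0 \in \Lambda$ be the least index with some letter of $W_{\lambda_0}$ lying inside $W_1$, and split $W_{\lambda_0} \equiv W_{\lambda_0}' W_{\lambda_0}''$ with $W_{\lambda_0}''$ the (nonempty) tail inside $W_1$ and $W_{\lambda_0}'$ the (possibly empty) head remaining in $W_0$; set $I = \{\lambda \in \Lambda : \lambda < \lambda_0\}$, and set $I = \Lambda$ if $W_1 \equiv E$. Symmetrically define $\lambda_1$, the split $U_{\lambda_1} \equiv U_{\lambda_1}'' U_{\lambda_1}'$, and the terminal interval $I' \subseteq \Lambda'$. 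Then
\[
\Red(WU) \equiv \bigl(\prod_{\lambda \in I} W_\lambda\bigr) W_{\lambda_0}' U_{\lambda_1}' \bigl(\prod_{\lambda' \in I'} U_{\lambda'}\bigr),
\]
with no further cancellation anywhere since $W_0 U_1$ is reduced.

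The crux is to exploit $W_1 \equiv U_0^{-1}$. The standalone p-decompositions of $W_1$ and $U_0^{-1}$ agree, so their leading pure chunks coincide; these are $W_{\lambda_0}''$ and $(U_{\lambda_1}'')^{-1}$ respectively. Thus, when $W_1, U_0$ are both nonempty, $W_{\lambda_0}$ and $U_{\lambda_1}$ share a single color $\alpha$ and $W_{\lambda_0}'' \equiv (U_{\lambda_1}'')^{-1}$. It follows that $\Red(W_{\lambda_0} U_{\lambda_1}) \equiv \Red(W_{\lambda_0}' U_{\lambda_1}')$, and since $W_{\lambda_0}' U_{\lambda_1}'$ is a subword of the reduced word $W_0 U_1$ it is itself reduced and $\alpha$-pure. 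Setting $V := \Red(W_{\lambda_0} U_{\lambda_1}) \equiv W_{\lambda_0}' U_{\lambda_1}'$, if $V \not\equiv E$ (equivalently, at least one of $W_{\lambda_0}'$, $U_{\lambda_1}'$ is nonempty) then the displayed expression above is already the p-decomposition of $\Red(WU)$, putting us in case (ii).

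The remaining subcase is $W_{\lambda_0}' \equiv U_{\lambda_1}' \equiv E$, where cancellation aligns on chunk boundaries on both sides and $\Red(WU) \equiv (\prod_{\lambda \in I} W_\lambda)(\prod_{\lambda' \in I'} U_{\lambda'})$. If the colors of $W_{\lambda_0 - 1}$ (if present) and $U_{\lambda_1 + 1}$ (if present) disagree, this is already the p-decomposition and we land in case (i); if they agree, the two adjacent chunks merge into a single reduced pure chunk $W_{\lambda_0-1}U_{\lambda_1+1}$, and shrinking $I$ and $I'$ by one element each with $\tilde\lambda_0 = \lambda_0 - 1$, $\tilde\lambda_1 = \lambda_1 + 1$ reinterprets the situation as case (ii) with $V \equiv W_{\tilde\lambda_0}U_{\tilde\lambda_1}$. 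Degenerate situations $W \equiv E$, $U \equiv E$, $W_1 \equiv E$, or $U_0 \equiv E$ fit naturally into this bookkeeping. I expect the main obstacle to be purely organizational: the merge-at-seam subcase inside the both-empty situation is easy to overlook and it forces re-indexing $\lambda_0, \lambda_1$ to their one-step-inward neighbors in order to match the statement's requirement $V \equiv \Red(W_{\lambda_0}U_{\lambda_1})$.
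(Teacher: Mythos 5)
Your proposal follows essentially the same route as the paper: apply Lemma \ref{reduced}, locate the cancellation boundary inside the p-decompositions, and split into the case where the boundary cuts through a p-chunk (giving a merged pure word $V$) versus the case where it falls on chunk boundaries. Your observation that $W_{\lambda_0}''$ and $(U_{\lambda_1}'')^{-1}$ are the matching first p-chunks of $W_1 \equiv U_0^{-1}$ is a slightly cleaner way to see the color-matching fact than the paper's case-by-case derivation, and your ``merge-at-seam'' reindexing is exactly the paper's subtraction $I = I_0 \setminus \{\lambda_0\}$.

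There is, however, a bookkeeping gap that your list of degenerate situations does not cover. You define $\lambda_0$ as the least $\lambda$ with a letter of $W_\lambda$ inside $W_1$; this minimum need not exist even when $W_1 \not\equiv E$, namely when $W_1$ has no first p-chunk (equivalently, $\{\lambda : \overline{W_\lambda} \cap \overline{W_1} \neq \emptyset\}$ has no least element). For instance, take $W \equiv a_{2,0}\,\bigl(\cdots a_{0,5}a_{1,4}a_{0,3}a_{1,2}a_{0,1}\bigr)$ with domain $\{\ast\} \cup (-\omega)$ and $U \equiv W_1^{-1} a_{3,0}$: here $W_1 \not\equiv E$ but the relevant set is $-\omega$, with no minimum, so your $\lambda_0$ is undefined and your recipe for $I$ breaks. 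The paper avoids this by working directly with the maximal initial interval $I_0$ whose chunks lie in $\overline{W_0}$ and then casing on whether $I_0$ has a maximum (rather than on whether its complement has a minimum). One shows that in the problematic situation $\prod_{\lambda\in I_0}W_\lambda \equiv W_0$ automatically, and if $I_0$ has no maximum then no p-chunk of $W_0U_1$ can straddle the seam, so case (i) holds with $I = I_0$; the fix is easy but you need it.
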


\begin{proof}  Since both $W$ and $U$ are reduced we have reduced words $W_0$, $W_1$, $U_0$, $U_1$  as in the conclusion of Lemma \ref{reduced}.  Select $I_0 \subseteq \Lambda$ to be a maximal initial interval for which $\bigcup_{\lambda \in I}\overline{W_{\lambda}} \subseteq \overline{W_0}$.  Select $I_1' \subseteq \Lambda'$ to be a maximal terminal interval such that $\bigcup_{\lambda' \in I'}\overline{U_{\lambda'}} \subseteq \overline{U_1}$.

Suppose $\prod_{\lambda \in I_0} W_{\lambda}\equiv W_0$ and $\prod_{\lambda' \in I_1'} U_{\lambda'} \equiv U_1$.  If $I_0$ has a maximal element $\lambda_0$ and $I_1'$ has a minimal element $\lambda_1$ such that the words $W_{\lambda_0}$ and $U_{\lambda_1}$ are both $\alpha$-pure for some $\alpha$, then we let $I = I_0 \setminus \{\lambda_0\}$ and $I' = I_1'\setminus \{\lambda_1\}$ and $V \equiv W_{\lambda_0}U_{\lambda_1}$ and obviously condition (ii) holds.  If there are no such maximal and minimal elements then condition (i) holds.

Suppose that $\prod_{\lambda \in I_0} W_{\lambda}\not\equiv W_0$.  Then there exists some $\lambda_0$ which is the least element strictly above all elements in $I_0$ and nonempty words $W_{\lambda_0, 0}$ and $W_{\lambda_0, 1}$ such that

$$
\begin{array}{ll}
W_{\lambda_0} & \equiv W_{\lambda_0, 0}W_{\lambda_0, 1};\\
W_0 & \equiv_p (\prod_{\lambda \in I_0} W_{\lambda}) W_{\lambda_0, 0};\\
W_1 & \equiv_p W_{\lambda_{0}, 1} (\prod_{\lambda \in \Lambda \setminus (I_0\cup \{\lambda_0\})}).
\end{array}
$$

\noindent If in addition $\prod_{\lambda'\in I_1} U_{\lambda'} \equiv U_1$ then $\Lambda' \setminus I_1$ has a maximum element $\lambda_1$ which satisfies $U_{\lambda_1} \equiv W_{\lambda_0, 1}^{-1}$.  Thus we let $I = I_0\setminus \{\lambda_0\}$ and $I' = I_1$ and $V \equiv W_{\lambda_0, 0} \equiv \Red(W_{\lambda_0}U_{\lambda_1})$ and we have condition (ii).  On the other hand, if in addition we have $\prod_{\lambda'\in I_1} U_{\lambda'} \not\equiv U_1$ then $\Lambda' \setminus I_1$ has a maximum element $\lambda_1$ and there exist nonempty words $U_{\lambda_1, 0}$ and $U_{\lambda_1, 1}$ for which

$$
\begin{array}{ll}
U_{\lambda_1} & \equiv U_{\lambda_1, 0}U_{\lambda_1, 1};\\
U_0 & \equiv_p (\prod_{\lambda' \in \Lambda' \setminus I_1}U_{\lambda'})U_{\lambda_1, 0};\\
U_1 & \equiv_p U_{\lambda_1, 1}(\prod_{\lambda'\in I_1} U_{\lambda'}).
\end{array}
$$

\noindent Then we let $V \equiv W_{\lambda_0, 0}V_{\lambda_1, 1} \equiv \Red(W_{\lambda_0}U_{\lambda_1})$ and $I = I_0$ and $I' = I_1$ and condition (ii) holds.

The case where $\prod_{\lambda \in I_0} W_{\lambda}\equiv W_0$ and $\prod_{\lambda' \in I_1'} U_{\lambda'} \not\equiv U_1$ follows from dualizing the proof of an earlier case, and so we are done.
\end{proof}

\begin{lemma}\label{elementsofthegeneratedsubgroup}  Suppose that $X \subseteq \Red_{\kappa}$.  For each nonempty element $W$ of the subgroup $\langle \bigcup_{U\in X}\pchunk(U) \rangle \leq \Red_{\kappa}$ if $W \equiv_p \prod_{\lambda \in \Lambda} W_{\lambda}$ then there exist nonempty intervals $I_0, \ldots, I_n$ in $\Lambda$ such that

\begin{enumerate}[(i)]

\item $\Lambda \equiv \prod_{i = 0}^n I_i$; and

\item for each  $0 \leq i \leq n$ at least one of the following holds:

\begin{enumerate}[(a)]

\item $I_i$ is a singleton $\{\lambda\}$ such that $W_{\lambda}$ is the reduction of a finite concatenation of pure p-chunks of elements in $X^{\pm 1}$;

\item $\prod_{\lambda \in I_i} W_{\lambda}$ is a p-chunk of some element in $X^{\pm 1}$.

\end{enumerate}

\end{enumerate}
\end{lemma}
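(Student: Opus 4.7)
The plan is to induct on $m$, the minimal number of p-chunks of elements in $X^{\pm 1}$ whose $*$-product in $\Red_\kappa$ equals $W$. Passing from $X$ to $X^{\pm 1}$ is harmless, since the pure decomposition of $U^{-1}$ is the reverse of that of $U$, so every $*$-inverse of a p-chunk of $U$ is a p-chunk of $U^{-1}$. For $m = 1$, the word $W$ is itself a p-chunk of some element of $X^{\pm 1}$, and taking $n = 0$ with $I_0 = \Lambda$ satisfies (b).

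For the inductive step, I would write $W = \Red(W' V_m)$, where $W' = V_1 * \cdots * V_{m-1}$ admits a decomposition $\pindex(W') \equiv \prod_{j=0}^{n'} I_j'$ by the inductive hypothesis, and $V_m$ is a p-chunk of some element of $X^{\pm 1}$. Applying Lemma \ref{pchunkmultiplication} to $W'$ and $V_m$ produces an initial interval $I \subseteq \pindex(W')$, a terminal interval $I'' \subseteq \pindex(V_m)$, and, in case (ii) only, an extra ``collision'' pure word $V$ at the boundary. As the decomposition for $\pindex(W)$, I would take the nonempty intersections $I_j' \cap I$ in their natural order, then the singleton indexing $V$ if present, and finally $I''$.

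To verify the dichotomy (a)/(b) for each piece: each nonempty $I_j' \cap I$ is either all of $I_j'$ or a proper initial sub-interval of it. A type-(a) singleton is preserved intact. For type (b), an initial sub-interval of an interval in $\pindex(U)$ is still an interval in $\pindex(U)$, so the truncated piece remains a p-chunk of $U$. The terminal piece $I''$ is a terminal sub-interval of $\pindex(V_m)$, which in turn sits inside $\pindex(U)$ for the generator $U \in X^{\pm 1}$ of which $V_m$ is a p-chunk, so (b) holds.

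The subtle point, which I expect to be the main obstacle, is verifying that the collision word $V \equiv \Red(W'_{\lambda_0} V_{m, \lambda_1})$ in case (ii) satisfies (a). Here $V_{m, \lambda_1}$ is a pure p-chunk of the generator of which $V_m$ is a p-chunk, hence a pure p-chunk of an element of $X^{\pm 1}$. The index $\lambda_0$ lies in some $I_j'$: if $I_j'$ was of type (a) then $W'_{\lambda_0}$ is already a reduction of a finite concatenation of pure p-chunks; if of type (b) then $W'_{\lambda_0}$ is a single pure p-chunk of the associated generator, because the pure decomposition of a p-chunk is inherited from the pure decomposition of the ambient word. In either case, Lemma \ref{reducedconsequence} lets me express $V$ as the reduction of a finite concatenation of pure p-chunks of elements in $X^{\pm 1}$, yielding (a) and completing the induction.
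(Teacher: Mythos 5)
Your proof is correct and follows essentially the same approach as the paper: induction on the length of a $*$-product expression in p-chunks, with Lemma \ref{pchunkmultiplication} supplying the two-case analysis of how the new decomposition arises from that of $W'$. The only difference is presentational — the paper is terse about the collision word, while you spell out why $\Red(W'_{\lambda_0} U_{\lambda_1})$ is a reduction of a finite concatenation of pure p-chunks in both the type-(a) and type-(b) subcases, invoking Lemma \ref{reducedconsequence}; this is exactly the verification the paper leaves implicit.
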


\begin{proof}  The elements of $\langle \bigcup_{U \in X} \pchunk(U) \rangle$ are of form $\Red(U_0\cdots U_l)$ where each $U_i$ is a p-chunk of an element of $X^{\pm 1}$.  The claim will follow by an induction on the number $l$.  If $l = 0$ or $l = 1$ then we are already done.  Supposing that the claim holds for $l$, we suppose $W \equiv \Red(U_0\cdots U_{l + 1}) \equiv \Red(\Red(U_0\cdots U_l)U_{l+1})$ and let $W' \equiv \Red(U_0\cdots U_l)$ and $U \equiv U_{l+1}$.  Let $W' \equiv W_0W_1$ and $U \equiv U_0U_1$ as in Lemma \ref{reduced} for performing the reduction $\Red(W'U)$.  Let $W' \equiv_p \prod_{\lambda \in \Lambda} W_{\lambda}$ and $U \equiv_p \prod_{\lambda' \in \Lambda'}U_{\lambda}$. By induction we have for the word $W'$ a decomposition $I_0, \ldots, I_{n'}$ as in the conclusion of this lemma.  We can select an initial interval $I \subseteq \Lambda$ and a terminal interval $I'\subseteq \Lambda'$ as in the conclusion of Lemma \ref{pchunkmultiplication}.  Consider the two possible cases in Lemma \ref{pchunkmultiplication} for the word $W \equiv \Red(W'U)$.  If case (i) holds then we can decompose the p-chunk total order for $W$ into at most $n' + 1$ intervals as in (i)-(iii) of the statement of the lemma that we are proving.  If case (ii) holds then we can decompose the p-chunk total order for $W$ into at most $n'+ 2$ intervals, at least one of which will be a singleton.  Thus we are done.
\end{proof}

We say a subgroup $G$ of $\Red_{\kappa}$ is \emph{p-fine} if each p-chunk $U$ of each $W\in G$ is also in $G$ (cf. \cite[page 600]{E2}).

\begin{lemma}\label{fine}  If $X \subseteq \Red_{\kappa}$ then the subgroup $\langle \bigcup_{U\in X} \pchunk(U)\rangle \leq \Red_{\kappa}$ is p-fine.  This is the smallest p-fine subgroup including the set $X$.
\end{lemma}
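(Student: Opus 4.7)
The plan is to verify p-fineness using the structural description of elements of $\langle \bigcup_{U \in X}\pchunk(U)\rangle$ already established in Lemma \ref{elementsofthegeneratedsubgroup}, and then handle the minimality claim by a direct generation argument.

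First I would record an auxiliary transitivity property of the p-chunk relation. If $W \equiv_p \prod_{\lambda \in \Lambda} W_\lambda$ and $I \subseteq \Lambda$ is an interval, then the factors $W_\lambda$ with $\lambda \in I$ remain maximal pure subwords of $W \upharpoonright_p I$: a wider pure subword of $W \upharpoonright_p I$ would already be a wider pure subword of $W$, contradicting maximality. Hence $\pindex(W \upharpoonright_p I) \equiv I$ via the obvious map, and every p-chunk of $W \upharpoonright_p I$ has the form $W \upharpoonright_p J$ for a subinterval $J \subseteq I$, i.e.\ is again a p-chunk of $W$. I would also note that since $U \in \pchunk(U)$, each $U \in X$ lies in $G := \langle \bigcup_{U \in X}\pchunk(U)\rangle$; hence so does $U^{-1}$, whose p-chunks are precisely the inverses of p-chunks of $U$ and therefore also belong to $G$. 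Thus every p-chunk of every element of $X^{\pm 1}$ lies in $G$.

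Next I would prove p-fineness of $G$ as follows. Take any $W \in G$ with pure decomposition $W \equiv_p \prod_{\lambda \in \Lambda} W_\lambda$ and any p-chunk $W' \equiv W \upharpoonright_p J$ for some interval $J \subseteq \Lambda$. Apply Lemma \ref{elementsofthegeneratedsubgroup} to obtain a finite partition $\Lambda \equiv \prod_{i=0}^n I_i$ where each $I_i$ satisfies either (a) $I_i = \{\lambda\}$ with $W_\lambda \in G$ (being a reduction of a finite product of pure p-chunks of elements of $X^{\pm 1}$), or (b) $W \upharpoonright_p I_i$ is itself a p-chunk of some element of $X^{\pm 1}$. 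Setting $J_i := J \cap I_i$, each $J_i$ is an interval and $W' \equiv \prod_i (W \upharpoonright_p J_i)$, so it suffices to show each $W \upharpoonright_p J_i$ lies in $G$. In case (a), $J_i$ is either empty or equal to $I_i$, giving either the identity or $W_\lambda \in G$. In case (b), the transitivity of the p-chunk relation established above shows that $W \upharpoonright_p J_i$ is again a p-chunk of an element of $X^{\pm 1}$, and hence lies in $G$.

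For minimality, I would argue that any p-fine subgroup $G' \leq \Red_\kappa$ containing $X$ must, by p-fineness, contain every p-chunk of every $U \in X$, and therefore must contain the subgroup they generate; so $G \subseteq G'$. I do not anticipate any real obstacle here, since Lemma \ref{elementsofthegeneratedsubgroup} has already done the substantive work of exposing the block structure of elements of $G$; the only delicate point is the transitivity observation in the second paragraph, which is what lets one propagate the p-chunk property through the intersection $J \cap I_i$ in case (b).
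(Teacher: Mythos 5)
Your proof is correct and takes the same approach as the paper, which states that Lemma \ref{fine} "follows immediately from the characterization in Lemma \ref{elementsofthegeneratedsubgroup}"; you have simply filled in the details that the paper leaves implicit. The transitivity observation you flag (that p-chunks of p-chunks are p-chunks, because maximal pure blocks of $W$ restricted to an interval $I \subseteq \pindex(W)$ remain maximal in $W \upharpoonright_p I$, since adjacent blocks in the pure decomposition carry distinct first subscripts) is the right technical point to isolate, and the minimality argument is exactly the intended one.
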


\begin{proof}  This follows immediately from the characterization in Lemma \ref{elementsofthegeneratedsubgroup}.
\end{proof}

Given a set $X \subseteq \Red_{\kappa}$ we'll denote the subgroup $\langle \bigcup_{U\in X} \pchunk(U)\rangle \leq \Red_{\kappa}$ by $\Pfine(X)$.

\begin{lemma}\label{notmanypures}  If $X \subseteq \Red_{\kappa}$ then there are at most $(|X| +1) \cdot \aleph_0$ pure p-chunks of elements in $\Pfine(X)$.
\end{lemma}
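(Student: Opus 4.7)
The plan is to reduce the count of pure p-chunks of elements in $\Pfine(X)$ to a count of finite sequences drawn from the (easily bounded) set of pure p-chunks of elements of $X^{\pm 1}$, and then bound that cardinality directly.

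First I would observe the following structural fact. For any reduced word $W \equiv_p \prod_{\lambda \in \Lambda} W_\lambda$, adjacent factors $W_\lambda, W_{\lambda+1}$ must be $\alpha$-pure for different $\alpha$'s, by the maximality built into the p-decomposition. Consequently every pure p-chunk of $W$ is (up to $\equiv$) either empty or exactly one of the $W_\lambda$. So to count pure p-chunks of elements of $\Pfine(X)$ it suffices to count the $W_\lambda$'s that arise in p-decompositions of elements of $\Pfine(X)$.

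Next I would apply Lemma \ref{elementsofthegeneratedsubgroup}: for any nonempty $W \in \Pfine(X)$ the index $\Lambda = \pindex(W)$ splits as $\prod_{i = 0}^n I_i$, with each $I_i$ either a singleton $\{\lambda\}$ whose $W_\lambda$ is the reduction of a finite concatenation of pure p-chunks of elements of $X^{\pm 1}$, or else an interval on which $\prod_{\lambda \in I_i} W_\lambda$ is a p-chunk of some element of $X^{\pm 1}$. In the second case, each individual $W_\lambda$ (being pure) is in particular a pure p-chunk of an element of $X^{\pm 1}$; this trivially fits into the first case as the one-term concatenation. Hence every pure p-chunk of an element of $\Pfine(X)$ is, up to $\equiv$, the reduction of some finite concatenation of elements of the set $P$ of pure p-chunks of elements of $X^{\pm 1}$.

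It then remains only to bound $|P|$ and the number of reductions of finite concatenations from $P$. Since each word has countable domain, each $U \in X^{\pm 1}$ has at most countably many p-chunks (hence at most countably many pure p-chunks), giving $|P| \leq |X^{\pm 1}| \cdot \aleph_0 \leq (|X|+1)\cdot \aleph_0$. The set of finite sequences from $P$ has cardinality $\max(|P|,\aleph_0) \leq (|X|+1)\cdot \aleph_0$, and the $\Red(\cdot)$ operation is a function, so the collection of reductions of such concatenations has cardinality at most $(|X|+1)\cdot\aleph_0$, proving the bound. The only step that requires any real care is the reduction to finite concatenations of pure p-chunks of $X^{\pm 1}$, which is not an obstacle since it is essentially read off from Lemma \ref{elementsofthegeneratedsubgroup}; the rest is a cardinal arithmetic exercise.
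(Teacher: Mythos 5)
Your argument takes the same route as the paper's proof, which simply cites Lemma \ref{elementsofthegeneratedsubgroup} and notes that the pure p-chunks of elements of $\Pfine(X)$ are reductions of finite concatenations of pure p-chunks of elements of $X^{\pm 1}$; you have filled in the bookkeeping. There is one small inaccuracy in the cardinal count: the assertion that each $U \in X^{\pm 1}$ ``has at most countably many p-chunks'' is not true in general --- a word whose p-index has order type $\mathbb{Q}$ can have $2^{\aleph_0}$ pairwise inequivalent p-chunks, since intervals in $\mathbb{Q}$ correspond to pairs of Dedekind cuts and the resulting restrictions need not be $\equiv$-equivalent. The conclusion you actually need, that $U$ has only countably many \emph{pure} p-chunks, is correct for exactly the reason you gave in your opening paragraph: a pure p-chunk of $U$ must be one of the factors $U_\lambda$ in the p-decomposition, and there are only $|\pindex(U)| \leq \aleph_0$ of these. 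With that small repair the proof is sound and essentially identical to the paper's.
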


\begin{proof}  This is also immediate from Lemma \ref{elementsofthegeneratedsubgroup}, since the pure p-chunks in $\Pfine(X)$ are reductions of finite concatenations of pure p-chunks of elements in $X^{\pm 1}$.
\end{proof}

\end{subsection}

\begin{subsection}{Close Subsets}\label{closesubsets}  We take a diversion through a concept which will be useful in later subsections.

\begin{definition}  Let $\Lambda$ be a totally ordered set.  We say $\Lambda_0 \subseteq \Lambda$ is \emph{close in $\Lambda$}, and write $\Close(\Lambda_0, \Lambda)$, if every infinite interval in $\Lambda$ has nonempty intersection with $\Lambda_0$.  
\end{definition}

\begin{lemma}\label{basiccloseproperties}  The following hold:

\begin{enumerate}[(i)]

\item  If $\Close(\Lambda_0, \Lambda)$ then for any infinite interval $I \subseteq \Lambda$ the set $I \cap \Lambda_0$ is infinite.

\item  If $\Lambda_2 \subseteq \Lambda_1 \subseteq \Lambda_0$ with $\Close(\Lambda_{i+1}, \Lambda_i)$  for $i = 0, 1$, then $\Close(\Lambda_2, \Lambda_0)$.

\item  If $\Lambda \equiv \prod_{\theta \in \Theta} \Lambda_{\theta}$, $\Close(\{\theta \in \Theta \mid \Lambda_{\theta, 0} \neq \emptyset\}, \Theta)$, and $\Close(\Lambda_{\theta, 0}, \Lambda_{\theta})$ for each $\theta \in \Theta$ then $\Close(\bigcup_{\theta \in \Theta} \Lambda_{\theta, 0}, \Lambda)$.

\item  If  $I_0$ is an interval in $\Lambda$ and $\Close(\Lambda_0, \Lambda)$ then $\Close(\Lambda_0 \cap I_0, I_0)$

\end{enumerate}

\end{lemma}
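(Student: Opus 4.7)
The plan is to prove the four items in the order (i), (ii), (iv), (iii), with (iii) being the real content. For (i), I would argue by contradiction: if $I$ is an infinite interval of $\Lambda$ but $F = I \cap \Lambda_0$ is finite, then $I \setminus F$ is a union of at most $|F|+1$ pairwise disjoint subintervals of $I$ (hence of $\Lambda$); since their union is infinite, one of them must be infinite. This subinterval is an infinite interval of $\Lambda$ disjoint from $\Lambda_0$, contradicting $\Close(\Lambda_0,\Lambda)$.

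For (ii), given an infinite interval $I \subseteq \Lambda_0$, the set $I \cap \Lambda_1$ is an interval of $\Lambda_1$ (because $\Lambda_1 \subseteq \Lambda_0$ and $I$ is convex in $\Lambda_0$). By (i), $I \cap \Lambda_1$ is infinite, hence an infinite interval of $\Lambda_1$. Applying $\Close(\Lambda_2,\Lambda_1)$ yields a point in $I \cap \Lambda_1 \cap \Lambda_2 = I \cap \Lambda_2$. For (iv) the proof is essentially tautological: any interval $J$ of $I_0$ is already an interval of $\Lambda$, so an infinite such $J$ meets $\Lambda_0$ by $\Close(\Lambda_0,\Lambda)$, and since $J \subseteq I_0$ this intersection equals $J \cap (\Lambda_0 \cap I_0)$.

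The real work is (iii). The key structural fact is that in the ordered concatenation $\Lambda \equiv \prod_{\theta \in \Theta}\Lambda_{\theta}$ each $\Lambda_{\theta}$ is convex in $\Lambda$, so $I \cap \Lambda_{\theta}$ is an interval of both $\Lambda$ and $\Lambda_{\theta}$. Given an infinite interval $I \subseteq \Lambda$, I would consider $\Theta_I = \{\theta \in \Theta \mid I \cap \Lambda_{\theta} \neq \emptyset\}$ and let $J_I$ denote its convex hull in $\Theta$. One checks that for every $\theta \in J_I$ which is neither the minimum nor the maximum of $\Theta_I$ (in particular, for every $\theta \in J_I \setminus \Theta_I$, which forces $\Lambda_{\theta} = \emptyset$), the entire block $\Lambda_{\theta}$ lies inside $I$. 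I would then split on whether $J_I$ is infinite or finite. If $J_I$ is infinite, then removing its at most two extreme points of $\Theta_I$ leaves an infinite interval in $\Theta$, which by the hypothesis $\Close(\{\theta \mid \Lambda_{\theta,0}\neq \emptyset\}, \Theta)$ must contain some $\theta$ with $\Lambda_{\theta,0} \neq \emptyset$; for this $\theta$ we have $\Lambda_{\theta,0} \subseteq \Lambda_{\theta} \subseteq I$. If instead $J_I$ is finite, then $I \subseteq \bigsqcup_{\theta \in \Theta_I}(I \cap \Lambda_{\theta})$ is a finite union, so by pigeonhole $I \cap \Lambda_{\theta}$ is infinite for some $\theta \in \Theta_I$; this is an infinite interval of $\Lambda_{\theta}$, so $\Close(\Lambda_{\theta,0},\Lambda_{\theta})$ produces a point in $I \cap \Lambda_{\theta,0}$. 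Either case gives an element of $I \cap \bigcup_{\theta}\Lambda_{\theta,0}$, establishing closeness. The only subtlety I expect is being careful that the boundary indices of $\Theta_I$ (at most two of them) are handled correctly so that the "infinite $J_I$" branch yields a truly interior $\theta$.
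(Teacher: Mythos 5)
Your proposal is correct and follows essentially the same approach as the paper's proof: (i) decomposes $I\setminus(I\cap\Lambda_0)$ into finitely many subintervals and applies pigeonhole; (ii) chains via (i); (iv) is the observation that intervals of $I_0$ are intervals of $\Lambda$; and (iii) considers the set of indices $\theta$ for which $I$ meets $\Lambda_\theta$, splitting on whether it is finite (pigeonhole plus $\Close(\Lambda_{\theta,0},\Lambda_\theta)$) or infinite (apply $\Close(\{\theta\mid\Lambda_{\theta,0}\neq\emptyset\},\Theta)$ to find an interior $\theta$ with $\Lambda_\theta\subseteq I$). The only difference is cosmetic: you pass to the convex hull $J_I$ of $\Theta_I$ rather than asserting outright that $\Theta_I$ is already an interval of $\Theta$, which is a slightly more careful handling of the possibility that some $\Lambda_\theta$ are empty, but the substance of the argument is identical.
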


\begin{proof} (i)  If instead $I \cap \Lambda_0 = \{\lambda_0, \lambda_1, \ldots, \lambda_n\}$ with $\lambda_i < \lambda_{i + 1}$ then at least one of the intervals $I \cap (-\infty, \lambda_0)$, $(\lambda_0, \lambda_1)$, $\ldots$, $(\lambda_{n-1}, \lambda_n)$, $I \cap (\lambda_n, \infty)$ in $\Lambda$ is infinite, but each has empty intersection with $\Lambda_0$ and this is a contradiction.

\noindent (ii) Let $I \subseteq \Lambda_0$ be an infinite interval.  Notice that $I \cap \Lambda_1$ is infinite by (i) and so $I \cap \Lambda_1$ is an infinite interval in $\Lambda_1$, so $I \cap \Lambda_2 = (I \cap \Lambda_1) \cap \Lambda_2 \neq \emptyset$.

\noindent (iii)  Let $I \subseteq \Lambda$ be an infinite interval.  The set $I_0 = \{\theta \in \Theta \mid I \cap \Lambda_{\theta} \neq \emptyset\}$ is an interval in $\Theta$.  If $I_0$ is finite then as $I = \bigsqcup_{\theta \in I_0} (I \cap \Lambda_{\theta})$ there is some $\theta_0 \in I_0$ for which $|I \cap \Lambda_{\theta_0}| = \infty$, and as $I \cap \Lambda_{\theta_0}$ is an infinite interval in $\Lambda_{\theta_0}$ we see that $I \cap \Lambda_{\theta_0, 0} \neq \emptyset$, so $I \cap \bigcup_{\theta \in \Theta} \Lambda_{\theta, 0} \neq \emptyset$.  If $I_0$ is infinite then $I_0 \cap \{\theta \in \Theta \mid \Lambda_{\theta, 0} \neq \emptyset\}$ is infinite by (i), as we are assuming $\Close(\{\theta \in \Theta \mid  \Lambda_{\theta, 0} \neq \emptyset\}, \Theta)$.  Then there exists some $\theta_0 \in I_0 \cap \{\theta \in \Theta \mid  \Lambda_{\theta, 0} \neq \emptyset\}$ for which $I \supseteq \Lambda_{\theta_0}$.  Thus $I \cap \Lambda_{\theta_0, 0} \neq \emptyset$.

\noindent (iv)  This is obvious.
\end{proof}

If $\Close(\Lambda_0, \Lambda)$ then for each interval $I \subseteq \Lambda$ we let $\varpropto(I, \Lambda_0)$ denote the smallest interval in $\Lambda$ which includes the set $I \cap \Lambda_0$.  In other words $\varpropto(I, \Lambda_0) =  \bigcup_{\lambda_0, \lambda_1 \in I \cap \Lambda_0, \lambda_0 \leq \lambda_1} [\lambda_0, \lambda_1]$ where the intervals $[\lambda_0, \lambda_1]$ are being considered in $\Lambda$.

\begin{lemma}\label{prettyclose} Let $\Close(\Lambda_0, \Lambda)$ and $I \subseteq \Lambda$ be an interval.

\begin{enumerate}[(i)]

\item The inclusion $I \supseteq \varpropto(I, \Lambda_0)$ holds and $\varpropto(I, \Lambda_0) = \varpropto(\varpropto(I, \Lambda_0), \Lambda_0)$.

\item  The set $I \setminus \varpropto(I, \Lambda_0)$ is the disjoint union of an initial and terminal subinterval $I_0, I_1 \subseteq I$ (either subinterval could be empty) with $|I_0|, |I_1| < \infty$.
\end{enumerate}

\end{lemma}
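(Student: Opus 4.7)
My plan is to unpack the explicit formula $\varpropto(I, \Lambda_0) = \bigcup_{\lambda_0 \leq \lambda_1 \text{ in } I \cap \Lambda_0} [\lambda_0, \lambda_1]$ and invoke the hypothesis $\Close(\Lambda_0, \Lambda)$ only at the very end, in order to rule out certain infinite intervals.

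For (i), the inclusion $I \supseteq \varpropto(I, \Lambda_0)$ is immediate: $I$ is itself an interval of $\Lambda$ containing $I \cap \Lambda_0$, and $\varpropto(I, \Lambda_0)$ is by definition the smallest such interval. For the idempotency, I would first check the equality $\varpropto(I, \Lambda_0) \cap \Lambda_0 = I \cap \Lambda_0$: the $\supseteq$ direction follows because $\varpropto(I, \Lambda_0)$ contains $I \cap \Lambda_0$ by construction, and the $\subseteq$ direction follows from $\varpropto(I, \Lambda_0) \subseteq I$ just established. Since the smallest interval in $\Lambda$ containing $\varpropto(I, \Lambda_0) \cap \Lambda_0$ equals the smallest interval containing $I \cap \Lambda_0$, part (i) is finished.

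For (ii), I would let $I_0$ be the set of elements of $I$ lying strictly below every element of $I \cap \Lambda_0$, and $I_1$ those lying strictly above. These are disjoint, and from the explicit union formula for $\varpropto(I, \Lambda_0)$ one sees directly that $I \setminus \varpropto(I, \Lambda_0) = I_0 \sqcup I_1$. The key observation is that each $I_j$ is in fact an interval of $\Lambda$ (not merely of $I$): since $I$ is convex in $\Lambda$, and $I_0$ (respectively $I_1$) is downward-closed (resp.\ upward-closed) in $I$, a short convexity check shows that $I_j$ is convex in $\Lambda$. By construction $I_j \cap \Lambda_0 = \emptyset$, so if $I_j$ were infinite then it would furnish an infinite interval of $\Lambda$ disjoint from $\Lambda_0$, contradicting $\Close(\Lambda_0, \Lambda)$. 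Hence $|I_0|, |I_1| < \infty$.

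The only mild subtlety is the degenerate case $I \cap \Lambda_0 = \emptyset$, where $\varpropto(I, \Lambda_0) = \emptyset$; in this situation one takes $I_0 = I$ and $I_1 = \emptyset$, and closeness again forces $I$ itself to be finite. Beyond this bookkeeping, I expect no real obstacle: the argument is essentially just the observation that convex complements inside a convex set are initial/terminal, and that closeness forbids such complements from being infinite.
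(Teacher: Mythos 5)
Your proof is correct and follows essentially the same route as the paper's. For part (i) you phrase things slightly more conceptually (via the observation $\varpropto(I,\Lambda_0)\cap\Lambda_0 = I\cap\Lambda_0$, which immediately forces the two ``smallest intervals'' to coincide) whereas the paper verifies $\varpropto(I,\Lambda_0)\subseteq\varpropto(\varpropto(I,\Lambda_0),\Lambda_0)$ element-by-element, but these are the same observation; part (ii), including the degenerate case $I\cap\Lambda_0=\emptyset$ and the convexity-plus-closeness argument, matches the paper.
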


\begin{proof} (i)  The claimed inclusion is obvious.  For the claimed equality it is therefore sufficient to prove that $\varpropto(I, \Lambda_0) \subseteq \varpropto(\varpropto(I, \Lambda_0), \Lambda_0)$.  We let $\lambda \in \varpropto(I, \Lambda_0)$ be given.  Select $\lambda_0, \lambda_1 \in I \cap \Lambda_0$ such that $\lambda_0 \leq \lambda \leq \lambda_1$.  Then $\lambda_0, \lambda_1 \in \varpropto(I, \Lambda_0) \cap \Lambda_0$ and $\lambda_0 \leq \lambda \leq \lambda_1$, so $\lambda \in \varpropto(\varpropto(I, \Lambda_0), \Lambda_0)$.

(ii)   If $I \cap \Lambda_0 = \emptyset$ then $I$ is finite (since $\Close(\lambda_0, \Lambda)$) and we can let $I_0 = \emptyset$ and $I_1 = I$.  If $I \cap \Lambda_0 \neq \emptyset$ then we let $I_0 = \{\lambda \in I \mid (\forall \lambda_0 \in I \cap \Lambda_0)\lambda < \lambda_0\}$ and $I_1 = \{\lambda \in I \mid (\forall \lambda_0 \in I \cap \Lambda_0)\lambda > \lambda_0\}$.  Clearly $I \equiv I_0\varpropto(I, \Lambda_0)I_1$.  Each of $I_0$ and $I_1$ is a subinterval of $I$ and therefore a subinterval of $\Lambda$ as well.  If, say, $I_0$ is infinite then $I_0 \cap \Lambda_0 \neq \emptyset$ but this is an obvious contradiction.

\end{proof}

We will say that two totally ordered sets $\Lambda$ and $\Theta$ are \emph{close-isomorphic} if there exist $\Lambda_0 \subseteq \Lambda$ and $\Theta_0 \subseteq \Theta$ with $\Close(\Lambda_0, \Lambda)$, $\Close(\Theta_0, \Theta)$ and $\Lambda_0$ order isomorphic to $\Theta_0$; and if $\iota$ is an order isomorphism between such a $\Lambda_0$ and $\Theta_0$ then we will call $\iota$ a \emph{close order isomorphism from $\Lambda$ to $\Theta$}.  It is obvious that the inverse of a close order isomorphism from $\Lambda$ to $\Theta$ is a close order isomorphism from $\Theta$ to $\Lambda$.

From a close order isomorphism (abbreviated \emph{coi}) between totally ordered sets one obtains a reasonable way of identifying intervals in one totally ordered set with intervals in the other, which we now describe.  Given coi $\iota$ between $\Lambda$ and $\Theta$, with $\Lambda_0$ and $\Theta_0$ being the respective domain and range of $\iota$, and an interval $I \subseteq \Lambda$ we let $\varpropto(I, \iota)$ denote the smallest interval in $\Theta$ which includes the set $\iota(I \cap \Lambda_0)$.  Thus  $\varpropto(I, \iota) = \bigcup_{\theta_0, \theta_1 \in \iota(I \cap \Lambda_0), \theta_0 \leq \theta_1} [\theta_0, \theta_1]$, where each interval $[\theta_0, \theta_1]$ is being considered in $\Theta$.

\begin{lemma}\label{almostidentified}  If $\iota$ is a coi between $\Lambda$ and $\Theta$ and $I \subseteq \Lambda$ is an interval then $\varpropto(\varpropto(I, \iota), \iota^{-1}) = \varpropto(I, \Lambda_0)$, where $\iota: \Lambda_0 \rightarrow \Theta_0$.
\end{lemma}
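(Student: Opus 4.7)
The plan is to reduce the desired equality to the single set equality $\iota^{-1}(\varpropto(I,\iota) \cap \Theta_0) = I \cap \Lambda_0$, since by definition both $\varpropto(\varpropto(I,\iota), \iota^{-1})$ and $\varpropto(I, \Lambda_0)$ are defined as the smallest interval in $\Lambda$ containing the respective set inside $\Lambda_0$ (namely $\iota^{-1}(\varpropto(I,\iota)\cap \Theta_0)$ and $I \cap \Lambda_0$). Once those two sets in $\Lambda_0$ are shown to coincide, the minimal interval containing them in $\Lambda$ is automatically the same, finishing the proof.

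For the inclusion $I \cap \Lambda_0 \subseteq \iota^{-1}(\varpropto(I,\iota)\cap \Theta_0)$, I would simply note that if $\lambda \in I \cap \Lambda_0$ then $\iota(\lambda) \in \iota(I \cap \Lambda_0) \subseteq \varpropto(I,\iota)$, and also $\iota(\lambda) \in \Theta_0$ since $\iota$ has codomain $\Theta_0$; hence $\lambda \in \iota^{-1}(\varpropto(I,\iota) \cap \Theta_0)$. This direction is essentially tautological from the definitions.

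For the reverse inclusion $\iota^{-1}(\varpropto(I,\iota) \cap \Theta_0) \subseteq I \cap \Lambda_0$, I would pick $\theta \in \varpropto(I,\iota) \cap \Theta_0$ and use the characterization $\varpropto(I,\iota) = \bigcup_{\theta_0,\theta_1 \in \iota(I \cap \Lambda_0),\ \theta_0 \leq \theta_1}[\theta_0,\theta_1]$ to produce $\lambda_0, \lambda_1 \in I \cap \Lambda_0$ with $\iota(\lambda_0) \leq \theta \leq \iota(\lambda_1)$. Applying the order-preserving bijection $\iota^{-1}$ yields $\lambda_0 \leq \iota^{-1}(\theta) \leq \lambda_1$; since $I$ is an interval in $\Lambda$, this gives $\iota^{-1}(\theta) \in I$, and $\iota^{-1}(\theta) \in \Lambda_0$ automatically. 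Therefore $\iota^{-1}(\theta) \in I \cap \Lambda_0$, as required.

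There is no real obstacle here; the argument is a direct unpacking of the definitions, with the only substantive fact being that $\iota$ is an order isomorphism so the preimage of the convex hull (inside $\Theta_0$) of $\iota(I \cap \Lambda_0)$ is exactly the convex hull (inside $\Lambda_0$) of $I \cap \Lambda_0$, which collapses back to $I \cap \Lambda_0$ by convexity of $I$. If anything, one should be a little careful to distinguish ``smallest interval in $\Lambda$'' from ``smallest interval in $\Lambda_0$,'' but this distinction is handled cleanly by appealing to the definition of $\varpropto$ on both sides.
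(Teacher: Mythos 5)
Your argument is correct and is exactly the "straightforward" unpacking that the paper's one-word proof alludes to: reduce the minimal-interval equality to the set equality $\iota^{-1}(\varpropto(I,\iota) \cap \Theta_0) = I \cap \Lambda_0$, with the only substantive point being that $\iota$ is an order isomorphism and $I$ is convex.
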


\begin{proof}  Straightforward.
\end{proof}

We point out that a coi $\iota$ between $\Lambda$ and $\Theta$ also induces a coi between the reversed orders $\Lambda^{-1}$ and $\Theta^{-1}$ in the obvious way.

\begin{lemma}\label{coilemma}  Let $I \equiv I_0 \cdots I_n$ and $\iota$ a coi from $I$ to $I'$.  Then there exist (possibly empty) finite subintervals $I_0', \ldots, I_{n+1}'$ of $\varpropto(I, \iota)$ such that 

\begin{center}

$\varpropto(I, \iota) \equiv I_0'\varpropto(I_0, \iota) I_1' \varpropto(I_1, \iota) I_2' \cdots \varpropto(I_n, \iota) I_{n+1}'$.

\end{center}
\end{lemma}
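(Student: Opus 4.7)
The plan is to write $\varpropto(I, \iota)$ as a disjoint union of the images $\varpropto(I_k, \iota)$ together with a few small ``gap'' pieces, and then to use $\Close(\Theta_0, I')$ to force those gap pieces to be finite.

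Concretely, let $\iota\colon \Lambda_0 \to \Theta_0$ with $\Close(\Lambda_0, I)$ and $\Close(\Theta_0, I')$. Write $J = \varpropto(I, \iota)$ and $J_k = \varpropto(I_k, \iota)$ for $0 \le k \le n$. First I would verify the identity $J \cap \Theta_0 = \iota(I \cap \Lambda_0)$ and, for each $k$, $J_k \cap \Theta_0 = \iota(I_k \cap \Lambda_0)$. The inclusion $\supseteq$ is immediate from the definition of $\varpropto$; for the reverse inclusion, given $\theta \in J \cap \Theta_0$ one picks $\theta_0, \theta_1 \in \iota(I \cap \Lambda_0)$ sandwiching $\theta$, so that $\iota^{-1}(\theta)$ lies in $\Lambda_0$ between two elements of $I \cap \Lambda_0$, forcing $\iota^{-1}(\theta) \in I$ since $I$ is an interval. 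Since the $I_k$ partition $I$ in order and $\iota$ is order-preserving, the sets $\iota(I_k \cap \Lambda_0)$ partition $\iota(I \cap \Lambda_0)$ in the correct order.

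Next I would check that the (possibly empty) intervals $J_0, \ldots, J_n$ in $J$ are pairwise disjoint and appear in the correct order: for $k_0 < k_1$ with $J_{k_0}, J_{k_1}$ both nonempty, every $\theta \in J_{k_0}$ lies at most at $\max \iota(I_{k_0} \cap \Lambda_0)$ and every $\theta' \in J_{k_1}$ lies at least at $\min \iota(I_{k_1} \cap \Lambda_0)$; order-preservation of $\iota$ then gives $\theta < \theta'$. Consequently, writing $G = J \setminus \bigsqcup_k J_k$, the set $G$ decomposes into at most $n+1$ maximal subintervals of $J$, namely $I_0'$ consisting of elements below every nonempty $J_k$, $I_{n+1}'$ consisting of elements above every nonempty $J_k$, and for $1 \le k \le n$ the subinterval $I_k'$ consisting of elements above every nonempty $J_j$ with $j < k$ and below every nonempty $J_j$ with $j \ge k$. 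By construction the concatenation $I_0' J_0 I_1' J_1 \cdots J_n I_{n+1}'$ reproduces $J$ in order.

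It remains to show each $I_k'$ is finite. But each $I_k'$ is an interval in $J$, hence an interval in $I'$, and $I_k' \cap \Theta_0 \subseteq G \cap \Theta_0 = \emptyset$ because $J \cap \Theta_0 = \bigsqcup_k (J_k \cap \Theta_0) \subseteq \bigsqcup_k J_k$. Since $\Close(\Theta_0, I')$, any infinite interval in $I'$ meets $\Theta_0$, so $I_k'$ must be finite, completing the proof.

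The only mildly delicate point is keeping the bookkeeping correct when some of the $J_k$ are empty, but the definition of $I_k'$ above in terms of the nonempty $J_j$ handles this uniformly, so there is no substantive obstacle; everything reduces to the closeness of $\Theta_0$ in $I'$ together with the structural identities for $\varpropto$.
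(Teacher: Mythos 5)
Your proof is correct and matches the paper's argument: decompose $\varpropto(I,\iota)$ into the correctly-ordered pieces $\varpropto(I_k,\iota)$ plus gap intervals, and conclude each gap is finite because it misses $\Theta_0$ while $\Close(\Theta_0,I')$ holds. The one wrinkle is that your explicit description of $I_k'$ can yield coinciding intervals when some $\varpropto(I_k,\iota)$ are empty, so they do not literally give a concatenation without arbitrarily splitting the larger gap; you flag this, and the paper is no more explicit about the bookkeeping.
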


\begin{proof}  Assume the hypotheses and let $\Close(\Lambda, I)$ and $\Close(\Lambda', I')$ with $\iota: \Lambda \rightarrow \Lambda'$ being an order isomorphism.  Clearly each $\varpropto(I_j, \iota)$ is a subinterval of $\varpropto(I, \iota)$, and it is easy to see that all elements of $\varpropto(I_j, \iota)$ are strictly below all elements of $\varpropto(I_{j+1}, \iota)$ for $0 \leq j < n$.  Thus we may indeed write 

\begin{center}

$\varpropto(I, \iota) \equiv I_0'\varpropto(I_0, \iota) I_1' \varpropto(I_1, \iota) I_2' \cdots \varpropto(I_n, \iota) I_{n+1}'$

\end{center}

\noindent and we conclude by pointing out that $I_l' \cap \Lambda' = I_l' \cap \iota(\Lambda) = I_l' \cap (\bigcup_{j = 0}^n \iota(I_j \cap \Lambda)) \subseteq \bigcup_{j = 0}^n (I_l' \cap \varpropto(I_j, \iota)) = \emptyset$ for each $0 \leq l \leq n+1$, and since $\Close(\Lambda', I')$ we have $I_l'$ finite.
\end{proof}

\begin{lemma}\label{coilemma2}  Let $\iota$ be a coi from $I$ to $I'$.  If $I_0 \subseteq I$ is finite then $\varpropto(I_0, \iota)$ is finite.
\end{lemma}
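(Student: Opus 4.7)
The plan is to argue by contradiction, leveraging part (i) of Lemma \ref{basiccloseproperties} (every infinite interval of the ambient order meets the close subset in an infinite set) together with the order-preserving nature of $\iota$. Write $\iota: \Lambda \to \Lambda'$ with $\Close(\Lambda, I)$ and $\Close(\Lambda', I')$. Since $I_0$ is a finite interval, $I_0 \cap \Lambda$ is finite; if it is empty, then $\varpropto(I_0, \iota) = \emptyset$ and we are done, so assume it is nonempty and let $\lambda_{\min}$ and $\lambda_{\max}$ be its minimum and maximum. Because $\iota$ is order-preserving, $\iota(I_0 \cap \Lambda)$ has minimum $\iota(\lambda_{\min})$ and maximum $\iota(\lambda_{\max})$, so the smallest interval of $I'$ containing $\iota(I_0 \cap \Lambda)$ is exactly $[\iota(\lambda_{\min}), \iota(\lambda_{\max})]$ in $I'$, i.e. $\varpropto(I_0, \iota) = [\iota(\lambda_{\min}), \iota(\lambda_{\max})]$.

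Now suppose for contradiction that $\varpropto(I_0, \iota)$ is infinite. Since $\Close(\Lambda', I')$, Lemma \ref{basiccloseproperties}(i) gives that $\varpropto(I_0, \iota) \cap \Lambda'$ is infinite, so its $\iota$-preimage is an infinite subset of $\Lambda$. Because $\iota$ is an order isomorphism from $\Lambda$ to $\Lambda'$, this preimage is $[\lambda_{\min}, \lambda_{\max}]_I \cap \Lambda$ (the interval in $I$ intersected with $\Lambda$). But $I_0$ is an interval of $I$ containing both $\lambda_{\min}$ and $\lambda_{\max}$, so $[\lambda_{\min}, \lambda_{\max}]_I \subseteq I_0$, hence $I_0 \cap \Lambda$ is infinite, contradicting finiteness of $I_0$.

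Thus $\varpropto(I_0, \iota)$ is finite. I do not foresee any real obstacle: the only subtlety is making sure to pass through the close subset $\Lambda'$ and use the order-preserving identification with $\Lambda$ so that the interval hypothesis on $I_0$ actually bites, which is the step where finiteness of $I_0$ is consumed.
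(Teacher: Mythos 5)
Your proof is correct and is essentially the same argument as the paper's, just packaged as a proof by contradiction: in both cases the key step is identifying $\varpropto(I_0, \iota) \cap \Lambda'$ with $\iota(I_0 \cap \Lambda)$ (using that $I_0$ is an interval and $\iota$ an order isomorphism), so that this intersection is finite, and then invoking Lemma \ref{basiccloseproperties}(i) via $\Close(\Lambda', I')$ to conclude $\varpropto(I_0, \iota)$ is finite.
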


\begin{proof}  Let $\Close(\Lambda, I)$ and $\Close(\Lambda', I')$ and $\iota: \Lambda \rightarrow \Lambda'$ be an order isomorphism.  Since $I_0$ is finite, we know $I_0 \cap \Lambda$ is finite.  Clearly we have $\varpropto(I_0, \iota) \cap \Lambda' = \iota(I_0 \cap \Lambda)$, so $\varpropto(I_0, \iota)$ is an interval in $I'$ having finite intersection with $\Lambda'$.  Thus $\varpropto(I_0, \iota)$ is finite by Lemma \ref{basiccloseproperties} (i).
\end{proof}

\end{subsection}

\begin{subsection}{Coherent coi triples}\label{coherentcoitriplessubsect}  Suppose that $\kappa_0$ and $\kappa_1$ are cardinal numbers greater than or equal to $2$.  For words $W \in \Red_{\kappa_0}$ and $U \in \Red_{\kappa_1}$ we'll write $\coi(W, \iota, U)$ to denote that $\iota$ is a coi between $\pindex(W)$ and $\pindex(U)$ and say that $\coi(W, \iota, U)$ is a \emph{coi triple from $\Red_{\kappa_0}$ to $\Red_{\kappa_1}$}.  We will often abuse language and say that $\iota$ is a coi from $W$ to $U$ when really $\iota$ is a coi from $\pindex(W)$ to $\pindex(U)$.

\begin{definition}  A collection $\{\coi(W_x, \iota_x, U_x)\}_{x\in X}$ of coi triples from $\Red_{\kappa_0}$ to $\Red_{\kappa_1}$ is \emph{coherent} if for any choice of $x_0, x_1 \in X$, intervals $I_0 \subseteq \pindex(W_{x_0})$ and $I_1 \subseteq \pindex(W_{x_1})$ and $i \in \{-1, 1\}$ such that $W_{x_0} \upharpoonright_p I_0 \equiv (W_{x_1}\upharpoonright_p I_1)^i$ we get

\begin{center}

$[[U_{x_0}\upharpoonright_p \varpropto(I_0, \iota_{x_0})]] = [[(U_{x_1}\upharpoonright_p \varpropto(I_1, \iota_{x_1}))^i]]$

\end{center}
 
\noindent and similarly for any choice of $x_2, x_3 \in X$, intervals $I_2 \subseteq \pchunk(U_{x_2})$ and $I_3 \subseteq \pchunk(U_{x_3})$ and $j \in \{-1, 1\}$ such that $U_{x_2} \upharpoonright_p I_2 \equiv (U_{x_3} \upharpoonright_p I_3)^j$ we get

\begin{center}

$[[W_{x_2} \upharpoonright_p \varpropto(I_2, \iota_{x_2}^{-1})]] = [[(W_{x_3} \upharpoonright_p \varpropto(I_3, \iota_{x_3}^{-1}))^j]]$.

\end{center}

\noindent It is clear from the symmetric nature of this definition that if collection of coi triples $\{\coi(W_x, \iota_x, U_x)\}_{x\in X}$  from $\Red_{\kappa_0}$ to $\Red_{\kappa_1}$ is coherent then so also is the collection of coi triples $\{\coi(U_x, \iota_x^{-1} W_x)\}_{x\in X}$ from $\Red_{\kappa_1}$ to $\Red_{\kappa_0}$.  We emphasize that a word can appear multiple times in a coherent collection.  For example, if each element of $\{W_x\}_{x \in X}$ is pure then the collection $\{(W_x, \iota_x, E)\}_{x \in X}$ is obviously coherent (each $\iota_x$ is the empty function).

\end{definition}

\begin{lemma}\label{ascendingchaincoi}  Suppose that $\Theta$ is a totally ordered set and that $\{\mathcal{T}_{\theta}\}_{\theta \in \Theta}$ is a collection of coherent collections of coi triples from $\Red_{\kappa_0}$ to $\Red_{\kappa_1}$ such that $\theta \leq \theta'$ implies $\mathcal{T}_{\theta} \subseteq \mathcal{T}_{\theta'}$.  Then $\bigcup_{\theta \in \Theta} \mathcal{T}_{\theta}$ is coherent.
\end{lemma}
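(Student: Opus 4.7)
The plan is to reduce verification of the coherence condition for $\bigcup_{\theta \in \Theta} \mathcal{T}_{\theta}$ to coherence of a single $\mathcal{T}_{\theta}$, using the fact that the defining condition for coherence only involves two coi triples at a time together with the directedness (in fact, total orderedness) of the chain $\{\mathcal{T}_{\theta}\}_{\theta \in \Theta}$.

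First I would unpack what needs to be shown: given any two coi triples $\coi(W_{x_0}, \iota_{x_0}, U_{x_0})$ and $\coi(W_{x_1}, \iota_{x_1}, U_{x_1})$ in $\bigcup_{\theta \in \Theta} \mathcal{T}_{\theta}$, together with intervals $I_0 \subseteq \pindex(W_{x_0})$, $I_1 \subseteq \pindex(W_{x_1})$ and $i \in \{-1, 1\}$ with $W_{x_0} \upharpoonright_p I_0 \equiv (W_{x_1} \upharpoonright_p I_1)^i$, we need the equality of $[[U_{x_0}\upharpoonright_p \varpropto(I_0, \iota_{x_0})]]$ and $[[(U_{x_1}\upharpoonright_p \varpropto(I_1, \iota_{x_1}))^i]]$ in $\Co_{\kappa_1}$, and symmetrically for the dual condition.

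Next I would apply the ascending chain hypothesis: by assumption, each of the two chosen coi triples lies in some $\mathcal{T}_{\theta_0}$ and $\mathcal{T}_{\theta_1}$ respectively. Since $\Theta$ is totally ordered we can take $\theta^* = \max\{\theta_0, \theta_1\}$, and the nesting property $\mathcal{T}_{\theta} \subseteq \mathcal{T}_{\theta'}$ for $\theta \leq \theta'$ puts both triples in the single coherent collection $\mathcal{T}_{\theta^*}$. The coherence of $\mathcal{T}_{\theta^*}$ then immediately gives the required equality in $\Co_{\kappa_1}$. The symmetric condition on pairs $(U_{x_2}, U_{x_3})$ is handled identically by the same two-element choice.

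This lemma should be essentially routine and I do not anticipate any real obstacle; the only thing worth being slightly careful about is that the coherence condition as stated quantifies over pairs drawn from a single collection, so we genuinely need the chain to be directed enough that any two members lie in a common stage, which is provided for free by $\Theta$ being totally ordered. The argument will be a single short paragraph in the paper.
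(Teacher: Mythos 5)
Your proof is correct and takes the same route as the paper's: pick, for any two coi triples appearing in the union, a single stage $\mathcal{T}_{\theta^*}$ containing both (possible because $\Theta$ is totally ordered and the chain is nested), and invoke coherence of that one collection. Nothing further to add.
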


\begin{proof}  Supposing that $\coi(W_{x_0}, \iota_{x_0}, U_{x_0}), \coi(W_{x_1}, \iota_{x_1}, U_{x_1}) \in \bigcup_{\theta \in \Theta} \mathcal{T}_{\theta}$ and intervals $I_0 \subseteq \pindex(W_{x_0})$ and $I_1 \subseteq \pindex(W_{x_1})$ and $i\in \{-1, 1\}$ are such that $W_{x_0} \upharpoonright_p I_0 \equiv (W_{x_1}\upharpoonright_p I_1)^i$, we select $\theta \in \Theta$ such that $\coi(W_{x_0}, \iota_{x_0}, U_{x_0}), \coi(W_{x_1}, \iota_{x_1}, U_{x_1}) \in \mathcal{T}_{\theta}$.  As $\mathcal{T}_{\theta}$ is coherent we get 

\begin{center}

$[[U_{x_0}\upharpoonright_p \varpropto(I_0, \iota_{x_0})]] = [[(U_{x_1}\upharpoonright_p \varpropto(I_1, \iota_{x_1}))^i]]$

\end{center}

\noindent  The comparable check for words $U_{x_2}, U_{x_3} \in \Red_{\kappa_1}$ is analogous.

\end{proof}

\begin{lemma}\label{gettingstarted}  Suppose $\{\coi(W_x, \iota_x, U_x)\}_{x\in X}$ is coherent, $x\in X$, $I \subseteq \pindex(W_x)$ is an interval, $I \equiv I_0I_1 \cdots I_n$.  Suppose also that for each $0 \leq j \leq n$ we have an $x_j \in X$, an interval $I_j'$ in $\pindex(W_{x_j})$ and $i_j \in \{-1, 1\}$ such that $W_x \upharpoonright_p I_j \equiv (W_{x_j} \upharpoonright_p I_j')^{i_j}$.  Then

\begin{center}  $[[U_x \upharpoonright_p \varpropto(I, \iota_x)]] = \prod_{j = 0}^n [[(U_{x_j} \upharpoonright_p \varpropto(I_j', \iota_{x_j}))^{i_j}]]$.

\end{center}

Furthermore, if $L = \{0 \leq j \leq n \mid |I_j| > 1\}$ we have

\begin{center}  $[[U_x \upharpoonright_p \varpropto(I, \iota_x)]] = \prod_{j \in L} [[(U_{x_j} \upharpoonright_p \varpropto(I_j', \iota_{x_j}))^{i_j}]]$.

\end{center}

\end{lemma}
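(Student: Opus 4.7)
The plan is to derive the main formula from Lemma~\ref{coilemma} (which breaks $\varpropto(I,\iota_x)$ into pieces aligned with the intervals $I_j$) combined with the coherence hypothesis and the fact that any finite concatenation of pure p-chunks of $U_x$ or of some $U_{x_j}$ represents the identity in $\Co_{\kappa_1}$. The ``furthermore'' clause will then follow by showing the factors indexed by $j$ with $|I_j|\le 1$ are themselves trivial and may be omitted from the product.

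First I would invoke Lemma~\ref{coilemma} on the restriction of $\iota_x$ (viewed as a coi from $I$ to $\varpropto(I,\iota_x)$) together with the decomposition $I\equiv I_0\cdots I_n$, obtaining finite ``gap'' subintervals $J_0,J_1,\dots,J_{n+1}$ of $\varpropto(I,\iota_x)$ so that
\[ \varpropto(I,\iota_x)\equiv J_0\,\varpropto(I_0,\iota_x)\,J_1\,\varpropto(I_1,\iota_x)\,J_2\cdots \varpropto(I_n,\iota_x)\,J_{n+1}. \]
Restricting $U_x$ through this decomposition yields a corresponding factorization of $U_x\upharpoonright_p\varpropto(I,\iota_x)$. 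Each $J_l$ is a finite interval in $\pindex(U_x)$, so $U_x\upharpoonright_p J_l$ is a finite concatenation of pure p-chunks of $U_x$ and therefore represents the identity in $\Co_{\kappa_1}$. Hence
\[ [[U_x\upharpoonright_p\varpropto(I,\iota_x)]]=\prod_{j=0}^n[[U_x\upharpoonright_p\varpropto(I_j,\iota_x)]]. \]
Applying coherence to each identity $W_x\upharpoonright_p I_j\equiv (W_{x_j}\upharpoonright_p I_j')^{i_j}$ replaces every factor by $[[(U_{x_j}\upharpoonright_p\varpropto(I_j',\iota_{x_j}))^{i_j}]]$, which is the first claimed formula.

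For the ``furthermore'' clause I would show that every factor indexed by $j\notin L$ is trivial. If $I_j=\emptyset$, then $W_x\upharpoonright_p I_j\equiv E$ forces $I_j'=\emptyset$ and the factor is $[[E]]$. If $|I_j|=1$, then $W_x\upharpoonright_p I_j$ is a single maximal pure p-chunk of $W_x$, so $(W_{x_j}\upharpoonright_p I_j')^{i_j}$ is pure; maximality of the p-decomposition of $W_{x_j}$ forces any two adjacent elements of $\pindex(W_{x_j})$ to yield p-chunks of distinct first subscripts (else they would merge into a single maximal pure piece), so a concatenation of two or more consecutive p-chunks of $W_{x_j}$ is never pure, forcing $|I_j'|\le 1$. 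In either case $I_j'$ is finite, so by Lemma~\ref{coilemma2} the interval $\varpropto(I_j',\iota_{x_j})$ is finite, making $U_{x_j}\upharpoonright_p\varpropto(I_j',\iota_{x_j})$ a finite concatenation of pure p-chunks of $U_{x_j}$ and thus trivial in $\Co_{\kappa_1}$. Deleting these factors from the product yields the second formula.

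The main obstacle is the maximality argument in the last paragraph showing $|I_j|\le 1\Rightarrow |I_j'|\le 1$; once this is in hand, both formulas reduce to routine bookkeeping with Lemmas~\ref{coilemma} and~\ref{coilemma2} and the defining property of $\Co_{\kappa_1}$.
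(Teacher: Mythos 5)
Your proof is correct and follows essentially the same route as the paper: use Lemma \ref{coilemma} (together with the fact that finite concatenations of pure p-chunks are trivial in the cone group) to split $[[U_x\upharpoonright_p\varpropto(I,\iota_x)]]$ into the product over $j$, apply coherence to each factor, and then drop the $j\notin L$ factors via Lemma \ref{coilemma2}. The only cosmetic difference is your more elaborate justification that $|I_j|\le 1$ forces $|I_j'|\le 1$; the paper simply observes $|I_j|=|I_j'|$ since $\equiv$-equivalent words have order-isomorphic p-indices, but your maximality argument arrives at the same conclusion.
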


\begin{proof}  For each $0 \leq j \leq n$ we have $W_x \upharpoonright_p I_j \equiv (W_{x_j} \upharpoonright_p I_j')^{i_j}$, so that by the fact that $\{\coi(W_x, \iota_x, U_x)\}_{x\in X}$ is coherent we see that $$[[U_x \upharpoonright_p \varpropto(I_j, \iota_x)]] = [[(U_{x_j} \upharpoonright_p \varpropto(I_j', \iota_{x_j}))^{i_j}]]$$ for all $0 \leq j \leq n$.  In particular we have $$\prod_{j = 0}^n [[U_x \upharpoonright_p \varpropto(I_j', \iota_x)]] = \prod_{j = 0}^n [[(U_{x_j} \upharpoonright_p \varpropto(I_j', \iota_{x_j}))^{i_j}]]$$ and so we will be done with the first claim if we show that $[[U_x \upharpoonright_p \varpropto(I, \iota_x)]] = \prod_{j = 0}^n [[U_x \upharpoonright_p \varpropto(I_j, \iota_x)]]$.  But this is true since by Lemma \ref{coilemma} the (possibly unreduced) word $\prod_{j = 0}^n U_{x} \upharpoonright_p \varpropto(I_j, \iota_x)$ is obtained from $U_x \upharpoonright_p \varpropto(I, \iota_x)$ by deleting finitely many pure subwords.

Next we let $L$ be as in the statement of the lemma.  Notice that for each $0 \leq j \leq n$ with $j \notin L$ we have $|I_j| = |I_j'| \leq 1$ and so $\varpropto(I_j', \iota_{x_j})$ is a finite interval, by Lemma \ref{coilemma2}.  Thus for each such $j$ we have $[[(U_{x_j} \upharpoonright_p \varpropto(I_j', \iota_{x_j}))^{i_j}]] = [[E]]$ since $U_{x_j} \upharpoonright_p \varpropto(I_j', \iota_{x_j})$ is a finite concatenation of pure words.  Thus removing all such $j$ from the multiplication expression $\prod_{j = 0}^n [[(U_{x_j} \upharpoonright_p \varpropto(I_j', \iota_{x_j}))^{i_j}]]$ will not change the value in the group, and so we are done with the second claim.

\end{proof}

What follows is a rather technical result that will allow us to conclude that certain natural maps are well-defined despite certain choices that are made.

\begin{lemma}\label{tedious}  Let $\{\coi(W_x, \iota_x, U_x)\}_{x\in X}$ be coherent and $W \in \Pfine(\{W_x\}_{x \in X})$.  Let $I_0, \ldots, I_n$ be a finite set of subintervals of $\pindex(W)$ as in the conclusion of Lemma \ref{elementsofthegeneratedsubgroup} and let $J = \{0 \leq j \leq n \mid |I_j| > 1\}$.  For each $j \in J$ select $x_j \in X$, $i_j \in \{-1, 1\}$, and interval $\Lambda_j \subseteq W_{x_j}$ such that $W \upharpoonright_p I_j \equiv (W_{x_j} \upharpoonright_p \Lambda_j)^{i_j}$.  Again, let $I_0', \ldots, I_{n'}'$ be a finite set of subintervals of $\pindex(W)$ as in the conclusion of Lemma \ref{elementsofthegeneratedsubgroup} and let $J' = \{0 \leq j' \leq n' \mid |I_{j'}'| > 1\}$.  For each $j' \in J'$ select $y_{j'} \in X$, $m_{j'} \in \{-1, 1\}$, and interval $\Lambda_{j'}' \subseteq W_{y_{j'}}$ such that $W \upharpoonright_p I_{j'}' \equiv (W_{y_{j'}} \upharpoonright_p \Lambda_{j'}')^{m_{j'}}$.  Then

\begin{center}  $\prod_{j \in J} [[(U_{x_j} \upharpoonright_p \varpropto(\Lambda_j, \iota_{x_j}))^{i_j}]] = \prod_{j' \in J'} [[(U_{y_{j'}} \upharpoonright_p \varpropto(\Lambda_{j'}', \iota_{y_{j'}}))^{m_{j'}}]]$.
\end{center}

\end{lemma}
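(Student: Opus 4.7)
The plan is to introduce a common refinement of the two interval decompositions $I_0, \ldots, I_n$ and $I_0', \ldots, I_{n'}'$ of $\pindex(W)$ and, via the filtered form of Lemma \ref{gettingstarted} applied to each member of the coherent collection in turn, rewrite both products of the lemma as a product indexed by the non-singleton pieces of this refinement. Coherence then gives a term-by-term match between the two resulting products. Let $\{K_a\}_a$ denote the (finite) family of nonempty intersections $I_j \cap I_{j'}'$, listed in the order of $\pindex(W)$. Each $K_a$ is a subinterval of a unique $I_{j(a)}$ and of a unique $I_{j'(a)}'$; when $|K_a|>1$ we must have $j(a)\in J$ and $j'(a)\in J'$, since singleton parts in either original decomposition can only come from case (a) of Lemma \ref{elementsofthegeneratedsubgroup}.

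Next, I would transfer the refinement along the given equivalences: each $K_a \subseteq I_{j(a)}$ determines a subinterval $\Lambda(K_a)\subseteq \Lambda_{j(a)}$ (listed in reverse along $\Lambda_{j(a)}$ when $i_{j(a)}=-1$) with $W\upharpoonright_p K_a \equiv (W_{x_{j(a)}}\upharpoonright_p \Lambda(K_a))^{i_{j(a)}}$, and similarly $\Lambda'(K_a)\subseteq \Lambda_{j'(a)}'$ with $W\upharpoonright_p K_a \equiv (W_{y_{j'(a)}}\upharpoonright_p \Lambda'(K_a))^{m_{j'(a)}}$. For each $j\in J$, the decomposition of $\Lambda_j$ into its pieces $\Lambda(K_a)$ can be read as a decomposition into p-chunks of $W_{x_j}$ itself, since each $\Lambda(K_a)$ is an interval in $\pindex(W_{x_j})$. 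The filtered form of Lemma \ref{gettingstarted} applied to the triple $(W_{x_j},\iota_{x_j},U_{x_j})$ then yields, after the sign $i_j$ is taken into account,
\[
[[(U_{x_j}\upharpoonright_p \varpropto(\Lambda_j,\iota_{x_j}))^{i_j}]] = \prod_{K_a\subseteq I_j,\,|K_a|>1} [[(U_{x_j}\upharpoonright_p \varpropto(\Lambda(K_a),\iota_{x_j}))^{i_j}]]
\]
in the order of $I_j$. Taking the product over $j\in J$ expresses the left-hand side of the lemma as $\prod_{|K_a|>1}[[(U_{x_{j(a)}}\upharpoonright_p \varpropto(\Lambda(K_a),\iota_{x_{j(a)}}))^{i_{j(a)}}]]$ in the order of $\pindex(W)$, and the symmetric argument rewrites the right-hand side as $\prod_{|K_a|>1}[[(U_{y_{j'(a)}}\upharpoonright_p \varpropto(\Lambda'(K_a),\iota_{y_{j'(a)}}))^{m_{j'(a)}}]]$. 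From the two given equivalences one derives $W_{x_{j(a)}}\upharpoonright_p \Lambda(K_a) \equiv (W_{y_{j'(a)}}\upharpoonright_p \Lambda'(K_a))^{i_{j(a)}m_{j'(a)}}$, and coherence applied to this identity, followed by raising to $i_{j(a)}$, delivers the needed factor-by-factor equality.

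The main obstacle I expect is the sign bookkeeping: when $i_j=-1$, the pieces $\Lambda(K_a)$ appear in $\Lambda_j$ in reverse of the order on $I_j$, so Lemma \ref{gettingstarted} lists them reversed and with inverted exponents, and raising to $i_j$ reverses and inverts them once more; one must verify that these two toggles cancel so that each factor in the final product lands with the correct exponent. The analogous verification with $m_{j'}=\pm 1$ is of the same form. Once this signed-ordering check is in place, what remains is essentially a finite combinatorial verification driven by finiteness of the common refinement, the filtered form of Lemma \ref{gettingstarted}, and one application of coherence per non-singleton piece $K_a$.
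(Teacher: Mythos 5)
Your proposal is correct and takes essentially the same route as the paper: both form the common refinement by intersecting the two decompositions of $\pindex(W)$, use the filtered ("furthermore") form of Lemma \ref{gettingstarted} to re-express each side as a product over the non-singleton refined pieces, and invoke coherence to match the factors. The paper packages the final coherence matching inside a second application of Lemma \ref{gettingstarted} (with cross-referenced selections and an explicit reindexing by the bijections $F$, $F'$), while you apply coherence directly, once per refined piece; that is a cosmetic rather than substantive difference, and your sign analysis for $i_j = -1$ — that order reversal and exponent inversion toggle twice and cancel — is exactly the check needed.
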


\begin{proof}  Assume the hypotheses.  Take $\mathbb{I}$ to be the set of nonempty intervals obtained by intersecting an $I_j$ with an $I_{j'}'$.  For each $0 \leq j \leq n$ we can write $I_j \equiv I_{(j, 0)}I_{(j, 1)}\cdots I_{(j, n_j)}$ where each $I_{(j, q)}$ is an element of $\mathbb{I}$.  Similarly for each $0 \leq j' \leq n'$ we write $I_{j'}' \equiv I_{(j', 0)}' \cdots I_{(j', n_{j'}')}'$ where each $I_{(j', r)}'$ is an element of $\mathbb{I}$.

We have $\mathbb{I} = \{I_{(j, q)}\}_{0 \leq j \leq n, 0 \leq q \leq n_j} = \{I_{(j', r)}'\}_{0 \leq j' \leq n', 0 \leq r \leq n_{j'}'}$.  Let $F: \mathbb{I} \rightarrow \{(j, q) \mid 0 \leq j \leq n, 0 \leq q \leq n_j\}$ be the unique order isomorphism between the domain and codomain where the domain is given the lexicographic order, comparing the leftmost coordinate first and define $F':\mathbb{I} \rightarrow \{(j', r) \mid 0 \leq j' \leq n', 0 \leq r \leq n_{j'}'\}$ similarly.  Let $\first: \{(j, q) \mid 0 \leq j \leq n, 0 \leq q \leq n_j\} \rightarrow \{0, \ldots, n\}$ denote projection to the first coordinate, and similarly define $\first':  \{(j', r) \mid 0 \leq j' \leq n', 0 \leq r \leq n_{j'}'\} \rightarrow \{0, \ldots, n'\}$.  Let $\mathbb{J} \subseteq \mathbb{I}$ denote the set of intervals in $\mathbb{I}$ which are of cardinality at least $2$; that is, $\mathbb{J} = \{I_{(j, q)}| 0 \leq j \leq n, 0 \leq q \leq n_j , |I_{(j, q)}| \geq 2\}$.

For each $j \in J$ and each $I_{(j, q)} \in \mathbb{J}$ we know that $W \upharpoonright_p I_{(j, q)} \in \pchunk(W_{x_j}^{i_j})$, so select an interval $\Lambda_{(j, q)} \subseteq \pindex(W_{x_j})$ such that $W \upharpoonright_p I_{(j, q)} \equiv (W_{x_j} \upharpoonright_p \Lambda_{(j, q)})^{i_j}$.  Now

$$
\begin{array}{ll}
\prod_{j \in J} [[(U_{x_j} \upharpoonright_p \varpropto(\Lambda_j, \iota_{x_j}))^{i_j}]] & = \prod_{j \in J} \prod_{0 \leq q \leq n_j, I_{(j, q)} \in \mathbb{J}} [[(U_{x_j} \upharpoonright_p \varpropto(\Lambda_{(j, q)}, \iota_{x_j}))^{i_j}]]\\
& =  \prod_{I \in \mathbb{J}} [[(U_{x_{\first\circ F(I)}} \upharpoonright_p \varpropto(\Lambda_{F(I)}, \iota_{x_{\first\circ F(I)}}))^{i_{\text{first}\circ F(I)}}]] \\
& = \prod_{j' \in J'} \prod_{0 \leq r \leq n_{j'}', I_{(j', r)}\in \mathbb{J}}  [[(U_{x_{\first\circ F\circ (F')^{-1}(j', r))}} \upharpoonright_p\\
& \varpropto(\Lambda_{F\circ (F')^{-1}(j', r)}, \iota_{x_{\first \circ F\circ (F')^{-1}(j', r)}}))^{i_{\first\circ F\circ (F')^{-1}(j', r)}}]]\\
& = \prod_{j' \in J'} [[(U_{y_{j'}} \upharpoonright_p \varpropto(\Lambda_{j'}', \iota_{y_{j'}}))^{m_{j'}}]]
\end{array}
$$

\noindent where the first equality holds by Lemma \ref{gettingstarted}, the second and third equalities are simply a rewriting of the order index, and the last equality holds by another application of Lemma \ref{gettingstarted}.  This completes the proof.
\end{proof}

Now we may conclude that a coherent collection of coi's produces well-defined homomorphisms.  For each $i \in \{0, 1\}$ we let $\beth_{\kappa_i}: \Red_{\kappa_i} \rightarrow \Co_{\kappa_i}$ denote the surjection given by $W \mapsto [[W]]$.

\begin{proposition}\label{welldefined}  Let $\{\coi(W_x, \iota_x, U_x)\}_{x\in X}$ be coherent.  By selecting for each $W \in \Pfine(\{W_x\}_{x \in X})$ a finite set of subintervals $I_0, \ldots, I_n$ of $\pindex(W)$ as in the conclusion of Lemma \ref{elementsofthegeneratedsubgroup}, letting $J = \{0 \leq j \leq n \mid |I_j| > 1\}$, selecting for each $j \in J$ an element $x_j \in X$, $i_j \in \{-1, 1\}$, and interval $\Lambda_j \subseteq \pindex(W_{x_j})$ such that $W \upharpoonright_p I_j \equiv (W_{x_j} \upharpoonright_p \Lambda_j)^{i_j}$ we obtain a homomorphism

\begin{center}
$\phi_0:  \Pfine(\{W_x\}_{x \in X}) \rightarrow \beth_{\kappa_1}(\Pfine(\{U_x\}_{x \in X}))$
\end{center}

\noindent whose definition is independent of the choices made of the set of subintervals $I_0, \ldots, I_n$, elements $x_j \in X$ and $i_j \in \{-1, 1\}$, and intervals $\Lambda_j \subseteq \pindex(W_{x_j})$.  The comparable map 

\begin{center}

$\phi_1: \Pfine(\{U_x\}_{x \in X}) \rightarrow \beth_{\kappa_0}(\Pfine(\{W_x\}_{x \in X}))$

\end{center}

\noindent similarly is a homomorphism whose definition is independent of the various selections made.

\end{proposition}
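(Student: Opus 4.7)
My plan is to verify that the formula defines a function on $\Pfine(\{W_x\}_{x \in X})$ and that this function respects the group operation; the argument for $\phi_1$ will be symmetric, using the (obviously coherent) collection $\{\coi(U_x, \iota_x^{-1}, W_x)\}_{x \in X}$.  Well-definedness is precisely the content of Lemma \ref{tedious}: two decompositions of $\pindex(W)$ satisfying Lemma \ref{elementsofthegeneratedsubgroup} together with any accompanying data yield the same product in $\Co_{\kappa_1}$.  Since each factor $[[(U_{x_j} \upharpoonright_p \varpropto(\Lambda_j, \iota_{x_j}))^{i_j}]]$ is the $\beth_{\kappa_1}$-image of a p-chunk of $U_{x_j}^{\pm 1}$, the product lies in $\beth_{\kappa_1}(\Pfine(\{U_x\}_{x \in X}))$.

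For the homomorphism identity $\phi_0(\Red(WU)) = \phi_0(W)\phi_0(U)$, I would fix $W, U \in \Pfine(\{W_x\}_{x \in X})$ and choose decompositions of $\pindex(W)$ and $\pindex(U)$ as in Lemma \ref{elementsofthegeneratedsubgroup}.  Lemma \ref{pchunkmultiplication} describes $\Red(WU)$: an initial interval of $\pindex(W)$ and a terminal interval of $\pindex(U)$ survive, possibly glued together by a new pure p-chunk $V$ in case (ii), while the remaining portions cancel against each other.  Because any sub-interval of a p-chunk of some $W_x^{\pm 1}$ is itself a p-chunk of $W_x^{\pm 1}$, the decompositions of $W$ and $U$ may be refined without altering $\phi_0(W)$ or $\phi_0(U)$ (by well-definedness) so that each interval lies entirely on one side of the cancellation boundary, and so that in case (ii) the partially cancelled boundary elements $\{\lambda_0\} \subseteq \pindex(W)$ and $\{\lambda_1\} \subseteq \pindex(U)$ appear as their own singleton intervals.

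Write $W$'s refined decomposition as $I_0, \ldots, I_p$ (kept), possibly $\{\lambda_0\}$, and $I_{p+1}, \ldots, I_n$ (fully cancelled), with analogous notation $I_0', \ldots, I_{q-1}'$ (fully cancelled), possibly $\{\lambda_1\}$, and $I_{q+1}', \ldots, I_{n'}'$ (kept) for $U$.  Set $Z_W := W \upharpoonright_p(I_{p+1} \cdots I_n)$ and $Z_U := U \upharpoonright_p(I_0' \cdots I_{q-1}')$; combining Lemma \ref{pchunkmultiplication} with the uniqueness clause of Lemma \ref{reduced} yields $Z_W \equiv Z_U^{-1}$ as reduced words in $\Pfine(\{W_x\}_{x \in X})$.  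Then $\Red(WU)$ admits the decomposition $I_0, \ldots, I_p, V, I_{q+1}', \ldots, I_{n'}'$ (with $V$ absent in case (i)) satisfying Lemma \ref{elementsofthegeneratedsubgroup}; because $V$ is a pure word in $\Pfine(\{W_x\}_{x \in X})$ it falls under case (a), and being a singleton it is excluded from the indexing set $J$ and contributes nothing to $\phi_0(\Red(WU))$.

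Computing, $\phi_0(W)\phi_0(U)$ splits into four blocks: contributions from $I_0, \ldots, I_p$, from $Z_W$, from $Z_U$, and from $I_{q+1}', \ldots, I_{n'}'$.  Applying well-definedness to the single reduced word $Z_W \equiv Z_U^{-1}$ (once with the decomposition inherited from $W$, and once with the reverse-and-invert decomposition inherited from $U$) yields $\phi_0(Z_W) = \phi_0(Z_U)^{-1}$; the middle two blocks therefore collapse to the identity, and the outer two combine to give exactly $\phi_0(\Red(WU))$.  The main obstacle I anticipate is the bookkeeping at the cancellation boundary in case (ii) of Lemma \ref{pchunkmultiplication}: one must verify that isolating $\{\lambda_0\}$ and $\{\lambda_1\}$ produces a legitimate refined decomposition, that the residual merged chunk $V$ lies in $\Pfine(\{W_x\}_{x \in X})$ and genuinely satisfies case (a) of Lemma \ref{elementsofthegeneratedsubgroup}, and that after stripping off $\{\lambda_0\}$ and $\{\lambda_1\}$ the fully cancelled remainders $Z_W$ and $Z_U$ are mutually inverse as reduced words rather than merely mutually inverse up to further reduction.
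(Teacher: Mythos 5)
Your proposal is correct and, on a close reading, works, but it organizes the argument differently from the paper's proof, so a comparison is warranted.

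Both proofs obtain well-definedness directly from Lemma \ref{tedious}, and both rely on the same two enabling facts: that singleton intervals fall outside $J$ and so contribute nothing, and that well-definedness lets one compare the $\phi_0$-value computed from two different decompositions. Where you diverge is in the structure of the homomorphism check. The paper proceeds in several preparatory steps — it first shows $\phi_0$ is unchanged when a first or last p-chunk is dropped, then that $\phi_0$ distributes over concatenation of already-reduced products, then that $\phi_0(W^{-1}) = \phi_0(W)^{-1}$ — and only then attacks $\Red(W_0W_1)$ by writing $W_0 \equiv W_{00}W_{01}$, $W_1 \equiv W_{10}W_{11}$ (Lemma \ref{reduced}) and introducing auxiliary words $W_{00}', W_{11}', W_a, W_b, W_c$ to trim partial boundary p-chunks and patch in purely pure pieces; this requires an explicit case split according to which of $W_{00}, W_{01}, W_{11}$ end or begin with a common-purity chunk. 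You instead go straight at $\Red(WU)$: refine the two decompositions so the cancellation boundary and any partially-cancelled $\lambda_0, \lambda_1$ are isolated as singletons, observe that the residual merged chunk $V \equiv \Red(W_{\lambda_0}U_{\lambda_1})$ is a pure singleton of type (a) in Lemma \ref{elementsofthegeneratedsubgroup} (hence invisible to $\phi_0$), and then compare $Z_W$ and $Z_U^{-1}$ via well-definedness with the decomposition of $Z_W$ inherited once from $W$ and once from $U$ reversed-and-inverted. This last step is a self-contained local proof of the needed inverse relation $\phi_0(Z_W) = \phi_0(Z_U)^{-1}$, so you never have to establish the global inversion identity as a separate lemma. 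Your route is somewhat leaner and avoids the paper's enumeration of boundary cases, at the cost of the bookkeeping you flag: one does need to check that isolating $\lambda_0, \lambda_1$ yields a legitimate Lemma \ref{elementsofthegeneratedsubgroup} decomposition (it does — sub-intervals of a p-chunk of $W_x^{\pm 1}$ remain p-chunks of $W_x^{\pm 1}$, and a single pure p-chunk trivially satisfies clause (a)), that $V$ is indeed a reduction of finitely many pure p-chunks of $X^{\pm 1}$ (it is, since $W_{\lambda_0}$ and $U_{\lambda_1}$ each are), and that the equivalence $W_1 \equiv U_0^{-1}$ from Lemma \ref{reduced} matches p-decompositions so that the initial partial chunk $W_{\lambda_0,1}$ corresponds to the terminal one $U_{\lambda_1,0}^{-1}$ and hence $Z_W \equiv Z_U^{-1}$ on the nose, not merely after a further reduction. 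All of these checks go through, so your argument is sound.
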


\begin{proof}  From Lemma \ref{tedious} we see that the described function $\phi_0$ is well-defined and independent of the numerous choices made.  We must check that $\phi_0$ is a homomorphism.

We note first that if $W\in \Pfine(\{W_x\}_{x\in X})$ and $\pindex(W)$ has a first or last element, say $\lambda = \max(\pindex(W))$, then $\phi_0(W) = \phi_0(W\upharpoonright_p \pindex(W)\setminus \{\lambda\})$.  This is easily seen by selecting the set of intervals $I_0, \ldots, I_n$ for $W$ to be such that $I_n = \{\lambda\}$.  The fact that $|I_n| = 1$ and therefore $I_n \notin J$ completes the argument.

Suppose that $W \in \Pfine(\{W_x\}_{x \in X})$ and $W \equiv W_0W_1$ where also both $W_0, W_1 \in \Pfine(\{W_x\}_{x\in X})$.  Choose subintervals $I_0, \cdots I_n$ in $\pindex(W_0)$ as in Lemma \ref{elementsofthegeneratedsubgroup}, let $J = \{0 \leq j \leq n \mid |I_j| > 1\}$, select $x_j \in X$ and $i_j \in \{-1, 1\}$ and intervals $\Lambda_j \subseteq \pindex(W_{x_j})$ with $W \upharpoonright_p I_j \equiv (W_{x_j} \upharpoonright_p \Lambda_j)^{i_j}$.  Similarly choose intervals $I_0', \ldots, I_{n'}'$ in $\pindex(W_1)$ and define $J'$ and choose $y_{j'} \in X$, $m_{j'} \in \{-1, 1\}$ and $\Lambda_{j'}' \subseteq \pindex(W_{y_{j'}})$ for each $j' \in J'$.  Notice that $\pindex(W) \equiv I_0\cdots I_nI_0'\cdots I_{n'}'$ is a decomposition as in Lemma \ref{elementsofthegeneratedsubgroup} and $J \cup J'$ is precisely the set of indices whose accompanying interval is of cardinality at most one.  Then

$$
\begin{array}{ll}
\phi_0(W) & = (\prod_{j \in J} [[(U_{x_j}\varpropto(\Lambda_j, \iota_{x_j}))^{i_j}]]) (\prod_{j' \in J'} [[(U_{y_{j'}}\varpropto(\Lambda_{j'}, \iota_{x_j}))^{m_j}]])\\
& = \phi_0(W_0)\phi_0(W_1)
\end{array}
$$

Next we suppose that $W \in \Pfine(\{W_x\}_{x \in X})$ and let subintervals $I_0, \cdots I_n$ in $\pindex(W_0)$ be as in Lemma \ref{elementsofthegeneratedsubgroup}, let $J = \{0 \leq j \leq n \mid |I_j| > 1\}$, select $x_j \in X$ and $i_j \in \{-1, 1\}$ and intervals $\Lambda_j \subseteq \pindex(W_{x_j})$ with $W \upharpoonright_p I_j \equiv (W_{x_j} \upharpoonright_p \Lambda_j)^{i_j}$.  Notice that $\pindex(W^{-1})$ may be written as $\pindex(W^{-1}) \equiv I_n'\cdots I_0'$ as in Lemma \ref{elementsofthegeneratedsubgroup}, where $I_j'$ is order isomorphic to the ordered set $(I_j)^{-1}$, and $W \upharpoonright_p I_j \equiv (W^{-1} \upharpoonright_p I_j')^{-1}$.  Also, $\{0 \leq j \leq n \mid |I_j'| > 1\}$ is equal to the set $J$.  Then

$$
\begin{array}{ll}
\phi_0(W) & = \prod_{j \in J} [[(U_{x_j}\varpropto(\Lambda_j, \iota_{x_j}))^{i_j}]]\\
& = (\prod_{j \in J^{-1}}  [[(U_{x_j}\varpropto(\Lambda_j, \iota_{x_j}))^{- i_j}]])^{-1}\\
& = (\phi_0(W^{-1}))^{-1}
\end{array}
$$

\noindent where we use $J^{-1}$ to denote the set $J$ under the reverse order.  Thus $\phi_0(W^{-1}) = (\phi_0(W))^{-1}$.

Finally we let $W_0, W_1 \in \Pfine(\{W_x\}_{x \in X})$ be given.  As in Lemma \ref{reduced} we write $W_0 \equiv W_{00}W_{01}$ and $W_1 \equiv W_{1 0}W_{11}$ with $W_{01} \equiv W_{10}^{-1}$ and the word $W_{00}W_{11}$ reduced.  We will give the argument in the most difficult case and sketch how the argument goes in the less difficult ones.  Suppose that $W_{00}$ ends with a nonempty $\alpha$-pure word and $W_{11}$ begins with a nonempty $\alpha$-pure word, and also that $W_{01}$ begins with a nonempty $\alpha$-pure word.  From this last assumption we know that $W_{10}$ ends with a nonempty $\alpha$-pure word.

We have $W_{00}W_{11} \equiv W_{00}'W_aW_{11}'$ where we denote $\lambda_0 = \max(\pindex(W_{00}))$, $\lambda_1 = \min(\pindex(W_{11}))$ and

$$
\begin{array}{ll}
W_{00}' & \equiv W_{00} \upharpoonright_p \{\lambda \in \pindex(W_0) \mid \lambda < \lambda_0\}\\
W_{11}' & \equiv W_1 \upharpoonright_p \{\lambda \in \pindex(W_1) \mid \lambda > \lambda_1\}\\
W_a & \equiv (W_{00} \upharpoonright_p \{\lambda_0\})(W_{11} \upharpoonright_p \{\lambda_1\})
\end{array}
$$

Note that $W_{00}', W_a, W_{11}' \in \Pfine(\{W_x\}_{x\in X})$, whereas for example $W_{00} \upharpoonright_p \{\lambda_0\}$ might not be in $\Pfine(\{W_x\}_{x\in X})$.  Furthermore let $\lambda_2 = \min(\pindex(W_{01}))$ and $\lambda_3 = \max(\pindex(W_{10}))$ and define

$$
\begin{array}{ll}
W_{01}' & \equiv W_{01} \upharpoonright_p (\pindex(W_{01}) \setminus \{\lambda_2\})\\
W_b & \equiv (W_{00}\upharpoonright_p \{\lambda_0\})(W_{01}\upharpoonright_p \{\lambda_2\})\\
W_{10}' & \equiv W_{10} \upharpoonright_p (\pindex(W_{10}) \setminus \{\lambda_3\})\\
W_c & \equiv (W_{10}\upharpoonright_p \{\lambda_3\})(W_{11}\upharpoonright_p \{\lambda_1\})
\end{array}
$$

\noindent Notice that $W_{01}' \equiv (W_{10}')^{-1}$ and that $W_{01}', W_b, W_{10}', W_c \in \Pfine(\{W_x\}_{x\in X})$.

By our work so far we get

$$
\begin{array}{ll}
\phi_0(W_{00}W_{11}) & = \phi_0(W_{00}'W_aW_{11}')\\
& = \phi_0(W_{00}')\phi_0(W_a)\phi_0(W_{11}')\\
& = \phi_0(W_{00}')\phi_0(W_{11}')\\
& = \phi_0(W_{00}')\phi_0(W_{01}')\phi_0(W_{10}')\phi_0(W_{11}')\\
& = \phi_0(W_{00}')\phi_0(W_b)\phi_0(W_{01}')\phi_0(W_{10}')\phi_0(W_c)\phi_0(W_{11}')\\
& = \phi_0(W_0)\phi_0(W_1).
\end{array}
$$

In the simpler case where $W_{01}$ does not begin with an $\alpha$-pure word (hence $W_{10}$ does not end with an $\alpha$-pure word) we let $W_{01}' = W_{01}$, $W_{10}' = W_{10}$ and both $W_b$ and $W_c$ be the empty word and the equalities above will all hold.  In the case there does not exist $\alpha < \kappa_0$ such that both $W_{00}$ ends with a nonempty $\alpha$-pure word and $W_{11}$ begins with an $\alpha$-pure word we let $W_{00}' \equiv W_{00}$, $W_{11}' \equiv W_{11}$ and $W_a \equiv E$.  It may still be the case that $W_{00}$ ends with a nonempty $\beta$-pure word and $W_{01}$ begins with a nonempty $\beta$-pure word, $\beta < \kappa_0$, and for this we define

$$
\begin{array}{ll}
W_{01}' & \equiv W_{01} \upharpoonright_p (\pindex(W_{01}) \setminus \{\lambda_2\})\\
W_b & \equiv (W_{00}\upharpoonright_p \{\lambda_0\})(W_{01}\upharpoonright_p \{\lambda_2\})\\
W_{10}' &  \equiv W_{10} \upharpoonright_p (\pindex(W_{10}) \setminus \{\lambda_3\})
\end{array}
$$

\noindent and let $W_c$ be given by 

\[
\left\{
\begin{array}{ll}
(W_{10}\upharpoonright_p \{\lambda_3\})(W_{11}\upharpoonright_p \{\lambda_1\})
                                            & \text{in case } W_{11} \text{ begins with a nonempty } \beta\text{-pure}\\
& \text{word and }\lambda_3 = \min\pindex(W_{11});\\
W_{10}\upharpoonright_p \{\lambda_3\}                                        & \text{otherwise}.
\end{array}
\right.
\]

\noindent The case where $W_{11}$ and $W_{10}$ respectively begin and end with a $\beta$-pure word, for some $\beta < \kappa_0$ is analogous.  If none of these cases holds then we simply let $W_{00}' \equiv W_{00}$, $W_{01}' \equiv W_{01}$, $W_{10}' \equiv W_{10}$, $W_{11}' \equiv W_{11}$ and $W_a \equiv W_b \equiv W_c \equiv E$.  This exhausts all possibilities and the proof is complete (the arguments for $\phi_1$ are made in the analogous way).

\end{proof}

\begin{proposition}\label{obtainediso}  The homomorphisms $\phi_0$ and $\phi_1$ descend respectively to isomorphisms 

\begin{center}
$\Phi_0: \beth_0(\Pfine(\{W_x\}_{x\in X})) \rightarrow \beth_1(\Pfine(\{U_x\}_{x\in X}))$

$\Phi_1: \beth_1(\Pfine(\{U_x\}_{x\in X})) \rightarrow \beth_0(\Pfine(\{W_x\}_{x\in X}))$
\end{center}

\noindent with $\Phi_0 = \Phi_1^{-1}$.
\end{proposition}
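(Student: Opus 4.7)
The plan has three stages: show that the homomorphism $\phi_0$ of Proposition~\ref{welldefined} factors through $\beth_{\kappa_0}$ to yield a well-defined $\Phi_0$, carry out the analogous step for $\phi_1$ yielding $\Phi_1$, and then verify on generators that $\Phi_1 \circ \Phi_0$ and $\Phi_0 \circ \Phi_1$ are the identity.

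For descent, the key starting observation is that if $W \in \Pfine(\{W_x\}_{x \in X})$ is pure then $\pindex(W)$ is a singleton and in the formula of Proposition~\ref{welldefined} the unique interval lies outside the index set $J$; hence $\phi_0(W) = [[E]]$ as the empty product. Since $\phi_0$ is a homomorphism, it annihilates the normal closure in $\Pfine(\{W_x\}_{x \in X})$ of pure elements. To complete descent, I would take $W \in \Pfine(\{W_x\}_{x \in X}) \cap \ker(\beth_{\kappa_0})$ and apply Corollary~\ref{deletesomewords} to obtain pure intervals $I_0, \ldots, I_p \subseteq \overline{W}$ whose complementary restriction reduces to $E$. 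Since $W$ is reduced, each $I_j$ must lie inside a single pure p-chunk $W_{\lambda_j}$ of $W$ by maximality of pure chunks. Enlarging each $I_j$ to the full $\overline{W_{\lambda_j}}$ and, if necessary, adjoining further full pure p-chunks of $W$ so that the complement still reduces to $E$, one obtains a telescoping identity $W = \prod_j C_j W_{\lambda_j} C_j^{-1}$ inside $\Pfine(\{W_x\}_{x \in X})$ in which each $C_j$ is a $*$-product of p-chunks of $W$ (and hence lies in $\Pfine$ by Lemma~\ref{fine}) and each $W_{\lambda_j}$ is a pure p-chunk of $W$. The homomorphism property of $\phi_0$ then gives $\phi_0(W) = [[E]]$.

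For the inverse check, since both maps are homomorphisms it suffices to verify $\Phi_1 \circ \Phi_0 = \mathrm{id}$ on the generators $[[W_x \upharpoonright_p I]]$ of $\beth_{\kappa_0}(\Pfine(\{W_x\}_{x \in X}))$, indexed by $x \in X$ and intervals $I \subseteq \pindex(W_x)$. Unpacking the formula of Proposition~\ref{welldefined} yields $\Phi_1 \circ \Phi_0([[W_x \upharpoonright_p I]]) = [[W_x \upharpoonright_p \varpropto(\varpropto(I, \iota_x), \iota_x^{-1})]]$, and by Lemma~\ref{almostidentified} this last interval equals $\varpropto(I, \Lambda_x)$, where $\Lambda_x$ denotes the domain of $\iota_x$. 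By Lemma~\ref{prettyclose} the interval $I$ differs from $\varpropto(I, \Lambda_x)$ only by two finite initial and terminal sub-intervals; the corresponding p-chunks of $W_x$ over those finite pieces are finite concatenations of pure words and hence trivial in $\Co_{\kappa_0}$. Therefore $[[W_x \upharpoonright_p I]] = [[W_x \upharpoonright_p \varpropto(I, \Lambda_x)]]$, yielding $\Phi_1 \circ \Phi_0 = \mathrm{id}$; the identity $\Phi_0 \circ \Phi_1 = \mathrm{id}$ follows symmetrically.

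The principal obstacle is the enlargement step in the descent argument: one must verify that whenever the small-interval deletion of Corollary~\ref{deletesomewords} succeeds, it can be arranged at the full p-chunk level --- possibly by enlarging to the entire $\overline{W_{\lambda_j}}$ and adjoining further full pure p-chunks --- so that the conjugating elements and pure factors of the resulting telescoping decomposition all sit inside $\Pfine(\{W_x\}_{x \in X})$. The subtlety is that a pure subword $W \upharpoonright I_j$ of a reduced word need not itself belong to $\Pfine(\{W_x\}_{x \in X})$, while any pure p-chunk $W_{\lambda_j}$ of $W$ certainly does (by Lemma~\ref{fine}); the enlargement shifts the argument from the level of arbitrary sub-intervals to the coarser p-chunk level on which $\Pfine$ is well-behaved.
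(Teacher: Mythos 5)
Your argument for the inverse check is exactly the paper's: expand each element of $\beth_0(\Pfine(\{W_x\}_{x\in X}))$ as a product of classes of p-chunks $[[W_j]]$, push through both maps, and use Lemma~\ref{almostidentified} together with Lemma~\ref{prettyclose} to conclude that $\varpropto(\varpropto(\Lambda_j,\iota_{x_j}),\iota_{x_j}^{-1})$ differs from $\Lambda_j$ only by finite ends, so that the $\beth$-classes match. That part is fine.

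On descent you are in fact more careful than the paper. The paper's own proof only observes that pure words in $\Pfine(\{W_x\}_{x\in X})$ are annihilated by $\phi_0$ and then concludes ``so we get the induced $\Phi_0$''; this shows that $\phi_0$ factors through the quotient of $\Pfine(\{W_x\}_{x\in X})$ by the normal closure \emph{inside $\Pfine$} of its pure elements, which is a priori a coarser quotient than $\beth_{\kappa_0}(\Pfine(\{W_x\}_{x\in X}))$. You correctly identify that what is actually needed is that $\Pfine(\{W_x\}_{x\in X})\cap\ker\beth_{\kappa_0}\subseteq\ker\phi_0$, and you try to obtain this from Corollary~\ref{deletesomewords}. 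The telescoping consequence you describe (once one can delete \emph{full} pure p-chunks and have the remainder reduce to $E$) does give $\phi_0(W)=[[E]]$ cleanly, and is the right way to cash in the homomorphism property.

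However, the enlargement step itself --- ``enlarging each $I_j$ to the full $\overline{W_{\lambda_j}}$ and, if necessary, adjoining further full pure p-chunks so that the complement still reduces to $E$'' --- is a genuine claim that needs proof, not a formality. Replacing $I_j$ by $\overline{W_{\lambda_j}}$ deletes additional letters, and the complementary word $\Red(W\upharpoonright_p(\pindex(W)\setminus\{\lambda_j\}_j))$ need no longer be $E$: from $\Red(W\upharpoonright(\overline{W}\setminus\bigcup I_j))=E$ one only obtains, after passing to $\Ho_\kappa$ and pulling out the pure factors $[(X_jZ_j)]$, that $\Red(W\upharpoonright_p(\pindex(W)\setminus\{\lambda_j\}_j))$ is \emph{in $\ker\beth_{\kappa_0}$}, not that it is $E$. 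Adjoining more p-chunks would then require iterating the process, and since a word may have infinitely many p-chunks there is no obvious terminating measure. Moreover, after the first reduction the new word's p-decomposition may no longer align with that of $W$, so ``full pure p-chunks of $W$'' is not the right notion to iterate with. So the enlargement cannot simply be ``arranged''; it needs an argument that is at least as delicate as the rest of the proposition, and you haven't supplied one. (The paper, for its part, doesn't address this point at all.)

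In summary: same route as the paper for $\Phi_1\Phi_0=\mathrm{id}$, a more explicit (and in principle better) treatment of descent, but the key enlargement lemma on which your descent rests is asserted rather than proved, and is not as routine as the phrasing suggests.
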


\begin{proof}  If $W \in \Pfine(\{W_x\}_{x \in X})$ is a pure word the set $\pindex(W)$ is a singleton and for any decomposition of $\pindex(W)$ by Lemma \ref{elementsofthegeneratedsubgroup} the accompanying set $J$ will necessarily be empty.  Thus all pure words in $\Pfine(\{W_x\}_{x \in X})$ are in $\ker(\phi_0)$ and so we get the induced $\Phi_0$, and similarly we obtain an induced $\Phi_0$.

Notice that by Lemma \ref{elementsofthegeneratedsubgroup} each element of the group $\beth_0(\Pfine(\{W_x\}_{x\in X}))$ may be written as a product $[[W_0]][[W_1]] \cdots [[W_n]]$ where each $W_i$ is an element in $(\bigcup_{x\in X}\pchunk(W_x))^{\pm 1}$.  For each $0 \leq j \leq n$ we select $x_j$ and $i_j$  and interval $\Lambda_j \subseteq \pindex(W_{x_j})$ such that $W_j \equiv (W_{x_j}\upharpoonright_p \Lambda_j)^{i_j}$.  Now

$$
\begin{array}{ll}
\Phi_1 \circ \Phi_0([[W_0]] \cdots [[W_n]]) & = \prod_{j=0}^n\Phi_1[[(U_{x_j}\upharpoonright_p \varpropto(\Lambda_j, \iota_{x_j}))^{i_j}]]\vspace*{2mm}\\
& = \prod_{j = 0}^n (\Phi_1([[U_{x_j}\upharpoonright_p \varpropto(\Lambda_j, \iota_{x_j})]]))^{i_j}\vspace*{2mm}\\
& = \prod_{j = 0}^n [[W_{x_j} \upharpoonright_p \varpropto(\varpropto(\Lambda_j, \iota_{x_j}), \iota_{x_j}^{-1})]]^{i_j}\vspace*{2mm}\\
& = \prod_{j = 0}^n [[W_{x_j} \upharpoonright_p \Lambda_j]]^{i_j}\vspace*{2mm}\\
& = \prod_{j = 0}^n [[W_j]]
\end{array}
$$

\noindent where the fourth equality holds by Lemma \ref{almostidentified} (the word $W_{x_j} \upharpoonright_p \varpropto(\varpropto(\Lambda_j, \iota_{x_j}), \iota_{x_j}^{-1})$ is obtained from the word $W_{x_j} \upharpoonright_p \Lambda_j$ by deleting finitely many pure subwords, namely those associated with the set $\Lambda_j \setminus \varpropto(\varpropto(\Lambda_j, \iota_{x_j}), \iota_{x_j}^{-1})$.)  Thus $\Phi_1 \circ \Phi_0$ is the identity map, and that $\Phi_0 \circ \Phi_1$ is also the identity map follows from the same reasoning.  The proposition is proved.

\end{proof}
\end{subsection}

\begin{subsection}{Extensions of coherent collections}\label{extendingcoherence}

By Proposition \ref{obtainediso}, the problem of finding an isomorphism between cone groups is reduced to that of finding a coherent collection of coi triples $\{\coi(W_x, U_x, \iota_x)\}_{x\in X}$ such that $\beth_0(\Pfine(\{W_x\}_{x\in X})) = \Co_{\kappa_0}$ and $\beth_1(\Pfine(\{U_x\}_{x\in X})) = \Co_{\kappa_1}$.  Thus, in this and all remaining subsections we approach the problem of extending collections of coi triples.  We still assume that $\kappa_0, \kappa_1 \geq 2$ and that the coi collections are from $\Red_{\kappa_0}$ to $\Red_{\kappa_1}$.

\begin{lemma}\label{findsomerepresentative}   Let $\{\coi(W_x, \iota_x, U_x)\}_{x\in X}$ be coherent.  If $W \in \Pfine(\{W_x\}_{x\in X})$ then there exists a $U \in \Red_{\kappa_1}$ and coi $\iota$ from $W$ to $U$ such that $\{\coi(W_x, \iota_x, U_x)\}_{x\in X} \cup \{(W, \iota, U)\}$ is coherent.  Moreover if $W$ is nonempty the domain (and range) of $\iota$ can be made to be nonempty.
\end{lemma}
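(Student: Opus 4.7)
The plan is to construct $U$ and $\iota$ explicitly from the structural decomposition of $W$ given by Lemma \ref{elementsofthegeneratedsubgroup}, then reduce the verification of coherence to the well-definedness of the homomorphism $\phi_0$ of Proposition \ref{welldefined}.

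First I apply Lemma \ref{elementsofthegeneratedsubgroup} to $W$ to obtain a decomposition $\pindex(W) \equiv I_0 I_1 \cdots I_n$ together with, for each $j \in J := \{j : |I_j| > 1\}$, a choice of $x_j \in X$, $i_j \in \{\pm 1\}$, and interval $\Lambda_j \subseteq \pindex(W_{x_j})$ with $W\upharpoonright_p I_j \equiv (W_{x_j}\upharpoonright_p \Lambda_j)^{i_j}$. For $j \in J$ set $V_j := (U_{x_j}\upharpoonright_p \varpropto(\Lambda_j, \iota_{x_j}))^{i_j}$ and for $j \notin J$ set $V_j := E$; put $U := \Red(V_0 V_1 \cdots V_n) \in \Red_{\kappa_1}$, noting that $U \in \Pfine(\{U_x\}_{x \in X})$ since $\Pfine$ is a subgroup. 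To build $\iota$, each $\iota_{x_j}$ restricts to a coi from $\Lambda_j$ into $\varpropto(\Lambda_j, \iota_{x_j})$ and transports through $W\upharpoonright_p I_j \equiv (W_{x_j}\upharpoonright_p \Lambda_j)^{i_j}$ to a coi $\iota_j$ from a close subset of $I_j$ into $\pindex(V_j)$. A maximal cancellation realizing $V_0\cdots V_n \to U$ in the sense of Lemma \ref{cancellationreduces} determines which p-chunks of the $V_j$ survive in $\pindex(U)$; taking $\iota$ to be the disjoint union $\bigsqcup_j \iota_j$ restricted to these survivors yields the desired coi, closeness of whose domain and range follows from Lemma \ref{basiccloseproperties} together with closeness of the $\iota_{x_j}$.

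Coherence of the enlarged collection reduces to establishing, for every interval $K \subseteq \pindex(W)$, the identity
\[
[[U\upharpoonright_p \varpropto(K,\iota)]] = \phi_0(W\upharpoonright_p K).
\]
Restricting the decomposition $I_0,\ldots,I_n$ to $K$ yields a valid decomposition of $W\upharpoonright_p K$ for computing $\phi_0$, and the resulting product matches piecewise, up to $\approx$, the assembled subword of $U$: the finitely many discrepancies (singleton $I_j$ with $j\notin J$, finite pieces $\varpropto(\Lambda_j', \iota_{x_j})$ with $|\Lambda_j'| \leq 1$ in view of Lemma \ref{coilemma2}, and pure p-chunks consumed or generated at the $V_j$-boundaries during reduction) all contribute $[[E]]$ in $\Co_{\kappa_1}$. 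Given this identity, whenever $W\upharpoonright_p K_0 \equiv (W_y\upharpoonright_p K_1)^{\pm 1}$, well-definedness of $\phi_0$ gives
\[
[[U\upharpoonright_p \varpropto(K_0,\iota)]] = \phi_0(W\upharpoonright_p K_0) = \phi_0(W_y\upharpoonright_p K_1)^{\pm 1} = [[(U_y\upharpoonright_p \varpropto(K_1,\iota_y))^{\pm 1}]],
\]
where the last equality uses the trivial single-interval decomposition $\{K_1\}$ to evaluate $\phi_0(W_y\upharpoonright_p K_1)$. The reverse coherence direction (for $U\upharpoonright_p K_2 \equiv (U_y\upharpoonright_p K_3)^{\pm 1}$) is handled symmetrically via the companion homomorphism $\phi_1$, which applies since $U \in \Pfine(\{U_x\}_{x\in X})$.

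For the moreover clause, if the constructed $\iota$ is nonempty we are done; otherwise $[[U]] = [[E]]$, and a replacement of $U$ and $\iota$ producing a nonempty close coi is obtained as follows. When $\pindex(W)$ is finite, closeness is vacuous, so one may take $U$ to be any nonempty pure word in $\Red_{\kappa_1}$ and $\iota$ any singleton-to-singleton assignment. When $\pindex(W)$ is infinite, one inserts between consecutive $V_j$ single pure filler letters with fresh second subscripts (chosen larger than any second subscript appearing in the relevant portion of the $V_j$, so that these letters cannot participate in any cancellation); this preserves $[[U]] = [[E]] = \phi_0(W)$, and an infinite close subset of the surviving p-chunks of the $V_j$ then supports a nonempty $\iota$. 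The main obstacle is the bookkeeping in the construction of $\iota$: tracking which p-chunks of the $V_j$ survive the maximal cancellation, verifying closeness of the resulting sets, and matching $U\upharpoonright_p \varpropto(K,\iota)$ piecewise with the product formula defining $\phi_0(W\upharpoonright_p K)$.
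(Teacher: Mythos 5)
Your construction sets $U := \Red(V_0 V_1 \cdots V_n)$ with $V_j := (U_{x_j}\upharpoonright_p \varpropto(\Lambda_j, \iota_{x_j}))^{i_j}$ and builds $\iota$ by restricting $\bigsqcup_j \iota_j$ to the p-chunks of the $V_j$ that ``survive'' a maximal cancellation, asserting that closeness of the result follows from Lemma \ref{basiccloseproperties}. This is where the proof breaks: the concatenation $V_0 V_1 \cdots V_n$ can have arbitrarily large cancellation, including the case where an entire $V_j$ (infinite) is consumed. The coherence hypothesis only guarantees that word equivalences $W\upharpoonright_p I_j \equiv (W_{x_j}\upharpoonright_p \Lambda_j)^{i_j}$ transport to $\approx$-equalities of the $U$-sides, not to $\equiv$-equalities, so reducedness of $W$ gives no control over cancellation between $V_j$ and $V_{j\pm 1}$. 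If the entire copy of $\pindex(V_j)$ disappears while $I_j$ is infinite, the surviving domain of $\iota$ is empty over $I_j$, and closeness in $\pindex(W)$ fails. Since a coi requires closeness on both sides, the object you construct need not be a coi at all, and the rest of the argument does not get off the ground.

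The paper's proof takes the same starting decomposition but avoids reduction entirely. It replaces $V_j \equiv E$ (your $j \notin J$ case) by a nonempty pure letter $a_{0,0}$, and between consecutive blocks it inserts a separator letter $a_{\alpha',0}$ whose first subscript $\alpha'$ is chosen (using $\kappa_1 \geq 2$) so that the separator cannot participate in any cancellation with the block it abuts. This makes $U \equiv \prod_{j=0}^n U_j$ already reduced, so $\pindex(U)$ decomposes block-by-block, $\iota$ can be defined directly as a union of the transported $\iota_j$'s without discarding anything, and closeness follows cleanly from Lemma \ref{basiccloseproperties}(iii). Your idea of inserting ``filler letters with fresh second subscripts'' to fix the moreover clause is precisely the missing ingredient, but it must be used in the general construction, not only as a patch for the nonemptiness claim; once you do so, your proof essentially collapses into the paper's.

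Two smaller points. First, your reduction of coherence to the single identity $[[U\upharpoonright_p \varpropto(K,\iota)]] = \phi_0(W\upharpoonright_p K)$ invokes Proposition \ref{welldefined} before the new triple is in the collection, which is legitimate, but your appeal to $\phi_1$ ``symmetrically'' for the $U$-side checks needs justification: $\phi_1$ is defined from the old collection, and you must separately show $\phi_1(U\upharpoonright_p I_2) = [[W\upharpoonright_p \varpropto(I_2,\iota^{-1})]]$, which is an argument parallel to the $\phi_0$ side rather than a free consequence of it. Second, the claim that ``the finitely many discrepancies\dots all contribute $[[E]]$'' does not hold under your definition of $U$: the pieces deleted by a maximal cancellation of $V_0\cdots V_n$ need not be pure p-chunks nor finitely many, again because cancellation across block boundaries is unbounded.
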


\begin{proof}  If $W$ is empty then we let $U$ and $\iota$ be empty.  Else we choose subintervals $I_0, \cdots I_n$ in $\pindex(W)$ as in Lemma \ref{elementsofthegeneratedsubgroup}, let $J = \{0 \leq j \leq n \mid |I_j| > 1\}$, select $x_j \in X$ and $i_j \in \{-1, 1\}$ and intervals $\Lambda_j \subseteq \pindex(W_{x_j})$ with $W \upharpoonright_p I_j \equiv (W_{x_j} \upharpoonright_p \Lambda_j)^{i_j}$.  Let $J' \subseteq J$ be given by

\begin{center}

$J' = \{j\in J\mid (U_{x_j} \upharpoonright_p \varpropto(\Lambda_j, \iota_{x_j}))^{i_j} \not\equiv E\}$.
\end{center}

\noindent For each $j \in J'$ let $U_j' \equiv (U_{x_j} \upharpoonright_p \varpropto(\Lambda_j, \iota_{x_j}))^{i_j}$.  For every $0 \leq j \leq n$ with $j\notin J'$ we let $U_j' \equiv a_{0, 0}$.

The word $\prod_{j = 0}^n U_j'$ is probably not reduced, and so we will make slight modifications in order to obtain a reduced word.  We know that each subword $U_j'$ is reduced and nonempty.  Let $U_n \equiv U_n'$.  Let $0 \leq j < n$ be given.  There are a couple of possibilities:

\begin{itemize}
\item $\pindex(U_j')$ has a maximal element and $\pindex(U_{j + 1}')$ has a minimal element and both $U_j'\upharpoonright_p \{\max\pindex(U_j')\}$ and $U_{j + 1}' \upharpoonright_p \{\min\pindex(U_{j + 1}')\}$ are $\alpha$-pure for some $\alpha < \kappa_1$;

\item $\pindex(U_j')$ has a maximal element and $\pindex(U_{j + 1}')$ has a minimal element and both $U_j'\upharpoonright_p \{\max\pindex(U_j')\}$ and $U_{j + 1}' \upharpoonright_p \{\min\pindex(U_{j + 1}')\}$ are not $\alpha$-pure for some $\alpha < \kappa_1$; or

\item $\pindex(U_j')$ does not have a maximal element and $\pindex(U_{j + 1}')$ does not have a minimal element.
\end{itemize}

\noindent In the middle case we let $U_j \equiv U_j'$.  In the first or last case we choose $\alpha_j' < \kappa_1$ such that $U_j'$ does not end with an $\alpha_j'$-pure word (here we are using the fact that $\kappa_1 \geq 2$) and let $U_j \equiv U_j'a_{\alpha', 0}$.  The word $U_jU_{j + 1}'$ is reduced, and so the word $U_jU_{j+1}$ is reduced (since $U_{j + 1}$ is nonempty), and so the word $U \equiv \prod_{j = 0}^n U_j$ is reduced.

We now define the coi $\iota$ from $W$ to $U$ in a very natural way.  If $j \in J'$ then we let the domain of $\iota_{x_j}$ be $\Lambda_j'$, and in particular $\Close(\Lambda_j', \pindex(W_{x_j}))$.  Let $\Lambda_j'' \subseteq I_j$ be the image of $\Lambda_j' \cap \Lambda_j$ under the order isomorphism given by $W \upharpoonright_p I_j \equiv (W_{x_j} \upharpoonright_p \Lambda_j)^{i_j}$.  Similarly we let $\Theta_j'' \subseteq \pindex(U_j') \subseteq \pindex(U_j)$ be the image of $\iota(\Lambda_j \cap \Lambda_j')$ under the order isomorphism given by $U_j' \equiv (U_{x_j} \upharpoonright_p \varpropto(\Lambda_j, \iota_{x_j}))^{i_j}$.  Define $\iota_j: \Lambda_j'' \rightarrow \Theta_j''$ to be the order isomorphism given by the restriction to $\Lambda_j''$ of the composition of the order isomorphism given by $W \upharpoonright_p I_j \equiv (W_{x_j} \upharpoonright_p \Lambda_j)^{i_j}$ with $\iota$ with the order isomorphism given by $(U_{x_j} \upharpoonright_p \varpropto(\Lambda_j, \iota_{x_j}))^{i_j} \equiv U_j'$.  It is easy to check that $\Close(\Lambda_j'', I_j)$, $\Close(\Theta_j'', \pindex(U_j))$.

If $0\leq j \leq n$ and $j \notin J'$ then $I_j$ is finite and nonempty, as is $\pindex(U_j)$, and we simply select elements $\lambda \in I_j$ and $\lambda' \in \pindex(U_j)$ and let $\Lambda_j'' = \{\lambda\}$, $\Theta_j'' = \{\lambda_j'\}$ and $\iota_j: \Lambda_j'' \rightarrow \Theta_j''$ be the unique function.  Clearly $\Close(\Lambda_j'', I_j)$, $\Close(\Theta_j'', \pindex(U_j))$.

Let $\Lambda'' = \bigcup_{j=0}^n \Lambda_j''$ and $\Theta'' = \bigcup_{j = 0}^n \Theta_j''$, and notice that $\Close(\Lambda'', \pindex(W))$ and $\Close(\Theta'', \pindex(U))$ by Lemma \ref{basiccloseproperties} (iii).  Let $\iota: \Lambda'' \rightarrow \Theta''$ be the unique extension of the $\iota_j$.  Now $\coi(W, \iota, U)$.

We check that $\{\coi(W_x, \iota_x, U_x)\}_{x\in X} \cup \{\coi(W, \iota, U)\}$ is coherent.  Suppose that $y\in X$ and intervals $I \subseteq \pchunk(W)$ and $I' \subseteq \pchunk(W_y)$ and $i \in \{-1, 1\}$ are such that $W \upharpoonright_p I  \equiv (W_y \upharpoonright_p I')^i$.  Let $L \subseteq \{0, \ldots, n\}$ denote the set of those $j$ such that $I_j \cap I \neq \emptyset$.  For each $j \in L \cap J$ we have $W\upharpoonright_p (I_j \cap I) \equiv (W_{x_j}\upharpoonright_p \Lambda_j^*)^{i_j}$ for the obvious choice of interval $\Lambda_j^* \subseteq \Lambda_j \subseteq \pchunk(W_{x_j})$.  Thus $(W_{x_j}\upharpoonright_p \Lambda_j^*)^{i\cdot i_j} \equiv W_y \upharpoonright_p I_j'$ for the obvious choice of interval $I_j' \subseteq I'$.  By the coherence of $\{\coi(W_x, \iota_x, U_x)\}_{x\in X}$ we therefore have

$$
\begin{array}{ll}
[[U \upharpoonright_p \varpropto(I, \iota)]] & = \prod_{j \in L}[[U \upharpoonright_p \varpropto(I_j \cap I, \iota)]]\\
& = \prod_{j\in L \cap J'} [[U \upharpoonright_p \varpropto(I_j \cap I, \iota)]]\\
& = \prod_{j \in L\cap J'} [[U_{x_j} \upharpoonright_p \varpropto(\Lambda_j^*, \iota_{x_j})]]^{i_j}\\
& = \prod_{j\in (L \cap J')^i} [[U_{y}\upharpoonright_p \varpropto(I_j', \iota_y)]]^i\\
& = [[(U_y \upharpoonright_p \varpropto(I', \iota_y))^{i}]].
\end{array}
$$

If we select intervals $I, I' \subseteq \pindex(W)$ and $i\in \{-1, 1\}$ such that $W \upharpoonright_p I \equiv (W \upharpoonright_p I')^i$ then a similar strategy of finitely decomposing $I$ and $I'$ is employed to show $[[U\upharpoonright (I, \iota)]] = [[(U\upharpoonright_p (I', \iota))^{i}]]$.

The check that if $U \upharpoonright_p Q \equiv (U_z \upharpoonright_p Q')^i$, where $z\in X$, then the appropriate elements of $\Co_{\kappa_0}$ are equal is similar to that above, with slight modifications (note that although $U \notin \Pfine(\{U_x\}_{x\in X}))$ is possible, the word $U$ is a finite concatenation of words in $\Pfine(\{U_x\}_{x \in X})$ and pure words).  Similarly if $Q, Q' \subseteq \pindex(U)$, and the proof is complete.
\end{proof}

We introduce some extra notation for convenience.  For a not necessarily reduced word $W$ we let 

\begin{center}
$\|W\| = \sup\{\frac{1}{n+1} \mid n = \proj_1(W(i)) \text{ for some }i\in \overline{W}\}$
\end{center}

\noindent where the supremum is considered in the set of nonnegative reals.  As examples we have $\|E\| = 0$ and $\|a_{\alpha, 5}^{-1}a_{\alpha', 10}\| = \frac{1}{6}$.  By comparison to earlier notation, we have $d(W) = \frac{1}{\|W\|} - 1$.

\begin{lemma}\label{makeitsmaller}  Suppose that $\kappa_0$ and $\kappa_1$ are cardinal numbers greater than or equal to $2$.  Suppose that $\{\coi(W_x, \iota_x, U_x)\}_{x\in X}$ is coherent, $z \in X$ and that $\epsilon>0$ is a real number.  Then there exists a $U \in \Red_{\kappa_1}$ with $\|U\| <\epsilon$ and coi $\iota$ from $W_z$ to $U$ such that $\{\coi(W_x, \iota_x, U_x)\}_{x\in X} \cup \{\coi(W_z, \iota, U)\}$ is coherent.  Moreover the domain (and codomain) of $\iota$ may be chosen to be nonempty provided $\iota_z$  satisfies this property.
\end{lemma}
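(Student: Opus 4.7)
The plan is to keep most of $U_z$ intact while surgically replacing the small number of ``bad'' p-chunks by small-norm, same-color substitutes.

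Setup: Fix a natural number $N$ with $\frac{1}{N+1}<\epsilon$. Because $U_z\in\Red_{\kappa_1}$ is a valid word, for each $n<N$ only finitely many positions in $\overline{U_z}$ carry a letter with second subscript $n$, so only finitely many p-chunks of the pure decomposition $U_z\equiv_p\prod_{\theta\in\Theta}U_{z,\theta}$ contain any low-index letter; call this finite set $\Theta_{\mathrm{rep}}\subseteq\Theta$. For each $\theta\in\Theta\setminus\Theta_{\mathrm{rep}}$ put $V_\theta:=U_{z,\theta}$, and for each $\theta\in\Theta_{\mathrm{rep}}$ put $V_\theta:=a_{\beta_\theta,N_\theta}$, a single positive letter of the same color $\beta_\theta$ as $U_{z,\theta}$, with the $N_\theta\geq N$ chosen all distinct (and, whenever possible, avoiding second subscripts that already occur with color $\beta_\theta$ in the words $U_x$). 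Put $U:=\prod_{\theta\in\Theta}V_\theta$ and take $\iota$ to be $\iota_z$ with codomain reinterpreted in $\pindex(U)$ via the canonical order isomorphism $\pindex(U)\cong\Theta$.

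Verification: Consecutive $V_\theta,V_{\theta'}$ inherit distinct colors from the p-decomposition of $U_z$, so $U$ is reduced; validity as an element of $\W_{\kappa_1}$ is inherited from $U_z$ (the only modification is to finitely many chunks, at fresh distinct subscripts). Every letter of $U$ has second subscript $\geq N$, hence $\|U\|\leq\frac{1}{N+1}<\epsilon$. The key computation is that $[[U\upharpoonright_p\varpropto(I,\iota)]]=[[U_z\upharpoonright_p\varpropto(I,\iota_z)]]$ in $\Co_{\kappa_1}$ for every interval $I\subseteq\pindex(W_z)$: the two words differ only at the finitely many $\theta\in\Theta_{\mathrm{rep}}\cap\varpropto(I,\iota_z)$, and at each such $\theta$ the swap $V_\theta\leftrightarrow U_{z,\theta}$ amounts to multiplying by a conjugate of the same-color pure element $V_\theta U_{z,\theta}^{-1}$. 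A finite induction through these positions exhibits $U\upharpoonright_p\varpropto(I,\iota)\cdot (U_z\upharpoonright_p\varpropto(I,\iota_z))^{-1}$ in $\Ho_{\kappa_1}$ as a finite product of conjugates of pure elements, and hence it lies in the normal subgroup by which we quotient to obtain $\Co_{\kappa_1}$. This gives forward coherence of $(W_z,\iota,U)$ with $(W_z,\iota_z,U_z)$, and forward coherence with any other triple $(W_{x_1},\iota_{x_1},U_{x_1})$ then follows by composition through the existing coherence. The ``moreover'' assertion is immediate since $\iota=\iota_z$ shares its domain with $\iota_z$.

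The main obstacle is the backward half of coherence, that is, dealing with matchings $U\upharpoonright_p Q\equiv(U_{x_3}\upharpoonright_p Q_3)^j$ whose interval $Q$ crosses a replacement position. When the replacement letters $a_{\beta_\theta,N_\theta}$ can be chosen ``fresh'' within the collection, the only such matchings are with $U_{x_3}=U_z$ and reduce back to forward coherence together with the finite-difference analysis above. In the residual case, a replacement $V_\theta$ can only match a singleton p-chunk of $U_{x_3}$ equal to $a_{\beta_\theta,N_\theta}$, and the required equality $[[W_z\upharpoonright_p\varpropto(Q,\iota^{-1})]]=[[(W_{x_3}\upharpoonright_p\varpropto(Q_3,\iota_{x_3}^{-1}))^j]]$ is extracted by splitting $Q$ at the finitely many replacement positions, applying the existing coherence to each non-replacement segment via the identification of $\varpropto(\cdot,\iota^{-1})$ with $\varpropto(\cdot,\iota_z^{-1})$, and using that singleton/finite intervals contribute the identity on both sides by Lemma \ref{coilemma2}.
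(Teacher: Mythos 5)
Your proof takes essentially the same approach as the paper's: both identify the finitely many p-chunks of $U_z$ that are ``too large,'' replace each with a single positive same-colored letter at a large second subscript, keep $\iota=\iota_z$, and verify coherence by noting that $U\upharpoonright_p\varpropto(I,\iota)$ and $U_z\upharpoonright_p\varpropto(I,\iota_z)$ differ only by finitely many pure p-chunks. Your only variation is to use distinct replacement subscripts $N_\theta$ (the paper uses a single $N$) and to try to choose them ``fresh'' so as to simplify the backward coherence check; the paper just asserts the coherence is ``rather intuitive'' and leaves that check to the reader, which your sketch fills in correctly.
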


\begin{proof}  If $W_z$ is empty then let $U$ be empty and $\iota = \emptyset$.  Otherwise let  $U_z \equiv_p \prod_{\lambda \in \pindex(U_z)} U_{\lambda}$ and $J = \{\lambda \in \pindex(U_z) \mid \|U_{\lambda}\| \geq \epsilon\}$.  Since $U_z$ is a word, we know that $J$ is finite.  Let $N \in \omega$ be large enough that $\frac{1}{N+1} < \epsilon$.  For each $\lambda \in \pindex(U_z)$ we let

\[
U_{\lambda}' \equiv \left\{
\begin{array}{ll}
U_{\lambda}
                                            & \text{if } \lambda \notin J, \\
a_{\alpha, N}                                        & \text{if }\lambda \in J\text{ and }U_{\lambda}\text{ is }\alpha\text{-pure}.
\end{array}
\right.
\]

We let $U \equiv \prod_{\lambda \in \pindex(U_x)} U_{\lambda}'$.  It is easy to see that $U$ is reduced (a cancellation in $U$ would necessarily include the pairing of a letter $a_{\alpha, N} \equiv U_{\lambda}$, with $\lambda \in J$, with a letter in $U_{\lambda'}'$ where $\lambda'$ is the immediate successor or immediate predecessor of $\lambda$ in $\pindex(U_x)$, and thus $U_{\lambda}'$ and $U_{\lambda'}'$ are both $\alpha$-pure, so $U_{\lambda}$ and $U_{\lambda'}$ are as well, a contradiction).  Moreover $U \equiv_p \prod_{\lambda \in \pindex(U_x)} U_{\lambda}'$ and clearly $\|U\| < \epsilon$.  Letting $\iota = \iota_z$ it is immediate that $\iota$ is a coi from $W_z$ to $U$.  The rather intuitive fact that $\{\coi(W_x, \iota_x, U_x)\}_{x\in X} \cup \{\coi(W_z, \iota, U)\}$ is coherent is proved along similar lines used in earlier proofs.
\end{proof}

\begin{lemma}\label{avoidpchunk}  Suppose that $\kappa_1 \geq 2$ and that $|X| < 2^{\aleph_0}$.  Given $N \in \omega \setminus \{0\}$ and ordinal $\alpha < \kappa_1$ there exists an $\alpha$-pure word $U \in \Red_{\kappa_1}$ using only positive letters such that $\|U\| = \frac{1}{N}$, and $U(\max(\overline{U})) = a_{\alpha, N - 1} = U(\min(\overline{U}))$, and $U \notin \Pfine(\{U_x\}_{x\in X})$.
\end{lemma}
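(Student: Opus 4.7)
The plan is to use a counting argument based on Lemma \ref{notmanypures}. Any nonempty pure element $W$ of $\Pfine(\{U_x\}_{x\in X})$ is trivially a pure p-chunk of itself (taking the whole, singleton p-index), and so by Lemma \ref{notmanypures} the set of pure elements of $\Pfine(\{U_x\}_{x\in X})$ has cardinality at most $(|X|+1)\cdot\aleph_0 < 2^{\aleph_0}$. It therefore suffices to exhibit $2^{\aleph_0}$ pairwise $\not\equiv$ $\alpha$-pure reduced positive words in $\W_{\kappa_1}$ meeting the other conditions of the lemma; at least one must then lie outside $\Pfine(\{U_x\}_{x\in X})$ and serve as $U$.

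To produce the family, I would fix an almost-disjoint family $\Sigma$ of infinite subsets of $\{N, N+1, N+2, \ldots\}$ with $|\Sigma| = 2^{\aleph_0}$ (cf. \cite[II.1.3]{K}), and for each $Y \in \Sigma$ enumerate $Y = \{m_0^Y < m_1^Y < \cdots\}$ in increasing order. Define $U_Y$ to be the word whose domain is the ordered set $\omega + 1 = \{0, 1, 2, \ldots\}\cup\{\omega\}$ and whose values are $U_Y(0) = U_Y(\omega) = a_{\alpha, N-1}$ and $U_Y(i) = a_{\alpha, m_{i-1}^Y}$ for finite $i \geq 1$. Since $m_i^Y \to \infty$ and $a_{\alpha, N-1}$ appears only at the two endpoints, the set $\{i \in \overline{U_Y} \mid \pro_1(U_Y(i)) \leq N'\}$ is finite for every $N' \in \omega$, so $U_Y$ is indeed a word. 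Because all letters are positive, $U_Y$ is automatically reduced and $\alpha$-pure, and the minimum second subscript $N-1$ is attained only at the endpoints, so $\|U_Y\| = 1/N$ and $U_Y(\min(\overline{U_Y})) = a_{\alpha, N-1} = U_Y(\max(\overline{U_Y}))$ as required.

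For distinct $Y, Z \in \Sigma$ the words $U_Y$ and $U_Z$ belong to distinct $\equiv$-classes, since the ordered set $\omega + 1$ admits only the identity order automorphism and the interior letter sequences $(m_i^Y)$ and $(m_i^Z)$ differ at some index. This yields $2^{\aleph_0}$ pairwise nonequivalent candidates in $\Red_{\kappa_1}$, and by the cardinality bound established at the outset at least one such $U_Y$ avoids $\Pfine(\{U_x\}_{x\in X})$. I anticipate no real obstacle in this argument: it is a mild refinement of the construction used in Theorem \ref{howmany}, modified so that the word's domain has a first and last element, both labeled by $a_{\alpha, N-1}$, and every other condition reduces to a direct inspection of the definitions.
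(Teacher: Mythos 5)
Your proof is correct and takes essentially the same approach as the paper: both arguments rest on Lemma \ref{notmanypures} to bound the pure words in $\Pfine(\{U_x\}_{x\in X})$ by fewer than $2^{\aleph_0}$, and then exhibit $2^{\aleph_0}$ pairwise-inequivalent $\alpha$-pure positive reduced words satisfying the endpoint and $\|\cdot\|$ constraints. The only cosmetic difference is the family: you build words on the domain $\omega+1$ indexed by an almost-disjoint family, while the paper uses the domain $[0,1]\cap\mathbb{Q}$ and simply counts injective functions from $(0,1)\cap\mathbb{Q}$ into $\{a_{\alpha,n}\}_{n\geq N-1}$, which sidesteps the need for almost-disjointness.
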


\begin{proof}  Assume the hypotheses.  We will let $\overline{U} = [0, 1] \cap \mathbb{Q}$.  It is easy to see that the set of all functions $f: ([0, 1]\cap \mathbb{Q}) \rightarrow \{a_{\alpha, n}\}_{n \geq N - 1}$  such that $f(0) = f(1) = a_{\alpha, N -1}$ and the restriction $f \upharpoonright (0, 1) \cap \mathbb{Q}$ is injective is of cardinality $2^{\aleph_0}$, and each such function is an element of $\Red_{\kappa_1}$ since there are no inverse letters with which to perform a cancellation.  On the other hand we have by Lemma \ref{notmanypures} that there are $< 2^{\aleph_0}$ pure elements in $\Pfine(\{U_x\}_{x\in X})$.  The lemma follows immediately.

\end{proof}

\end{subsection}

\begin{subsection}{$\omega$-type concatenations}\label{omegaconcat}

In this subsection we prove the following:

\begin{proposition}\label{omegawords}  Suppose that $\kappa_0$ and $\kappa_1$ are cardinal numbers greater than or equal to $2$.  Suppose that $\{\coi(W_x, \iota_x, U_x)\}_{x\in X}$ is coherent, that $\pindex(W) \equiv \prod_{n\in \omega} I_n$ with each $I_n \neq \emptyset$, $W \upharpoonright_p I_n \in \Pfine(\{W_x\}_{x\in X})$, and $W \notin \Pfine(\{W_x\}_{x\in X})$.  Suppose also that $|X| < 2^{\aleph_0}$.  Then there exists $U \in \Red_{\kappa_1}$ and coi $\iota$ from $W$ to $U$ such that $\{\coi(W_x, \iota_x, U_x)\}_{x\in X} \cup \{\coi(W, \iota, U)\}$ is coherent.
\end{proposition}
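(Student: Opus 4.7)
My plan is to process each p-chunk $W_n := W \upharpoonright_p I_n$ in turn and then $\omega$-concatenate.  Inductively I would build, for each $n \in \omega$, a coi triple $\coi(W_n, \iota_n', U_n^{(0)})$ such that $\mathcal{T}_{n+1} := \{\coi(W_x, \iota_x, U_x)\}_{x \in X} \cup \{\coi(W_m, \iota_m', U_m^{(0)})\}_{m \leq n}$ is coherent: first apply Lemma~\ref{findsomerepresentative} to $W_n \in \Pfine(\{W_x\}_{x \in X}) \subseteq \Pfine(\{W_x\}_{x \in X} \cup \{W_m\}_{m < n})$, and then Lemma~\ref{makeitsmaller} to arrange $\|U_n^{(0)}\| < 1/(n+2)$ (so $d(U_n^{(0)}) \to \infty$) with $\dom(\iota_n')$ nonempty (using the ``moreover'' clauses and the fact that each $W_n$ is nonempty).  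Lemma~\ref{ascendingchaincoi} makes $\mathcal{T}_\infty := \bigcup_n \mathcal{T}_{n+1}$ coherent.

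To obtain a reduced concatenation I would modify $U_n^{(0)}$ to $U_n'$: for $n \geq 1$, whenever $\pindex(U_{n-1}^{(0)})$ has a maximum $\lambda$ and $\pindex(U_n^{(0)})$ a minimum $\mu$ with $U_{n-1}^{(0)}\upharpoonright_p\{\lambda\}$ and $U_n^{(0)}\upharpoonright_p\{\mu\}$ both $\alpha$-pure for a common $\alpha$, prepend a single filler letter $a_{\beta_n, N_n}$ with $\beta_n \neq \alpha$ (possible since $\kappa_1 \geq 2$) and $N_n > n+1$; otherwise set $U_n' := U_n^{(0)}$.  Each filler is a pure singleton p-chunk lying outside the range of $\iota_n'$, so all coherence conditions transfer from $U_n^{(0)}$ to $U_n'$.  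Then $U := \prod_{n \in \omega} U_n'$ is a valid reduced word in $\W_{\kappa_1}$, and $\iota := \bigcup_n \iota_n'$, viewed via the canonical embeddings $\pindex(W_n) \hookrightarrow \pindex(W)$ and $\pindex(U_n') \hookrightarrow \pindex(U)$, is a coi from $W$ to $U$ by Lemma~\ref{basiccloseproperties}(iii) applied to the nonempty close domains and ranges of each $\iota_n'$.

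The remaining task is to verify $\mathcal{T}_\infty \cup \{\coi(W, \iota, U)\}$ is coherent.  When a matching interval $I \subseteq \pindex(W)$ meets only finitely many $I_n$, the check is routine: decompose $I$ and the matching $I' \subseteq \pindex(W_y)$ along the $I_n$-boundaries, apply pairwise coherence from $\mathcal{T}_\infty$, and invoke Lemma~\ref{coilemma} so that the finitely many intervening $\varpropto$-gaps reduce to pure subwords trivial in $\Co_{\kappa_1}$.

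The main obstacle I expect is the case $I = [i_0, \infty) \cap \pindex(W)$ spanning infinitely many $I_n$'s, in which $[[U\upharpoonright_p \varpropto(I, \iota)]]$ and $[[(U_y\upharpoonright_p \varpropto(I', \iota_y))^i]]$ are both represented by genuinely infinite concatenations, and piecewise $\Co_{\kappa_1}$-equivalence of the factors need not imply equivalence of the wholes.  I would exploit the freedom in Lemma~\ref{findsomerepresentative}: when building each $U_n^{(0)}$, select the decomposition of $\pindex(W_n)$ supplied by Lemma~\ref{elementsofthegeneratedsubgroup} to include $J_n := I_n \cap I$ as a distinguished block matched with $(y, K_n, i)$, where $K_n$ is the corresponding subinterval of $I'$.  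This forces $U_n^{(0)}\upharpoonright_p \varpropto(J_n, \iota_n') \equiv (U_y\upharpoonright_p \varpropto(K_n, \iota_y))^i$ literally as words, so the $\omega$-concatenation literally produces $(U_y\upharpoonright_p \varpropto(I', \iota_y))^i$ up to finitely many pure gaps absorbed by Lemma~\ref{coilemma}.  Since any two infinite-tail queries necessarily share an infinite tail of $\pindex(W)$, the coherence of $\mathcal{T}_\infty$ propagates the alignment of $U$ from one distinguished query to all the others at the $\Co_{\kappa_1}$ level; Corollary~\ref{deletesomewords} certifies that the unavoidable residual discrepancies are concentrated in finitely many pure subwords, hence trivial in $\Co_{\kappa_1}$.
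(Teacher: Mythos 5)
Your overall scaffolding (build $U_n$ via Lemmas~\ref{findsomerepresentative} and~\ref{makeitsmaller}, insert fillers, concatenate, take $\iota = \bigcup \iota_n$) mirrors the paper, but there are two genuine gaps, both concentrated in the place you flag as the ``main obstacle.''  First, that obstacle does not actually arise: if $\Lambda_0 \subseteq \pindex(W)$ meets infinitely many $I_n$ and $W \upharpoonright_p \Lambda_0 \equiv (W_y\upharpoonright_p \Lambda_1)^i$, then $W\upharpoonright_p\Lambda_0 \in \Pfine(\{W_x\}_{x\in X})$, and since $\Lambda_0$ is a terminal interval up to finitely many $I_n$'s we can write $W$ as a product of this tail with an initial segment that is also in $\Pfine(\{W_x\}_{x\in X})$, forcing $W \in \Pfine(\{W_x\}_{x\in X})$ and contradicting the hypothesis.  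Your alignment machinery (re-choosing the $U_n^{(0)}$ so that one distinguished block matches $(y,K_n,i)$ literally at the word level) is therefore unnecessary, and in fact it cannot be made to work: a single $W_n$ can admit many incompatible matchings $(y,\Lambda_0,\Lambda_1,i)$, and Lemma~\ref{findsomerepresentative} gives you coherence in $\Co_{\kappa_1}$, not a word-level equality, so you cannot force all these alignments simultaneously.

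Second, and more seriously, your fillers are too weak on the $U$-side.  The paper's fillers $V_n$ are built by Lemma~\ref{avoidpchunk} (this is the one place $|X| < 2^{\aleph_0}$ is used) to lie \emph{outside} $\Pfine(\{U_x\}_{x\in X}\cup\{U_n\}_{n\in\omega})$; this blocking property is what forces any interval $\Lambda_0 \subseteq \pindex(U)$ that matches a p-chunk of some $U_y$ (which therefore lies in $\Pfine(\{U_x\}_{x\in X}\cup\{U_n\}_{n\in\omega})$) to be confined inside a single $\pindex(U_n)$.  A single letter $a_{\beta_n,N_n}$ chosen only to break $\alpha$-purity at a boundary is itself a pure word and may well be a p-chunk of some $U_y$, so it obstructs nothing; your coherence argument for intervals of $\pindex(U)$ that cross a filler has no leverage.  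Relatedly, you never address the self-matching cases $W\upharpoonright_p\Lambda_0 \equiv (W\upharpoonright_p\Lambda_1)^i$ and $U\upharpoonright_p\Lambda_0 \equiv (U\upharpoonright_p\Lambda_1)^i$, which in the paper again rely on the fillers' avoidance of $\Pfine$ (and on their distinctive $\|\cdot\|$-values) to pin down the matching.
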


\begin{proof}  For each $n\in \omega$ write $W_n \equiv W\upharpoonright_p I_n$.  As $W_0 \in \Pfine(\{W_x\}_{x\in X})$ is nontrivial we select a word $U_0 \in \Red_{\kappa_1}$ and coi $\iota_0$ from $W_0$ to $U_0$ such that the domain of $\iota_0$ is nonempty and such that $\{\coi(W_x, \iota_x, U_x)\}_{x\in X} \cup \{\coi(W_0, \iota_0, U_0)\}$ is coherent, by Lemma \ref{findsomerepresentative}.  Assuming that $\{\coi(W_i, \iota_j, U_j)\}_{j \leq m}$ have already been chosen such that $\|U_j\| < \frac{\|U_{j - 1}\|}{2}$, each $\iota_j$ has nonempty domain and $\{\coi(W_x, \iota_x, U_x)\}_{x\in X} \cup \{\coi(W_i, \iota_j, U_j)\}_{j \leq m}$ is coherent, we use Lemmas \ref{findsomerepresentative} and \ref{makeitsmaller} to select $U_{m + 1} \in \Red_{\kappa_1}$ and coi $\iota_{m+1}$ from $W_{m + 1}$ to $U_{m + 1}$ so that $\iota_{m + 1}$ has nonempty domain, $\|U_{m+1}\| < \frac{\|U_m\|}{2}$ and $\{\coi(W_x, \iota_x, U_x)\}_{x\in X} \cup \{\coi(W_i, \iota_j, U_j)\}_{j \leq m + 1}$ is coherent.

The collection $\{\coi(W_x, \iota_x, U_x)\}_{x\in X} \cup \{\coi(W_i, \iota_j, U_j)\}_{j \in \omega}$ is coherent by Lemma \ref{ascendingchaincoi}.  For each $j \in \omega$ we will construct a word $V_j \in \Red_{\kappa_1}$ with $1 \leq |\pindex(V_j)| \leq 2$.  Select $\alpha_j < \kappa_1$ such that the word $U_j$ does not end with an $\alpha_j$-pure subword.  This is possible since $\kappa_1 \geq 2$ and $U_j$ can end in at most one pure subword (and might possibly not end in a pure subword).  By Lemma \ref{avoidpchunk} we select an $\alpha_j$-pure word $V_j' \in \Red_{\kappa_1} \setminus  \Pfine(\{U_x\}_{x\in X} \cup \{U_i\}_{i \in \omega})$ which uses only positive letters such that $\|V_j'\| = \|U_j\|$ and $\overline{V_j'}$ has maximum and minimum elements and $V_j'(\max(\overline{V_j'})) = a_{\alpha_j, \|U_j\| - 1} = V_j'(\max(\overline{V_j'}))$.  If $U_{j+1}$ begins with an $\alpha_j$-pure subword then, again by Lemma \ref{avoidpchunk}, select $V_j'' \in \Red_{\kappa_1} \setminus  \Pfine(\{U_x\}_{x\in X} \cup \{U_i\}_{i \in \omega})$ which uses only positive letters such that $\|V_j''\| = \|U_j\|$ and $\overline{V_j''}$ has maximum and minimum elements and $V_j''(\max(\overline{V_j''})) = a_{\alpha_j, \|U_j\| - 1} = V_j''(\max(\overline{V_j''}))$ and $V_j''$ is pure but not $\alpha_j$ pure.  If $U_{j + 1}$ does not begin with an $\alpha_j$-pure subword then let $V_j'' = E$.  Let $V_j = V_j'V_j''$.

We know for each $n\in \omega$ that $U_n$, $V_n'$ and $V_n''$ are each reduced.  By how $V_n'$ was selected, we know that $U_nV_n'$ is reduced since any cancellation would need to pair letters in $V_n'$ with those in $U_n$, and $U_n$ does not end in an $\alpha_j$-pure word.  Similarly, $U_nV_n'V_n'' \equiv U_nV_n$ is reduced.

Since $\|U_nV_n\| \leq \frac{1}{2^n}$ we have that the expression $\prod_{n\in \omega} U_nV_n \equiv U_0V_0U_1V_1 \cdots$ is a word.  Let $\mathcal{S}$ be a cancellation on the word $U =\prod_{n \in \omega} U_nV_n$.  If any elements of $\overline{U_0V_0} \subseteq \overline{U}$ appear in $\mathcal{S}$ then $\max \overline{V_0}$ appears and is paired with some element of $\overline{\prod_{n \geq 1}U_nV_n}$,  since $U_0V_0$ is reduced.  But $\max(\overline{V_0})$ cannot be paired with any element of $\overline{\prod_{n \geq 1}U_nV_n}$ since $\|V_0 \upharpoonright \{\max(\overline{V_0})\}\| >\|\prod_{n \geq 1}U_nV_n\|$.  Thus no elements of $\overline{U_0V_0}$ appear in $\mathcal{S}$.  But by the same token, no elements of $\overline{U_1V_1}$ appear in $\mathcal{S}$, and by induction no elements of any $\overline{U_nV_n}$ can appear.  Thus $\mathcal{S} = \emptyset$ and evidently $U$ is reduced.

We note as well that by how $V_n'$ and $V_n''$ were chosen we can write $\pindex(U) \equiv \prod_{n \in \omega}\pindex(U_n)\pindex(V_n)$, and $1 \leq |\pindex(V_n)| \leq 2$.  Let $\iota$ be the function $\iota = \bigcup_{j \in \omega} \iota_j$.  By Lemma \ref{basiccloseproperties} (iii) the domain of $\iota$ is close in $\pindex(W)$ and the range of $\iota$ is close in $U$, and thus we may write $\coi(W, \iota, U)$.  We will show that $\{\coi(W_x, \iota_x, U_x)\}_{x\in X} \cup \{\coi(W_i, \iota_j, U_j)\}_{j \in \omega} \cup \{\coi(W, \iota, U)\}$ is coherent, from which it will immediately follow that $\{\coi(W_x, \iota_x, U_x)\}_{x\in X} \cup \{\coi(W, \iota, U)\}$ is coherent.

Suppose that $y \in X \cup \omega$, $\Lambda_0 \subseteq \pindex(W)$ and $\Lambda_1 \subseteq \pindex(W_y)$ are intervals and $i \in \{-1, 1\}$ are such that $W \upharpoonright_p \Lambda_0 \equiv (W_y \upharpoonright_p \Lambda_1)^{i}$.  If the set $\{n\in \omega \mid I_n \cap \Lambda_0 \neq \emptyset\}$ is infinite, then by the fact that $\Lambda_0$ is an interval there exist $m \in \omega$ and intervals $I_m', I_m'' \subseteq I_m$, with $I_m'$ possibly empty, such that $I_m \equiv I_m'I_m''$ and $\Lambda_0 \equiv I_m''\prod_{n = m+1}^{\infty}I_n$.  Certainly $(W_y \upharpoonright_p \Lambda_1)^{i} \in \Pfine(\{W_x\}_{x\in X} \cup \{W_n\}_{n\in \omega})$, and since $W_n \in  \Pfine(\{W_x\}_{x\in X}$ for each $n$ we have in fact that $\Pfine(\{W_x\}_{x\in X} \cup \{W_n\}_{n\in \omega}) =  \Pfine(\{W_x\}_{x\in X})$.  Therefore we have $W \upharpoonright_p \Lambda_0 \equiv (W_y \upharpoonright_p \Lambda_1)^{i} \in \Pfine(\{W_x\}_{x\in X})$.  But also $(\prod_{n=0}^{m-1} W_n)W\upharpoonright_pI_m' \in  \Pfine(\{W_x\}_{x\in X})$.  Thus $W \equiv ((\prod_{n=0}^{m-1} W_n)W\upharpoonright_pI_m' ) (W\upharpoonright_p \Lambda_0) \in \Pfine(\{W_x\}_{x\in X})$, contrary to the assumptions of our lemma.

Thus we suppose that $y \in X \cup \omega$, $\Lambda_0 \subseteq \pindex(W)$ and $\Lambda_1 \subseteq \pindex(W_y)$ are intervals and $i \in \{-1, 1\}$ are such that $W \upharpoonright_p \Lambda_0 \equiv (W_y \upharpoonright_p \Lambda_1)^{i}$ and know from this that the set $K = \{n\in \omega \mid I_n \cap \Lambda_0 \neq \emptyset\}$ is finite.  If $K = \emptyset$ then $\Lambda_0 = \emptyset = \Lambda_1$ and $[[U \upharpoonright_p \varpropto(\Lambda_0, \iota)]] = [[E]] = [[(U_y \upharpoonright_p \varpropto(\Lambda_1,\iota_y))^{i}]]$.  If $K$ has cardinality $1$ then we let $K = \{m\}$ and we can write $I_m \equiv I_m'\Lambda_0I_m''$ where either or both of $I_m'$ and $I_m''$ may be empty.  Since $\{\coi(W_x, \iota_x, U_x)\}_{x\in X} \cup \{\coi(W_j, \iota_j, U_j)\}_{j \in \omega}$ is coherent, we have

$$
\begin{array}{ll}
[[U \upharpoonright_p \varpropto(\Lambda_0, \iota)]] & = [[U_m \upharpoonright_p \varpropto(\Lambda, \iota_m)]]\\
& = [[(U_y \upharpoonright_p \varpropto(\Lambda_1,\iota_y))^{i}]].
\end{array}
$$

\noindent If $K$ is of cardinality at least $2$ then we let $m_a$ and $m_b$ be respectively the minimal and maximal elements and write $I_{m_a} \equiv I_{m_a}'I_{m_a}''$, $I_{m_b} \equiv I_{m_b}'I_{m_b}''$ (where either or both of $I_{m_a}'$ and $I_{m_b}''$ may be empty) and $\Lambda_0 \equiv I_{m_a}''I_{m_a + 1} \cdots I_{m_b - 1}I_{m_b}'$.  As $W \upharpoonright_p \Lambda_0 \equiv (W_y \upharpoonright_p \Lambda_1)^{i}$, there exist subintervals $J_0, \ldots, J_{m_b - m_a}$ of $\Lambda_1$ such that $W \upharpoonright_p I_{j} \equiv (W_y \upharpoonright_p J_j- m_a)^i$ for $m_a <  j < m_b$ and $W\upharpoonright_p I_{m_a}'' \equiv (W_y \upharpoonright_p J_0)^i$ and $W \upharpoonright_p I_{m_b}' \equiv (W_y \upharpoonright_p J_{m_b - m_a})^{i}$.  Since $\{\coi(W_x, \iota_x, U_x)\}_{x\in X} \cup \{\coi(W_j, \iota_j, U_j)\}_{j \in \omega}$ is coherent, we have

$$
\begin{array}{ll}
[[U \upharpoonright_p \varpropto(\Lambda_0, \iota)]] & = [[U_{m_a} \upharpoonright_p \varpropto(I_{m_a}'',\iota_{m_a})]][[U_{m_a + 1} \upharpoonright_p \varpropto(I_{m_a + 1}, \iota_{m_a + 1}]]\\
& \cdots [[U_{m_b - 1} \upharpoonright_p \varpropto(I_{m_b - 1}, \iota_{m_b  - 1})]][[U \upharpoonright_p \varpropto(I_{m_b}', \iota_{m_b})]]\\
& = \prod_{j\in \{0, \ldots, m_b - m_a\}^i} [[(U_y \upharpoonright_p \varpropto(J_j, \iota_y))^i]]\\
& = [[(U_y \varpropto(\Lambda_1, \iota_y))^i]].
\end{array}
$$

Suppose now that $\Lambda_0, \Lambda_1 \subseteq \pindex(W)$ are intervals and $i\in \{-1, 1\}$ are such that $W \upharpoonright_p \Lambda_0 \equiv (W \upharpoonright_p \Lambda_1)^i$.  Let $K_0 = \{n \in \omega \mid I_n \cap \Lambda_0 \neq \emptyset\}$ and $K_1 = \{n \in \omega \mid I_n \cap \Lambda_1 \neq \emptyset\}$.

\noindent\textbf{Case 1. $K_0$ is infinite.}  In this case, if $K_1$ is finite then $W\upharpoonright_p \Lambda_0 \in \Pfine(\{W_x\}_{x\in X})$, and we have already seen that this implies $W \in \Pfine(\{W_x\}_{x\in X})$ since $K_0$ is infinite, and this is a contradiction.  Thus $K_1$ must be infinite in this case.  If $i = -1$ then $W\upharpoonright_p \Lambda_0 \equiv (W\upharpoonright_p \Lambda_1)^{-1}$, which implies that the word $W$ ends in a nonempty word $V^{-1}$, where $V \in \pchunk(W_{\min(J_1)})$.  Thus $W$ has a nontrivial terminal subword which is in $\Pfine(\{W_x\}_{x\in X})$, from which we derive a contradiction as before.  Thus $i = 1$ and $W \upharpoonright_p \Lambda_0 \equiv W \upharpoonright_p \Lambda_1$, and both $\Lambda_0$ and $\Lambda_1$ are infinite terminal intervals in $\pindex(W)$.  If without loss of generality $\Lambda_1$ is a proper subinterval of $\Lambda_0$, then since $W \upharpoonright_p \Lambda_0 \equiv W \upharpoonright_p \Lambda_1$ we can select a proper terminal subinterval $\Lambda_2 \subseteq \Lambda_1$ such that $W \upharpoonright_p \Lambda_1 \equiv W \upharpoonright_p \Lambda_2$, and inductively we select proper terminal subinterval $\Lambda_{i + 1} \subseteq \Lambda_i$ with $W \upharpoonright_p \Lambda_i \equiv W \upharpoonright_p \Lambda_{i + 1}$.  Thus, letting $\lambda \in \Lambda_0 \setminus \Lambda_1$ we see that the nonempty $W \upharpoonright_p \{\lambda\}$ occurs infinitely often as a subword of $W$, so that $W$ is not a word, a contradiction.  Thus $\Lambda_0 = \Lambda_1$ and $[[U \upharpoonright_p \varpropto(\Lambda_0, \iota)]] = [[(U\upharpoonright_p \varpropto(\Lambda_1, \iota)^i)]]$.

\noindent\textbf{Case 2. $K_0$ is finite.}  In this case we know that $K_1$ is also finite (by applying the argument in Case 1, since $W \upharpoonright_p \Lambda_1 \equiv (W \upharpoonright_p \Lambda_0)^i$).  Thus $W \upharpoonright_p \Lambda_0 \in \Pfine(\{W_n\}_{n\in \omega})$.  If $K_0 = \emptyset$ then so also $K_1 = \emptyset = \Lambda_0 = \Lambda_1$ and it is easy to see that $[[U\upharpoonright_p \varpropto(\Lambda_0, \iota)]] = [[E]] = [[(U\upharpoonright_p \varpropto(\Lambda_1, \iota))^{i}]]$.  In case $K_0 \neq \emptyset$, from the correspondence $W \upharpoonright_p \Lambda_0 \equiv (W \upharpoonright_p \Lambda_1)^i$ we decompose $\Lambda_0 \equiv \Theta_0\Theta_1\cdots \Theta_m$ and $\Lambda_1 \equiv \Theta_0'\Theta_1'\cdots\Theta_m'$ so that $W \upharpoonright_p \Theta_j \equiv (W \upharpoonright_p \Theta_{f(j)}')^i$ where

\[
f(j) = \left\{
\begin{array}{ll}
j
                                            & \text{if } i = 1, \\
m-j                                        & \text{if }i = -1.
\end{array}
\right.
\]

\noindent and each $\Theta_j$ is a subinterval of one of $I_{\min(K_0)}, \ldots, I_{\max(K_0)}$ and each $\Theta_j'$ is a subinterval of one of $I_{\min(K_1)}, \ldots, I_{\max(K_1)}$.  Let $f_0: \{0, \ldots, m\} \rightarrow \{\min(K_0), \ldots, \max(K_0)\}$ be the non-decreasing surjective function given by $\Theta_j \subseteq I_{f_0(j)}$ and similarly let $f_1: \{0, \ldots, m\} \rightarrow \{\min(K_1), \ldots, \max(K_1)\}$ be given by $\Theta_j' \subseteq I_{f_1(j)}$.  We have

$$
\begin{array}{ll}
[[U\upharpoonright_p \varpropto(\Lambda_0, \iota)]] & = \prod_{j = 0}^m[[U_{f_0(j)}\upharpoonright_p \varpropto(\Theta_j, \iota_{f_0(j)})]]\\
& = \prod_{j = 0}^m [[(U_{f_1(f(j))} \upharpoonright_p \varpropto(\Theta_{f(j)}, \iota_{f_1(f(j))}))^i]]\\
& = [[(U\upharpoonright_p \varpropto(\Lambda_1, \iota))^i]]
\end{array}
$$

\noindent where the first and third equalities hold by performing a deletion of finitely many pure words in $\Red_{\kappa_1}$ and the second equality holds by the coherence of the collection $\{\coi(W_n, \iota_n, U_n)\}_{n\in \omega}$.  This completes Case 2 and this part of the argument.

Next we suppose that $y \in X \cup \omega$, $\Lambda_0 \subseteq \pindex(U)$ and $\Lambda_1 \subseteq \pindex(U_y)$ are intervals and $i \in \{-1, 1\}$ are such that $U \upharpoonright_p \Lambda_0 \equiv (U_y \upharpoonright_p \Lambda_1)^{i}$.  Recalling that $U \equiv \prod_{n\in \omega} (U_nV_n)$ and none of the nonempty p-chunks of $V_n$ are in $\Pfine(\{U_x\}_{x\in X}\cup \{U_n\}_{n\in \omega})$ we see that $\Lambda_0 \subseteq \pindex(U_n)$ for some $n\in \omega$.  From the coherence of $\{\coi(W_n, \iota_n, U_n)\}_{n\in \omega} \cup \{\coi(W_x, \iota_x, U_x)\}_{x\in X}$ it is easy to see that $[[W \upharpoonright_p \varpropto(\Lambda_0, \iota^{-1})] = [[(W_y \upharpoonright_p \varpropto(\Lambda_1,\iota_y^{-1}))^i]]$.

Finally suppose intervals $\Lambda_0, \Lambda_1 \subseteq \pindex(U)$ and $i\in \{-1, 1\}$ are such that $U \upharpoonright_p \Lambda_0 \equiv (U \upharpoonright_p \Lambda_1)^i$.  Recall that $U \equiv \prod_{n\in \omega} U_nV_n$ with $$\pindex(U) \equiv \prod_{n\in \omega} \pindex(U_n)\pindex(V_n)$$ and for all $n\in \omega$ we have $\|U_n\| = \|V_n\| \geq 2 \|U_{n + 1}\|$ and $V_n$ uses only positive letters, satisfies $1 \leq |\pindex(V_n)| \leq 2$ and every nonempty p-chunk of $V_n$ is not an element of $\Pfine(\{U_x\}_{x \in X} \cup \{U_n\}_{n \in \omega})$.

If there exists $\lambda \in \Lambda_0$ and $n\in \omega$ such that $\lambda \in \pindex(V_n)$ then $i = 1$ since every pure p-chunk of $U$ which is not in $\Pfine(\{U_x\}_{x \in X}\{U_n\}_{n \in \omega})$ is a p-chunk in some $V_m$ and therefore has positive letters only.  Furthermore the order isomorphism $h: \Lambda_0 \rightarrow \Lambda_1$ induced by the word equivalence $U \upharpoonright_p \Lambda_0 \equiv U\upharpoonright_p \Lambda_1$ must have $h(\lambda) = \lambda$ , for if $U \upharpoonright_p \{\lambda\}$ is, say, $\alpha$-pure then $U\upharpoonright_p \{\lambda\}$ is the unique $\alpha$-pure p-chunk of $U$ which has value $\|U \upharpoonright_p \{\lambda\}\|$ under the function $\|\cdot\|$.  But this implies that $h$ is the identity function since if, say, $\lambda' < \lambda$ and $h(\lambda') < \lambda'$ then $\lambda' < h^{-1}(\lambda') < h^{-2}(\lambda') < \cdots < \lambda$ and so the word $U \upharpoonright_p \Lambda_0$ has infinitely many disjoint occurrences of subwords equivalent to $U\upharpoonright_p \{\lambda'\}$, which contradicts the fact that $U$ is a word.  Thus $\Lambda_0 = \Lambda_1$ and obviously $[[W \upharpoonright_p \varpropto(\Lambda_0, \iota^{-1})]] = [[W \upharpoonright_p \varpropto(\Lambda_1, \iota^{-1})]]$.

On the other hand if $\Lambda_0 \cap \pindex(V_n) = \emptyset$ for all $n \in \omega$ then $\Lambda_0 \subseteq \pindex(U_m)$ for some $m \in \omega$.  Thus $U \upharpoonright_p \Lambda_0 \in \Pfine(\{U_x\}_{x \in X} \cup \{U_n\}_{n \in \omega})$, so $\Lambda_1 \cap \pindex(V_n) = \emptyset$ for all $n \in \omega$ as well.  Thus $\Lambda_1 \subseteq \pindex(U_{m'})$ for some $m' \in \omega$.  Then

$$
\begin{array}{ll}
[[W\upharpoonright_p \varpropto(\Lambda_0, \iota^{-1})]] & = [[W_m \upharpoonright_p \varpropto(\Lambda_0, \iota_m^{-1})]]\\
& = [[(W_{m'} \upharpoonright_p \varpropto(\Lambda_1, \iota_{m'}^{-1}))^{i}]]\\
& =  [[(W \upharpoonright_p \varpropto(\Lambda_1, \iota^{-1}))^i]]
\end{array}
$$

\noindent since $\{\coi(W_n, \iota_n, U_n)\}_{n\in \omega}$ is coherent.  The proposition is proved.
\end{proof}

\end{subsection}

\begin{subsection}{$\mathbb{Q}$-type concatenations}\label{Qconcat}

In this subsection we will devote our attention to proving the following:

\begin{proposition}\label{Qtype}  Suppose that $\kappa_0$ and $\kappa_1$ are cardinal numbers greater than or equal to $2$.  Suppose that $\{\coi(W_x, \iota_x, U_x)\}_{x\in X}$ is coherent, that $\pindex(W) \equiv \prod_{q \in \mathbb{Q}} I_q$ with each $I_q \neq \emptyset$, $W \upharpoonright_p I_q \in \Pfine(\{W_x\}_{x\in X})$ for each $q \in \mathbb{Q}$, and $W \upharpoonright_p \bigcup\Lambda \notin \Pfine(\{W_x\}_{x\in X})$ for each interval $\Lambda \subseteq \mathbb{Q}$ with more than one point.  Suppose also that $|X| < 2^{\aleph_0}$.  Then there exists $U \in \Red_{\kappa_1}$ and coi $\iota$ from $W$ to $U$ such that $\{\coi(W_x, \iota_x, U_x)\}_{x\in X} \cup \{\coi(W, \iota, U)\}$ is coherent.
\end{proposition}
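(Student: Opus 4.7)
The plan is to imitate the proof of Proposition \ref{omegawords}, using the countability of $\mathbb{Q}$ to drive an inductive construction and using the hypothesis that $W \upharpoonright_p \bigcup \Lambda \notin \Pfine(\{W_x\}_{x \in X})$ for multi-point $\mathbb{Q}$-intervals $\Lambda$ to control the coherence check. First, enumerate $\mathbb{Q} = \{q_n\}_{n \in \omega}$ and write $W_q \equiv W \upharpoonright_p I_q$. Inductively on $n$, apply Lemma \ref{findsomerepresentative} to pick $U_{q_n} \in \Red_{\kappa_1}$ and coi $\iota_{q_n}$ from $W_{q_n}$ to $U_{q_n}$ extending the coherence of the previously chosen collection, then apply Lemma \ref{makeitsmaller} to force $\|U_{q_n}\| < 2^{-n}$ (preserving a nonempty domain of $\iota_{q_n}$ when $W_{q_n}$ is nonempty); by Lemma \ref{ascendingchaincoi}, the full countable union $\{\coi(W_x, \iota_x, U_x)\}_{x \in X} \cup \{\coi(W_{q_n}, \iota_{q_n}, U_{q_n})\}_{n \in \omega}$ is coherent.

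Next, for each $n$, use Lemma \ref{avoidpchunk} to choose an $\alpha_n$-pure separator word $V_{q_n} \in \Red_{\kappa_1} \setminus \Pfine(\{U_x\}_{x \in X} \cup \{U_{q_m}\}_{m \in \omega})$ using only positive letters, with $\|V_{q_n}\| = \|U_{q_n}\|$ and extremal letters equal to $a_{\alpha_n, \|U_{q_n}\|^{-1} - 1}$; here $\alpha_n < \kappa_1$ is picked, possible since $\kappa_1 \geq 2$, so that neither the first nor the last p-chunk of $U_{q_n}$ is $\alpha_n$-pure. Form $U_{q_n}^* \equiv V_{q_n} U_{q_n} V_{q_n}$, which is reduced by the boundary-purity discrimination, and set
$$
U \equiv \prod_{q \in \mathbb{Q}} U_q^*,
$$
a valid word thanks to the norm control. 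Putting $\iota := \bigcup_{q \in \mathbb{Q}} \iota_q$, Lemma \ref{basiccloseproperties} (iii) shows $\coi(W, \iota, U)$.

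The first main task is to verify $U$ is reduced. Any cancellation $\mathcal{S}$ restricts trivially to each reduced $U_{q_n}^*$, so any pair $\langle i_0, i_1\rangle \in \mathcal{S}$ crosses two distinct pieces $\overline{U_{q_n}^*}, \overline{U_{q_m}^*}$ with $q_n < q_m$ in $\mathbb{Q}$. Taking $n^* := \min\{n : \overline{U_{q_n}^*}\text{ is touched by }\mathcal{S}\}$ and choosing the pair so that $i_0 \in \overline{U_{q_{n^*}}^*}$, minimality forces the partner's piece-index $m$ to satisfy $m > n^*$; by density of $\mathbb{Q}$, infinitely many $q_l$ lie in $(q_{n^*}, q_m)$, and Definition \ref{cancellation} (4) demands that every letter of each corresponding $\overline{U_{q_l}^*}$ be paired inside $(i_0, i_1)$. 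Applying the max-norm argument to the distinguished first letter of $V_{q_l}$ for $l$ minimal in $L := \{l : q_{n^*} < q_l < q_m\}$, its partner must lie in some $\overline{U_{q_{l'}}^*}$ with enumeration index $l' \leq l$; ruling out $l' \in L \setminus \{l\}$ by the minimality of $l$ and ruling out $l' = l$ by the reducedness of $U_{q_l}^*$, the partner is forced into $\overline{U_{q_{n^*}}^*}$ or $\overline{U_{q_m}^*}$, and iterating the same analysis at those boundary pieces yields a contradiction parallel to the $\omega$-case. Hence $\mathcal{S} = \emptyset$, and $\pindex(U) \equiv \prod_{q \in \mathbb{Q}} \pindex(U_q^*)$.

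Finally, verify coherence of $\{\coi(W_x, \iota_x, U_x)\}_{x \in X} \cup \{\coi(W_{q_n}, \iota_{q_n}, U_{q_n})\}_{n \in \omega} \cup \{\coi(W, \iota, U)\}$ along the lines of the proof of Proposition \ref{omegawords}. For an equivalence $W \upharpoonright_p \Lambda_0 \equiv (W_y \upharpoonright_p \Lambda_1)^i$, setting $K := \{q \in \mathbb{Q} : I_q \cap \Lambda_0 \neq \emptyset\}$, if $|K| \geq 2$ then deleting the (at most two) endpoints of $K$ in $\mathbb{Q}$ yields a multi-point subinterval $K' \subseteq K$ with $\bigcup_{q \in K'} I_q \subseteq \Lambda_0$; but then $W \upharpoonright_p \bigcup_{q \in K'} I_q$ is a p-chunk of $W \upharpoonright_p \Lambda_0 \in \Pfine(\{W_x\}_{x \in X})$, hence itself in $\Pfine(\{W_x\}_{x \in X})$ by Lemma \ref{fine}, contradicting our hypothesis. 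So $|K| = 1$, $\Lambda_0 \subseteq I_{q_m}$ for some $m$, and the equality follows from coherence of the countable collection. The dual case for $\Lambda_0 \subseteq \pindex(U)$ uses $V_q \notin \Pfine(\{U_x\}_{x \in X} \cup \{U_{q_n}\}_{n \in \omega})$ to pin $\Lambda_0$ inside a single $\pindex(U_{q_m})$. The main obstacle is the reducedness step: the density of $\mathbb{Q}$ obstructs the linear inductive peeling of the $\omega$-case and must be replaced by a minimality argument simultaneously in the enumeration index and in the norm of the separator letters.
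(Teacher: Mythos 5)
Your construction diverges from the paper's in a way that creates a genuine gap in the coherence verification, and you've also skipped the case of that verification where the gap would surface. The paper first collects a sequence $\{W_n\}_{n\in\omega}$ of \emph{distinct} representatives (up to $\equiv$ and inversion) of the blocks $W\upharpoonright_p I_q$, assigns a target $U_n$ to each $W_n$, and then sets $U_q \equiv (V_{f_0(q),b}\,U_{f_0(q)}\,V_{f_0(q),c})^{f_1(q)}$ where $f_0,f_1$ record which $W_n$ (and with which sign) equals $W\upharpoonright_p I_q$. The whole point is that whenever $W\upharpoonright_p I_q \equiv (W\upharpoonright_p I_{q'})^{\pm 1}$, the corresponding $U_q$ and $U_{q'}$ are \emph{literally} the same word up to inversion. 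You instead enumerate $\mathbb{Q}=\{q_n\}$ and assign a fresh $U_{q_n}$ to each $q_n$ with strictly shrinking norm $\|U_{q_n}\|<2^{-n}$. If $W\upharpoonright_p I_{q_a}\equiv W\upharpoonright_p I_{q_b}$ for $a\neq b$ (perfectly possible, and in fact unavoidable if $W$ has any repetition among its pure-chunk blocks), your $U_{q_a}^*$ and $U_{q_b}^*$ are forced to be different words because their norms differ. Now take intervals $\Lambda_0,\Lambda_1\subseteq\pindex(W)$ with $W\upharpoonright_p\Lambda_0\equiv(W\upharpoonright_p\Lambda_1)^i$ each spanning infinitely many blocks $I_q$ — unlike the $\omega$-case, order-density makes such ``interior'' self-similar intervals possible and they do not collapse to $\Lambda_0=\Lambda_1$. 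Coherence demands $[[U\upharpoonright_p\varpropto(\Lambda_0,\iota)]]=[[(U\upharpoonright_p\varpropto(\Lambda_1,\iota))^i]]$, and the paper derives this from $U_{h(q)}\equiv(U_q)^i$ for the induced bijection $h$ between the block index sets; your construction does not satisfy $U_{h(q)}\equiv(U_q)^i$, so that equality in $\Co_{\kappa_1}$ is false and the new collection is not coherent. Your proof only checks the cases $W\upharpoonright_p\Lambda_0\equiv(W_y\upharpoonright_p\Lambda_1)^i$ with $y\in X\cup\omega$ and the dual $U$-side case, and omits $\Lambda_0,\Lambda_1\subseteq\pindex(W)$ entirely, which is exactly the case your construction fails.

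A secondary problem: you define $U_{q_n}^*\equiv V_{q_n}U_{q_n}V_{q_n}$ with a single $\alpha_n$-pure separator repeated on both sides, claiming $\alpha_n<\kappa_1$ can be chosen so that neither the first nor last p-chunk of $U_{q_n}$ is $\alpha_n$-pure, ``possible since $\kappa_1\geq 2$.'' When $\kappa_1=2$ and $U_{q_n}$ begins with a $0$-pure chunk and ends with a $1$-pure chunk there is no such $\alpha_n$, and if $\alpha_n$ matches one of the boundary alphabets the boundary p-chunks merge (and may even cancel, since $V_{q_n}$ begins with $a_{\alpha_n,N-1}$ where $N-1$ is precisely the minimal second subscript of $U_{q_n}$). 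This is why the paper uses two separators $V_{m,b}$ and $V_{m,c}$ with independently chosen alphabets $\alpha_{m,b},\alpha_{m,c}$. Finally, your reducedness argument via a minimality-of-enumeration-index double induction is sketchy as written (among other things it implicitly needs $\|U_{q_{n+1}}\|<\|U_{q_n}\|$, which $\|U_{q_n}\|<2^{-n}$ alone does not guarantee); the paper instead proves Lemma \ref{QUisreduced} by iteratively modifying a hypothetical cancellation block-by-block and pulling it back to a cancellation of $W$, which is more robust to the density of $\mathbb{Q}$. The separator and norm issues are repairable, but the failure to make $U_q$ depend only on the $\equiv$/inverse class of $W\upharpoonright_p I_q$ is a missing idea that the construction cannot do without.
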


\begin{proof}  Let $\{W_n\}_{n \in \omega}$ be a list such that for each $q\in \mathbb{Q}$ we have some $n\in \omega$ for which either $W \upharpoonright_p I_q \equiv W_n$ or $W \upharpoonright_p I_q \equiv W_n^{-1}$, and $n \neq n'$ implies $W_n \not\equiv W_{n'} \not\equiv W_n^{-1}$.  Notice that indeed such a list must be infinite, for otherwise there is some $q' \in \mathbb{Q}$ such that $\{q \in \mathbb{Q} \mid W \upharpoonright_p I_q \equiv W \upharpoonright_p I_{q'}\}$ is infinite, which contradicts the fact that $W$ is a word.  By assumption $\{W_n\}_{n \in \omega}\subseteq \Pfine(\{W_x\}_{x \in X})$.  Select $U_0 \in \Red_{\kappa_1}$ and coi $\iota_0$ from $W_0$ to $U_0$ with nonempty domain such that $\{\coi(W_x, \iota_x, U_x)\}_{x\in X} \cup \{\coi(W_0, \iota_0, U_0)\}$ is coherent by Lemma \ref{findsomerepresentative}.  Assuming we have chosen $U_n$ and $\iota_n$ we select $U_{n + 1} \in \Red_{\kappa_1}$ and coi $\iota_{n + 1}$ from $W_{n + 1}$ to $U_{n + 1}$ such that $\|U_{n + 1}\| \leq \frac{1}{2}\|U_n\|$, the domain of $\iota_{n + 1}$ is nonempty, and $\{\coi(W_x, \iota_x, U_x)\}_{x\in X} \cup \{\coi(W_j, \iota_j, U_j)\}_{j = 0}^{n+1}$ is coherent by Lemmas \ref{findsomerepresentative} and \ref{makeitsmaller}.  By Lemma \ref{ascendingchaincoi} the collection $\{\coi(W_x, \iota_x, U_x)\}_{x\in X} \cup \{\coi(W_j, \iota_j, U_j)\}_{n \in \omega}$ is coherent.

For each $m \in \omega$ select ordinals $\alpha_{m, b}, \alpha_{m, c} < \kappa_1$ such that $U_m$ does not begin with an initial subword which is $\alpha_{m, b}$-pure and $U_m$ does not end with a terminal subword which is $\alpha_{m, c}$-pure.  By Lemma \ref{avoidpchunk} we select $\alpha_{m, b}$-pure word $V_{m, b}$ which uses only positive letters such that $\|V_{m, b}\| = \|U_m\|$, and $V_{m, b}(\max(\overline{V_{m, b}})) = a_{\frac{1}{\|U_m\|} - 1, \alpha_{m, b}} = V_{m, b}(\min(\overline{V_{m, b}}))$ and $V_{m, b} \notin \Pfine(\{U_{x \in X}\}_{x\in X} \cup \{U_n\}_{n\in \omega})$.  Similarly select an $\alpha_{m, c}$-pure word $V_{m, c}$ which uses only positive letters such that $\|V_{m, c}\| = \|U_m\|$, and $V_{m, c}(\max(\overline{V_{m, c}})) = a_{\frac{1}{\|U_m\|} - 1, \alpha_{m, c}} = V_{m, c}(\min(\overline{V_{m, c}}))$ and $V_{m, c} \notin \Pfine(\{U_{x \in X}\}_{x\in X} \cup \{U_n\}_{n\in \omega})$.

Define functions $f_0: \mathbb{Q} \rightarrow \omega$ and $f_1: \mathbb{Q} \rightarrow \{\pm 1\}$ by $W \upharpoonright_p I_q \equiv W_{f_0(q)}^{f_1(q)}$.  For each $m \in \omega$ the preimage $f_0^{-1}(m)$ is nonempty (by how the list $\{W_n\}_{n \in \omega}$ was chosen) and finite (since $W$ is a word).  For each $q \in \mathbb{Q}$ let $U_q \equiv (V_{f_0(q), b}U_{f_0(q)}V_{f_0(q), c})^{f_1(q)}$ and $U \equiv \prod_{q\in \mathbb{Q}} U_q$.  Notice that this is a word since for each real number $\epsilon > 0$ the set $\{q \in \mathbb{Q} \mid \|U_q\| \geq \epsilon\}$ is finite.  It is easy to see that each $U_q$ is reduced and that moreover $\pindex(U_{f_0(q)}^{f_1(q)})$ is a subinterval of $\pindex(U_q)$ and $|\pindex(U_q) \setminus \pindex(U_{f_0(q)}^{f_1(q)})| = 2$.

\begin{lemma}\label{QUisreduced}  $U$ is reduced.
\end{lemma}

\begin{proof}  For each $n \in \omega$ we let $J_n = \{q\in \mathbb{Q} \mid \|U_q\| = \frac{1}{n+1}\}$.  We see that each $J_n$ is finite since $U$ is a word.  For any cancellation $\mathcal{S}$ on $U$ we define $L_n(\mathcal{S})$ to be the set of those $q\in J_n$ for which there exists $i\in \overline{U_q}$ which occurs in some ordered pair in $\mathcal{S}$.  Define $L_n'(\mathcal{S}) \subseteq L_n(\mathcal{S})$ to be the set of all $q\in L_n(\mathcal{S})$ for which there exists a unique $q' \in L_n(\mathcal{S})$ such that $\mathcal{S}$ pairs each element in $\overline{U_q}$ with an element in $\overline{U_{q'}}$ and each element in $\overline{U_{q'}}$ with an element in $\overline{U_q}$.  Our strategy will be to assume for contradiction that a nonempty cancellation over $U$ exists and then to inductively modify the cancellation into a cancellation which witnesses a cancellation over $W$, contradicting the reducedness of $W$.

Suppose that $\mathcal{S}_0$ is a nonempty cancellation over $U$ and let $n_0$ be minimal such that $L_{n_0}(\mathcal{S}) \neq \emptyset$.  If $L_{n_0}(\mathcal{S}_0) = L_{n_0}'(\mathcal{S}_0)$ then we write $\mathcal{S}_1 = \mathcal{S}_0$ and move on to the next step of our induction.  If $L_{n_0}(\mathcal{S}_0) \neq L_{n_0}'(\mathcal{S}_0)$ then we write $L_{n_0}(\mathcal{S}_0) \setminus L_{n_0}'(\mathcal{S}_0) = \{q_0, \ldots, q_k\}$ with $q_m < q_{m+1}$ under the ordering on $\mathbb{Q}$.  Define a relation $E$ by writing $E(q_{m_0}, q_{m_1})$, where $q_{m_0}, q_{m_1} \in L_{n_0}(\mathcal{S}_0) \setminus L_{n_0}'(\mathcal{S}_0)$, if there exist $i_0 \in \overline{U_{q_{m_0}}}$ and $i_1 \in \overline{U_{q_{m_1}}}$ such that $\langle i_0, i_1 \rangle \in \mathcal{S}_0$.  Since each $U_q$ is reduced we see that $E(q_m, q_m)$ is false for all $0 \leq m \leq k$.  Also, $E(q_{m_0}, q_{m_1})$ implies that $q_{m_0} < q_{m_1}$ since $\langle i_0, i_1 \rangle \in \mathcal{S}_0$ implies $i_0 < i_1$ in $\overline{U}$.  By how each $U_q$ is defined, we see that $U_q(\min(\overline{U_q})) = U_q(\max(\overline{U_q})) \in \{a_{\alpha_{n_0}, n_0}^{\pm 1}\}$ for each $q \in L_{n_0}(\mathcal{S}_0)$.  For $q' \in \bigcup_{n > n_0}L_n(\mathcal{S}_0)$ we have $\|U_{q'}\| < \frac{1}{n_0 + 1}$.  Since $U_q$ is reduced for each $q\in L_{n_0}(\mathcal{S}_0)$, we see that for each $q \in L_{n_0}(\mathcal{S}_0)$ at least one of $\max(\overline{U_q})$ or $\min(\overline{U_q})$ must appear in some element of $\mathcal{S}_0$.  Moreover, by how $L_n'(\mathcal{S}_0)$ is defined, for each $q \in L_{n_0}(\mathcal{S}_0) \setminus L_{n_0}'(\mathcal{S}_0)$ at least one of $\max(\overline{U_q})$ or $\min(\overline{U_q})$ must appear in $\mathcal{S}_0$ and be paired with some element in $\overline{U_{q'}}$ for some $q' \in  L_{n_0}(\mathcal{S}_0) \setminus (L_{n_0}'(\mathcal{S}_0) \cup \{q\})$.

Thus we see that each $q \in L_{n_0}(\mathcal{S}_0) \setminus L_{n_1}'(\mathcal{S}_0)$ must appear as a first or second coordinate in the relation $E$.  Notice as well that if $E(q_{m_0}, q_{m_1})$ and $E(q_{m_2}, q_{m_3})$ where $q_{m_0} < q_{m_2} \leq q_{m_1}$ then $q_{m_0} < q_{m_2} < q_{m_3} \leq q_{m_1}$ by property (4) of cancellations (see Definition \ref{cancellation}).  Similarly if $E(q_{m_0}, q_{m_1})$ and $E(q_{m_2}, q_{m_3})$ hold and $q_{m_0} \leq q_{m_3} < q_{m_1}$ then we have $q_{m_0} \leq q_{m_2} < q_{m_3} < q_{m_1}$.  Since the set $L_{n_0}(\mathcal{S}_0) \setminus L_{n_1}'(\mathcal{S}_0)$ is finite, we therefore have some $0 \leq m < k$ such that $E(q_m, q_{m+1})$.  Again, since $U_{q_m}$ and $U_{q_{m + 1}}$ are each reduced we must have $\langle \max(\overline{U_m}), \min(\overline{U_{m+1}})\rangle \in \mathcal{S}_0$.  Thus $U_{q_m} \equiv (U_{q_{m+1}})^{-1}$ and we let $f: \overline{U_{q_m}} \rightarrow \overline{U_{q_{m+1}}}$ be an order reversing bijection with $U_{q_{m+1}}(f(i)) =  (U_{q_{m}}(i))^{-1}$ witnessing this equivalence.

We let $\mathcal{S}_0^{(1)}$ be given by

$$
\begin{array}{ll}
\mathcal{S}_0^{(1)} & = \{\langle i_0, i_1\rangle \in \mathcal{S}_0 \mid i_0, i_1 \notin \overline{U_{q_m}}\cup \overline{U_{q_{m+1}}}\}\\
& \cup \{\langle i_0, f(i_0) \rangle \mid i_0 \in \overline{U_{q_m}}\}\\
& \cup \{\langle i_0, i_1 \rangle \in \overline{U} \times \overline{U} \mid (\exists i_2 \in \overline{U_{q_m}}) \langle i_0, i_2\rangle, \langle f(i_2), i_1\rangle \in \mathcal{S}_0\}\\
& \cup \{\langle i_0, i_1 \rangle \in \overline{U} \times \overline{U} \mid (\exists i_2 \in \overline{U_{q_m}})\langle i_1, i_2\rangle, \langle i_0, f(i_2) \rangle \in \mathcal{S}_0\}\\
& \cup \{\langle i_0, i_1\rangle \in \overline{U} \times \overline{U} \mid (\exists i_2 \in \overline{U_{q_m}})\langle i_2, i_1\rangle, \langle f(i_2), i_0\rangle \in \mathcal{S}_0\}.
\end{array}
$$

\noindent It is straightforward to see that $\mathcal{S}_0^{(1)}$ is a cancellation and that $L_n(\mathcal{S}_0^{(1)}) \subseteq L_n(\mathcal{S}_0)$ for all $n\in \omega$.  But also $L_{n_0}'(\mathcal{S}_0^{(1)}) = L_{n_0}'(\mathcal{S}_0) \sqcup \{q_m, q_{m+1}\}$.  Iterating the argument to produce $\mathcal{S}_0^{(2)}$, $\mathcal{S}_0^{(3)}$, etc. so as to make $L_{n_0}'(\mathcal{S}_0^{(j+1)})$ strictly include $L_{n_0}'(\mathcal{S}_0^{(j)})$ and have $L_{n_0}(\mathcal{S}_0^{(j+1)}) \subseteq L_{n_0}(\mathcal{S}_0^{(j)})$, we see, since $L_{n_0}(\mathcal{S}_0)$ is finite, that eventually $L_{n_0}'(\mathcal{S}_0^{(j)}) = L_{n_0}(\mathcal{S}_0^{(j)})$.  Set $\mathcal{S}_1 = \mathcal{S}_0^{(j)}$.

Notice that $\mathcal{S}_1$ does not pair any element of $\overline{U_q}$ with $\overline{U_{q'}}$ when $q \in L_{n_0}(\mathcal{S}_1)$ and $q' \notin L_{n_0}(\mathcal{S}_1)$.  Letting $n_1 \in \omega$ be minimal such that $n_1 > n_0$ and $L_{n_1}(\mathcal{S}_1) \neq \emptyset$ (an $n > n_0$ with  $L_n(\mathcal{S}_1) \neq \emptyset$ must exist since $\mathbb{Q}$ is order dense), we may thus repeat the arguments as before to create $\mathcal{S}_2$ such that $L_{n_1}(\mathcal{S}_2) = L_{n_1}'(\mathcal{S}_2)$ and also $\mathcal{S}_2$ agrees with $\mathcal{S}_1$ on $L_{n_0}(\mathcal{S}_1) = L_{n_0}(\mathcal{S}_2)$. Select $n_2 > n_1$ which is minimal such that $L_{n_2}(\mathcal{S}_2) \neq \emptyset$, produce $\mathcal{S}_3$, and continue this process inductively.  Let $\mathcal{S}_{\infty}$ equal $\{\langle i_0, i_1 \rangle \mid (\exists m\in \omega) i_0, i_1 \in \bigcup_{q \in L_{n_m}}U_q\text{ and }\langle i_0, i_1 \rangle \in \mathcal{S}_{m+1}\}$ and we have $\mathcal{S}_{\infty}$ is a cancellation such that $L_n(\mathcal{S}_{\infty}) = L_n'(\mathcal{S}_{\infty})$ for all $n \in \omega$ and $\mathcal{S}_{\infty} \neq \emptyset$.

But now let $\mathcal{S}' = \{\langle q_0, q_1 \rangle \mid (\exists i_0 \in\overline{U_{q_0}}, i_1 \in \overline{U_{q_1}})\langle i_0, i_1 \rangle \in \mathcal{S}_{\infty}\}$ and notice that $\mathcal{S}'$ is a pairing of a subset of elements in $\mathbb{Q}$ that satisfies the comparable properties (1) - (4) of Definition \ref{cancellation}, and $\langle q_0, q_1 \rangle \in \mathcal{S}'$ implies that $U_{q_0} \equiv (U_{q_1})^{-1}$.  Then $W_{q_0} \equiv (W_{q_1})^{-1}$ for $\langle q_0, q_1 \rangle \in \mathcal{S}'$ and it is easy to use $\mathcal{S}'$ to define a nonempty cancellation $\mathcal{S}$ on $W$, and we have a contradiction.
\end{proof}

Now that we know that $U$ is reduced, it is easy to see that $\pindex(U) \equiv \prod_{q \in \mathbb{Q}} \pindex(U_q) \equiv \prod_{q\in \mathbb{Q}} (\pindex(V_{f_0(q), b})\pindex(U_{f_0(q)})\pindex(V_{f_0(q), c}))^{f_1(q)}$.  We define the coi $\iota$ from $W$ to $U$ in the very natural way using the collection $\{\coi(W_n, \iota_n, U_n)\}_{n\in \omega}$.  Namely let $W_q$ denote the subword $W \upharpoonright_p I_q$, and recall that $W_{f_0(q)}^{f_1(q)} \equiv W_q$ and $U_q \equiv (V_{f_0(q)}U_{f_0(q)}V_{f_0(q)})^{f_1(q)}$.  Let $g:\pindex(U_{f_0(q)}^{f_1(q)}) \rightarrow \pindex(U_q)$ denote the order embedding given by this last equivalence.  Let $\iota_q$ be the function whose domain $\dom(\iota_q)$ is the image of $\dom(\iota_{f_0(q)})$ under the order isomorphism $f: \pindex(W_{f_0(q)}^{f_1(q)}) \rightarrow \pindex(W_q)$, whose image lies in $\pindex(U_q)$ and such that $\iota_q(i) = g \circ \iota_{f_0(q)} \circ f^{-1}(i)$.

Notice that $\iota_q$ is an order isomorphism between its domain and image since $\iota_{f_0(q)}$ is order preserving and exactly one of the following holds:

\begin{itemize}

\item $f$ is an order isomorphism between $\pindex(W_q)$ and $\pindex(W_{f_0(q)})$ and $g$ is an order embedding from $\pindex(U_{f_0(q)})$ to $\pindex(U_q)$;

\item $f$ gives an order reversing bijection between $\pindex(W_q)$ and $\pindex(W_{f_0(q)})$ and $g$ gives an order reversing embedding from $\pindex(U_{f_0(q)})$ to $\pindex(U_q)$.
\end{itemize}

Moreover since $\Close(\dom(\iota_n), \pindex(W_n))$ it is easy to see that the relation $\Close(\dom(\iota_q), \pindex(W_q))$ holds.  Also, since $|\pindex(V_{f_0(q), b})| = 1 = |\pindex(V_{f_0(q), c})|$ we easily see that $\Close(\im(\iota_q), \pindex(U_q))$.  Now we let $\iota$ be the order isomorphism given by $\iota = \bigcup_{q\in \mathbb{Q}} \iota_q$.  By Lemma \ref{basiccloseproperties} (iii) we have $\Close(\dom(\iota), \pindex(W))$ and $\Close(\im(\iota), \pindex(U_q))$, so $\iota$ is a coi from $W$ to $U$.  We check the coherence of $\{\coi(W_x, \iota_x, U_x)\}_{x\in X} \cup \{\coi(W_n, \iota_n, U_n)\}_{n \in \omega} \cup \{\coi(W, \iota, U)\}$, which will immediately imply the coherence of  $\{\coi(W_x, \iota_x, U_x)\}_{x\in X} \cup \{\coi(W, \iota, U)\}$.

Suppose that $x_0\in X \cup \omega$, $\Lambda_0 \subseteq \pindex(W)$ and $\Lambda_1 \subseteq \pindex(W_{x_0})$ are intervals, and $i \in \{-1, 1\}$ are such that $W \upharpoonright_p \Lambda_0 \equiv (W_{x_0} \upharpoonright_p \Lambda_1)^i$.  Notice that $\Lambda_0$ must be a subinterval of some $\pindex(W_q)$ since $\mathbb{Q}$ is order dense, $W \upharpoonright_p \Lambda \notin \Pfine(\{W_x\}_{x\in X})$ for each interval $\Lambda \subseteq \mathbb{Q}$ with more than one point and $(W_{x_0} \upharpoonright_p \Lambda_1)^i \in \Pfine(\{W_x\}_{x\in X} \cup \{W_n\}_{n\in \omega}) = \Pfine(\{W_x\}_{x \in X})$.  But letting $f: \pindex(W_{f_0(q)}^{f_1(q)}) \rightarrow \pindex(W_q)$ be the natural order isomorphism and $\Lambda_0' \subseteq \pindex(W_{f_0(q)}^{f_1(q)})$ be the interval given by $f^{-1}(\Lambda_0)$ it is easy to see that

$$
\begin{array}{ll}
[[U \upharpoonright_p \varpropto(\Lambda_0, \iota)]] & = [[(U_{f_0(q)} \upharpoonright_p \varpropto(\Lambda_0', \iota_{f_0(q)}))^{f_1(q)}]]\\
& = [[(U_{x_0} \upharpoonright_p \varpropto(\Lambda_1, \iota_{x_0}))^i]]
\end{array}
$$

\noindent by how the function $\iota_q$ was defined (for the first equality) and the coherence of $\{\coi(W_x, \iota_x, U_x)\}_{x\in X} \cup \{\coi(W_n, \iota_n, U_n)\}_{n \in \omega}$ (for the second equality).

Next, suppose that $\Lambda_0, \Lambda_1 \subseteq \pindex(W)$ are intervals and $i \in \{-1, 1\}$ are such that $W\upharpoonright_p \Lambda_0 \equiv (W\upharpoonright_p \Lambda_1)^i$.  Let $J_0 = \{q \in \mathbb{Q} \mid \pindex(W_q) \cap \Lambda_0 \neq \emptyset\}$ and $J_1 = \{q\in \mathbb{Q} \mid \pindex(W_q) \cap \Lambda_1 \neq \emptyset\}$.  Clearly each of $J_0$ and $J_1$ are intervals in $\mathbb{Q}$.  If, say $J_0$ is empty or a singleton then $W\upharpoonright_p \Lambda_0 \in \Pfine(\{W_x\}_{x \in X})$, and so $J_1$ is not infinite (since we are assuming $W \upharpoonright_p \Lambda \notin \Pfine(\{W_x\}_{x\in X})$ for each interval $\Lambda \subseteq \mathbb{Q}$ with more than one point.)  Similarly if $J_1$ is empty or a singleton then $J_0$ is finite (hence a singleton or empty).  In case $J_0$ is finite we can argue as before, using the coherence of the collection $\{\coi(W_n, \iota_n, U_n)\}_{n \in \omega}$ to obtain $[[U \upharpoonright_p \varpropto(\Lambda_0, \iota)]] = [[(U \upharpoonright_p \varpropto(\Lambda_1, \iota))^i]]$.

Suppose now that $J_0$ (and therefore also $J_1$) is infinite.  Since $J_0$ is order dense and $W \upharpoonright_p \Lambda \notin \Pfine(\{W_x\}_{x\in X})$ for each interval $\Lambda \subseteq \mathbb{Q}$ with more than one point, we notice that $J_0$ has a minimum if and only if the word $W \upharpoonright_p \Lambda_0$ has a nonempty initial subword which is an element of $\Pfine(\{W_x\}_{x\in X})$.  Also, if $J_0$ has minimum $q$ then $W\upharpoonright_p (\pindex(W_q) \cap \Lambda_0)$ is the maximal initial subword of $W \upharpoonright_p \Lambda_0$ which is an element in $\Pfine(\{W_x\}_{x\in X})$.  Similarly $J_0$ has a maximum if and only if the word $W \upharpoonright_p \Lambda_0$ has a nonempty terminal subword which is an element of $\Pfine(\{W_x\}_{x\in X})$, and if $J_0$ has maximum $q$ then $W\upharpoonright_p (\pindex(W_q) \cap \Lambda_0)$ is the maximal terminal subword of $W \upharpoonright_p \Lambda_0$ which is an element in $\Pfine(\{W_x\}_{x\in X})$.  Let $J_0' \subseteq J_0$ be the subinterval which consists of $J_0$ minus any maximum or minimum that $J_0$ might have.  By similar reasoning, we see that for each $q \in J_0'$ the subword $W_q$ is a maximal subword of $W\upharpoonright_p \Lambda_0$ which is an element of $\Pfine(\{W_x\}_{x\in X})$.

The comparable claims hold for $J_1$; for example $J_1$ has a minimum if and only if the word $W \upharpoonright_p \Lambda_1$ has a nonempty initial subword which is an element of $\Pfine(\{W_x\}_{x\in X})$, and if $q \in J_1$ is minimal then $W\upharpoonright_p (\pindex(W_q) \cap \Lambda_1)$ is the maximal initial subword of $W \upharpoonright_p \Lambda_1$ which is an element in $\Pfine(\{W_x\}_{x\in X})$.  Define the interval $J_1' \subseteq J_1$ similarly.  As $W \upharpoonright_p \Lambda_0 \equiv (W \upharpoonright_p \Lambda_1)^i$ we see that if $i = 1$ 

\begin{itemize}

\item $J_0$ has a minimum if and only if $J_1$ has;

\item $J_0$ has a maximum if and only if $J_1$ has;

\item if $q_0 = \min(J_0)$ and $q_1 = \min(J_1)$ then $W \upharpoonright_p (\Lambda_0 \cap \pindex(W_{q_0})) \equiv W \upharpoonright_p (\Lambda_1 \cap \pindex(W_{q_1}))$;

\item if $q_0 = \max(J_0)$ and $q_1 = \max(J_1)$ then $W \upharpoonright_p (\Lambda_0 \cap \pindex(W_{q_0})) \equiv W \upharpoonright_p (\Lambda_1 \cap \pindex(W_{q_1}))$;

\item there is an order isomorphism $h: J_0' \rightarrow J_1'$ such that $W_{h(q)} \equiv W_{g}$
\end{itemize}

\noindent and if $i = -1$

\begin{itemize}

\item $J_0$ has a minimum if and only if $J_1$ has a maximum;

\item $J_0$ has a maximum if and only if $J_1$ has a minimum;

\item if $q_0 = \min(J_0)$ and $q_1 = \max(J_1)$ then $W \upharpoonright_p (\Lambda_0 \cap \pindex(W_{q_0})) \equiv (W \upharpoonright_p (\Lambda_1 \cap \pindex(W_{q_1})))^{-1}$;

\item  if $q_0 = \max(J_0)$ and $q_1 = \min(J_1)$ then $W \upharpoonright_p (\Lambda_0 \cap \pindex(W_{q_0})) \equiv (W \upharpoonright_p (\Lambda_1 \cap \pindex(W_{q_1})))^{-1}$;

\item there is an order reversing bijection $h: J_0' \rightarrow J_1'$ such that $W_{h(q)} \equiv (W_q)^{-1}$.

\end{itemize}

From this and how the $\iota_q$ were defined it is clear that $$U \upharpoonright_p (\Lambda_0 \cap \bigcup_{q \in J_0'}\pindex(W_q), \iota) \equiv (U \upharpoonright_p (\Lambda_1 \cap \bigcup_{q \in J_1'} \pindex(W_q)))^i.$$  Now if, for example, $i = -1$ and $J_0$ has maximum and minimum then we see that

$$
\begin{array}{ll}
[[U \upharpoonright_p (\Lambda_0, \iota)]]\\
= [[U \upharpoonright_p (\Lambda_0 \cap \pindex(W_{\min(J_0)}), \iota)]][[U \upharpoonright_p (\Lambda_0 \cap \bigcup_{q \in J_0'}\pindex(W_q), \iota)]]\\
\cdot [[U \upharpoonright_p (\Lambda_0 \cap \pindex(W_{\max(J_0)}), \iota)]]\\
= [[U \upharpoonright_p (\Lambda_0 \cap \pindex(W_{\min(J_0)}), \iota)]]][[(U \upharpoonright_p (\Lambda_1 \cap \bigcup_{q \in J_1'}\pindex(W_q), \iota))^{-1}]]\\
\cdot [[U \upharpoonright_p (\Lambda_0 \cap \pindex(W_{\max(J_0)}), \iota)]]\\
=  [[(U \upharpoonright_p (\Lambda_0 \cap \pindex(W_{\max(J_1)}), \iota))^{-1}]]][[(U \upharpoonright_p (\Lambda_1 \cap \bigcup_{q \in J_1'}\pindex(W_q), \iota))^{-1}]]\\
\cdot [[U \upharpoonright_p (\Lambda_0 \cap \pindex(W_{\max(J_0)}), \iota)]]\\
=  [[(U \upharpoonright_p (\Lambda_0 \cap \pindex(W_{\max(J_1)}), \iota))^{-1}]]]\\
\cdot [[(U \upharpoonright_p (\Lambda_1 \cap \bigcup_{q \in J_1'}\pindex(W_q), \iota))^{-1}]][[(U \upharpoonright_p (\Lambda_0 \cap \pindex(W_{\min(J_0)}), \iota))^{-1}]]\\
=  [[(U \upharpoonright_p (\Lambda_1 \cap \bigcup_{q \in J_1'} \pindex(W_q)))^{-1}]]

\end{array}
$$

\noindent where the first and last equalities follow from deleting finitely many pure p-chunks, the second equality follows from $U \upharpoonright_p (\Lambda_0 \cap \bigcup_{q \in J_0'}\pindex(W_q), \iota) \equiv (U \upharpoonright_p (\Lambda_1 \cap \bigcup_{q \in J_1'} \pindex(W_q)))^i$, and the third and fourth follow from the fact that the collection $\{\coi(W_n, \iota_n, U_n)\}_{n \in \omega}$ is coherent.  All other possibilities can be similarly argued.

Next we suppose that $x_0 \in X \cup \omega$ and $\Lambda_0 \subseteq \pindex(U), \Lambda_1 \subseteq \pindex(U_{x_0})$ are intervals and $i \in \{-1, 1\}$ are such that $U \upharpoonright_p \Lambda_0 \equiv (U_{x_0} \upharpoonright_p \Lambda_1)^i$.  As $(U_{x_0} \upharpoonright_p \Lambda_1)^i \in \Pfine(\{U_x\}_{x \in X} \cup \{U_n\}_{n \in \omega})$, and $V_{m, b}, V_{m, c} \notin \Pfine(\{U_x\}_{x \in X} \cup \{U_n\}_{n \in \omega})$ for all $m \in \omega$ we see that $\Lambda_0$ must be a subinterval of some $\pindex(U_q)$, and more particularly a subinterval of $\pindex(U_{f_0(q)}^{f_1(q)})$.  By how $\iota_q$ was defined, and since $\{\coi(W_x, \iota_x, U_x)\}_{x\in X} \cup \{\coi(W_n, \iota_n, U_n)\}_{n\in \omega}$ is coherent it follows that

\begin{center}

$[[W \upharpoonright_p \varpropto(\Lambda_0, \iota^{-1})]] = [[(W_{x_0} \upharpoonright_p \varpropto(\Lambda_1, \iota_{x_0}^{-1}))^i]]$.

\end{center}

Finally, supposing that intervals $\Lambda_0, \Lambda_1 \subseteq \pindex(U)$ and $i \in \{-1, 1\}$ are such that $U\upharpoonright_p \Lambda_0 \equiv (U \upharpoonright_p \Lambda_1)^i$ we define $J_0, J_0', J_1, J_1'$ the same as before.  One sees that  $J_0$ has a minimum if and only if $U \upharpoonright_p \Lambda_0$ has a nonempty initial subword which is a pure p-chunk (i.e. a word $V_{m, b}^{\pm 1}$ or $V_{m, c}^{\pm 1}$ for some $m\in \omega$) or which is in $\Pfine(\{U_x\}_{x \in X} \cup \{U_n\}_{n \in \omega})$, and similar such adjustments for maxima and $J_1$.  Also for each $q \in J_0'$ (or $q \in J_1'$) we have that $U_{f_0(q)}^{f_1(q)}$ is a maximal subword of $U$ which is in $\Pfine(\{U_x\}_{x \in X} \cup \{U_n\}_{n \in \omega})$, and each of $V_{f_0(q), b}^{f_1(q)}$ and $V_{f_0(q), c}^{f_1(q)}$ is a maximal p-chunk of $U$ such that all of the nonempty p-chunks are not in $\Pfine(\{U_x\}_{x \in X} \cup \{U_n\}_{n \in \omega})$.  The bijection $h: J_0' \rightarrow J_1'$ which is an order isomorphism in case $i = 1$, or an order reversal in case $i = -1$, such that $U_{h(q)} \equiv (U_q)^i$ once again can be seen and the argument then follows as before showing that

\begin{center}

$[[W \upharpoonright_p \varpropto(\Lambda_0, \iota^{-1})]] = [[(W \upharpoonright_p \varpropto(\Lambda_1, \iota^{-1}))^i]]$.

\end{center}

\end{proof}

\end{subsection}

\begin{subsection}{Arbitrary extensions}\label{arbitraryext}

In this subsection we will prove the following proposition and then complete the proof of Theorem \ref{bigisomorphism} as well as prove Theorem \ref{elementaryequiv}.

\begin{proposition}\label{arbitraryextensions}   Suppose that $\kappa_0$ and $\kappa_1$ are cardinal numbers greater than or equal to $2$.  Suppose that $\{\coi(W_x, \iota_x, U_x)\}_{x\in X}$ is coherent and that $|X| < 2^{\aleph_0}$.  Then given $W\in \Red_{\kappa_0}$ there exists $U \in \Red_{\kappa_1}$ and coi $\iota$ from $W$ to $U$ such that $\{\coi(W_x, \iota_x, U_x)\}_{x\in X} \cup \{\coi(W, \iota, U)\}$ is coherent.
\end{proposition}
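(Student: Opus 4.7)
The plan is to prove the proposition by transfinite induction on a rank of $\pindex(W)$, reducing to the three already-handled cases: $W$ lies in $\Pfine$ relative to the current collection (Lemma \ref{findsomerepresentative}), $\omega$-type concatenation (Proposition \ref{omegawords}), and $\mathbb{Q}$-type concatenation (Proposition \ref{Qtype}). The key structural input is that $\pindex(W)$ is a countable linear order, and every countable linear order lies in the smallest class $\mathcal{L}$ of linear orders containing $\emptyset$ and singletons and closed under $\omega$-sums, $\omega^*$-sums, and $\mathbb{Q}$-sums; this endows each such order with a natural rank $r$ valued in a countable ordinal. The base case $r = 0$ means $W$ is empty or pure, and in either case the triple $\coi(W, \emptyset, E)$ preserves coherence: for any comparison $W \upharpoonright_p I_0 \equiv (W_y \upharpoonright_p I_1)^i$, the interval $I_1$ is empty or a singleton, so $\varpropto(I_1, \iota_y)$ is finite by Lemma \ref{coilemma2}, and consequently $U_y \upharpoonright_p \varpropto(I_1, \iota_y)$ is a finite concatenation of pure words and represents the identity in $\Co_{\kappa_1}$.

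For the inductive step, I would decompose $\pindex(W) \equiv \prod_{\lambda \in \Lambda} I_\lambda$ with $\Lambda$ of order type $\omega$, $\omega^*$, or $\mathbb{Q}$ and each $I_\lambda$ of strictly smaller rank, and set $W_\lambda \equiv W \upharpoonright_p I_\lambda$. Apply the inductive hypothesis one $\lambda$ at a time to extend the collection to a coherent $\mathcal{T}'$ containing each $W_\lambda$ as a $W$-component, using Lemma \ref{ascendingchaincoi} to take increasing unions along the way. Since only countably many triples are added, $|\mathcal{T}'| < 2^{\aleph_0}$ and the inductive hypothesis remains applicable. If $W$ itself now lies in $\Pfine$ of the $W$-components of $\mathcal{T}'$, invoke Lemma \ref{findsomerepresentative}. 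Otherwise, split by the order type of $\Lambda$: the $\omega$-case is precisely Proposition \ref{omegawords}; the $\omega^*$-case reduces to the $\omega$-case via simultaneous inversion $\coi(W_x, \iota_x, U_x) \mapsto \coi(W_x^{-1}, \iota_x^{-1}, U_x^{-1})$, which preserves coherence by the symmetry built into the definition; and the $\mathbb{Q}$-case appeals to Proposition \ref{Qtype}.

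The main obstacle is the $\mathbb{Q}$-case, since Proposition \ref{Qtype} demands the extra hypothesis that $W \upharpoonright_p \bigcup \Lambda' \notin \Pfine$ for every subinterval $\Lambda' \subseteq \mathbb{Q}$ with $|\Lambda'| \geq 2$, which is not automatic from the structural decomposition. I would handle this by considering the equivalence relation on $\Lambda$ given by $q \sim q'$ iff $W \upharpoonright_p \bigcup [\min(q, q'), \max(q, q')]$ lies in $\Pfine$ relative to $\mathcal{T}'$. Its classes are intervals in $\mathbb{Q}$, so the quotient $\Lambda/{\sim}$ is itself a countable linear order. If the quotient is a single class then $W \in \Pfine$ and Lemma \ref{findsomerepresentative} finishes the argument. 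Otherwise, first extend the collection to cover $W \upharpoonright_p \bigcup [q]$ for each equivalence class $[q]$ (which is p-fine by construction, and covered via the inductive hypothesis applied to each class separately); then the quotient decomposition enjoys the required non-p-fineness on every nontrivial subinterval. If the quotient is itself of type $\omega$, $\omega^*$, or $\mathbb{Q}$ we apply the corresponding proposition directly; otherwise, a final application of the structure theorem to the quotient, combined with a further round of induction, lands in one of the three handled cases. Careful accounting of how many triples are introduced at each stage keeps the total cardinality under $2^{\aleph_0}$ throughout.
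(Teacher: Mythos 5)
Your overall strategy of inducting on a rank for countable linear orders is appealing, and it does correctly identify that the three previously-developed tools (Lemma \ref{findsomerepresentative}, Proposition \ref{omegawords}, Proposition \ref{Qtype}) should do the heavy lifting.  However, there are two genuine gaps.

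First, the structure theorem is false as stated: the smallest class of linear orders containing $\emptyset$ and singletons and closed under $\omega$-sums, $\omega^*$-sums, and $\mathbb{Q}$-sums does \emph{not} contain all countable linear orders.  Consider $\omega + \omega^*$: this order has both a least and a greatest element, but any nontrivial $\omega$-sum of nonempty intervals has no greatest element, any $\omega^*$-sum has no least, and any $\mathbb{Q}$-sum has neither. So $\omega + \omega^*$ cannot be produced.  You would need to also close under finite concatenations and then argue carefully that a well-founded rank still exists (this is essentially a version of Hausdorff's analysis of countable orders via condensation, which requires finite sums), so the missing case is not cosmetic.

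Second, and more seriously, the reduction to the $\mathbb{Q}$-hypothesis in Proposition \ref{Qtype} has a real gap.  When you form the equivalence relation $q \sim q'$ iff $W \upharpoonright_p \bigcup[\min(q,q'), \max(q,q')] \in \Pfine$, the claim that ``if the quotient is a single class then $W \in \Pfine$'' is unjustified: a single class means only that every \emph{bounded} interval of $\Lambda$ gives a $\Pfine$ word, but $\Lambda \equiv \mathbb{Q}$ has no greatest or least element, so $W$ itself is not of the form $W \upharpoonright_p \bigcup[q,q']$, and there is no reason for $W$ to lie in $\Pfine$.  Moreover, once you add triples for each equivalence class, $\Pfine$ grows, so the equivalence relation coarsens, and the quotient order $\Lambda/\sim$ can be an arbitrary countable linear order rather than one of smaller rank; the ``further round of induction'' therefore has no termination guarantee --- you could find yourself forced to recurse on orders whose rank does not decrease. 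This is precisely the difficulty the paper's proof is designed to circumvent.

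For comparison, the paper's proof does not attempt to pre-assign a rank to $\pindex(W)$.  Instead it well-orders $\pindex(W)$ arbitrarily, and at each stage locates a point $\lambda$ not yet contained in a maximal p-fine interval; it then identifies an $\omega$-increasing sequence of p-fine intervals (growing to the right or to the left of $\lambda$) whose union is not p-fine and handles that union by Proposition \ref{omegawords}.  Termination is shown not by a descending rank but by a cardinality argument: if the process ran for $\aleph_1$ steps, the bookkeeping functions $f_0, f_1, f_2$ would produce an uncountable strictly increasing chain of intervals inside the countable set $\pindex(W)$.  After this step every $\lambda$ is contained in a maximal p-fine interval and the collection of maximal intervals is automatically a dense order (order type a subinterval of $1 + \mathbb{Q} + 1$), so Proposition \ref{Qtype} applies with its hypotheses satisfied.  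This sidesteps both of the difficulties above: no structure theorem is needed, and the dense-order hypothesis of Proposition \ref{Qtype} is obtained automatically by choosing the intervals to be \emph{maximal} p-fine rather than by trying to repair an arbitrary $\mathbb{Q}$-decomposition after the fact.
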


\begin{proof}  Assume the hypotheses.  If $W$ is the empty word $E$ then we let $U \equiv E$ and $\iota$ be the empty function.  This clearly satisfies the conclusion of the proposition.  Thus we may now assume that $W$ is not $E$ and so $\pindex(W)$ is nonempty.  For each $\lambda \in \pindex(W)$ we let $\iota_{\lambda}$ be the empty function, and $\iota_{\lambda}$ is a coi from $W \upharpoonright_p \{\lambda\}$ to $E$.  It is quite trivial to see that $\mathcal{T}_0 = \{\coi(W_x, \iota_x, U_x)\}_{x\in X} \cup \{\coi(W\upharpoonright_p \{\lambda\}, \iota_{\lambda}, E)\}_{\lambda \in \pindex(W)}$ is coherent.  Let $\prec$ be a well-order on the set $\pindex(W)$ and if $\mathcal{T}$ is a collection of coi then we let $h(\mathcal{T})$ denote the set of first words listed in the ordered triples (for example $h(\mathcal{T}_0) = \{W_x\}_{x\in X} \cup \{W\upharpoonright_p \{\lambda\}\}_{\lambda \in \pindex(W)}$).

\noindent \textbf{Step 1.}  We'll define a function $f_0$ from a subset of the set $\aleph_1$ of countable ordinals to $\pindex(W)$, as well as a function $f_1$ with the same domain as $f_0$ and codomain the set of two letters $\{L, R\}$ and $f_2$ a function with the same domain as $f_0$ and codomain the set of intervals in $\pindex(W)$, and also extend the coi collection.  If each $\lambda \in \pindex(W)$ is contained in a maximal interval $I \subseteq \pindex(W)$ such that $W \upharpoonright_p I \in h(\mathcal{T}_{\zeta})$ then we cease our construction and proceed to Step 2.  If it is not the case that each $\lambda \in \pindex(W)$ is contained in a maximal interval $I \subseteq \pindex(W)$ such that $W \upharpoonright_p I \in h(\mathcal{T}_{\zeta})$ then we select a minimal such $\lambda$ under the well-ordering $\prec$ and let $f_0(\zeta) = \lambda$.  At least one of two possibilities must hold:

\noindent \textbf{Case i.}  If there is a sequence $\{I_m\}_{m \in \omega}$ such that $\lambda = \min(I_m)$ and $I_m$ is strictly included in $I_{m+1}$ for all $m \in \omega$ with $W \upharpoonright_p I_m \in \Pfine(h(\mathcal{T_{\zeta}}))$ but $W \upharpoonright_p \bigcup_{m\in \omega} I_m \notin \Pfine(h(\mathcal{T_{\zeta}}))$ then we let $f_1(\zeta) = L$ and $f_2(\zeta) = \bigcup_{m\in \omega} I_m$.  By Proposition \ref{omegawords} we select a $U_{\zeta} \in \Red_{\kappa_1}$ and coi $\iota_{\zeta}$ from $W\upharpoonright_p f_2(\zeta)$ to $U_{\zeta}$ such that $\mathcal{T}_{\zeta + 1} = \mathcal{T}_{\zeta} \cup \{\coi(W\upharpoonright_p f_2(\zeta), \iota_{\zeta}, U_{\zeta})\}$ is coherent.

\noindent \textbf{Case ii.}  If such a sequence as in Case i does not exist then there exists a sequence $\{I_m\}_{m \in \omega}$ such that $\lambda = \max(I_m)$ and $I_m$ is strictly included in $I_{m+1}$ for all $m \in \omega$ with $W \upharpoonright_p I_m \in \Pfine(h(\mathcal{T_{\zeta}}))$ but $W \upharpoonright_p \bigcup_{m\in \omega} I_m \notin \Pfine(h(\mathcal{T_{\zeta}}))$.  In this case we let $f_1(\zeta) = R$ and $f_2(\zeta) = \bigcup_{m\in \omega} I_m$.  By Proposition \ref{omegawords} applied to the word $W^{-1}$ we select a $U_{\zeta} \in \Red_{\kappa_1}$ and coi $\iota_{\zeta}$ from $W\upharpoonright_p f_2(\zeta)$ to $U_{\zeta}$ such that $\mathcal{T}_{\zeta + 1} = \mathcal{T}_{\zeta} \cup \{\coi(W\upharpoonright_p f_2(\zeta), \iota_{\zeta}, U_{\zeta})\}$ is coherent.

Iterating this recursion and letting $\mathcal{T}_{\zeta} = \bigcup_{\zeta_0 < \zeta} \mathcal{T}_{\zeta_0}$ when $\zeta$ is a limit ordinal, we define the functions $f_0, f_1, f_2$ over an increasingly large initial segment of $\aleph_1$.  We claim, however, that this recursion must terminate at some stage, and thus move us into Step 2.  If, otherwise, the recursion did not terminate then the functions $f_0, f_1, f_2$ are defined on all of $\aleph_1$.  Since the codomains, $\pindex(W)$ and $\{L, R\}$, of $f_0$ and $f_1$ are countable there exists some $\lambda \in \pindex(W)$ and, say $R\in \{L, R\}$, and uncountable $J \subseteq \aleph_1$ such that $f_0(J) = \{\lambda\}$ and $f_1(J) = \{R\}$.  But notice that $f_2(\zeta_0)$ is strictly included into $f_2(\zeta_1)$ whenever $\zeta_0, \zeta_1 \in J$ satisfy $\zeta_0 < \zeta_1$, and this is impossible since the set $\pindex(W)$ is countable.

\noindent \textbf{Step 2.}  From Step 1 we obtain a coherent collection $\mathcal{T}_{\zeta}$ of coi, with $| \mathcal{T}_{\zeta}| < 2^{\aleph_0}$, and each $\lambda \in \pindex(W)$ includes into a maximal interval $I_\lambda \subseteq \pindex(W)$ with respect to the property that $W \upharpoonright_p I_{\lambda} \in \Pfine(h(\mathcal{T}_{\zeta}))$.  The collection $\Lambda$ of all such maximal intervals has a natural induced ordering and is necessarily order dense, for if there existed distinct $I_{\lambda}$ and $I_{\lambda'}$ between which there are no elements in $\Lambda$ then the word $W \upharpoonright_p I_{\lambda} \cup I_{\lambda'} \in \Pfine(h(\mathcal{T}_{\zeta}))$, contradicting maximality.  Let $\Lambda'$ be the interval in $\Lambda$ which excludes $\min(\Lambda)$ and $\max(\Lambda)$ if either or both exist.  If $\Lambda'$ is not empty then it is order isomorphic to $\mathbb{Q}$, and in either case by Proposition \ref{Qtype} we may add, if necessary, a single coi triple to $\mathcal{T}_{\zeta}$ to obtain a coherent collection $\mathcal{T}_{\zeta}'$ such that $W\upharpoonright_p (\bigcup \Lambda') \in \Pfine(h(\mathcal{T}_{\zeta}'))$.  Next, since $W \upharpoonright_p \min(\Lambda), W\upharpoonright_p \max(\Lambda) \in \Pfine(h(\mathcal{T}_{\zeta}'))$ if either of $\min(\Lambda)$ or $\max(\Lambda)$ exists, we have that $W \in \Pfine(h(\mathcal{T}_{\zeta}'))$ as $W$ is the concatenation of one or two or three words in $\Pfine(h(\mathcal{T}_{\zeta}'))$.  By Lemma \ref{findsomerepresentative} we select $U \in \Red_{\kappa_1}$ and coi $\iota$ such that $\mathcal{T}_{\zeta}' \cup \{\coi(W, \iota, U)\}$ is coherent.  Then $\{\coi(W_x, \iota_x, U_x)\}_{x\in X} \cup \{\coi(W, \iota, U)\}$ is coherent and our proposition is proved.
\end{proof}

\begin{proof}[Proof of Theorem \ref{bigisomorphism}]  Let $\kappa$ be a cardinal such that $2 \leq \kappa \leq 2^{\aleph_0}$.  It is easy to see from Theorem \ref{howmany} that $|\Red_2| = |\Red_{\kappa}| = 2^{\aleph_0}$.  Thus we let $\prec$ well-order $\Red_2$ in such a way that each element has fewer than $2^{\aleph_0}$ predecessors.  Similarly let $\prec'$ well-order $\Red_{\kappa}$ in such a way that each element has fewer than $2^{\aleph_0}$ predecessors.  We inductively define a coherent collection $\{\coi(W_{\zeta}, \iota_{\zeta}, U_{\zeta})\}_{\zeta < 2^{\aleph_0}}$ of coi triples from $\Red_2$ to $\Red_{\kappa}$.

Recall that each ordinal $\zeta$ may be written uniquely as an ordinal sum $\zeta = \beta + m$ where $\beta$ is either $0$ or a limit ordinal and $m \in \omega$, and so $\zeta$ can be considered even or odd depending on the parity of $m$.  Suppose that we have defined coherent $\{\coi(W_{\zeta}, \iota_{\zeta}, U_{\zeta})\}_{\zeta < \mu}$ for all $\mu < \nu < 2^{\aleph_0}$.  By Lemma \ref{ascendingchaincoi} we know $\{\coi(W_{\zeta}, \iota_{\zeta}, U_{\zeta})\}_{\zeta < \nu}$ is coherent.  If $\nu$ is even then by Lemma \ref{avoidpchunk} we select a word $W_{\nu} \notin \Pfine(\{W_{\zeta}\}_{\zeta < \nu})$ which is minimal such under $\prec$ and by Proposition \ref{arbitraryextensions} select a $U_{\nu} \in \Red_{\kappa}$ and coi $\iota_{\nu}$ such that  $\{\coi(W_{\zeta}, \iota_{\zeta}, U_{\zeta})\}_{\zeta < \nu + 1}$ is coherent (using $\kappa_0 =2$ and $\kappa_1 = \kappa$).  Similarly if $\nu$ is odd then by Lemma \ref{avoidpchunk} we select a word $U_{\nu} \notin \Pfine(\{U_{\zeta}\}_{\zeta < \nu})$ which is minimal such under $\prec'$ and by Proposition \ref{arbitraryextensions} select a $W_{\nu} \in \Red_{\kappa}$ and coi $\iota_{\nu}$ such that  $\{\coi(W_{\zeta}, \iota_{\zeta}, U_{\zeta})\}_{\zeta < \nu + 1}$ is coherent (using $\kappa_0 = \kappa$ and $\kappa_1 = 2$).

Notice that $\Pfine(\{W_{\zeta}\}_{\zeta < 2^{\aleph_0}}) = \Red_2$ and $\Pfine(\{U_{\zeta}\}_{\zeta < 2^{\aleph_0}}) = \Red_{\kappa}$.  Thus by Proposition \ref{obtainediso} we have an isomorphism $\Phi: \Co_2 \rightarrow \Co_{\kappa}$.

\end{proof}

We will derive Theorem \ref{elementaryequiv} as a consequence of Theorem \ref{bigisomorphism}.  Instead of defining the notions of elementary equivalence and elementary subsumption, we will trust the reader to know these concepts or to look them up.  We will rely on the following classical result.

\begin{lemma}\label{test}  Suppose $\mathcal{U}_0$ is a submodel of $\mathcal{U}_1$ such that for every $a_0, \ldots, a_{n-1}\in U_0$ and $a\in U_1$ there exists an automorphism $\phi: \mathcal{U}_1 \rightarrow \mathcal{U}_1$ such that $\phi(a_i) = a_i$ for all $i<n$ and $\phi(a) \in U_0$.  Then $\mathcal{U}_0$ is an elementary submodel of $\mathcal{U}_1$.
\end{lemma}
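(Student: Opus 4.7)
The plan is to invoke the Tarski-Vaught test, which states that a substructure $\mathcal{U}_0 \subseteq \mathcal{U}_1$ is an elementary substructure if and only if for every first-order formula $\varphi(x, y_0, \ldots, y_{n-1})$, every $a_0, \ldots, a_{n-1} \in U_0$, and every $b \in U_1$ with $\mathcal{U}_1 \models \varphi(b, a_0, \ldots, a_{n-1})$, there exists $b' \in U_0$ with $\mathcal{U}_1 \models \varphi(b', a_0, \ldots, a_{n-1})$. This is the standard workhorse for producing elementary submodels, and the hypothesis of the lemma is tailor-made to supply such witnesses $b'$.

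First I would fix an arbitrary formula $\varphi(x, y_0, \ldots, y_{n-1})$, parameters $a_0, \ldots, a_{n-1} \in U_0$, and $b \in U_1$ with $\mathcal{U}_1 \models \varphi(b, a_0, \ldots, a_{n-1})$. Applying the hypothesis to the tuple $a_0, \ldots, a_{n-1}$ and the element $b$, I obtain an automorphism $\phi : \mathcal{U}_1 \to \mathcal{U}_1$ fixing each $a_i$ and satisfying $\phi(b) \in U_0$. The key observation is that automorphisms preserve satisfaction of first-order formulas, so
\[
\mathcal{U}_1 \models \varphi(\phi(b), \phi(a_0), \ldots, \phi(a_{n-1})) = \varphi(\phi(b), a_0, \ldots, a_{n-1}),
\]
giving $b' := \phi(b) \in U_0$ as the required witness. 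This verifies the Tarski-Vaught criterion and completes the argument.

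There is essentially no obstacle here: the only ingredients are the Tarski-Vaught test (which one may either cite or recall inductively from the complexity of formulas) and the invariance of first-order satisfaction under automorphisms (proved by a straightforward induction on formula complexity, using that $\phi$ preserves all symbols of the language). The proof reduces to recognizing that the existential witness demanded by Tarski-Vaught is supplied, up to an automorphism fixing the relevant parameters, precisely by the hypothesis of the lemma.
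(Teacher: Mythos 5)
Your proof is correct. The paper does not actually supply a proof of this lemma; it is stated and cited as a classical fact, so there is no argument to compare against. Your approach via the Tarski--Vaught test combined with invariance of first-order satisfaction under automorphisms is precisely the standard textbook argument for this result, and every step checks out: the hypothesis yields an automorphism $\phi$ fixing $a_0,\ldots,a_{n-1}$ with $\phi(b)\in U_0$, automorphism invariance gives $\mathcal{U}_1\models\varphi(\phi(b),a_0,\ldots,a_{n-1})$, and Tarski--Vaught then upgrades the substructure to an elementary substructure.
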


\begin{proof}[Proof of Theorem \ref{elementaryequiv}]  Certainly if $\gamma = \kappa$ or if $2 \leq \gamma \leq \kappa \leq 2^{\aleph_0}$ then we have $\Co_{\gamma} \simeq \Co_{\kappa}$ (using Theorem \ref{bigisomorphism} in the second case) and the isomorphism is an elementary embedding.  We may therefore assume that $2^{\aleph_0}\leq \gamma < \kappa$, for the result will follow for $2 \leq \gamma < 2^{\aleph_0} < \kappa$ as well by the fact that $\Co_{\gamma} \simeq \Co_{2^{\aleph_0}}$ in this case.

The map $\psi_{\gamma, \kappa}: \Co_{\gamma} \rightarrow \Co_{\kappa}$ given by $[[W]] \mapsto [[W]]$ is easily seen to be an injection and we consider $\Co_{\gamma}$ as the substructure of $\Co_{\kappa}$ consisting of those $[[W]]$ which have a representative utilizing only letters with first coordinate $< \gamma$.  Any bijection $f: \kappa \rightarrow \kappa$ induces a bijection $F_f:\Ao_{\kappa} \rightarrow \Ao_{\kappa}$ given by $a_{\alpha, n}^{\pm 1}\mapsto  a_{f(\alpha), n}^{\pm 1}$ which induces a bijection $\mathcal{F}_f: \W_{\kappa} \rightarrow \W_{\kappa}$ given by $W \mapsto \prod_{i\in \overline{W}}F_f(W(i))$.  This $\mathcal{F}_f$ induces an automorphism $\theta_{f}: \Red_{\kappa} \rightarrow \Red_{\kappa}$ given by $W \mapsto \mathcal{F}_f(W)$ which descends to an automorphism $\overline{\theta_f}: \Co_{\kappa}\rightarrow \Co_{\kappa}$.

\begin{lemma}\label{lotsofauts}  Suppose $\gamma \leq \kappa$ with $\gamma$ uncountable.  If $X \subseteq \Co_{\gamma}$ and $Y \subseteq \Co_{\kappa}$ with $|X|, |Y|<\gamma$ there exists a bijection $f: \kappa \rightarrow \kappa$ such that $\overline{\theta_f}(x) = x$ for all $x\in X$ and $\overline{\theta_f}(Y) \subseteq \Co_{\gamma}$.
\end{lemma}

\begin{proof}  Assume the hypotheses.  For each $x\in X$ fix a representative $W_x \in x$ such that $\pro_0(W) \subseteq \gamma$.  For each $y\in Y$ fix a representative $W_y$.  Since each set $\pro_0(W_x)$ is at most countable, the set $\bigcup_{x\in X}\pro_0(W_x)$ is of cardinality at most $\aleph_0\cdot |X|$.  Similarly the set $\bigcup_{y\in Y} \pro_0(W_y)$ is of cardinality at most $\aleph_0 \cdot |Y|$.  

Since $\gamma$ is uncountable, $\bigcup_{x\in X}\pro_0(W_x) \subseteq \gamma$ is of cardinality less than $\gamma$ and $\bigcup_{y\in Y} \pro_0(W_y)\subseteq \kappa$ is also of cardinality less than $\gamma$, we can easily select a bijection $f: \kappa \rightarrow \kappa$ which fixes the elements in $\bigcup_{x\in X}\pro_0(W_x)$ and such that $f(\bigcup_{y\in Y} \pro_0(W_y)) \subseteq \gamma$.  The automorphism $\overline{\theta_f}$ satisfies the desired properties.
\end{proof}

The proof of Theorem \ref{elementaryequiv} is now complete by appealing to Lemma \ref{test}.

\end{proof}

We note that the map $f\mapsto \overline{\theta_f}$ gives a homomorphic injection from the full symetric group on the set $\kappa$,  $S_{\kappa}$, to the automorphism group $\Aut(\pi_1(\GS_{\kappa}))$.  Since $\pi_1(\GS_2) \simeq \pi_1(\GS_{2^{\aleph_0}})$ we immediately get the following, which is not obvious a priori:

\begin{corollary}  The group $\Aut(\pi_1(\GS_2))$ includes a subgroup isomorphic to the full symmetric group $S_{2^{\aleph_0}}$ on a set of size continuum.
\end{corollary}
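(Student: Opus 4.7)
The plan is to package two ingredients already in hand. The text preceding the corollary exhibits, for every cardinal $\kappa$, a homomorphic injection $\Theta_\kappa : S_\kappa \hookrightarrow \Aut(\pi_1(\GS_\kappa))$ via $f \mapsto \overline{\theta_f}$, where $\overline{\theta_f}$ is the descent of the relabeling $a_{\alpha,n}^{\pm 1} \mapsto a_{f(\alpha),n}^{\pm 1}$. Specializing to $\kappa = 2^{\aleph_0}$ yields an embedding $S_{2^{\aleph_0}} \hookrightarrow \Aut(\pi_1(\GS_{2^{\aleph_0}}))$.

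Next I would fix an isomorphism $\Phi : \pi_1(\GS_2) \to \pi_1(\GS_{2^{\aleph_0}})$ furnished by Theorem \ref{bigisomorphism} and transport the copy of $S_{2^{\aleph_0}}$ through it. Conjugation by $\Phi$ produces a group isomorphism
\[
\mathrm{c}_\Phi : \Aut(\pi_1(\GS_{2^{\aleph_0}})) \longrightarrow \Aut(\pi_1(\GS_2)), \qquad \sigma \longmapsto \Phi^{-1} \circ \sigma \circ \Phi.
\]
The composite $\mathrm{c}_\Phi \circ \Theta_{2^{\aleph_0}}$ is then the desired homomorphic injection from $S_{2^{\aleph_0}}$ into $\Aut(\pi_1(\GS_2))$, and its image is the required subgroup.

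The only point in the preceding remark not made fully explicit is the injectivity of $\Theta_\kappa$, which I would verify separately. Given distinct $f, g \in S_\kappa$, set $h = g^{-1} f$, pick $\alpha < \kappa$ with $h(\alpha) \neq \alpha$, and pick any $\beta \neq \alpha$. The reduced alternating word $W \equiv a_{\alpha, 0}\, a_{\beta, 1}\, a_{\alpha, 2}\, a_{\beta, 3}\cdots$ satisfies the hypotheses of Lemma \ref{foranontrivialelement} with each $V_n$ taken to be the $n$-th letter (pure sub-intervals are then singletons, and each second subscript appears exactly once in $V_n$ and not at all in the other $V_m$), so $[[W]] \neq [[E]]$. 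Applying the same lemma in spirit, or directly using Corollary \ref{deletesomewords}, to $W \cdot \theta_h(W)^{-1}$: this has order type $\omega + \omega^{*}$, and because $h(\alpha) \neq \alpha$ every letter of the form $a_{\alpha,2k}$ from the $W$-half has no possible cancellation partner in the $\theta_h(W)^{-1}$-half, so no finite deletion of pure subwords can reduce it to $E$. Therefore $[[W]] \neq [[\theta_h(W)]]$, that is $\overline{\theta_h} \neq \mathrm{id}$, and hence $\overline{\theta_f} \neq \overline{\theta_g}$.

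There is no substantial obstacle: the corollary is a clean two-line consequence of Theorem \ref{bigisomorphism} together with the representation $\Theta_\kappa$, and the only mildly technical ingredient is the distinguishing-word argument above establishing injectivity of $\Theta_\kappa$.
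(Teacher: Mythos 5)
Your proof is correct and follows the same route as the paper: the paper observes in the remark preceding the corollary that $f \mapsto \overline{\theta_f}$ is a homomorphic injection from $S_\kappa$ into $\Aut(\pi_1(\GS_\kappa))$ and then simply applies Theorem \ref{bigisomorphism} with $\kappa = 2^{\aleph_0}$; your conjugation by $\Phi$ is exactly this transport made explicit. The only addition you make is spelling out the injectivity of $f\mapsto\overline{\theta_f}$ (which the paper asserts without proof), and your argument for that — showing the letters $a_{\alpha,2k}$ in $W\cdot\theta_h(W)^{-1}$ have no possible cancellation partner and survive any finite deletion of pure singleton subwords, so Corollary \ref{deletesomewords} gives $[[W\cdot\theta_h(W)^{-1}]]\neq[[E]]$ — is sound.
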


This corollary also follows by combining Theorem \ref{bigisomorphism2} of the current paper with \cite[Theorem B]{Cor}.

\end{subsection}
\end{section}

\begin{section}{Theorem \ref{bigisomorphism2}}\label{last}

In this section we prove Theorem \ref{bigisomorphism2}.  Many of the notions and strategies used in the proof of Theorem \ref{bigisomorphism} will be used here with slight adaptions.  In many cases the adaptions are so slight that we will simply state a result and point to the comparable result in Section \ref{isom} for the proof.

Subsection \ref{HAbackground} will give some preliminary setup and notation.  Subsection \ref{basicextension} provides some discussion of elementary earlier results which are revisited in the current setting.  In subsection \ref{omegaconcatnewsetting} we show how to extend a coherent collection of coi triples, in our new setting, so as to include $\omega$-concatenations of words which have already appeared in the collection, and subsection \ref{Qconcatnewsetting} gives the comparable results for $\mathbb{Q}$-concatenations.  In subsection \ref{arbitraryextensionsnewsetting} we prove Theorem \ref{bigisomorphism2}.

\begin{subsection}{Background for $\HA$}\label{HAbackground}  The harmonic archipelago space $\HA$ is a disk in which we have raised thin hills of height $1$ whose hill-bases limit to a single point on the boundary (see Figure \ref{harmonicarchipelagofig} in the introduction).  The fundamental group $\pi_1(\HA)$ admits a description using infinitary words, in similar flavor to the fundamental groups mentioned thus far.  For references and proofs of our characterization the interested reader may consult \cite[Section 2]{CHM}.

We consider the set $\W_c$ of words on the alphabet $\{c_n^{\pm 1}\}_{n \in \omega}$, defined up to $\equiv$.  The $\sim$ equivalence relation on $\W_c$ is defined as before and the group $\W_c/ \sim$ is isomorphic to the fundamental group $\pi_1(\Ea)$ of the Hawaiian earring.  Defining reduced words, cancellations, etc., precisely as before, the set $\Red_c$ of reduced words in $\W_c$ is isomorphic to the group $\W_c/\sim$.  We will say that a word $W \in \Red_c$ is \emph{$m$-pure}, with $m\in \omega$, if all of its letters are in $\{c_m^{\pm 1}\}$; more particularly $W$ is $m$-pure if and only if it is either $E$ or of form $c_m^j$ with $j \in \mathbb{Z} \setminus \{0\}$.  Similarly $W \in \Red_c$ is pure if it is $m$-pure for some $m \in \omega$.  Let $\Pure(\Red_c)$ denote the set of pure subwords of $\Red_c$.

The group $\pi_1(\HA)$ is isomorphic to $\Red_c/\langle\langle \Pure(\Red_c)\rangle\rangle$.  One can visualize this by considering the continuous map from $\Ea$ to $\HA$ where the wedge point of $\Ea$ maps to the point on the boundary of $\HA$ which is the limit of the hill-bases, and the $n$-th circle of $\Ea$ maps so as to move along the boundary, wrap around the $n$-th hill, and then follow the same path backwards along the boundary.  The induced map on the fundamental group produces a surjection to $\pi_1(\HA)$ and the kernel corresponds to $\langle\langle\Pure(\Red_c)\rangle\rangle$.

As was done before, for a word $W \in \Red_c$ we can select maximal nonempty intervals of $\overline{W}$ such that the restriction of $W$ to the interval is pure.  Also, define the p-decomposition of $\overline{W}$, of $W$, $\pchunk(W)$, p-fine, and extend the notation $W \equiv_p \prod_{\lambda \in \Lambda} W_{\lambda}$, $\pindex(W)$, etc. to the words of $\Red_c$ with respect to the words $\Pure(\Red_c)$.  There is little room for confusion between such notions already defined for groups $\Red_{\kappa}$ and these new notions for $\Red_c$ since the words in $\Red_c$ use an alphabet with letter ``c'' and the letters have only one subscript and the letters in $\Red_{\kappa}$ use the letter ``a'' and have two subscripts.  Of course, the motivations for these notions are the same in both cases: from a reduced word we may delete a pure subword, and after reducing we obtain a word which represents a loop which is homotopic to that represented by the original word.

The following hold by the same proofs as their counterparts in subsection \ref{pchunks}, but substituting $m$-pure for some $m \in \omega$ for any mention of $\alpha$-pure.

\begin{lemma}\label{pchunkmultiplicationHA}  Suppose that $W, U \in \Red_c$ are such that $W \equiv_p \prod_{\lambda \in \Lambda} W_{\lambda}$ and $U \equiv_p \prod_{\lambda' \in \Lambda'} U_{\lambda'}$.  Then there exists a (possibly empty) initial interval $I \subseteq \Lambda$, a (possibly empty) terminal interval $I' \subseteq \Lambda'$ such that either:

\begin{enumerate}[(i)] \item $\Red(WU) \equiv_p \prod_{\lambda \in I}W_{\lambda}\prod_{\lambda'\in I'}U_{\lambda'}$; or

\item there exist $\lambda_0\in \Lambda$ which is the least element strictly above all elements in $I$, $\lambda_1\in \Lambda'$ which is the greatest element strictly below all elements of $I'$ and 

\begin{center}
$\Red(WU) \equiv_p (\prod_{\lambda \in I}W_{\lambda})V(\prod_{\lambda'\in I'}U_{\lambda'})$ 
\end{center}

\noindent where $V \equiv \Red(W_{\lambda_0}U_{\lambda_1})\not\equiv E$ is pure.

\end{enumerate}

\end{lemma}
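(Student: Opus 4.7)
The plan is to follow the proof of Lemma \ref{pchunkmultiplication} essentially verbatim, since the only structural change in passing from $\Red_{\kappa}$ to $\Red_c$ is that pure words are now indexed by a single natural number $m$ (a word is pure iff it is $m$-pure for some $m$) rather than by an ordinal $\alpha<\kappa$. All the combinatorial arguments about how the p-decompositions of $W$ and $U$ interact with the reduction $\Red(WU)$ are insensitive to this change of labelling scheme.

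First I would invoke Lemma \ref{reduced}, whose proof depends only on the word combinatorics of $\W_{\kappa}$ and transfers to $\W_c$ without modification, to obtain reduced words $W_0, W_1, U_0, U_1$ with $W\equiv W_0 W_1$, $U\equiv U_0 U_1$, $W_1\equiv U_0^{-1}$, and $W_0 U_1$ reduced (so $\Red(WU)\equiv W_0 U_1$). Next I would let $I_0\subseteq \pindex(W)$ be the maximal initial interval with $\bigcup_{\lambda\in I_0}\overline{W_\lambda}\subseteq \overline{W_0}$, and $I_1'\subseteq \pindex(U)$ be the maximal terminal interval with $\bigcup_{\lambda'\in I_1'}\overline{U_{\lambda'}}\subseteq \overline{U_1}$.

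The case analysis then proceeds exactly as before. If $\prod_{\lambda\in I_0}W_\lambda\equiv W_0$ and $\prod_{\lambda'\in I_1'}U_{\lambda'}\equiv U_1$, then either $I_0$ has a maximum $\lambda_0$ and $I_1'$ has a minimum $\lambda_1$ with $W_{\lambda_0}$ and $U_{\lambda_1}$ both $m$-pure for a common $m\in\omega$ — in which case I set $I=I_0\setminus\{\lambda_0\}$, $I'=I_1'\setminus\{\lambda_1\}$, $V\equiv W_{\lambda_0}U_{\lambda_1}$ and land in (ii) — or no such common $m$ exists and conclusion (i) holds with $I=I_0$, $I'=I_1'$. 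If instead $\prod_{\lambda\in I_0}W_\lambda\not\equiv W_0$, then the least $\lambda_0$ above $I_0$ gives a chunk $W_{\lambda_0}$ that straddles the boundary, splitting as $W_{\lambda_0}\equiv W_{\lambda_0,0}W_{\lambda_0,1}$ with both halves nonempty and $m$-pure for the same $m$ (this common purity is forced by $W_{\lambda_0}$ being a pure p-chunk). Handling separately whether $\prod_{\lambda'\in I_1'}U_{\lambda'}\equiv U_1$ (in which case $U_{\lambda_1}\equiv W_{\lambda_0,1}^{-1}$ and we take $V\equiv W_{\lambda_0,0}$) or not (in which case $U_{\lambda_1}$ also splits and we take $V\equiv W_{\lambda_0,0}U_{\lambda_1,1}$) yields conclusion (ii) in each subcase. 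The remaining configuration dualizes.

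There is no real obstacle here: the only thing to verify beyond Lemma \ref{pchunkmultiplication} is that whenever two pure chunks are concatenated and one is $m$-pure while its neighbor is $m'$-pure with $m\neq m'$, then maximality of the two chunks in their respective p-decompositions forces them to remain separate p-chunks in the combined word, and that $\Red(W_{\lambda_0}U_{\lambda_1})\not\equiv E$ in case (ii) because $W$ and $U$ are reduced. Both observations are immediate from the definition of the p-decomposition in the $\Red_c$ setting, so the proof of Lemma \ref{pchunkmultiplication} can be copied with the single change of substituting ``$m$-pure for some $m\in\omega$'' wherever ``$\alpha$-pure for some $\alpha$'' appears.
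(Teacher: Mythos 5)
Your proposal is correct and takes exactly the approach the paper itself takes: in Subsection \ref{HAbackground} the paper explicitly states that Lemma \ref{pchunkmultiplicationHA} (along with the other lemmas in that block) ``hold by the same proofs as their counterparts in subsection \ref{pchunks}, but substituting $m$-pure for some $m\in\omega$ for any mention of $\alpha$-pure.'' Your line-by-line transcription of the proof of Lemma \ref{pchunkmultiplication} with that substitution is faithful, and you correctly note the only point needing any thought: the boundary p-chunk splits into two nonempty $m$-pure halves for a common $m$, and the combinatorics of the reduction $\Red(WU)$ against the p-decomposition are insensitive to whether purity is labelled by ordinals or by natural numbers.
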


\begin{lemma}\label{elementsofthegeneratedsubgroupHA}  Suppose that $X \subseteq \Red_c$.  For each nonempty element $W$ of the subgroup $\Pfine(X) \leq \Red_c$ if $W \equiv_p \prod_{\lambda \in \Lambda} W_{\lambda}$ then there exist nonempty intervals $I_0, \ldots, I_n$ in $\Lambda$ such that

\begin{enumerate}[(i)]

\item $\Lambda = \prod_{i = 0}^n I_i$; and

\item for each  $0 \leq i \leq n$ at least one of the following holds:

\begin{enumerate}[(a)]

\item $I_i$ is a singleton $\{\lambda\}$ such that $W_{\lambda}$ is the reduction of a finite concatenation of pure p-chunks of elements in $X^{\pm 1}$;

\item $\prod_{\lambda \in I_i} W_{\lambda}$ is a p-chunk of some element in $X^{\pm 1}$.

\end{enumerate}

\end{enumerate}
\end{lemma}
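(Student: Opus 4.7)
The plan is to mirror the proof of Lemma \ref{elementsofthegeneratedsubgroup} verbatim, substituting Lemma \ref{pchunkmultiplicationHA} for Lemma \ref{pchunkmultiplication}. The starting observation is that every element of $\Pfine(X) = \langle \bigcup_{U \in X} \pchunk(U) \rangle$ can be written as $\Red(U_0 U_1 \cdots U_l)$ where each $U_i$ is a p-chunk of an element of $X^{\pm 1}$. I would induct on $l$, with $l = 0$ and $l = 1$ being immediate (take the single interval $\Lambda$ itself and invoke clause (b) of the conclusion).

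For the inductive step, I would suppose the lemma holds for words representable by concatenations of length $l$ and consider $W \equiv \Red(U_0 \cdots U_{l+1}) \equiv \Red(W' U)$ where $W' \equiv \Red(U_0 \cdots U_l)$ and $U \equiv U_{l+1}$. By the inductive hypothesis applied to $W'$, we obtain a decomposition $\pindex(W') \equiv I_0 I_1 \cdots I_{n'}$ satisfying conditions (i) and (ii). Then I would invoke Lemma \ref{pchunkmultiplicationHA} to pass from this decomposition to one on $\pindex(W)$. If case (i) of that lemma applies, the p-index of $W$ is obtained from that of $W'$ by deleting a terminal collection of the $W'_\lambda$ and concatenating with a terminal interval of $\pindex(U)$; this lets us decompose $\pindex(W)$ into at most $n' + 1$ intervals, where the last one is a (possibly truncated) p-chunk of $U \in X^{\pm 1}$ and therefore satisfies (b). If case (ii) holds, the same decomposition is modified by inserting a single new singleton $\{\lambda_*\}$ between a possibly truncated tail of $\pindex(W')$ and a possibly truncated initial segment of $\pindex(U)$; this singleton satisfies (a) because its entry is $\Red(W_{\lambda_0} U_{\lambda_1})$, a reduction of a concatenation of two pure p-chunks of elements of $X^{\pm 1}$, and all other intervals either already satisfy (a)/(b) from the inductive hypothesis or are a p-chunk of $U \in X^{\pm 1}$ and satisfy (b).

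The only mild bookkeeping point, and the one step most deserving attention, is confirming that the inductive hypothesis continues to apply to the truncated initial portion of $W'$. Specifically, when Lemma \ref{pchunkmultiplicationHA} tells us that $\Red(W'U)$ restricts on its $W'$-side to a prefix of the form $\prod_{\lambda \in I} W'_\lambda$ for some initial interval $I \subseteq \pindex(W')$, we need that truncating the decomposition $I_0 \cdots I_{n'}$ to $I$ still yields a valid decomposition in the sense of (i)--(ii). This is immediate: intersecting each $I_j$ with $I$ either removes $I_j$ entirely or leaves it unchanged, except for at most one $I_j$ straddling the boundary, which gets truncated; if such a truncated $I_j$ was of type (b) it is still of type (b) since any subinterval of a p-chunk of $X^{\pm 1}$ is still a p-chunk of $X^{\pm 1}$, and if of type (a) it was already a singleton and so unaffected. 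A dual remark applies to the terminal interval $I'$ of $\pindex(U)$.

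There is no real obstacle in this argument, which is why the author elects to omit it; the entire content is that the characterization in Lemma \ref{pchunkmultiplicationHA} is formally identical to Lemma \ref{pchunkmultiplication}, so the induction of the earlier subsection transfers unchanged to the $\Red_c$ setting once one has replaced ``$\alpha$-pure'' by ``$m$-pure'' throughout. The only thing one must take care of is to verify clauses (a) and (b) after each of the two cases of Lemma \ref{pchunkmultiplicationHA} rather than attempting to track more delicate combinatorial data; in particular, the passage from a finite concatenation of pure p-chunks (as in the previously-established $I_j$'s of type (a)) to yet another such concatenation under the operation $V \equiv \Red(W_{\lambda_0} U_{\lambda_1})$ in case (ii) is exactly what clause (a) is designed to accommodate.
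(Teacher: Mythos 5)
Your proposal is correct and mirrors exactly the argument the paper has in mind: the paper itself does not write out a proof but simply states that the $\Red_c$ versions "hold by the same proofs as their counterparts in subsection \ref{pchunks}, but substituting $m$-pure for $\alpha$-pure," and your write-up is a faithful expansion of that induction with Lemma \ref{pchunkmultiplicationHA} replacing Lemma \ref{pchunkmultiplication}. Your remark that the singleton inserted in case (ii) stays under clause (a) — because $W_{\lambda_0}$ is already a reduction of a finite concatenation of pure p-chunks of $X^{\pm1}$ by the inductive hypothesis, and $U_{\lambda_1}$ is a pure p-chunk of $U_{l+1}$ — is the one detail worth making explicit, and you got it right.
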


\begin{lemma}\label{fineHA}  If $X \subseteq \Red_c$ then the subgroup $\langle \bigcup_{U\in X} \pchunk(U)\rangle \leq \Red_c$ is p-fine.  This is the smallest p-fine subgroup including the set $X$.
\end{lemma}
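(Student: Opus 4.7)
The plan is to mimic exactly the proof of Lemma \ref{fine} from the $\Red_{\kappa}$ setting, using the newly stated Lemma \ref{elementsofthegeneratedsubgroupHA} as the sole structural input. Since the definition of p-fine and the characterization of elements of $\Pfine(X)$ are verbatim the same in the $\Red_c$ setting (with ``$m$-pure'' for $m \in \omega$ replacing ``$\alpha$-pure'' for $\alpha < \kappa$), essentially no new ideas are needed.

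First I would verify that $\Pfine(X) := \langle \bigcup_{U \in X} \pchunk(U) \rangle$ is itself p-fine. Let $W \in \Pfine(X)$ be nonempty and let $W' \equiv W \upharpoonright_p J$ be a p-chunk, where $J \subseteq \pindex(W)$ is an interval. Applying Lemma \ref{elementsofthegeneratedsubgroupHA} to $W$, I obtain a finite decomposition $\pindex(W) \equiv I_0 \cdots I_n$ satisfying (ii)(a) or (ii)(b) for each $I_i$. Intersecting each $I_i$ with $J$ gives a finite decomposition $J \equiv J_0 \cdots J_n$ of $J$, where each $J_i \subseteq I_i$. In case (a), the corresponding piece is either empty or equal to the singleton $I_i$, whose associated letter $W_{\lambda}$ is a reduction of a finite concatenation of pure p-chunks of elements of $X^{\pm 1}$, hence lies in $\Pfine(X)$. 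In case (b), the corresponding piece is a subinterval of a p-chunk of some element of $X^{\pm 1}$, hence is itself a p-chunk of that element, and so lies in $\bigcup_{U \in X} \pchunk(U) \subseteq \Pfine(X)$. Thus $W' = \Red(\prod_{i=0}^n W \upharpoonright_p J_i)$ is a finite product of elements of $\Pfine(X)$, so $W' \in \Pfine(X)$.

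For the minimality claim, let $H \leq \Red_c$ be any p-fine subgroup with $X \subseteq H$. By p-fineness, $\pchunk(U) \subseteq H$ for every $U \in X$, and then since $H$ is a subgroup, $\langle \bigcup_{U \in X} \pchunk(U) \rangle \subseteq H$. Hence $\Pfine(X) \subseteq H$, as required.

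I do not anticipate any genuine obstacle: the whole content is bookkeeping around Lemma \ref{elementsofthegeneratedsubgroupHA}, which itself is declared to follow by the same proof as Lemma \ref{elementsofthegeneratedsubgroup}. The only mild point to be careful about is ensuring that in case (a), a singleton subinterval of $I_i$ of the form $\{\lambda\}$ still has its associated $W_{\lambda}$ lying in $\Pfine(X)$; this is immediate since a reduction of a finite concatenation of pure p-chunks of elements of $X^{\pm 1}$ is, by construction of $\Pfine(X)$ as a subgroup generated by $\bigcup_{U \in X} \pchunk(U)$, an element of $\Pfine(X)$.
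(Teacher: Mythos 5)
Your proof is correct and takes essentially the same approach as the paper: the paper's proof of Lemma \ref{fine} (and hence, by the stated analogy, of Lemma \ref{fineHA}) simply asserts that the result ``follows immediately from the characterization'' in the elements-of-the-generated-subgroup lemma, and your argument spells out exactly the bookkeeping (intersecting the finite decomposition with the interval $J$, handling cases (a) and (b), and the easy minimality check) that this assertion leaves implicit.
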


The analogue of Lemma \ref{notmanypures} also holds but it is not useful in the setting $\Red_c$ since the set $\Pure(\Red_c)$ of pure words is countable.  This limitation represents the principal difficulty in proving Theorem \ref{bigisomorphism2}.

Let $\beth_c: \Red_c \rightarrow \Red_c/\langle\langle\Pure(\Red_c)\rangle\rangle$ denote the quotient map and $[[W]]$ denote the equivalence class of $W \in \Red_c$ in $\Red_c/\langle\langle\Pure(\Red_c)\rangle\rangle$.  For words $W \in \Red_c$ and $U \in \Red_2$ we'll write, as before, $\coi(W, \iota, U)$ to denote that $\iota$ is a coi between $\pindex(W)$ and $\pindex(U)$ and say that $\coi(W, \iota, U)$ is a \emph{coi triple from $\Red_c$ to $\Red_2$}.  Coherence of a collection of coi triples from $\Red_c$ to $\Red_2$ is defined in the same way as before and the following analogue to Proposition \ref{obtainediso} follows from the same arguments.

\begin{proposition}\label{obtainedisonew}  From a coherent collection $\{\coi(W_x, \iota_x, U_x)\}_{x\in X}$ of coi triples from $\Red_c$ to $\Red_2$ we obtain isomorphisms

\begin{center}
$\Phi_0: \beth_c(\Pfine(\{W_x\}_{x \in X})) \rightarrow \beth_2(\Pfine(\{U_x\}_{x\in X}))$
\end{center}

\noindent and

\begin{center}
$\Phi_1:  \beth_2(\Pfine(\{U_x\}_{x\in X})) \rightarrow \beth_c(\Pfine(\{W_x\}_{x \in X}))$
\end{center}

\noindent such that $\Phi_0 = \Phi_1^{-1}$.

\end{proposition}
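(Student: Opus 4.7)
The plan is to reprise the arguments of Propositions \ref{welldefined} and \ref{obtainediso} with the only change being the substitution of ``$m$-pure for some $m \in \omega$'' wherever those proofs said ``$\alpha$-pure for some $\alpha < \kappa_1$.'' Concretely, for each $W \in \Pfine(\{W_x\}_{x \in X})$ I would pick a decomposition $I_0, \ldots, I_n$ of $\pindex(W)$ supplied by Lemma \ref{elementsofthegeneratedsubgroupHA}, let $J = \{0 \leq j \leq n \mid |I_j| > 1\}$, and for each $j \in J$ select $x_j \in X$, $i_j \in \{\pm 1\}$, and an interval $\Lambda_j \subseteq \pindex(W_{x_j})$ with $W \upharpoonright_p I_j \equiv (W_{x_j} \upharpoonright_p \Lambda_j)^{i_j}$. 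I would then set
\[
\phi_0(W) = \prod_{j \in J} [[(U_{x_j} \upharpoonright_p \varpropto(\Lambda_j, \iota_{x_j}))^{i_j}]] \in \beth_2(\Pfine(\{U_x\}_{x \in X})).
\]
The dual construction yields $\phi_1 \colon \Pfine(\{U_x\}_{x \in X}) \rightarrow \beth_c(\Pfine(\{W_x\}_{x \in X}))$.

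Well-definedness (independence of all the choices) is a restatement of Lemma \ref{tedious}, whose proof uses only the coherence hypothesis together with Lemma \ref{gettingstarted}, Lemma \ref{coilemma}, and Lemma \ref{coilemma2}; each of these is a purely combinatorial fact about p-chunks and coi triples and transfers verbatim. Verifying that $\phi_0$ is a homomorphism proceeds by the same case analysis found in Proposition \ref{welldefined}: one writes $W_0 \equiv W_{00} W_{01}$, $W_1 \equiv W_{10} W_{11}$ with $W_{01} \equiv W_{10}^{-1}$ and the concatenation $W_{00} W_{11}$ reduced (by Lemma \ref{reduced}), and inserts the necessary two-letter bridge words $W_a, W_b, W_c$ to cover the case in which the relevant terminal/initial p-chunks are $m$-pure for a common $m$. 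Nothing in the argument used a feature of the alphabet $\Ao_{\kappa_0}$ that is absent in the single-subscript alphabet for $\Red_c$, so it goes through unchanged.

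To descend $\phi_0$ to $\Phi_0 \colon \beth_c(\Pfine(\{W_x\}_{x \in X})) \rightarrow \beth_2(\Pfine(\{U_x\}_{x \in X}))$, I observe that any pure $W \in \Pfine(\{W_x\}_{x \in X})$ has singleton p-index, so the set $J$ associated with it is empty and $\phi_0(W)$ is the identity; since $\phi_0$ is a homomorphism, it therefore vanishes on $\langle\langle \Pure(\Red_c)\rangle\rangle \cap \Pfine(\{W_x\}_{x \in X})$, which is the kernel of $\beth_c \upharpoonright \Pfine(\{W_x\}_{x \in X})$. The analogous remark produces $\Phi_1$ from $\phi_1$, using that pure words in $\Red_2$ lie in the kernel of $\beth_2$.

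That $\Phi_0$ and $\Phi_1$ are mutually inverse is then the same telescoping calculation as in Proposition \ref{obtainediso}: write an element of $\beth_c(\Pfine(\{W_x\}_{x \in X}))$ as $[[W_0]] \cdots [[W_n]]$ with each $W_j \equiv (W_{x_j} \upharpoonright_p \Lambda_j)^{i_j}$, apply $\Phi_0$ and then $\Phi_1$, and use Lemma \ref{almostidentified} to recognize that $\varpropto(\varpropto(\Lambda_j, \iota_{x_j}), \iota_{x_j}^{-1})$ differs from $\Lambda_j$ only by finitely many pure subwords, which are killed in $\Co$-type quotients. I do not anticipate any real obstacle here; the ``principal difficulty'' flagged by the author, namely that $\Pure(\Red_c)$ is countable and so the analogue of Lemma \ref{avoidpchunk} fails, concerns extension of coherent families rather than the present proposition, and will surface only in the subsequent subsections \ref{omegaconcatnewsetting}--\ref{arbitraryextensionsnewsetting}.
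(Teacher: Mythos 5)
Your proposal is correct and mirrors the paper's own treatment exactly: the paper dispatches Proposition \ref{obtainedisonew} in one sentence by asserting it "follows from the same arguments" as Proposition \ref{obtainediso}, and you have simply unpacked that assertion, correctly tracing the definitions of $\phi_0$, $\phi_1$ through Lemma \ref{elementsofthegeneratedsubgroupHA}, the well-definedness and homomorphism arguments through the analogues of Lemma \ref{tedious} and Proposition \ref{welldefined}, the descent to $\Phi_0$, $\Phi_1$ via vanishing on pure words, and the telescoping inverse computation via Lemma \ref{almostidentified}. You have also correctly identified that the countability of $\Pure(\Red_c)$ (the obstacle the paper flags) plays no role here and only matters in the later extension steps.
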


\end{subsection}

\begin{subsection}{Some basic extension results}\label{basicextension}

\begin{lemma} \label{findsomerepresentativenew}  Let $\{\coi(W_x, \iota_x, U_x)\}_{x\in X}$ be a coherent collection of coi triples from $\Red_c$ to $\Red_2$.

\begin{enumerate}

\item  If $W \in \Pfine(\{W_x\}_{x\in X})$ then there exists a $U \in \Red_{\kappa_1}$ and coi $\iota$ from $W$ to $U$ such that $\{\coi(W_x, \iota_x, U_x)\}_{x\in X} \cup \{(W, \iota, U)\}$ is coherent.  Moreover if $W$ is nonempty the domain (and range) of $\iota$ can be made to be nonempty.

\item  If $U \in \Pfine(\{U_x\}_{x\in X})$ then there exists a $W \in \Red_{\kappa_1}$ and coi $\iota$ from $W$ to $U$ such that $\{\coi(W_x, \iota_x, U_x)\}_{x\in X} \cup \{(W, \iota, U)\}$ is coherent.  Moreover if $U$ is nonempty the domain (and range) of $\iota$ can be made to be nonempty.

\end{enumerate}

\end{lemma}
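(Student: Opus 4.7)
\noindent\textit{Proof proposal.} The plan is to adapt Lemma \ref{findsomerepresentative} to the present setting; the argument is essentially unchanged, and only the alphabet from which separator letters are drawn varies between the two parts. I sketch part (1) in detail and then indicate the minor modifications needed for part (2).

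For part (1), if $W$ is empty I take $U \equiv E$ and $\iota = \emptyset$. Otherwise I will decompose $\pindex(W) \equiv I_0 \cdots I_n$ using Lemma \ref{elementsofthegeneratedsubgroupHA}, set $J = \{j : |I_j| > 1\}$, and for each $j \in J$ select $x_j \in X$, $i_j \in \{\pm 1\}$, and interval $\Lambda_j \subseteq \pindex(W_{x_j})$ with $W \upharpoonright_p I_j \equiv (W_{x_j} \upharpoonright_p \Lambda_j)^{i_j}$. I set $U_j' \equiv (U_{x_j} \upharpoonright_p \varpropto(\Lambda_j, \iota_{x_j}))^{i_j}$ when this is nontrivial, and $U_j' \equiv a_{0,0}$ otherwise as a placeholder. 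To make $\prod_{j=0}^{n} U_j'$ reduced I will, just as in Lemma \ref{findsomerepresentative}, insert a separator letter of the form $a_{\alpha',0}$ between $U_j'$ and $U_{j+1}'$ whenever the two blocks would otherwise merge across a common $\alpha$-pure boundary; availability of two colors in $\Red_2$ guarantees that a suitable $\alpha' \in \{0,1\} \setminus \{\alpha\}$ always exists (and that the modified word $U_j \equiv U_j' a_{\alpha',0}$ remains reduced). The coi $\iota$ will be assembled as the union of the natural identifications between the $\Lambda_j$-copies inside $\pindex(W)$ and the $\varpropto(\Lambda_j, \iota_{x_j})$-copies inside $\pindex(U)$ induced by $\iota_{x_j}$; closeness of $\dom(\iota)$ in $\pindex(W)$ and of $\im(\iota)$ in $\pindex(U)$ will follow from Lemma \ref{basiccloseproperties}(iii), with Lemma \ref{coilemma2} controlling the singleton contributions from placeholder blocks.

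Part (2) is symmetric: given $U \in \Pfine(\{U_x\}_{x\in X}) \subseteq \Red_2$ I will apply Lemma \ref{elementsofthegeneratedsubgroup} to obtain an analogous decomposition, and build $W \in \Red_c$ from blocks $(W_{x_j} \upharpoonright_p \varpropto(\Lambda_j, \iota_{x_j}^{-1}))^{i_j}$, now using separator letters of the form $c_{m'}$, with $m'$ any index not appearing in the purity classes of the neighboring blocks' terminal and initial pure subwords. The availability of infinitely many fresh indices $m' \in \omega$ makes this step strictly easier than the corresponding color-choice in part (1); in particular, the two-color constraint that drove the original argument has no analogue here to worry about.

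The coherence check for the extended collection in either direction will proceed exactly as in the original proof: given any interval equivalence $W \upharpoonright_p \Lambda_0 \equiv (W_y \upharpoonright_p \Lambda_1)^i$ with $y \in X$, or an equivalence with $W_y$ replaced by $W$ itself, or either analogue with the roles of $W$ and $U$ swapped, I will split $\Lambda_0$ along the decomposition $I_0, \ldots, I_n$ into finitely many subintervals, each lying within a single $\Lambda_j$, invoke the coherence of $\{\coi(W_x, \iota_x, U_x)\}_{x\in X}$ to match corresponding images piecewise, and observe that the inserted separator letters constitute isolated pure p-chunks and therefore contribute trivially to $\Co_2$ (respectively $\Red_c/\langle\langle\Pure(\Red_c)\rangle\rangle$) after passing through the quotient. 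I do not foresee a genuine obstacle: the construction transfers essentially verbatim from Lemma \ref{findsomerepresentative}, and at every step the $\HA$-side adjustments (colors in $\Red_2$ for part (1), subscripts in $\Red_c$ for part (2)) are at least as permissive as the original $\kappa_1 \geq 2$ hypothesis.
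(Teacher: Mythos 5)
Your proposal is correct and follows essentially the same route as the paper: part (1) is transcribed from Lemma \ref{findsomerepresentative} (the paper itself says it goes through ``almost word for word''), and part (2) mirrors it on the $\Red_c$ side with separator letters $c_{m_j}$ chosen to avoid the purity class of the adjacent block. One imprecision worth flagging: you describe inserting a separator ``whenever the two blocks would otherwise merge across a common $\alpha$-pure boundary,'' but the paper's construction also inserts a separator in the case where $\pindex(U_j')$ has no maximum \emph{and} $\pindex(U_{j+1}')$ has no minimum. In that case no pure blocks are adjacent at all, yet without a separator the concatenation $U_j'U_{j+1}'$ can fail to be reduced (one end may be the formal inverse of the other across the limit boundary), and then $\pindex(U)$ would not be the ordered concatenation $\prod_j\pindex(U_j)$ needed for the coi. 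Since the same-color case forces avoiding only a single purity class, the $\kappa_1 \geq 2$ resource is exactly enough in part (1), and your observation that $\omega$-many classes make part (2) even less constrained is accurate; the rest of the coherence check proceeds as you outline and as in the original lemma.
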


\begin{proof}  Claim (1) is proved in the same way as Lemma \ref{findsomerepresentative}, almost word for word.  We prove claim (2), and the reader will notice that the reasoning is quite similar in this case as well.  If $U$ is empty then we let $W$ and $\iota$ be empty.  Else we choose subintervals $I_0, \ldots, I_n$ in $\pindex(U)$ as in Lemma \ref{elementsofthegeneratedsubgroup}, let $J = \{0 \leq j \leq n \mid |I_j| > 1\}$, select $x_j \in X$ and $i_j \in \{-1, 1\}$ and intervals $\Lambda_j \subseteq \pindex(U_{x_j})$ with $U \upharpoonright_p I_j \equiv (U_{x_j} \upharpoonright_p \Lambda_j)^{i_j}$.  Let $J' \subseteq J$ be given by

\begin{center}

$J' = \{j\in J\mid (W_{x_j} \upharpoonright_p \varpropto(\Lambda_j, \iota_{x_j}))^{i_j} \not\equiv E\}$.
\end{center}

\noindent For each $j \in J'$ let $W_j' \equiv (W_{x_j} \upharpoonright_p \varpropto(\Lambda_j, \iota_{x_j}))^{i_j}$.  For every $0 \leq j \leq n$ with $j\notin J'$ we let $W_j' \equiv c_0$.

The word $\prod_{j = 0}^n W_j'$ is probably not reduced, and so we will make slight modifications in order to obtain a reduced word.  We know that each subword $W_j'$ is reduced and nonempty.  Let $W_n \equiv W_n'$.  Let $0 \leq j < n$ be given.  There are a couple of possibilities:

\begin{itemize}
\item $\pindex(W_j')$ has a maximal element and $\pindex(W_{j + 1}')$ has a minimal element and both $W_j'\upharpoonright_p \{\max\pindex(W_j')\}$ and $W_{j + 1}' \upharpoonright_p \{\min\pindex(W_{j + 1}')\}$ are $m$-pure for some $m \in \omega$;

\item $\pindex(W_j')$ has a maximal element and $\pindex(W_{j + 1}')$ has a minimal element and both $U_j'\upharpoonright_p \{\max\pindex(W_j')\}$ and $W_{j + 1}' \upharpoonright_p \{\min\pindex(W_{j + 1}')\}$ are not $m$-pure for some $m \in \omega$; or

\item $\pindex(W_j')$ does not have a maximal element and $\pindex(W_{j + 1}')$ does not have a minimal element.
\end{itemize}

\noindent In the middle case we let $W_j \equiv W_j'$.  In the first or last case we choose $m_j \in \omega$ such that $W_j'$ does not end with an $m_j$-pure word and let $W_j \equiv W_j'c_{m_j}$.  The word $W_jW_{j + 1}'$ is reduced, and so the word $W_jW_{j+1}$ is reduced (since $W_{j + 1}$ is nonempty), and so the word $W \equiv \prod_{j = 0}^n W_j$ is reduced.  Moreover $\pindex(W) \equiv \prod_{j = 0}^n\pindex(W_j)$.

We now define the coi $\iota$ from $W$ to $U$ in a very natural way.  If $j \in J'$ then we let the domain of $\iota_{x_j}$ be $\Lambda_j'$, and in particular $\Close(\Lambda_j', \pindex(W_{x_j}))$.  Let $\Lambda_j'' \subseteq I_j$ be the image of $\Lambda_j' \cap \Lambda_j$ under the order isomorphism given by $W \upharpoonright_p I_j \equiv (W_{x_j} \upharpoonright_p \Lambda_j)^{i_j}$.  Similarly we let $\Theta_j'' \subseteq \pindex(U_j') \subseteq \pindex(U_j)$ be the image of $\iota(\Lambda_j \cap \Lambda_j')$ under the order isomorphism given by $U_j' \equiv (U_{x_j} \upharpoonright_p \varpropto(\Lambda_j, \iota_{x_j}))^{i_j}$.  Define $\iota_j: \Lambda_j'' \rightarrow \Theta_j''$ to be the order isomorphism given by the restriction to $\Lambda_j''$ of the composition of the order isomorphism given by $W \upharpoonright_p I_j \equiv (W_{x_j} \upharpoonright_p \Lambda_j)^{i_j}$ with $\iota$ with the order isomorphism given by $(U_{x_j} \upharpoonright_p \varpropto(\Lambda_j, \iota_{x_j}))^{i_j} \equiv U_j'$.  It is easy to check that $\Close(\Lambda_j'', I_j)$, $\Close(\Theta_j'', \pindex(U_j))$.

If $0\leq j \leq n$ and $j \notin J'$ then $I_j$ is finite and nonempty, as is $\pindex(U_j)$, and we simply select elements $\lambda \in I_j$ and $\lambda' \in \pindex(U_j)$ and let $\Lambda_j'' = \{\lambda\}$, $\Theta_j'' = \{\lambda_j'\}$ and $\iota_j: \Lambda_j'' \rightarrow \Theta_j''$ be the unique function.  Clearly $\Close(\Lambda_j'', I_j)$, $\Close(\Theta_j'', \pindex(U_j))$.

Let $\Lambda'' = \bigcup_{j=0}^n \Lambda_j''$ and $\Theta'' = \bigcup_{j = 0}^n \Theta_j''$, and notice that $\Close(\Lambda'', \pindex(W))$ and $\Close(\Theta'', \pindex(U))$ by Lemma \ref{basiccloseproperties} (iii).  Let $\iota: \Lambda'' \rightarrow \Theta''$ be the unique extension of the $\iota_j$.  Now $\coi(W, \iota, U)$.

We check that $\{\coi(W_x, \iota_x, U_x)\}_{x\in X} \cup \{\coi(W, \iota, U)\}$ is coherent.  Suppose that $y\in X$ and intervals $I \subseteq \pchunk(W)$ and $I' \subseteq \pchunk(W_y)$ and $i \in \{-1, 1\}$ are such that $W \upharpoonright_p I  \equiv (W_y \upharpoonright_p I')^i$.  Let $L \subseteq \{0, \ldots, n\}$ denote the set of those $j$ such that $I_j \cap I \neq \emptyset$.  For each $j \in L \cap J$ we have $W\upharpoonright_p (I_j \cap I) \equiv (W_{x_j}\upharpoonright_p \Lambda_j^*)^{i_j}$ for the obvious choice of interval $\Lambda_j^* \subseteq \Lambda_j \subseteq \pchunk(W_{x_j})$.  Thus $(W_{x_j}\upharpoonright_p \Lambda_j^*)^{i\cdot i_j} \equiv W_y \upharpoonright_p I_j'$ for the obvious choice of interval $I_j' \subseteq I'$.  By the coherence of $\{\coi(W_x, \iota_x, U_x)\}_{x\in X}$ we therefore have

$$
\begin{array}{ll}
[[U \upharpoonright_p \varpropto(I, \iota)]] & = \prod_{j \in L}[[U \upharpoonright_p \varpropto(I_j \cap I, \iota)]]\\
& = \prod_{j\in L \cap J'} [[U \upharpoonright_p \varpropto(I_j \cap I, \iota)]]\\
& = \prod_{j \in L\cap J'} [[U_{x_j} \upharpoonright_p \varpropto(\Lambda_j^*, \iota_{x_j})]]^{i_j}\\
& = \prod_{j\in (L \cap J')^i} [[U_{y}\upharpoonright_p \varpropto(I_j', \iota_y)]]^i\\
& = [[(U_y \upharpoonright_p \varpropto(I', \iota_y))^{i}]].
\end{array}
$$

If we select intervals $I, I' \subseteq \pindex(W)$ and $i\in \{-1, 1\}$ such that $W \upharpoonright_p I \equiv (W \upharpoonright_p I')^i$ then a similar strategy of finitely decomposing $I$ and $I'$ is employed to show $[[U\upharpoonright (I, \iota)]] = [[(U\upharpoonright_p (I', \iota))^{i}]]$.

The check that if $U \upharpoonright_p Q \equiv (U_z \upharpoonright_p Q')^i$, where $z\in X$, then the appropriate elements of $\Red_c/\langle\langle\Pure(\Red_c)\rangle\rangle$ are equal is similar to that above.  Similarly if $Q, Q' \subseteq \pindex(U)$, and the proof is complete.
\end{proof}

\begin{lemma}\label{makeitsmallernew}  Suppose that $\{\coi(W_x, \iota_x, U_x)\}_{x\in X}$ is a coherent collection of coi triples from $\Red_c$ to $\Red_2$, $z \in X$ and that $\epsilon>0$ is a real number.  Then there exists a $U \in \Red_2$ with $\|U\| <\epsilon$ and coi $\iota$ from $W_z$ to $U$ such that $\{\coi(W_x, \iota_x, U_x)\}_{x\in X} \cup \{\coi(W_z, \iota, U)\}$ is coherent.  Moreover the domain (and codomain) of $\iota$ may be chosen to be nonempty provided those of $\iota_z$ are.

Similarly for any $y \in X$ there exists a $W \in \Red_c$ with $\|W\| < \epsilon$ and coi $\iota$ from $W$ to $U_y$ such that $\{\coi(W_x, \iota_x, U_x)\}_{x\in X} \cup \{\coi(W, \iota, U_y)\}$ is coherent, and the domain and codomain of $\iota$ may be chosen to be nonempty provided those of $\iota_y$ are.
\end{lemma}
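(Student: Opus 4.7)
The plan is to adapt the argument of Lemma \ref{makeitsmaller} to the new asymmetric setting. In each part, we leave one half of the triple $(W_z, \iota_z, U_z)$ (resp.\ $(W_y, \iota_y, U_y)$) alone and modify the other half, replacing every pure p-chunk of $\|\cdot\|$-value at least $\epsilon$ by a tiny pure word that preserves the p-index structure, so that the existing coi can be reused with essentially no change.

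For the first claim, decompose $U_z \equiv_p \prod_{\lambda \in \pindex(U_z)} U_\lambda$ and let $J = \{\lambda : \|U_\lambda\| \geq \epsilon\}$, which is finite since $U_z$ is a word. Choose $N$ with $1/(N+1) < \epsilon$. For $\lambda \in J$ with $U_\lambda$ being $\alpha$-pure, set $U_\lambda' \equiv a_{\alpha, N}$; for $\lambda \notin J$ leave $U_\lambda' \equiv U_\lambda$. Put $U \equiv \prod_\lambda U_\lambda'$. The purity classes still agree with those of the original p-chunks of $U_z$, so adjacent p-chunks remain distinct, $U$ is reduced, and $U \equiv_p \prod_\lambda U_\lambda'$ with $\pindex(U)$ naturally identified with $\pindex(U_z)$. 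Take $\iota = \iota_z$ under this identification.

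For the second claim, decompose $W_y \equiv_p \prod_{\lambda \in \pindex(W_y)} W_\lambda$, each $W_\lambda$ being $m_\lambda$-pure, and define $J$ and $N$ analogously. The extra complication is that a pure word in $\Red_c$ has its purity class encoded in its single subscript, so shrinking an $m$-pure p-chunk necessarily changes its purity class. Process the $\lambda \in J$ one at a time and for each pick $W_\lambda' \equiv c_{n_\lambda}$ with $n_\lambda \geq N$ distinct from the purity classes of the two immediate neighbors of $\lambda$ in $\pindex(W_y)$ (either $m_{\lambda^{\pm}}$ for unmodified neighbors, or the previously chosen $n_{\lambda^{\pm}}$ for modified ones). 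At each stage only finitely many $n \geq N$ are forbidden, so such a choice exists. Setting $W \equiv \prod_\lambda W_\lambda'$ yields a reduced word with the same p-index as $W_y$, and we let $\iota = \iota_y$ under the natural identification. Non-emptiness of the domain (and codomain) of $\iota$ is inherited from that of $\iota_z$ (resp.\ $\iota_y$) via the order isomorphism between p-indices.

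The remaining task is checking that the enlarged collection is coherent. The key observation is that our modifications change $U_z$ into $U$ (resp.\ $W_y$ into $W$) by substituting pure words for pure words; hence $[[U \upharpoonright_p Q]] = [[U_z \upharpoonright_p Q]]$ in $\Co_2$, resp.\ $[[W \upharpoonright_p I]] = [[W_y \upharpoonright_p I]]$ in $\Red_c / \langle\langle \Pure(\Red_c) \rangle\rangle$, for every interval $Q$ (resp.\ $I$). Combining this with the isomorphism $\Phi_0: \beth_c(\Pfine(\{W_x\}_{x\in X})) \to \beth_2(\Pfine(\{U_x\}_{x\in X}))$ afforded by Proposition \ref{obtainedisonew}, each of the four matching conditions in the coherence definition collapses either to the original coherence or, via injectivity of $\Phi_0$, to the same original coherence cast through the isomorphism. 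The main obstacle is the index-juggling in the second construction: unlike Lemma \ref{makeitsmaller}, where the ``purity coordinate'' $\alpha$ of a letter is preserved under shrinking, every shrinking replacement in $\Red_c$ must change purity class, so replacement indices must be actively chosen to avoid merging with neighbouring p-chunks.
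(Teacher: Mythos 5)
Your construction is essentially the one in the paper: for the first claim you shrink the offending pure p-chunks of $U_z$ exactly as in Lemma \ref{makeitsmaller}, and for the second claim you correctly identify the extra wrinkle — in $\Red_c$ the purity class of a pure p-chunk is its single subscript, so shrinking a p-chunk below $\epsilon$ forces a change of purity class, which must be chosen so as not to merge with adjacent p-chunks. The paper handles this by selecting a strictly increasing sequence $m_0 < m_1 < \cdots < m_n$ (all $> N$) with each $m_j$ chosen to avoid clashing with the adjacent unmodified pure subwords of $W_y$; your ``process one at a time, avoiding the two current neighbours'' is a cosmetically different but equivalent way of producing the same kind of choice, and the p-index preservation and reducedness of the new word both go through.

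Where you genuinely deviate is in the coherence verification. The paper gestures at a direct check ``along similar lines used in earlier proofs,'' meaning a finite p-chunk decomposition of the intervals and a term-by-term application of the original coherence plus the deletion of finitely many pure subwords. Your route instead observes that $[[U \upharpoonright_p Q]] = [[U_z \upharpoonright_p Q]]$ (resp.\ $[[W \upharpoonright_p I]] = [[W_y \upharpoonright_p I]]$) for every interval, then transports everything through the isomorphism $\Phi_0$, $\Phi_1 = \Phi_0^{-1}$ of Proposition \ref{obtainedisonew}. This is a valid and slicker shortcut: e.g.\ if $U \upharpoonright_p Q_0 \equiv (U_{x_1}\upharpoonright_p Q_1)^i$ then $\Phi_1([[U_z \upharpoonright_p Q_0]]) = [[W_z\upharpoonright_p \varpropto(Q_0,\iota_z^{-1})]]$ and $\Phi_1([[(U_{x_1}\upharpoonright_p Q_1)^i]]) = [[(W_{x_1}\upharpoonright_p\varpropto(Q_1,\iota_{x_1}^{-1}))^i]]$, and these are the same since the arguments of $\Phi_1$ agree in $\Co_2$. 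The one caveat worth being explicit about is that ``injectivity of $\Phi_0$'' is not quite the property you use — what you need is that $\Phi_1$ is a well-defined homomorphism inverse to $\Phi_0$, so that the $\Phi_1$-image of a single class can be computed through any representing decomposition. That said, the gain is real: the $\Phi_0$-based argument folds all four coherence cases into one uniform calculation and avoids the combinatorics of Lemma \ref{coilemma}.
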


\begin{proof}  If $W_z$ is empty then let $U$ be empty and $\iota = \emptyset$.  Otherwise let  $U_z \equiv_p \prod_{\lambda \in \pindex(U_z)} U_{\lambda}$ and $J = \{\lambda \in \pindex(U_z) \mid \|U_{\lambda}\| \geq \epsilon\}$.  Since $U_z$ is a word, we know that $J$ is finite.  Let $N \in \omega$ be large enough that $\frac{1}{N+1} < \epsilon$.  For each $\lambda \in \pindex(U_z)$ we let

\[
U_{\lambda}' \equiv \left\{
\begin{array}{ll}
U_{\lambda}
                                            & \text{if } \lambda \notin J, \\
a_{\alpha, N}                                        & \text{if }\lambda \in J\text{ and }U_{\lambda}\text{ is }\alpha\text{-pure}.
\end{array}
\right.
\]

We let $U \equiv \prod_{\lambda \in \pindex(U_x)} U_{\lambda}'$.  It is easy to see that $U$ is reduced (a cancellation in $U$ would necessarily include the pairing of a letter $a_{\alpha, N} \equiv U_{\lambda}$, with $\lambda \in J$, with a letter in $U_{\lambda'}'$ where $\lambda'$ is the immediate successor or immediate predecessor of $\lambda$ in $\pindex(U_x)$, and thus $U_{\lambda}'$ and $U_{\lambda'}'$ are both $\alpha$-pure, so $U_{\lambda}$ and $U_{\lambda'}$ are as well, a contradiction).  Moreover $U \equiv_p \prod_{\lambda \in \pindex(U_z)} U_{\lambda}'$ and clearly $\|U\| < \epsilon$.  Letting $\iota = \iota_z$ it is immediate that $\iota$ is a coi from $W_z$ to $U$.  The rather intuitive fact that $\{\coi(W_x, \iota_x, U_x)\}_{x\in X} \cup \{\coi(W_z, \iota, U)\}$ is coherent is proved along similar lines used in earlier proofs.

Now let $y \in X$ be given.  If $U_y$ is empty then let $W$ and $\iota$ be empty.  Else we write $W_y \equiv_p \prod_{\lambda \in \pindex(W)} W_{\lambda}$ and $J = \{\lambda \in \pindex(W_y) \mid \|W_{\lambda}\| \geq \epsilon\}$, and so $J$ is finite.  Select $N \in \omega$ large enough that $\frac{1}{N + 1} < \epsilon$.  Write $J = \{\lambda_0, \lambda_1, \ldots, \lambda_n\}$ where $\lambda_j < \lambda_{j+1}$ under the order on $\pindex(W_y)$.  Select $m_0 \in \omega$ with $m_0 > N$ such that $W_y \upharpoonright_p \{\lambda \in \pindex(W_y) \mid \lambda < \lambda_0\}$ does not end with a nonempty $m_0$-pure subword and $W_y \upharpoonright_p \{\lambda \in \pindex(W_y) \mid \lambda_0 < \lambda\}$ does not begin with a nonempty $m_0$-pure subword.  If $0 < j < n$ and we have already selected $m_{j - 1}$ then select $m_j \in \omega$ with $m_j > m_{j + 1}$ such that $W_y \upharpoonright_p \{\lambda \in \pindex(W_y) \mid \lambda < \lambda_j\}$ does not end with a nonempty $m_j$-pure subword and $W_y \upharpoonright_p \{\lambda \in \pindex(W_y) \mid \lambda_j < \lambda\}$ does not begin with a nonempty $m_j$-pure subword.  Assuming we have selected $m_j$ for all $0 \leq j < n$ we select $m_n \in \omega$ with $m_n > m_{n-1}$ such that $W_y \upharpoonright_p \{\lambda \in \pindex(W_y) \mid \lambda < \lambda_n\}$ does not end with a nonempty $m_n$-pure subword and $W_y \upharpoonright_p \{\lambda \in \pindex(W_y) \mid \lambda_n < \lambda\}$ does not begin with a nonempty $m_j$-pure subword.  Letting

\[
W_{\lambda}' \equiv \left\{
\begin{array}{ll}
W_{\lambda}
                                            & \text{if }\lambda \notin J, \\
c_{m_j}                                        & \text{if }\lambda = \lambda_j \in J
\end{array}
\right.
\]

\noindent and $W \equiv \prod_{\lambda \in \pindex(W_y)} W_{\lambda}'$ it is easy to see that $W$ is reduced, that the equivalence $W \equiv_p \prod_{\lambda\in \pindex(W_y)}W_{\lambda}'$ holds, and $\|W\| <\epsilon$.  Letting $\iota = \iota_y$ one can easily perform the tedious check that $\{\coi(W_x, \iota_x, U_x)\}_{x\in X} \cup \{\coi(W, \iota, U_z)\}$ is coherent.

\end{proof}

The remaining material in this subsection is geared towards allowing us to define words which avoid a p-fine subgroup.

\begin{definition}  Given a word $W \in \Red_c$ we let $\sigma(W): \pindex(W) \rightarrow \omega$ be defined by letting $\sigma(W)(\lambda) = n$ where $W \upharpoonright_p \{\lambda\}$ is an $n$-pure word.
\end{definition}

\begin{lemma}\label{intervalsareboring}  Suppose that $\Theta$ is a totally ordered set and $f_0: \Theta \rightarrow \omega$ is a function.  Also suppose that $V \in \Red_c$ and $\iota_0, \iota_1: \Theta \rightarrow \pindex(V)$ are order embeddings with $\iota_0(\Theta)$ and $\iota_1(\Theta)$ being intervals in $\pindex(V)$, and $\sigma(V)(\iota_0(\theta)) = \sigma(V)(\iota_1(\theta)) = f_0(\theta)$ for all $\theta \in \Theta$.  If $\iota_0(\theta') = \iota_1(\theta')$ for some $\theta' \in \Theta$ then $\iota_0 = \iota_1$.
\end{lemma}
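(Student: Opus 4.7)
My plan is to argue by contradiction, by turning a disagreement between $\iota_0$ and $\iota_1$ into a label-preserving order embedding of $[\theta', \theta]$ into a proper initial subinterval of itself, and then contradicting the word condition on $V$. Suppose $\iota_0(\theta') = \iota_1(\theta') =: \mu'$ but there is some $\theta \in \Theta$ with $\iota_0(\theta) \neq \iota_1(\theta)$. Without loss of generality --- reversing the orders on $\Theta$ and $\pindex(V)$ if necessary (the hypotheses are clearly symmetric under simultaneous order reversal) --- I may assume $\theta > \theta'$, and after possibly swapping $\iota_0 \leftrightarrow \iota_1$, that $\iota_0(\theta) < \iota_1(\theta)$ in $\pindex(V)$.

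Because $\iota_1(\Theta)$ is an interval of $\pindex(V)$ containing both $\mu'$ and $\iota_1(\theta)$, the interval $[\mu', \iota_1(\theta)]$ of $\pindex(V)$ lies inside $\iota_1(\Theta)$; likewise, since $\iota_0(\Theta)$ is an interval, $\iota_0([\theta', \theta]) = [\mu', \iota_0(\theta)] \subseteq [\mu', \iota_1(\theta)] \subseteq \iota_1(\Theta)$. Thus $g := \iota_1^{-1} \circ \iota_0 \upharpoonright [\theta', \theta]$ is a well-defined order embedding $[\theta', \theta] \hookrightarrow \Theta$ which fixes $\theta'$ and has image $[\theta', \hat\theta]$, where $\hat\theta := \iota_1^{-1}(\iota_0(\theta))$. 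The inequality $\iota_1(\hat\theta) = \iota_0(\theta) < \iota_1(\theta)$ forces $\hat\theta < \theta$, so $g$ realizes $[\theta', \theta]$ as order-isomorphic to its proper initial subinterval $[\theta', \hat\theta]$; moreover $f_0 \circ g = f_0 \upharpoonright [\theta', \theta]$, since $\sigma(V) \circ \iota_0 = \sigma(V) \circ \iota_1 = f_0$.

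Iterating $g$ now produces a strictly decreasing sequence $\theta > g(\theta) > g^2(\theta) > \cdots$ in $[\theta', \theta]$, all assigned the common $f_0$-value $m := f_0(\theta)$. Their images under $\iota_0$ are infinitely many distinct elements of $\pindex(V)$ all carrying $\sigma(V)$-value $m$, which means $V$ has infinitely many $m$-pure p-chunks. But by the defining condition of a word in $\Red_c$, the set $\{i \in \overline{V} \mid \text{the subscript of } V(i) \leq m\}$ is finite, so the letter $c_m^{\pm 1}$ occurs only finitely often in $V$ and hence produces only finitely many $m$-pure p-chunks --- a contradiction. The main subtlety --- really the only one --- is spotting that the word condition is exactly what bounds the multiplicity of each label $m$; once the self-embedding $g$ is isolated, iteration delivers the contradiction automatically.
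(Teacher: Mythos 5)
Your proof is correct, but it takes a genuinely different route from the paper's. The paper argues directly: for each $\theta \neq \theta'$ it identifies the value $\iota_0(\theta)$ by a counting criterion --- the unique $\lambda$ on the correct side of $\lambda' = \iota_0(\theta')$ with $\sigma(V)(\lambda) = f_0(\theta)$ and with the same number of $\sigma$-equal elements strictly between $\lambda$ and $\lambda'$ as there are $f_0$-equal elements strictly between $\theta$ and $\theta'$ --- and observes that the interval hypothesis forces both $\iota_0(\theta)$ and $\iota_1(\theta)$ to satisfy that criterion, hence to be equal. You instead argue by contradiction: from a hypothetical disagreement at $\theta$, you build the label-preserving self-embedding $g = \iota_1^{-1}\circ\iota_0$ of $[\theta',\theta]$ onto a proper initial subinterval, iterate to get a strictly decreasing sequence with constant $f_0$-value $m$, and then push forward via $\iota_0$ to contradict the fact that only finitely many elements of $\pindex(V)$ carry a given $\sigma(V)$-value $m$ (the word condition). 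Both proofs hinge on exactly the same two facts --- that $\iota_0(\Theta), \iota_1(\Theta)$ are intervals, and that each $\sigma(V)$-fiber is finite --- but the paper uses the finiteness to pin down the answer outright, while you recast it as a well-foundedness obstruction to a nontrivial self-embedding. The paper's version is a bit more economical; yours makes the underlying rigidity phenomenon more visible. One small point worth being explicit about in your iteration step: strict monotonicity of $g$ together with $g(\theta) = \hat\theta < \theta$ gives $g^{n+1}(\theta) < g^n(\theta)$ by induction, which is what you implicitly use and which does hold.
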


\begin{proof}  Assume the hypotheses and let $\lambda' = \iota_0(\theta') = \iota_1(\theta')$.  Letting $\theta < \theta'$ in $\Theta$ be given, there is a unique $\lambda < \lambda'$ such that $\sigma(V)(\lambda) = f_0(\theta)$ and $|\{\lambda'' \in \pindex(V)\mid \lambda < \lambda'' < \lambda', \sigma(V)(\lambda'') = \sigma(V)(\lambda)\}| = |\{\theta'' \in \Theta \mid \theta < \theta'' < \theta', f_0(\theta'') = f_0(\theta)\}|$.  Since $\iota_0(\Theta)$ and $\iota_1(\Theta)$ are intervals in $\pindex(V)$ and $\sigma(V)(\iota_0(\theta_0)) = \sigma(V)(\iota_1(\theta_0)) = f_0(\theta_0)$ for all $\theta_0 \in \Theta$, it must be that $\lambda = \iota_0(\theta) = \iota_1(\theta)$.  Thus $\iota_0(\theta) = \iota_1(\theta)$ for all $\theta < \theta'$, and that $\iota_0(\theta) = \iota_1(\theta)$ for $\theta > \theta'$ follows similarly.

\end{proof}

\begin{lemma}\label{injectionsarefew}  Suppose that $\Theta$ is a totally ordered set and $f_0: \Theta \rightarrow \omega$ is a function.  If $V \in \Red_c$ then there are finitely many order embeddings $\iota: \Theta \rightarrow \pindex(V)$ with $\iota(\Theta)$ an interval and $\sigma(V)(\iota(\theta)) = f_0(\theta)$ for all $\theta \in \Theta$.
\end{lemma}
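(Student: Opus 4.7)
The plan is to use Lemma \ref{intervalsareboring} to show that each such embedding is pinned down by its value at a single point of $\Theta$, and then to argue that there are only finitely many possible values that a single point can take.

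If $\Theta$ is empty there is nothing to prove (the only order embedding is the empty function). Otherwise, pick any $\theta_0 \in \Theta$ and set $n_0 = f_0(\theta_0)$. For every order embedding $\iota : \Theta \to \pindex(V)$ satisfying the hypotheses of the lemma, the image $\iota(\theta_0)$ lies in the set
\[
S = \{ \lambda \in \pindex(V) \mid \sigma(V)(\lambda) = n_0 \}.
\]
The key observation is that $S$ is finite. Indeed, each $\lambda \in S$ corresponds to a maximal $n_0$-pure p-chunk $V \upharpoonright_p \{\lambda\}$, which is of the form $c_{n_0}^{j}$ for some nonzero integer $j$, and therefore uses at least one occurrence of a letter $c_{n_0}^{\pm 1}$ from $V$. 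Since $V$ is a word, the set $\{ i \in \overline{V} \mid \pro_1(V(i)) = n_0\}$ is finite, so only finitely many disjoint maximal $n_0$-pure subwords can exist in $V$; hence $|S| < \infty$.

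Now by Lemma \ref{intervalsareboring}, any two order embeddings $\iota_0, \iota_1 : \Theta \to \pindex(V)$ with interval image and $\sigma(V) \circ \iota_i = f_0$ that happen to agree at $\theta_0$ must be equal. Therefore the assignment $\iota \mapsto \iota(\theta_0)$ is injective from the set of admissible embeddings into the finite set $S$, which gives the desired finiteness. The main obstacle is really just isolating the right invariant to key off of, namely the value at a fixed $\theta_0$; once Lemma \ref{intervalsareboring} is in hand, the finiteness reduces to the elementary fact that a word contains only finitely many letters of any prescribed subscript.
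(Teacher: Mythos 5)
Your proof is correct and follows essentially the same approach as the paper: fix a point $\theta_0 \in \Theta$, observe that only finitely many $\lambda \in \pindex(V)$ have $\sigma(V)(\lambda) = f_0(\theta_0)$ since $V$ is a word, and use Lemma \ref{intervalsareboring} to see that the value $\iota(\theta_0)$ determines $\iota$. Your spelling out of why the set $S$ is finite is a bit more detailed than the paper's, but the argument is the same.
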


\begin{proof}  If $\Theta$ is empty then there is exactly one order embedding to $\pindex(V)$, namely the empty function.  If $\Theta$ is not empty, then fix $\theta' \in \Theta$.  Notice that there are only finitely many $\lambda' \in \pindex(V)$ such that $f_0(\theta') = \sigma(V)(\lambda')$ (since $V$ is a word), and any order embedding $\iota: \Theta \rightarrow \pindex(V)$ with $\iota(\Theta)$ an interval in $\pindex(V)$ and $\sigma(V)(\iota(\theta)) = f_0(\theta)$ for all $\theta \in \Theta$ and $\iota(\theta') = \lambda'$ is unique by Lemma \ref{intervalsareboring}.  Thus the conclusion holds.

\end{proof}

\begin{lemma}\label{blocksubword}  Suppose that $\{W_x\}_{x \in X} \subseteq \Red_c$ with $|X| < 2^{\aleph_0}$, that $\Theta$ is a totally ordered set and $f_0: \Theta \rightarrow \omega$ is a function.  If $f_1: \omega \rightarrow \Theta$ is an injective function (not necessarily preserving order) and $f_2: f_1(\omega) \rightarrow \{-1, 1\}$ is a function then there exists a function $g: f_1(\omega) \rightarrow \omega \setminus \{0\}$ such that there exists no $W \in(\bigcup_{x \in X} \pchunk(W_x))^{\pm 1}$ with $\iota: \Theta \equiv \pindex(W)$, $\sigma(W)(\iota(\theta)) = f_0(\theta)$ and $W \upharpoonright_p \{\iota(\theta)\} \equiv c_{f_1(\theta)}^{f_2(\theta)g(\theta)}$ for all $\theta \in f_1(\omega)$.

\end{lemma}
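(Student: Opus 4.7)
The plan is to establish Lemma \ref{blocksubword} by a cardinality / diagonalization argument. The key observation is that although $\bigcup_{x \in X}\pchunk(W_x)$ may a priori have cardinality up to $2^{\aleph_0}$, the subcollection of its members whose $\pindex$ is order isomorphic to $\Theta$ in a $\sigma$-compatible way is severely restricted, and Lemma \ref{injectionsarefew} lets us reduce the count to something small compared to $2^{\aleph_0}$.

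First I would formalize a matching: call $(W, \iota)$ a \emph{candidate} when $W \in (\bigcup_{x \in X}\pchunk(W_x))^{\pm 1}$, $\iota: \Theta \to \pindex(W)$ is an order isomorphism, and $\sigma(W)(\iota(\theta)) = f_0(\theta)$ for every $\theta \in \Theta$. For any candidate and any $\theta \in f_1(\omega)$, the p-chunk $W \upharpoonright_p \{\iota(\theta)\}$ is a pure word whose index is pinned by $f_0(\theta)$ and whose exponent $k^{W,\iota}_\theta \in \mathbb{Z}\setminus\{0\}$ is thereby determined by the data. The exponent condition in the lemma's statement then forces $g(\theta) = f_2(\theta)\, k^{W,\iota}_\theta$, so each candidate rules out at most one prospective $g$, namely $g_{W,\iota}: \theta\mapsto f_2(\theta)\, k^{W,\iota}_\theta$ (and rules out nothing at all if this function fails to land in $\omega\setminus\{0\}$ at some $\theta$).

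Next I would count the candidates via Lemma \ref{injectionsarefew}. Every candidate factors through some $x \in X$: either $W$ is a p-chunk $W_x\upharpoonright_p I$ of $W_x$, in which case $\iota$ identifies with an order embedding $\Theta \to \pindex(W_x)$ with interval image and $\sigma(W_x)\circ \iota = f_0$; or $W = (W_x\upharpoonright_p I)^{-1}$, which corresponds to an order-reversing embedding of $\Theta$ into $\pindex(W_x)$ with the analogous conditions, handled by applying Lemma \ref{injectionsarefew} to $\Theta$ equipped with its reverse order. For each fixed $W_x$ the lemma bounds the number of such embeddings, in either orientation, by a finite constant. Summing over $x \in X$ and the two orientations, the total number of candidates is at most $|X| \cdot \aleph_0 < 2^{\aleph_0}$, invoking the hypothesis $|X| < 2^{\aleph_0}$.

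Pure cardinality then finishes the argument: since $f_1(\omega)$ is countably infinite, the set of all $g: f_1(\omega) \to \omega\setminus\{0\}$ has cardinality $\aleph_0^{\aleph_0} = 2^{\aleph_0}$, which strictly exceeds the set of forbidden $g_{W,\iota}$. Any $g$ in the complement witnesses the lemma. The step that requires the most care, and which I view as the main obstacle, is the counting: one must recognize that although $\bigcup_{x\in X}\pchunk(W_x)$ itself could have size up to $2^{\aleph_0}$, the shape and $\sigma$-labelling conditions collapse the relevant portion to finitely many embeddings per $x$ via Lemma \ref{injectionsarefew}, with the order-reversing case reduced to the order-preserving statement by a small reindexing.
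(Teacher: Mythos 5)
Your proof is correct and takes essentially the same route as the paper: both reduce the count of dangerous pairs $(W,\iota)$ to finitely many per $x \in X$ via Lemma~\ref{injectionsarefew} (handling the inverted p-chunks by passing to $W_x^{-1}$, i.e.\ reversing $\Theta$), conclude that fewer than $2^{\aleph_0}$ functions $g$ are forbidden while $2^{\aleph_0}$ are available, and pick any unforbidden $g$. Your additional observation that each candidate forbids at most one $g$ (or none, if the pinned exponent has the wrong sign) is a clean way to package the paper's diagonal selection.
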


\begin{proof}  Let $\{\iota_y\}_{y \in Y}$ be the collection of all order embeddings with domain $\Theta$, codomain an element in $\{\pindex(W_x^{\pm 1})\}_{x \in X}$, say $\pindex(W_{x_y}^{i_{y}})$ where $i_{y}\in \{-1, 1\}$, $\iota_y(\Theta)$ an interval in $\pindex(W_{x_y}^{i_{y}})$, and $f_0(\theta) = \sigma(W_{x_y}^{i_y})(\iota_y(\theta))$.  We assume that the indexing $Y$ has no duplications: $y_0 \neq y_1$ implies that $\iota_{y_0} \neq \iota_{y_1}$.  Notice that $|Y| < 2^{\aleph_0}$ since by Lemma \ref{injectionsarefew} for each $x \in X$ there can be only finitely many $y \in Y$ with $x = x_y$.

The set of all functions $g: f_1(\omega) \rightarrow \omega \setminus \{0\}$ is of cardinality $2^{\aleph_0}$ and so it is possible to select $g: f_1(\omega) \rightarrow \omega \setminus \{0\}$ such that for each $y \in Y$ there exists $\theta_y \in f_1(\omega)$ such that $W_{x_y}^{i_y} \upharpoonright_p \{\iota_y(\theta_y)\} \not\equiv c_{f_1(\theta_y)}^{f_2(\theta_y)g(\theta_y)}$.  Clearly this $g$ safisfies the conclusion.
\end{proof}

\begin{lemma}\label{avoidinc}  Let $\{W_x\}_{x \in X} \subseteq \Red_c$ with $|X| < 2^{\aleph_0}$.  There exists a word $V \in \Red_c \setminus \Pfine(\{W_x\}_{x \in X})$.
\end{lemma}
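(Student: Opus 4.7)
The approach is to build $V$ as an infinite concatenation $V \equiv \prod_{n\in\omega} c_n^{g(n)}$ for a carefully chosen $g\colon\omega\to\omega\setminus\{0\}$, and to verify $V\notin\Pfine(\{W_x\}_{x\in X})$ by a diagonal argument over the (only countably many) possible ``tail embeddings'' of $V$ into some $W_x^{\pm1}$. Any such $V$ is automatically a reduced word (consecutive pure p-chunks $c_n^{g(n)}$ and $c_{n+1}^{g(n+1)}$ have different subscripts), with $\pindex(V)\cong\omega$, $V\upharpoonright_p\{n\}\equiv c_n^{g(n)}$, and $\sigma(V)(n)=n$.

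Suppose, towards a contradiction, that $V\in\Pfine(\{W_x\}_{x\in X})$. By Lemma \ref{elementsofthegeneratedsubgroupHA} we obtain a finite decomposition $\omega\equiv I_0\cdots I_l$ with each $V\upharpoonright_p I_j$ either a singleton (type~(a)) or a p-chunk of some $W_x^{\pm1}$ (type~(b)). The convex subsets of $\omega$ are either finite or of the form $\{k,k+1,\ldots\}$, and since the $I_j$ are disjoint and ordered, only the last interval $I_l$ can be infinite. Since $\omega$ is not a finite union of finite sets, $I_l$ must be of the form $\{k,k+1,\ldots\}$ for some $k\in\omega$ and of type~(b). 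Hence the tail $\prod_{m\geq k}c_m^{g(m)}$ must coincide with some p-chunk of $W_x^{\pm 1}$ for some $x\in X$.

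The plan thus reduces to choosing $g$ so that no tail $(g(k),g(k+1),\ldots)$ realises such a coincidence. For each triple $(k,x,i)\in\omega\times X\times\{-1,1\}$, Lemma \ref{injectionsarefew} (applied with $\Theta=\{k,k+1,\ldots\}$, $f_0(m)=m$, and the word $W_x^i$) produces only finitely many order embeddings $\iota\colon\{k,k+1,\ldots\}\to\pindex(W_x^i)$ with $\iota(\Theta)$ an interval and $\sigma(W_x^i)(\iota(m))=m$; each such $\iota$ determines a unique sequence $(v_k,v_{k+1},\ldots)\in(\mathbb{Z}\setminus\{0\})^\omega$ of exponents at the positions $\iota(m)$, and a coincidence occurs precisely when $g(m)=v_m$ for all $m\geq k$.

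The number of forbidden sequences so produced is at most $\aleph_0\cdot|X|\cdot 2\cdot\aleph_0=|X|\cdot\aleph_0<2^{\aleph_0}$, and for each forbidden sequence the set of $g\in(\omega\setminus\{0\})^\omega$ realising it is countable (the only free coordinates are $g(0),\ldots,g(k-1)$). Hence the union of all ``bad'' $g$ has cardinality strictly less than $2^{\aleph_0}=|(\omega\setminus\{0\})^\omega|$, so a ``good'' $g$ exists, completing the construction. The principal obstacle, as highlighted in the paper's remark following Lemma \ref{notmanypures}, is that $\Pure(\Red_c)$ is countable and so the direct analogue of Lemma \ref{avoidpchunk} is unavailable; the structural Lemmas \ref{intervalsareboring} and \ref{injectionsarefew} established earlier are precisely what make the finer diagonalisation over shifts $k\in\omega$ feasible here.
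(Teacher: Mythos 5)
Your proposal is correct, and it reaches the conclusion by a route that is simpler in presentation than the paper's but of the same diagonalisation-by-counting flavour. Where you take $V\equiv\prod_n c_n^{g(n)}$ with singleton pure blocks and directly count the forbidden exponent functions $g$ --- using Lemma \ref{injectionsarefew} to see that each $(k,x,i)$ forbids only finitely many tail patterns, each admitting only countably many completions, for a total of $|X|\cdot\aleph_0<2^{\aleph_0}$ bad $g$'s --- the paper instead takes $V\equiv\prod_m V_m$ with blocks $V_m$ of strictly increasing size $m+1$ and packages the count into the separate Lemma \ref{blocksubword}, applying it once for each tail index $n$ via the disjoint injections $f_{1,n}$. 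Both arguments reduce to Lemma \ref{elementsofthegeneratedsubgroupHA} and the observation that the last decomposition interval in $\omega$ is the only one that can be infinite, hence of type (b). Your version is the cleaner one for the present lemma; what the paper's block machinery buys is reusability: the growing blocks leave room for the injections $f_{1,n}$ demanded by Lemma \ref{blocksubword}, and that lemma is deployed again in Propositions \ref{omegawordsnew} and \ref{Qtypenew}, where the pure blocks must be interleaved with nontrivial words $W_m$ and the singleton simplification is no longer available. One further point worth making explicit: the paper notes at the end of its proof that every tail $\prod_{m\geq n}V_m$ is itself outside $\Pfine(\{W_x\}_{x\in X})$, which is the strengthened statement actually used downstream; your choice of $g$ delivers this too, since the same decomposition reasoning applied to any tail again forces some later tail to coincide with a p-chunk of some $W_x^{\pm 1}$, which your diagonalisation has excluded for all $k$.
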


\begin{proof}  Let $\{Z_m\}_{m \in \omega}$ be a collection of disjoint subsets of $\omega$ such that $|Z_m| = m + 1$ and all elements of $Z_m$ are below all elements of $Z_{m + 1}$ under the order on $\omega$.  We define words $V_m$ to be such that $\pindex(V_m)$ is equal to the set $Z_m$ under the restricted order from $\omega$, and $\sigma(V_m)(k) = k$.  Thus $V_m \equiv c_{k_{m, 0}}^{l_{m, 0}}c_{k_{m, 1}}^{l_{m, 1}}\cdots c_{k_{m, m}}^{l_{m, m}}$ where $Z_m = \{k_{m, 0}, \ldots, k_{m, m}\}$ and $k_{m, j} < k_{m, j + 1}$ and the exponents $l_{m, 0}, \ldots, l_{m, m}$ have yet to be determined.  The word $V$ is given by the product $V \equiv \prod_{m \in \omega} V_m$.  However the undetermined exponents in each $V_m$ are chosen it is clear that $V$ is reduced and provided the undetermined exponents are nonzero and we have $\pindex(V) \equiv \prod_{m \in \omega} \pindex(V_m)$.

So far we have determined $\pindex(V)$ and $\sigma(V)$, and we set $f_0: \pindex(V) \rightarrow \omega$ equal to $\sigma(V)$.  Let $f_{1, 0}: \omega \rightarrow \pindex(W)$ be the function where $f_{1, 0}(m) = k_{m, 0}$, let $f_{1, 1}: \omega \rightarrow \pindex(W)$ be the function where $f_{1, 1}(m) = k_{m + 1, 1}$ (i.e. the second element in $\pindex(V_{m + 1})$), $f_{1, 2}: \omega \rightarrow \pindex(W)$ has $f_{1, 2}(m) = k_{m + 2, 2}$ (the third element in $\pindex(V_{m + 2})$), etc.  Obviously each $f_{1, n}$ is injective and $f_{1, n_0}(\omega) \cap f_{1, n_1}(\omega) = \emptyset$ when $n_0 \neq n_1$.  For each $n \in \omega$ we let $f_{2, n}: f_{1, n}(\omega) \rightarrow \{-1, 1\}$ be the constant map to $1$.  Applying Lemma \ref{blocksubword}, for each $n \in \omega$ we select $g_n: f_{1, n}(\omega) \rightarrow \omega \setminus \{0\}$ such that there exists no $W \in (\bigcup_{x \in X} \pchunk(W_x))^{\pm 1}$ with $\iota: \pindex(\prod_{m = n}^{\infty}V_m) \equiv \pindex(W)$, $\sigma(W)(\iota(k)) = f_0(k)$ and $W \upharpoonright_p \{\iota(k)\} \equiv c_{f_{1, n}(k)}^{g_n(k)}$ for all $k \in f_{1, n}(\omega)$.

Let $V_m \equiv c_{k_{m, 0}}^{g_0(k_{m, 0})}c_{k_{m, 1}}^{g_1(k_{m, 1})}\cdots c_{k_{m, m}}^{g_m(k_{m, m})}$.  Now we have determined $V$.  If it is the case that $V \in \Pfine(\{W_x\}_{x \in X})$ then by Lemma \ref{elementsofthegeneratedsubgroupHA} there is a terminal interval $\Lambda \subseteq \Pfine(V)$ and $x \in X$ and $i\in \{-1, 1\}$ such that $V \upharpoonright_p \Lambda \in \pchunk(W_x^i)$.  As $\Lambda$ is a terminal interval in $\pindex(V)$, it is cofinite in $\pindex(V)$, and so we select $n \in \omega$ such that $\pindex(\prod_{m = n}^{\infty}V_m) \subseteq \Lambda$.  Select interval $I \subseteq \pindex(W_x^i)$ with $V \upharpoonright_p \Lambda \equiv W_x^i \upharpoonright_p I$ and let $\iota: \Lambda \rightarrow I$ be the induced order isomorphism.  Notice that $\sigma(W_x^i)(\iota(k)) = f_0(k)$ for all $k \in \pindex(\prod_{m = n}^{\infty}V_m)$.  Then by how $g_n$ was defined we have some $k \in f_{1, n}(\omega)$ such that $W_x^i \upharpoonright_p \{\iota(k)\} \not\equiv c_k^{g_n(k)} \equiv V \upharpoonright_p \{k\}$, a contradiction.

Notice that we have even shown that for each $n \in \omega$ the subword $\prod_{m = n}^{\infty} V_m$ is not an element of $\Pfine(\{W_x\}_{x \in X})$.

\end{proof}

\end{subsection}

\begin{subsection}{$\omega$-type concatenations.}\label{omegaconcatnewsetting}  In this subsection we prove the following.

\begin{proposition}\label{omegawordsnew}  Suppose that $\{\coi(W_x, \iota_x, U_x)\}_{x \in X}$ is a coherent collection of coi triples from $\Red_c$ to $\Red_2$ and that $|X| < 2^{\aleph_0}$ and $\Pure(\Red_c) \subseteq \Pfine(\{W_x\}_{x \in X})$.

\begin{enumerate}

\item  Suppose $W \in \Red_c$ with $\pindex(W) \equiv \prod_{n\in \omega} I_n$ and each $I_n \neq \emptyset$, $W \upharpoonright_p I_n \in \Pfine(\{W_x\}_{x\in X})$, and $W \notin \Pfine(\{W_x\}_{x\in X})$.  Then there exists $U \in \Red_2$ and coi $\iota$ from $W$ to $U$ such that $\{\coi(W_x, \iota_x, U_x)\}_{x\in X} \cup \{\coi(W, \iota, U)\}$ is coherent.

\item  Suppose $U \in \Red_2$ with $\pindex(U) \equiv \prod_{n \in \omega} I_n$ and each $I_n \neq \emptyset$, $U \upharpoonright_p I_n \in \Pfine(\{U_x\}_{x \in X})$, and $U \notin \Pfine(\{U_x\}_{x \in X})$.  Then there exists $W \in \Red_c$ and coi $\iota$ from $W$ to $U$ such that $\{\coi(W_x, \iota_x, U_x)\}_{x\in X} \cup \{\coi(W, \iota, U)\}$ is coherent.

\end{enumerate}

\end{proposition}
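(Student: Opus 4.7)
The proof will closely parallel that of Proposition \ref{omegawords}, using Lemmas \ref{findsomerepresentativenew} and \ref{makeitsmallernew} in place of their Section \ref{isom} counterparts. The principal new issue arises in part (2): since $\Pure(\Red_c)$ is countable and, by hypothesis, entirely contained in $\Pfine(\{W_x\}_{x\in X})$, one cannot produce separator words in $\Red_c$ that are pure and avoid the p-fine subgroup, as was done in the corresponding argument for the Griffiths spaces. Lemma \ref{avoidinc} is the tool that patches this gap.

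For part (1), write $W \equiv_p \prod_{n \in \omega} W_n$ where $W_n \equiv W \upharpoonright_p I_n$. The plan is to recursively build $U_n \in \Red_2$ and coi $\iota_n$ from $W_n$ to $U_n$, with nonempty domain, such that $\|U_n\| < \|U_{n-1}\|/2$ and the collection $\{\coi(W_x,\iota_x,U_x)\}_{x\in X} \cup \{\coi(W_j,\iota_j,U_j)\}_{j\leq n}$ is coherent; this uses Lemma \ref{findsomerepresentativenew}(1) for existence and Lemma \ref{makeitsmallernew} (first half) to shrink norms. Between consecutive $U_n$'s insert pure separators $V_n \in \Red_2$ obtained from Lemma \ref{avoidpchunk} (which applies since $|X|+\aleph_0 < 2^{\aleph_0}$), chosen so that $V_n \notin \Pfine(\{U_x\}_{x\in X}\cup \{U_m\}_{m\in \omega})$ and so that neither $U_n V_n$ nor $V_n U_{n+1}$ permit any free cancellation at their junctions. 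Setting $U \equiv \prod_{n\in\omega}(U_n V_n)$ and $\iota = \bigcup_n \iota_n$, the reducedness of $U$ and the coherence of the extended collection follow by the exact case analysis used in Proposition \ref{omegawords}.

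For part (2), the construction is dual. Write $U \equiv_p \prod_{n\in\omega} U_n$ with $U_n \equiv U\upharpoonright_p I_n$ and use Lemma \ref{findsomerepresentativenew}(2) together with the second half of Lemma \ref{makeitsmallernew} to choose $W_n \in \Red_c$ and coi $\iota_n$ from $W_n$ to $U_n$ with nonempty domain, $\|W_n\| < \|W_{n-1}\|/2$, and coherence preserved at each step. For the separators, Lemma \ref{avoidinc} (applied with the already-chosen set $\{W_x\}_{x\in X} \cup \{W_m\}_{m\in\omega}$, still of cardinality $<2^{\aleph_0}$) produces a word $V \in \Red_c$ avoiding the corresponding $\Pfine$; in fact its construction yields, for every $n$, a terminal subword $V^{(n)} \equiv \prod_{m\geq n}V_m$ which is also outside $\Pfine$. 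We harvest countably many disjoint such separator words $V_n$, control their norms by choosing their letters to have sufficiently large index (so $\|V_n\| \leq \|W_n\|$), and control their endpoints so that all concatenations $W_n V_n W_{n+1}$ are reduced. Form $W \equiv \prod_{n\in\omega}(W_n V_n)$ and $\iota = \bigcup_n \iota_n$.

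The main obstacle lies in the coherence verification for part (2). Specifically, one must show that whenever intervals $\Lambda_0 \subseteq \pindex(W)$, $\Lambda_1 \subseteq \pindex(W_y)$ and $i\in\{-1,1\}$ satisfy $W \upharpoonright_p \Lambda_0 \equiv (W_y \upharpoonright_p \Lambda_1)^i$, the interval $\Lambda_0$ must lie entirely within a single $\pindex(W_n)$ and avoid every $\pindex(V_m)$. In the Griffiths-case proof this was immediate because the separators were pure and deliberately placed outside $\Pfine$, so any equivalence with a member of $\Pfine$ had to avoid them. Here the separators are non-pure and structurally complex, so the argument requires that every nonempty p-chunk of $V_n$, and every word obtained by grafting such a p-chunk onto an adjacent tail of $W_n$ or head of $W_{n+1}$, lies outside $\Pfine(\{W_x\}_{x\in X}\cup\{W_m\}_{m\in\omega})$. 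This is exactly the stronger statement built into the construction of Lemma \ref{avoidinc} (as noted in its final paragraph, every terminal subword $\prod_{m\geq n}V_m$ avoids $\Pfine$); by applying the lemma with a suitably enlarged family and invoking this stronger conclusion, the required confinement of $\Lambda_0$ is obtained and the remainder of the coherence check follows the template of Proposition \ref{omegawords} mutatis mutandis.
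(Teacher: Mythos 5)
Your identification of the obstacle is exactly right: the hypothesis $\Pure(\Red_c) \subseteq \Pfine(\{W_x\}_{x\in X})$ rules out the pure-separator trick from Proposition \ref{omegawords}, and Lemma \ref{avoidinc}/Lemma \ref{blocksubword} is the correct replacement machinery. But the details of your fix have a gap.

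First, a local inconsistency: you assert that the argument requires ``every nonempty p-chunk of $V_n$ \dots lies outside $\Pfine(\{W_x\}_{x\in X}\cup\{W_m\}_{m\in\omega})$.'' This cannot be arranged. Your separators $V_n$ are concatenations of pure blocks, so their pure p-chunks are elements of $\Pure(\Red_c)$, which by hypothesis sits \emph{inside} $\Pfine(\{W_x\}_{x\in X})$. The terminal-subword strengthening in Lemma \ref{avoidinc} (``every $\prod_{m\geq n}V_m$ avoids $\Pfine$'') is about tails of the single word $V$ built there and does not say, nor can anything say, that individual p-chunks of blocks avoid $\Pfine$. So the property you claim as ``exactly the stronger statement built into Lemma \ref{avoidinc}'' is neither correct nor available.

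Second, and more critically, harvesting the blocks $V_m$ of a word $V$ constructed by a black-box application of Lemma \ref{avoidinc} (even with the enlarged family) does not give the avoidance the coherence check actually needs. Once you interleave and form $W\equiv\prod_{m}W_mV_m$, the confinement argument requires that the \emph{interleaved} tails $\prod_{m\geq n}W_mV_m$ avoid $\Pfine(\{W_x\}_{x\in X}\cup\{W_m\}_{m\in\omega})$. Lemma \ref{avoidinc} gives $\prod_{m\geq n}V_m\notin\Pfine$, but $\prod_{m\geq n}V_m$ is \emph{not} a p-chunk of $\prod_{m\geq n}W_mV_m$ (the $V_m$'s are no longer consecutive after interleaving), so the avoidance of the latter does not follow. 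The correct move, which the paper takes, is not to apply Lemma \ref{avoidinc} at all but rather to run Lemma \ref{blocksubword} \emph{against the partially determined interleaved word $W$ itself}: fix $\pindex(W)\equiv\prod_m\pindex(W_m)\pindex(V_m)$ and $\sigma(W)$ first (with $\|W_m\|<\frac{1}{\max(Z_m)+1}$ so that the concatenation is automatically reduced), take $f_{1,n}$ to pick out designated positions inside the separator portions, and choose the exponents via the resulting $g_n$ so that every interleaved tail $\prod_{m\geq n}W_mV_m$ is excluded from $(\bigcup_{x}\pchunk(W_x)\cup\bigcup_j\pchunk(W_j))^{\pm 1}$. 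Only with this coordinated choice does the coherence argument go through: if $\{n : \Lambda_0\cap\pindex(W_n)\neq\emptyset\}$ were infinite, $W\upharpoonright_p\Lambda_0$ would contain some interleaved tail as a p-chunk, contradicting the avoidance; the finiteness then lets the rest follow the Proposition \ref{omegawords} template.

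Your outline of part (1) and the overall architecture are sound; it is the separator selection in part (2) that must be re-done along the lines above.
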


\begin{proof}  Claim (1) has the same proof as Proposition \ref{omegawords}, almost word for word, and so we do not write the proof for this (one constructs an appropriate word, shows that it is an element in $\Red_2$, defines the coi in the natural way and argues regarding coherence precisely in the same way as in that proposition).  For claim (2) we assume the hypotheses.  Let $\{Z_m\}_{m \in \omega}$ be a collection of disjoint subsets of $\omega$ such that $|Z_m| = m + 1$ and all elements of $Z_m$ are below all elements of $Z_{m + 1}$ under the order on $\omega$.

Let $U_m \equiv U \upharpoonright_p I_m$ for each $m \in \omega$.  By Lemmas \ref{findsomerepresentativenew} and \ref{makeitsmallernew} we select a word $W_0 \in \Red_c$ and coi $\iota_0$ from $W_0$ to $U_0$ such that $\{\coi(W_x, \iota_x, U_x)\}_{x\in X} \cup \{\coi(W_0, \iota_0, U_0)\}$ is coherent, the domain of $\iota_0$ is nonempty, and $\|W_0\| < \frac{1}{\max(Z_0) + 1}$.  Assuming we have defined $W_0, \ldots, W_m$ and $\iota_0, \ldots, \iota_m$ we apply Lemmas \ref{findsomerepresentativenew} and \ref{makeitsmallernew} to find $W_{m + 1}$ and coi $\iota_{m + 1}$ from $W_{m + 1}$ to $U_{m + 1}$ such that $\{\coi(W_x, \iota_x, U_x)\}_{x\in X} \cup \{\coi(W_0, \iota_0, U_0), \ldots, \coi(W_{m + 1}, \iota_{m + 1}, U_{m + 1})\}$ is coherent, the domain of $\iota_{m + 1}$ is nonempty, and $\|W_{m + 1}\| < \frac{1}{\max(Z_{m + 1}) + 1}$.

We define words $V_m$ to be such that $\pindex(V_m)$ is equal to the set $Z_m$ under the restricted order from $\omega$, and $\sigma(V_m)(k) = k$.  Thus $V_m \equiv c_{k_{m, 0}}^{l_{m, 0}}c_{k_{m,1}}^{l_{m, 1}}\cdots c_{k_{m, m}}^{l_{m, m}}$ where $Z_m = \{k_{m, 0}, \ldots, k_{m, m}\}$ and $k_{m, j} < k_{m, j + 1}$ and the exponents $l_{m, 0}, \ldots, l_{m, m}$ have yet to be determined.  The word $W$ is given by the product $W \equiv \prod_{m \in \omega} W_mV_m \equiv W_0V_0W_1V_1\cdots$.  Provided the undetermined exponents in each $V_m$ are chosen so as to all be nonzero, the word $W$ is reduced (by arguing as in Proposition \ref{omegawords}) and we have $\pindex(W) \equiv \prod_{m \in \omega} \pindex(W_m)\pindex(V_m)$.

So far we have determined $\pindex(W)$ and $\sigma(W)$.  Let $f_{1, 0}: \omega \rightarrow \pindex(W)$ be the function where $f_{1, 0}(m) = \min\pindex(V_m)$, let $f_{1, 1}: \omega \rightarrow \pindex(W)$ be the function where $f_{1, 1}(m)$ is the second element in $\pindex(V_{m + 1})$, $f_{1, 2}: \omega \rightarrow \pindex(W)$ has $f_{1, 2}(m)$ being the third element in $\pindex(V_{m + 2})$, etc.  Obviously each $f_{1, n}$ is injective and $f_{1, n_0}(\omega) \cap f_{1, n_1}(\omega) = \emptyset$ when $n_0 \neq n_1$.  For each $n \in \omega$ we let $f_{2, n}: f_{1, n}(\omega) \rightarrow \{-1, 1\}$ be the constant map to $1$.  Applying Lemma \ref{blocksubword}, for each $n \in \omega$ we select $g_n: f_{1, n}(\omega) \rightarrow \omega \setminus \{0\}$ such that assigning $f_{1, n}(k)$ the exponent $g_n(k)$ guarantees that $\prod_{m = n}^\infty W_mV_m$ is not in $(\bigcup_{x\in X}\pchunk(W_x) \cup \bigcup_{j \in \omega}\pchunk(W_j))^{\pm 1}$.

Thus we let $V_m \equiv c_{k_{m, 0}}^{g_0(k_{m, 0})}c_{k_{m, 1}}^{g_1(k_{m, 1})}\cdots c_{k_{m, m}}^{g_m(k_{m, m})}$.  Now we have defined $W$, and $W$ is reduced with $\pindex(W) \equiv \prod_{m \in \omega} \pindex(W_m)\pindex(V_m)$.  Arguing as in \ref{avoidinc} we see that $W \notin \Pfine(\{W_x\}_{x \in X} \cup\{W_j\}_{j \in \omega})$ and indeed $\prod_{m = n}^{\infty} W_mV_m \notin \Pfine(\{W_x\}_{x \in X} \cup\{W_j\}_{j \in \omega})$ for each $n \in \omega$.  We let $\iota$ be the coi from $W$ to $U$ defined by $\iota = \bigcup_{m \in \omega} \iota_m$.

We check that $\{\coi(W_x, \iota_x, U_x)\}_{x\in X} \cup \{\coi(W_m, \iota_m, U_m)\}_{m \in \omega} \cup \{\coi(W, \iota, U)\}$ is coherent.  Suppose $z \in X \cup \omega$, $\Lambda_0 \subseteq \pindex(W)$ and $\Lambda_1 \subseteq \pindex(W_z)$ are intervals and $i \in \{-1, 1\}$ are such that $W \upharpoonright_p \Lambda_0 \equiv (W_z \upharpoonright_p \Lambda_1)^i$.  If $\{n \in \omega \mid \Lambda_0 \cap \pindex(W_n) \neq \emptyset\}$ is infinite, then it follows from the fact that $\Lambda_0$ is an interval in $\pindex(W)$ that $W \upharpoonright_p \Lambda_0$ has a word $\prod_{m = n}^{\infty}W_nV_n$ as a p-chunk, for some $n \in \omega$.  However this requires that $\prod_{m = n}^{\infty}W_nV_n \in \Pfine(\{W_x\}_{x \in X} \cup\{W_j\}_{j \in \omega})$, which is a contradiction.  Thus the set $\{n \in \omega \mid \Lambda_0 \cap \pindex(W_n) \neq \emptyset\}$ is finite, and it is straightforward to argue that 

\begin{center}

$[[U \upharpoonright_p \varpropto(\Lambda_0, \iota)]] = [[(U_z \upharpoonright_p \varpropto(\Lambda_1, \iota_z))^i]]$

\end{center}

\noindent from the fact that $\{\coi(W_x, \iota_x, U_x)\}_{x\in X} \cup \{\coi(W_m, \iota_m, U_m)\}_{m \in \omega}$ is coherent, as was done in Proposition \ref{omegawords}.

Suppose $\Lambda_0, \Lambda_1 \subseteq \pindex(W)$ are intervals and $i \in \{-1, 1\}$ are such that $W \upharpoonright_p \Lambda_0 \equiv (W \upharpoonright_p \Lambda_1)^i$.  We let $K_0 = \{n \in \omega \mid \Lambda_0 \cap \pindex(W_n) \neq \emptyset\}$ and $K_1 = \{n \in \omega \mid \Lambda_1 \cap \pindex(W_n) \neq \emptyset\}$.  If either of $K_0$ or $K_1$ is finite then from the fact that $\Pure(\Red_c) \subseteq \Pfine(\{W_x\}_{x \in X})$ we see that $W \upharpoonright_p \Lambda_0 \in \Pfine(\{W_x\}_{x \in X} \cup\{W_j\}_{j \in \omega})$ and so both of $K_0$ and $K_1$ are therefore finite.  If $K_0$ is finite then we see that 

\begin{center}

$[[U \upharpoonright_p \varpropto(\Lambda_0, \iota)]] = [[(U \upharpoonright_p \varpropto(\Lambda_1, \iota))^i]]$

\end{center}

\noindent from the coherence of $\{\coi(W_x, \iota_x, U_x)\}_{x\in X} \cup \{\coi(W_m, \iota_m, U_m)\}_{m \in \omega}$, by arguing as in Case 1 of Proposition \ref{omegawords}.  Thus we may assume that $K_0$ and $K_1$ are infinite.  As both are infinite, we see that $\Lambda_0$ and $\Lambda_1$ are each nonempty terminal intervals in $\pindex(W)$.  Since $\Pure(\Red_c) \subseteq \Pfine(\{W_x\}_{x \in X} \cup\{W_j\}_{j \in \omega})$, we know that every proper initial subword of $W \upharpoonright_p \Lambda_0$ is in $\Pfine(\{W_x\}_{x \in X} \cup\{W_j\}_{j \in \omega})$, and we also know that every nonempty terminal subword of $W \upharpoonright_p \Lambda_0$ is not in $\Pfine(\{W_x\}_{x \in X} \cup\{W_j\}_{j \in \omega})$.  The similar claims hold for $W \upharpoonright_p \Lambda_1$.  But since $W \upharpoonright_p \Lambda_0 \equiv (W \upharpoonright_p \Lambda_1)^i$ this implies that $i = 1$.  Thus $W \upharpoonright_p \Lambda_0 \equiv W \upharpoonright_p \Lambda_1$, and since $\Lambda_0$ and $\Lambda_1$ are terminal intervals in $\pindex(W)$ we know that at least one of $\Lambda_0 \subseteq \Lambda_1$ or $\Lambda_1 \subseteq \Lambda_0$ holds.  But we have already seen that no word may be $\equiv$ to a proper terminal subword of itself (see the proof of Proposition \ref{omegawords}) and so $\Lambda_0 = \Lambda_1$ and it immediately follows that 

\begin{center}

$[[U \upharpoonright_p \varpropto(\Lambda_0, \iota)]] = [[(U \upharpoonright_p \varpropto(\Lambda_1, \iota))^i]]$.

\end{center}

Finally, one analyzes the cases where $\Lambda_0 \subseteq \pindex(U)$ and $\Lambda_1 \subseteq \pindex(U_y)$ or $\Lambda_1 \subseteq \pindex(U)$ in the same way as above, using the coherence of the collection $\{\coi(W_x, \iota_x, U_x)\}_{x\in X} \cup \{\coi(W_m, \iota_m, U_m)\}_{m \in \omega}$ and the fact that for every proper initial subinterval $\Lambda$ of $\pindex(U)$ we have $U \upharpoonright_p \Lambda \in \Pfine(\{U_x\}_{x \in X} \cup \{U_j\}_{j \in J})$ and for every nonempty terminal interval $\Lambda$ of $\pindex(U)$ we have $U \upharpoonright_p \Lambda \notin \Pfine(\{U_x\}_{x \in X} \cup \{U_j\}_{j \in J})$.

\end{proof}

\end{subsection}

\begin{subsection}{$\mathbb{Q}$-type concatenations.}\label{Qconcatnewsetting}

We begin with an elementary result.

\begin{lemma}\label{denseinQ}  Suppose that $\{Y_n\}_{n \in \omega}$ is a collection of nonempty finite subsets of $\mathbb{Q}$ such that $\mathbb{Q} = \bigsqcup_{n \in \omega} Y_n$.  Then there exists a collection $\{N_k\}_{k \in \omega}$ such that $\bigsqcup_{k \in \omega} N_k = \omega$, each $N_k$ is infinite, and $\bigcup_{n \in N_k} Y_n$ is dense in $\mathbb{Q}$ for each $k \in \omega$.
\end{lemma}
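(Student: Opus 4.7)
The plan is to produce the partition by a diagonal construction, using the fact that $\mathbb{Q}$ has a countable basis. First I will fix an enumeration $\{(a_j, b_j)\}_{j \in \omega}$ of all open intervals in $\mathbb{Q}$ with rational endpoints; this is a basis for the topology on $\mathbb{Q}$, so density of a set $S \subseteq \mathbb{Q}$ is equivalent to $S \cap (a_j, b_j) \neq \emptyset$ for every $j$. For each $j \in \omega$ I will then form $M_j = \{n \in \omega \mid Y_n \cap (a_j, b_j) \neq \emptyset\}$, and observe that $M_j$ is necessarily infinite: since $(a_j, b_j) \cap \mathbb{Q}$ is infinite and each $Y_n$ is finite, the disjoint union $\mathbb{Q} = \bigsqcup_n Y_n$ forces infinitely many $Y_n$ to contribute a point to $(a_j, b_j)$.

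Next, I will fix a bijection $\omega \to \omega \times \omega$, written $i \mapsto (k_i, j_i)$, and recursively define a sequence of distinct natural numbers $n_0, n_1, n_2, \ldots$ by letting $n_i$ be the least element of $M_{j_i} \setminus \{n_0, \ldots, n_{i-1}\}$. Such an $n_i$ exists at every stage because $M_{j_i}$ is infinite and only finitely many elements have been excluded. The idea is that assigning $n_i$ to $N_{k_i}$ will guarantee $N_{k_i} \cap M_{j_i} \neq \emptyset$, so as $(k_i, j_i)$ ranges over all pairs we will secure intersection of each $N_k$ with every $M_j$.

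Finally, I will define the partition by setting $N_k = \{n_i \mid k_i = k\}$ for $k \geq 1$, and $N_0 = \{n_i \mid k_i = 0\} \cup (\omega \setminus \{n_i\}_{i \in \omega})$, absorbing into $N_0$ any elements of $\omega$ that never appeared in the recursive enumeration. Disjointness is immediate from the distinctness of the $n_i$, and the union covers $\omega$ by construction. Each $N_k$ is infinite because $\{i \in \omega \mid k_i = k\}$ is countably infinite (being the preimage of $\{k\} \times \omega$ under a bijection). For density, given $k \in \omega$ and a basic open interval $(a_j, b_j)$, pick the unique $i$ with $(k_i, j_i) = (k, j)$; then $n_i \in N_k$ and $Y_{n_i} \cap (a_j, b_j) \neq \emptyset$, so $\bigcup_{n \in N_k} Y_n$ meets every basic open set and is therefore dense.

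There is no serious obstacle here; the construction is a standard diagonal enumeration. The one point requiring care is the accounting needed to confirm that every $n \in \omega$ ends up in exactly one $N_k$, and this is handled cleanly by using a single bookkeeping class $N_0$ to absorb any leftover elements of $\omega$ that the recursion skipped over.
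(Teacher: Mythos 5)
Your proof is correct and follows essentially the same diagonal construction as the paper: enumerate a countable basis of rational intervals, fix a bijection $\omega \to \omega \times \omega$, and greedily pick a fresh index meeting each target interval. The only difference is that you explicitly absorb any indices skipped by the recursion into $N_0$, whereas the paper implicitly relies on the (true but unstated) fact that the greedy ``take the minimal unused index'' rule already exhausts all of $\omega$; your belt-and-suspenders step makes the covering of $\omega$ immediate and is a perfectly acceptable shortcut.
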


\begin{proof}  Let $h: \omega \rightarrow \omega \times \omega$ be a bijection and define $h_1: \omega \rightarrow \omega$ by letting $h_1(m)$ be the second coordinate of $h(m)$.  Let $\{I_j\}_{j \in \omega}$ be an enumeration of all nonempty open intervals in $\mathbb{Q}$ with rational supremum and infimum.  We will inductively construct an increasing sequence $F_0 \subseteq F_1 \subseteq \cdots$ of finite subsets of $\omega$.  Let $F_0 = \emptyset$ and assuming that we have defined $F_{m - 1}$ we select $n_m \in \omega \setminus F_{m - 1}$ to be minimal such that $Y_{n_m} \cap I_{h_1(m)} \neq \emptyset$ and let $F_m = F_{m - 1} \cup \{n_m\}$.  Letting $N_k = \{n_{h^{-1}(k, j)}\}_{j \in \omega}$ it is easy to see that the conclusion holds.

\end{proof}

\begin{proposition}\label{Qtypenew}  Suppose that $\{\coi(W_x, \iota_x, U_x)\}_{x \in X}$ is a coherent collection of coi triples from $\Red_c$ to $\Red_2$ and that $|X| < 2^{\aleph_0}$ and $\Pure(\Red_c) \subseteq \Pfine(\{W_x\}_{x \in X})$.

\begin{enumerate}

\item  Suppose that $W \in \Red_c$ is such that $\pindex(W) \equiv \prod_{q \in \mathbb{Q}} I_q$ with each $I_q \neq \emptyset$, $W \upharpoonright_p I_q \in \Pfine(\{W_x\}_{x\in X})$ for each $q \in \mathbb{Q}$, and $W \upharpoonright_p \bigcup\Lambda \notin \Pfine(\{W_x\}_{x\in X})$ for each interval $\Lambda \subseteq \mathbb{Q}$ with more than one point.  Then there exists $U \in \Red_2$ and coi $\iota$ from $W$ to $U$ such that $\{\coi(W_x, \iota_x, U_x)\}_{x\in X} \cup \{\coi(W, \iota, U)\}$ is coherent.

\item  Suppose that $U \in \Red_2$ is such that $\pindex(U) \equiv \prod_{q \in \mathbb{Q}} I_q$ with each $I_q \neq \emptyset$, $U \upharpoonright_p I_q \in \Pfine(\{U_x\}_{x\in X})$ for each $q \in \mathbb{Q}$, and $U \upharpoonright_p \bigcup\Lambda \notin \Pfine(\{U_x\}_{x\in X})$ for each interval $\Lambda \subseteq \mathbb{Q}$ with more than one point.  Then there exists $W \in \Red_c$ and coi $\iota$ from $W$ to $U$ such that $\{\coi(W_x, \iota_x, U_x)\}_{x\in X} \cup \{\coi(W, \iota, U)\}$ is coherent.

\end{enumerate}

\end{proposition}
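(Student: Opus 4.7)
Both claims follow the template of Proposition \ref{Qtype}, and I will focus on the more delicate claim (2); claim (1) proceeds by the very same scheme, with the pure markers $V_{m, b}, V_{m, c} \in \Red_{\kappa_1}$ of Proposition \ref{Qtype} replayed verbatim in $\Red_2$. The analogue of Lemma \ref{avoidpchunk} remains available in $\Red_2$: there are $< 2^{\aleph_0}$ pure p-chunks in $\Pfine(\{U_x\}_{x\in X})$ by the analogue of Lemma \ref{notmanypures}, while there are $2^{\aleph_0}$ candidate $\alpha$-pure reduced words in $\Red_2$ using only positive letters, so suitable pure markers can be chosen. Reducedness, definition of the coi, and the coherence verification are all carried over unchanged.

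For claim (2), I enumerate $\{U_n\}_{n \in \omega}$ realising the $\equiv$-classes modulo inversion of $\{U \upharpoonright_p I_q\}_{q \in \mathbb{Q}}$, set $f_0(q), f_1(q) \in \omega \times \{\pm 1\}$ via $U \upharpoonright_p I_q \equiv U_{f_0(q)}^{f_1(q)}$, and build $\{\coi(W_n, \iota_n, U_n)\}_{n \in \omega}$ extending the given coherent collection, with $\|W_n\|$ decreasing geometrically and $\iota_n$ having nonempty domain, by iterated application of Lemmas \ref{findsomerepresentativenew} and \ref{makeitsmallernew}. The crux is to produce marker words $V_{q, b}, V_{q, c} \in \Red_c$ analogous to those of Proposition \ref{Qtype}; these cannot be pure, since the hypothesis $\Pure(\Red_c) \subseteq \Pfine(\{W_x\}_{x \in X})$ would then absorb them into the existing p-fine subgroup and obliterate the detectability they are meant to provide.

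I design the markers using Lemmas \ref{blocksubword} and \ref{denseinQ}. Enumerate $\mathbb{Q} = \bigsqcup_{n \in \omega} Y_n$ as singletons, and partition $\omega = \bigsqcup_{k \in \omega} N_k$ with each $N_k$ infinite and $\bigcup_{n \in N_k} Y_n$ dense in $\mathbb{Q}$. For each $q \in \mathbb{Q}$ I attach to the block $W_{f_0(q)}^{f_1(q)}$ a short marker $V_{q, b}$ prepended and $V_{q, c}$ appended, each having its domain equal to a fresh finite set of second-subscript indices chosen so that $\|V_{q, b}\|, \|V_{q, c}\| \le \|W_{f_0(q)}\|$ and the first/last letters avoid any cancellation with the adjacent $W_{f_0(q)}^{f_1(q)}$ block (chosen as in Lemma \ref{makeitsmallernew}). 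The free exponents of the marker letters are then fixed, layer by layer indexed by $N_k$, using Lemma \ref{blocksubword}: for each threat $(\iota, W)$ where $W$ is a p-chunk of some $W_x^{\pm 1}$ or $W_n^{\pm 1}$ and $\iota$ would match an interval of $\pindex(W)$ to a dense block of concatenated markers, the $2^{\aleph_0}$ possible exponent assignments together with $|X| + \aleph_0 < 2^{\aleph_0}$ threats allow us to sabotage every threat simultaneously. Setting $W \equiv \prod_{q \in \mathbb{Q}} (V_{q, b} W_{f_0(q)} V_{q, c})^{f_1(q)}$, I prove $W \in \Red_c$ by adapting the cancellation-modification argument of Lemma \ref{QUisreduced}: the scale separation $\|V_{q, \cdot}\| \le \|W_{f_0(q)}\|$ forces a hypothetical cancellation on $W$ to pair whole $W_q$-blocks, contradicting either reducedness of the $W_{f_0(q)}$ or reducedness of the associated word on the $U$-side. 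The coi $\iota = \bigcup_q \iota_q$ is assembled exactly as in Proposition \ref{Qtype}.

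The principal obstacle is the coherence check when an identity $W \upharpoonright_p \Lambda_0 \equiv (W \upharpoonright_p \Lambda_1)^i$ involves $\Lambda_0, \Lambda_1$ meeting infinitely many blocks $\pindex(V_{q, b} W_{f_0(q)} V_{q, c})$. Here the marker-avoidance property does the real work: it forces the maximal subwords of $W \upharpoonright_p \Lambda_0$ lying in $\Pfine(\{W_x\}_{x \in X} \cup \{W_n\}_{n \in \omega})$ to be exactly the $W_{f_0(q)}^{f_1(q)}$-blocks, so the induced bijection between the $\mathbb{Q}$-intervals $J_0, J_1$ underlying $\Lambda_0, \Lambda_1$ is uniquely determined (as in the bulleted case analysis of Proposition \ref{Qtype}). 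Coherence then follows from coherence of $\{\coi(W_n, \iota_n, U_n)\}_{n \in \omega}$ after peeling off finitely many pure subwords. The remaining checks — where $\Lambda_0 \subseteq \pindex(U)$ is matched against $\pindex(U_y)$ or another interval of $\pindex(U)$ — are handled by the same template but with the roles of $W$ and $U$ interchanged, using that every proper initial/terminal subword of $U \upharpoonright_p \Lambda$ is in $\Pfine(\{U_x\}_{x \in X} \cup \{U_n\}_{n \in \omega})$ precisely when the corresponding $\mathbb{Q}$-interval has a minimum/maximum.
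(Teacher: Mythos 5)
Your proposal follows essentially the same route as the paper: the markers must be tuned by exponent rather than chosen non-$\Pfine$-pure, and the tuning is done layer by layer over the partition $\omega = \bigsqcup_k N_k$ from Lemma \ref{denseinQ} using Lemma \ref{blocksubword} to kill all threats. That is the genuinely new ingredient of Proposition \ref{Qtypenew}(2) over Proposition \ref{Qtype}, and you have identified it correctly.

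There are, however, several inaccuracies that would need repairing. First, your statement that the markers ``cannot be pure'' is the wrong conclusion to draw from $\Pure(\Red_c) \subseteq \Pfine(\{W_x\}_{x\in X})$: in $\Red_c$ \emph{every} p-chunk of a word is of form $c_m^j$ and hence pure, so individual markers are necessarily pure (and necessarily in $\Pfine$). In the paper the markers are precisely single pure words $c_{h_0(q)}^{z_{h_0(q)}}$; what lies outside $\Pfine(\{W_x\}_{x\in X}\cup\{W_n\}_{n\in\omega})$ is not the markers themselves but the concatenations $W\upharpoonright_p\bigcup J_j$ over dense rational intervals, thanks to the exponent tuning. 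Your phrase ``second-subscript indices'' is also a slip: letters in $\Red_c$ carry only one subscript. Second, your scale condition $\|V_{q,b}\|, \|V_{q,c}\| \le \|W_{f_0(q)}\|$ points the wrong way. To run the block-pairing modification of Lemma \ref{QUisreduced}, the marker letter attached to the $n$-th model word must be the \emph{bottleneck} of its block, i.e.\ you need $\|c_{h_0(q)}\| \ge \|W_{h_0(q)}\|$ (the paper arranges $\|W_{n+1}\| < \tfrac{1}{n+1} = \|c_n\|$). With your inequality the argument that a hypothetical cancellation must pair whole blocks does not go through as written; appealing instead to ``fresh'' indices fails because the given words $\{W_x\}_{x\in X}$ and the $\{W_n\}_{n\in\omega}$ may between them use every index in $\omega$. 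Third, the paper's markers depend only on $f_0(q)$, not on $q$, which is what makes $W_q \equiv (W_{q'})^{-1}$ imply $U_q \equiv (U_{q'})^{-1}$ and lets a nontrivial cancellation on $W$ descend to one on $U$; per-$q$ markers would break this implication, not strengthen it. Finally, the maximal subwords of $W$ lying in $\Pfine(\{W_x\}_{x\in X}\cup\{W_n\}_{n\in\omega})$ are the full blocks $W_q$, not the inner $W_{f_0(q)}^{f_1(q)}$ (the attached markers are pure, hence in $\Pfine$), though this last point is harmless since the two differ by finitely many pure p-chunks.
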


\begin{proof}  Claim (1) is proved as in Proposition \ref{Qtype} with almost no alteration.  For claim (2) we let $\{U_n\}_{n \in \omega}$ be a list such that for each $q \in \mathbb{Q}$ we have some $n \in \omega$ for which either $U \upharpoonright_p I_q \equiv U_n$ or $U \upharpoonright_p I_q \equiv U_n^{-1}$, and $n \neq n'$ implies $U_n \not\equiv U_{n'}\not\equiv U_n^{-1}$.  Such a list must be infinite, of course, as $U$ is a word.  We have by assumption that $\{U_n\}_{n \in \omega} \subseteq \Pfine(\{U_x\}_{x \in X})$.  Select $W_0 \in \Red_c$ and coi $\iota_0$ from $W_0$ to $U_0$, with nonempty domain and range, such that $\{\coi(W_x, \iota_x, U_x)\}_{x \in X} \cup\{\coi(W_0, \iota_0, U_0)\}$ is coherent and $\|W_0\| < 1$, using Lemmas \ref{findsomerepresentativenew} and \ref{makeitsmallernew}.  Generally select by Lemmas \ref{findsomerepresentativenew} and \ref{makeitsmallernew} a word $W_{n + 1} \in \Red_c$ and coi $\iota_{n + 1}$ from $W_{n + 1}$ to $U_{n + 1}$, with nonempty domain and range, so that $\{\coi(W_x, \iota_x, U_x)\}_{x \in X} \cup\{\coi(W_0, \iota_0, U_0), \ldots, \coi(W_{n + 1}, \iota_{n + 1}, U_{n + 1})\}$ is coherent and $\|W_{n + 1}\| < \frac{1}{n + 1}$.

Define functions $h_0: \mathbb{Q} \rightarrow \omega$ and $h_1: \mathbb{Q} \rightarrow \{-1, 1\}$ by $U \upharpoonright_p I_q \equiv U_{h_0(q)}^{h_1(q)}$.  For each $q \in \mathbb{Q}$ we will let $W_q \equiv (c_{h_0(q)}^{z_{h_0(q)}} W_{h_0(q)} c_{h_0(q)}^{z_{h_0(q)}})^{h_1(q)}$ and the nonzero integers $z_n$ are yet to be determined.  The word $W \equiv \prod_{q \in \mathbb{Q}} W_q$ will be reduced by the same argument as that for Lemma \ref{QUisreduced}, and $\pindex(W) \equiv \prod_{q \in \mathbb{Q}}\pindex(W_q)$.

Now that we have determined the values of $\sigma(W)$ we still need to fix the nonzero integers $z_n$.  For each $n \in \omega$ we let $Y_n$ be the preimage $h_0^{-1}(n)$.  We have $\omega = \bigsqcup_{n \in \omega} Y_n$ and each $Y_n$ is nonempty and finite.  By Lemma \ref{denseinQ} we select a collection $\{N_k\}_{k \in \omega}$ of infinite subsets of $\omega$ such that $\omega = \bigsqcup_{k \in \omega} N_k$ and $\bigcup_{n \in N_k} Y_n$ is dense in $\mathbb{Q}$ for each $k \in \omega$.  Let $\{J_j\}_{j \in \omega}$ be an enumeration of all nonempty open intervals in $\mathbb{Q}$ with rational infimum and supremum.  Then $N_j \cap J_j$ is dense in $J_j$ for each $j \in \omega$.  Fix $j \in \omega$.  Since $Y_n$ is finite for each $n \in N_j$ we can select an injection $F_{1, j}: \omega \rightarrow J_j$ such that $h_0(F_{1, j}(m_0)) \neq h_0(F_{1, j}(m_1))$ when $m_0 \neq m_1$.  Let $f_{1, j}(m)$ be the maximum element in $\pindex(W_{F_{1, j}(m)})$ (whose exponent is not yet determined).  By Lemma \ref{blocksubword} we pick a function $g_j: f_{1, j}(\omega) \rightarrow \omega \setminus \{0\}$ such that setting $z_{h_0(F_{1, j}(m))} = g_j(m)$ guarantees that the word $W \upharpoonright_p \bigcup J_j$ is not an element in $(\bigcup_{x \in X}\pchunk(W_x) \cup \bigcup_{n \in \omega} \pchunk(W_n))^{\pm 1}$.  Thus for all $n \in N_j$ we set $z_n = g_j(m)$ provided $h_0(F_{1, j}(m)) = n$ and set $z_n = 1$ if $n \notin h_0(F_{1, j}(\omega))$.

We have now determined the exponents $z_n$ and so the word $W$ is completely determined.  We have already noticed that the word $W$ is reduced and that $\pindex(W) \equiv \prod_{q \in \mathbb{Q}}\pindex(W_q)$.  Each $W_q$ is an element in $\Pfine(\{W_x\}_{x \in X} \cup \{W_n\}_{n \in \omega})$ since $c_{h_0(q)} \in \Pure(\Red_c) \subseteq \Pfine(\{W_x\}_{x \in X}) \subseteq \Pfine(\{W_x\}_{x \in X} \cup \{W_n\}_{n \in \omega})$.  We claim that each $W_q$ is a maximal subword of $W$ which is an element of $\Pfine(\{W_x\}_{x \in X} \cup \{W_n\}_{n \in \omega})$ in the sense that $\pindex(W_q)$ is an interval in $\pindex(W)$ and there is no interval $I$ in $\pindex(W)$ which properly includes $\pindex(W_q)$ such that $W \upharpoonright_p I \in \Pfine(\{W_x\}_{x \in X} \cup \{W_n\}_{n \in \omega})$.  Were it the case that such an $I$ existed, we would have an open interval $J_j \subseteq \mathbb{Q}$ such that $W \upharpoonright_p \bigcup J_j \in (\bigcup_{x \in X}\pchunk(W_x) \cup \bigcup_{n \in \omega}\pchunk(W_n))^{\pm 1}$ by Lemma \ref{elementsofthegeneratedsubgroupHA} and the fact that $\mathbb{Q}$ is order dense, but this was ruled out by how the exponents $\{z_n\}_{n \in N_j}$ were selected.

Now define the coi $\iota$ from $W$ to $U$ in the very natural way so that the restriction $\iota \upharpoonright_p \dom(\iota) \cap \pindex(W_q)$ commutes with $\iota_{h_0(q)}$ if $h_1(q) = 1$ and $\iota \upharpoonright_p \dom(\iota) \cap \pindex(W_q)$ commutes with the reverse of $\iota_{h_0(q)}$ defined on $W_{h_0(q)}^{-1}$ if $h_1(q) = - 1$.  The check that $\{\coi(W_x, \iota_x, U_x)\}_{x \in X} \cup \{\coi(W_n, \iota_n, U_n)\}_{n \in \omega} \cup \{\coi(W, \iota, U)\}$ is coherent now follows that used in Proposition \ref{Qtype}.

\end{proof}

\end{subsection}

\begin{subsection}{Arbitrary extensions.}\label{arbitraryextensionsnewsetting}  We complete the proof of Theorem \ref{bigisomorphism2}.  The following result is proved in precisely the same way as Proposition \ref{arbitraryextensions}, using Propositions \ref{omegawordsnew} and \ref{Qtypenew} in place of Propositions \ref{omegawords} and \ref{Qtype}, respectively.

\begin{proposition}\label{arbitraryextensionsnew}   Suppose that $\{\coi(W_x, \iota_x, U_x)\}_{x\in X}$ is a coherent collection of coi from $\Red_c$ to $\Red_2$, that $|X| < 2^{\aleph_0}$, and that $\Pfine(\Red_c) \subseteq \Pfine(\{W_x\}_{x \in X})$.

\begin{enumerate}

\item  Given $W\in \Red_c$ there exists $U \in \Red_2$ and coi $\iota$ from $W$ to $U$ such that $\{\coi(W_x, \iota_x, U_x)\}_{x\in X} \cup \{\coi(W, \iota, U)\}$ is coherent.

\item  Given $U\in \Red_2$ there exists $W \in \Red_c$ and coi $\iota$ from $W$ to $U$ such that $\{\coi(W_x, \iota_x, U_x)\}_{x\in X} \cup \{\coi(W, \iota, U)\}$ is coherent.

\end{enumerate}

\begin{proof}[Proof of Theorem \ref{bigisomorphism2}]  As $|\Red_2| = |\Red_c| = 2^{\aleph_0}$ we let $\prec_c$ well-order $\Red_c$ in such a way that each element has fewer than $2^{\aleph_0}$ predecessors and $\prec_2$ well-order $\Red_2$ in such a way that each element has fewer than $2^{\aleph_0}$ predecessors.  We inductively define a coherent collection $\{\coi(W_{\zeta}, \iota_{\zeta}, U_{\zeta})\}_{\zeta < 2^{\aleph_0}}$ of coi triples from $\Red_c$ to $\Red_2$.

Let $\{W_n\}_{n \in \omega}$ be an enumeration of $\Pure(\Red_c)$ and notice that the collection $\{\coi(W_n, \iota_n, E)\}_{n \in \omega}$ is coherent, where of course $\iota_n$ is the empty function.

Suppose that we have defined coherent $\{\coi(W_{\zeta}, \iota_{\zeta}, U_{\zeta})\}_{\zeta < \mu}$ for all $\mu < \nu < 2^{\aleph_0}$.  We know $\{\coi(W_{\zeta}, \iota_{\zeta}, U_{\zeta})\}_{\zeta < \nu}$ is coherent by reasoning as in Lemma \ref{ascendingchaincoi}.  If $\nu \geq \omega $ is even then by Lemma \ref{avoidinc} we select a word $W_{\nu} \notin \Pfine(\{W_{\zeta}\}_{\zeta < \nu})$ which is minimal such under $\prec_c$ and by Proposition \ref{arbitraryextensionsnew} select a $U_{\nu} \in \Red_2$ and coi $\iota_{\nu}$ such that  $\{\coi(W_{\zeta}, \iota_{\zeta}, U_{\zeta})\}_{\zeta < \nu + 1}$ is coherent.  Similarly if $\nu \geq \omega$ is odd then by Lemma \ref{avoidpchunk} we select a word $U_{\nu} \notin \Pfine(\{U_{\zeta}\}_{\zeta < \nu})$ which is minimal such under $\prec_2$ and by Proposition \ref{arbitraryextensionsnew} select a $W_{\nu} \in \Red_c$ and coi $\iota_{\nu}$ such that  $\{\coi(W_{\zeta}, \iota_{\zeta}, U_{\zeta})\}_{\zeta < \nu + 1}$ is coherent.

Now $\Pfine(\{W_{\zeta}\}_{\zeta < 2^{\aleph_0}}) = \Red_c$ and $\Pfine(\{U_{\zeta}\}_{\zeta < 2^{\aleph_0}}) = \Red_2$.  Thus by Proposition \ref{obtainedisonew} we have an isomorphism $\Phi: \Red_c/\langle\langle \Pfine(\Red_c)\rangle\rangle \rightarrow \Co_2$ and we are done.

\end{proof}

\end{proposition}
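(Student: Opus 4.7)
The plan is to adapt the two-phase construction from the proof of Proposition \ref{arbitraryextensions} to this new setting, substituting Propositions \ref{omegawordsnew} and \ref{Qtypenew} and Lemma \ref{findsomerepresentativenew} for their Section \ref{isom} counterparts. I will focus on part (1), since part (2) is entirely symmetric (using the second halves of the relevant lemmas). The hypothesis $\Pfine(\Red_c) \subseteq \Pfine(\{W_x\}_{x \in X})$ is the new ingredient that makes Propositions \ref{omegawordsnew} and \ref{Qtypenew} applicable, and because passing from a coherent collection to an enlarged coherent collection can only grow the p-fine closure on the $\Red_c$ side, this hypothesis is preserved at every intermediate stage.

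If $W \equiv E$, take $U \equiv E$ and $\iota = \emptyset$. Otherwise, well-order $\pindex(W)$ by some $\prec$ and start from $\mathcal{T}_0 = \{\coi(W_x, \iota_x, U_x)\}_{x \in X} \cup \{\coi(W \upharpoonright_p \{\lambda\}, \emptyset, E)\}_{\lambda \in \pindex(W)}$, which is coherent (every singleton p-chunk $W\upharpoonright_p\{\lambda\}$ lies in $\Pure(\Red_c) \subseteq \Pfine(\{W_x\}_{x \in X})$, so its $\approx$-class is trivial, matching $[[E]]$). In Phase 1, at each successor stage $\zeta < \aleph_1$, if every $\lambda \in \pindex(W)$ already lies in a maximal interval $I \subseteq \pindex(W)$ with $W\upharpoonright_p I \in \Pfine(h(\mathcal{T}_\zeta))$, halt. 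Otherwise pick the $\prec$-minimal offending $\lambda$, choose an $\omega$-type strictly increasing chain of intervals (left- or right-anchored at $\lambda$) whose union is not yet in $\Pfine(h(\mathcal{T}_\zeta))$, and apply Proposition \ref{omegawordsnew}(1) to enlarge $\mathcal{T}_\zeta$ by one coi triple covering that union. This must halt at some countable $\zeta$ because $\pindex(W)$ and the two-element set $\{L,R\}$ are countable, so otherwise the pigeonhole argument used in Proposition \ref{arbitraryextensions} produces an uncountable strictly increasing chain of subintervals of $\pindex(W)$, which is impossible.

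Phase 2: after termination, the collection $\Lambda$ of maximal intervals $I_\lambda$ with $W\upharpoonright_p I_\lambda \in \Pfine(h(\mathcal{T}_\zeta))$ partitions $\pindex(W)$, and is order-dense (two adjacent maximal intervals would merge into a single element of $\Pfine(h(\mathcal{T}_\zeta))$, contradicting maximality). After removing a possible min or max, the interior $\Lambda'$ is either empty or order-isomorphic to $\mathbb{Q}$; in the nonempty case apply Proposition \ref{Qtypenew}(1) to adjoin one coi triple covering $W\upharpoonright_p \bigcup\Lambda'$. The resulting collection $\mathcal{T}_\zeta'$ now has $W$ expressible as a concatenation of at most three words in $\Pfine(h(\mathcal{T}_\zeta'))$ (the central $\mathbb{Q}$-piece together with the possible left-end and right-end extremal p-chunks), hence $W \in \Pfine(h(\mathcal{T}_\zeta'))$ itself. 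One final application of Lemma \ref{findsomerepresentativenew}(1) produces the desired $U$, $\iota$, and the augmented coherent collection. Part (2) runs identically, swapping the roles of the two sides and using the ``(2)'' clauses of Lemma \ref{findsomerepresentativenew}, Proposition \ref{omegawordsnew}, and Proposition \ref{Qtypenew}.

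The only real point requiring attention — and the one I expect to be the main obstacle to a clean write-up rather than a genuine difficulty — is verifying that the hypothesis $\Pure(\Red_c) \subseteq \Pfine(\{W_x\}_{x \in X})$ continues to hold for every intermediate collection to which Proposition \ref{omegawordsnew} or \ref{Qtypenew} is applied. This is automatic because $\Pfine$ is monotone in its argument, and each intermediate collection contains the original $\{W_x\}_{x\in X}$. With that observation in hand, the transfinite termination argument in Phase 1 and the density/completeness analysis in Phase 2 transfer verbatim from the proof of Proposition \ref{arbitraryextensions}, and no new combinatorial content is required.
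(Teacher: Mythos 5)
Your proposal is correct and follows exactly the route the paper takes: the paper's own proof of this proposition is the one-line remark that it is proved precisely as Proposition \ref{arbitraryextensions}, substituting Propositions \ref{omegawordsnew} and \ref{Qtypenew} for Propositions \ref{omegawords} and \ref{Qtype}. Your additional observation that the hypothesis $\Pure(\Red_c)\subseteq\Pfine(\{W_x\}_{x\in X})$ persists through every intermediate collection by monotonicity of $\Pfine$ is exactly the one point that needs checking beyond the verbatim transfer, and you handle it correctly.
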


\end{subsection}

\end{section}

\section*{Acknowledgement}

The author thanks Jeremy Brazas for the beautiful pictures used in this article.

\end{document}